\newtheorem{theorem}{Theorem}[section]
\newtheorem{lemma}[theorem]{Lemma}
\newtheorem{corollary}[theorem]{Corollary}
\newtheorem{proposition}[theorem]{Proposition}
\newtheorem{result}[theorem]{Result}
\newdefinition{definition}[theorem]{Definition}
\newdefinition{example}[theorem]{Example}
\newdefinition{xca}[theorem]{Exercise}
\newdefinition{remark}[theorem]{Remark}
\newproof{proof}{Proof}
\newproof{vrfctn}{Verification}
\providecommand{\tsc}[1]{{\text{\sc#1}}}
\providecommand{\tscn}[2]{{\text{{\sc#1}{\small#2}}}}
\providecommand{\etal}{et al.}
\providecommand{\figref}[1]{{\textup{(Fig.~\ref{#1})}}}
\providecommand{\ffref}[1]{{\textup{(fn\ref{#1})}}}
\providecommand{\ffpref}[1]{{\textup{(fn\ref{#1}, p. \pageref{#1})}}}
\providecommand{\thmref}[1]{{\textup{(Theorem~\ref{#1})}}}
\providecommand{\lemref}[1]{{\textup{(Lemma~\ref{#1})}}}
\providecommand{\crlref}[1]{{\textup{(Corollary~\ref{#1})}}}
\providecommand{\prpref}[1]{{\textup{(Proposition~\ref{#1})}}}
\providecommand{\defref}[1]{{\textup{(Definition~\ref{#1})}}}
\providecommand{\rmkref}[1]{{\textup{(Remark~\ref{#1})}}}
\providecommand{\rstref}[1]{{\textup{(Result~\ref{#1})}}}
\providecommand{\figrefnp}[1]{{\textup{Fig.~\ref{#1}}}}
\providecommand{\thmrefnp}[1]{{\textup{Theorem~\ref{#1}}}}
\providecommand{\lemrefnp}[1]{{\textup{Lemma~\ref{#1}}}}
\providecommand{\crlrefnp}[1]{{\textup{Corollary~\ref{#1}}}}
\providecommand{\prprefnp}[1]{{\textup{Proposition~\ref{#1}}}}
\providecommand{\defrefnp}[1]{{\textup{Definition~\ref{#1}}}}
\providecommand{\rmkrefnp}[1]{{\textup{Remark~\ref{#1}}}}
\providecommand{\xmprefnp}[1]{{\textup{Example~\ref{#1}}}}
\providecommand{\eqrefsatob} [2]{\textup{(\ref{#1}--\ref{#2})}}
\providecommand{\eqrefsab}   [2]{\textup{(\ref{#1}, \ref{#2})}}
\providecommand{\eqrefsabc}  [3]{\textup{(\ref{#1}, \ref{#2}, \ref{#3})}}
\providecommand{\figrefsab}   [2]{{\textup{(Figs.~\ref{#1}, \ref{#2})}}}
\providecommand{\const}{{\rm const}}
\providecommand{\strlngfk}[2]{\genfrac{[}{]}{0pt}{}{#1}{#2}}
\newcommand{\abs}[1]{\lvert#1\rvert}
\begin{document}

\begin{frontmatter}

\title{Representation of the Lagrange reconstructing polynomial by combination of substencils}

\author{G.A. Gerolymos}
\ead{georges.gerolymos@upmc.fr}
\address{Universit\'e Pierre-et-Marie-Curie \textup{(}\tsc{upmc}\textup{)}, 4 place Jussieu, 75005 Paris, France}
%\journal{a Journal}
%\journal{Appl. Num. Math.}
%\journal{on Turbulence}
\journal{J. Comp. Appl. Math.}

\begin{keyword}
reconstruction \sep (Lagrangian) interpolation and reconstruction \sep hyperbolic \tsc{pde}s \sep finite differences \sep finite volumes
\MSC 65D99 \sep 65D05 \sep 65D25 \sep 65M06 \sep 65M08
\end{keyword}

\begin{abstract}
The Lagrange reconstructing polynomial [Shu C.W.: {\em SIAM Rev.} {\bf 51} (2009) 82--126] of a function $f(x)$
on a given set of equidistant ($\Delta x=\const$) points $\bigl\{x_i+\ell\Delta x;\;\ell\in\{-M_-,\cdots,+M_+\}\bigr\}$
is defined as the polynomial whose sliding (with $x$) averages on $[x-\tfrac{1}{2}\Delta x,x+\tfrac{1}{2}\Delta x]$
are equal to the Lagrange interpolating polynomial of $f(x)$ on the same stencil [Gerolymos G.A.: {\em J. Approx. Theory} {\bf 163} (2011) 267--305].
We first study the fundamental functions of Lagrange reconstruction, show that these polynomials have only real and distinct roots,
which are never located at the cell-interfaces (half-points) $x_i+n\tfrac{1}{2}\Delta x$ ($n\in\mathbb{Z}$), and obtain several identities.
Using these identities, we show that there exists a unique representation of the Lagrange reconstructing polynomial on $\{i-M_-,\cdots,i+M_+\}$
as a combination of the Lagrange reconstructing polynomials on Neville substencils [Carlini E., Ferretti R., Russo G.: {\em SIAM J. Sci. Comp.} {\bf 27} (2005) 1071--1091],
with weights which are rational functions of $\xi$ ($x=x_i+\xi\Delta x$) [Liu Y.Y., Shu C.W., Zhang M.P.: {\em Acta Math. Appl. Sinica} {\bf 25} (2009) 503--538],
and give an analytical recursive expression of the weight-functions.
We show that all of the poles of the rational weight-functions are real, and that there can be no poles at half-points.
We then use the analytical expression of the weight-functions, combined with the factorization of the fundamental functions of Lagrange reconstruction,
to obtain a formal proof of convexity (positivity of the weight-functions) in the neighborhood of $\xi=\tfrac{1}{2}$, iff all of the substencils
contain either point $i$ or point $i+1$ (or both).
\end{abstract}

\end{frontmatter}

%-----------------------------------------------------------------------------------------------------------------------------------
%
%
%
%
%
%
%
%
%
\section{Introduction}\label{RLRPCS_s_I}
%
%
%
%
%
%
%
%
%
%-----------------------------------------------------------------------------------------------------------------------------------

Polynomial interpolation and/or polynomial reconstruction are the basic numerical approximation operations involved in the development of \tsc{weno} schemes \cite{Liu_Osher_Chan_1994a,
                                                                                                                                                                   Jiang_Shu_1996a},
which are widely used \cite{Shu_2009a} for the discretization of (hyperbolic) \tsc{pde}s, particularly when the solution contains discontinuities.
Following Godunov's theorem \cite{vanLeer_2006a}, these schemes introduce nonlinearity in the approximation (with respect to the reconstructed function $h(x)$ or to its cell-averages $f(x)$),
to combine high-order with monotonicity. Central to the development of these methods \cite{Shu_1998a,
                                                                                           Shu_2009a}
is the underlying linear approximation, where the interpolating \cite{Carlini_Ferretti_Russo_2005a,
                                                                      Shu_2009a}
and/or the reconstructing \cite{Shu_1998a,
                                Shu_2009a}
polynomial on a given stencil is represented by a combination of the corresponding (interpolating or reconstructing) polynomials on substencils.
We introduce the following definitions
%-----------------------------------------------------------------------------------------------------------------------------------
%
\begin{definition}[{\rm Stencil {\cite[Definition 4.1, p. 283]{Gerolymos_2011a}}}]
\label{Def_AELRP_s_EPR_ss_PR_001}
Consider a 1-D homogeneous computational mesh
\begin{subequations}
                                                                                                       \label{Eq_Def_AELRP_s_EPR_ss_PR_001_001}
\begin{equation}
x_i=x_1+(i-1)\Delta x\qquad\qquad \Delta x = {\rm const}\in\mathbb{R}_{>0}
                                                                                                       \label{Eq_Def_AELRP_s_EPR_ss_PR_001_001a}
\end{equation}
Assume
\begin{equation}
M:=M_-+M_+\geq0
                                                                                                       \label{Eq_Def_AELRP_s_EPR_ss_PR_001_001b}
\end{equation}
The set of contiguous points
\begin{equation}
\tsc{s}_{i,M_-,M_+}:=\left\{i-M_-,\cdots,i+M_+\right\}
                                                                                                       \label{Eq_Def_AELRP_s_EPR_ss_PR_001_001c}
\end{equation}
is defined as the discretization-stencil in the neighborhood of $i$, with $M_-$ neighbors to the left and $M_+$ neighbors to the right.
The stencil $\tsc{s}_{i,M_-,M_+}$~\eqref{Eq_Def_AELRP_s_EPR_ss_PR_001_001c} contains $M+1>0$ points and has a length of $M$ intervals.
If $M_\pm\geq0$ then the stencil contains the pivot-point $i$. If $M_-M_+<0$ then the stencil does not contain the pivot-point $i$.
We will note
\begin{equation}
[\tsc{s}_{i,M_-,M_+}]:=[x_{i-M_-},x_{i+M_+}]\;\subset\;{\mathbb R}
                                                                                                       \label{Eq_Def_AELRP_s_EPR_ss_PR_001_001d}
\end{equation}
the interval defined by the edge-points of the stencil.
\end{subequations}
\qed
\end{definition}
%
%-----------------------------------------------------------------------------------------------------------------------------------
%-----------------------------------------------------------------------------------------------------------------------------------
%
\begin{definition}[{\rm Neville substencils}]
\label{Def_RLRPCS_s_I_002}
Let $\tsc{s}_{i,M_-,M_+}$ be a discretization stencil on a homogeneous grid \defref{Def_AELRP_s_EPR_ss_PR_001} with
\begin{subequations}
                                                                                                       \label{Eq_Def_RLRPCS_s_I_002_001}
\begin{equation}
M:=M_-+M_+\geq2
                                                                                                       \label{Eq_Def_RLRPCS_s_I_002_001a}
\end{equation}
Assume
\begin{equation}
K_{\rm s}\leq M-1\qquad;\qquad K_{\rm s}\in{\mathbb N}_0
                                                                                                       \label{Eq_Def_RLRPCS_s_I_002_001b}
\end{equation}
The $K_{\rm s}+1\geq1$ substencils
\begin{equation}
\tsc{s}_{i,M_--k_{\rm s},M_+-K_{\rm s}+k_{\rm s}}:=\left\{i-M_-+k_{\rm s},\cdots,i+M_+-K_{\rm s}+k_{\rm s}\right\}\;\forall\;k_{\rm s}\in\{0,\cdots,K_{\rm s}\}
                                                                                                       \label{Eq_Def_RLRPCS_s_I_002_001c}
\end{equation}
each of which contains $M-K_{\rm s}+1$ points and which satisfy
\begin{equation}
\bigcup_{k_{\rm s}=0}^{K_{\rm s}}\tsc{s}_{i,M_--k_{\rm s},M_+-K_{\rm s}+k_{\rm s}}=\tsc{s}_{i,M_-,M_+}
                                                                                                       \label{Eq_Def_RLRPCS_s_I_002_001d}
\end{equation}
\begin{equation}
\ell_{\rm s}\neq m_{\rm s} \iff \left\{\begin{array}{l}\tsc{s}_{i,M_--\ell_{\rm s},M_+-K_{\rm s}+\ell_{\rm s}}\nsubset\tsc{s}_{i,M_--m_{\rm s},M_+-K_{\rm s}+m_{\rm s}}\\
                                                       \tsc{s}_{i,M_--\ell_{\rm s},M_+-K_{\rm s}+\ell_{\rm s}}\neq    \tsc{s}_{i,M_--m_{\rm s},M_+-K_{\rm s}+m_{\rm s}}\\\end{array}\right.\quad
\forall \ell_{\rm s},m_{\rm s}\in\{0,\cdots,K_{\rm s}\}
                                                                                                       \label{Eq_Def_RLRPCS_s_I_002_001e}
\end{equation}
\begin{equation}
\tsc{s}_{i-M_-+k_\mathrm{s}+1,i+M_+-K_\mathrm{s}+k_\mathrm{s}+1}=\Big(\tsc{s}_{i-M_-+k_\mathrm{s},i+M_+-K_\mathrm{s}+k_\mathrm{s}}\setminus\{i-M_-+k_\mathrm{s}\}\Big)\cup\{i+M_+-K_\mathrm{s}+k_\mathrm{s}+1\}
\quad\forall k_\mathrm{s}\in\{0,\cdots,K_{\rm s}-1\}
                                                                                                       \label{Eq_Def_RLRPCS_s_I_002_001f}
\end{equation}
are the $(M-K_{\rm s}+1)$-order\footnote{\label{ff_Def_RLRPCS_s_I_002_001}In the sense that the Lagrange interpolating and reconstructing
                                                                          polynomials on each of the substencils \eqref{Eq_Def_RLRPCS_s_I_002_001c}
                                                                          are $O(\Delta x^{M-K_{\rm s}+1})$-accurate approximations \cite[Proposition 4.6, p. 289]{Gerolymos_2011a}.
                                        }
substencils of $\tsc{s}_{i,M_-,M_+}$, corresponding to the $K_{\rm s}$-level subdivision of $\tsc{s}_{i,M_-,M_+}$.
\end{subequations}
\qed
\end{definition}
%
%-----------------------------------------------------------------------------------------------------------------------------------
%-----------------------------------------------------------------------------------------------------------------------------------
%
\begin{definition}[{\rm Reconstruction pair {\cite[Definition 2.1, p. 270]{Gerolymos_2011a}}}]
\label{Def_AELRP_s_RPERR_ss_RP_001}
Assume that $\Delta x\in{\mathbb R}_{>0}$ is a \underline{constant} length, and that the functions
$f:I\longrightarrow{\mathbb R}$ and $h:I\longrightarrow{\mathbb R}$ are defined on the interval $I=[a-\frac{1}{2}\Delta x,b+\frac{1}{2}\Delta x]\subset{\mathbb R}$,
satisfying everywhere
\begin{subequations}
                                                                                                       \label{Eq_Def_AELRP_s_RPERR_ss_RP_001_001}
\begin{equation}
f(x)=\dfrac{1}{\Delta x}\int_{x-\frac{1}{2}\Delta x}^{x+\frac{1}{2}\Delta x}{h(\zeta)d\zeta}\quad\forall x\in[a,b]
                                                                                                       \label{Eq_Def_AELRP_s_RPERR_ss_RP_001_001a}
\end{equation}
assuming the existence of the integral in~\eqref{Eq_Def_AELRP_s_RPERR_ss_RP_001_001a}.
We will note the functions $f(x)$ and $h(x)$ related by~\eqref{Eq_Def_AELRP_s_RPERR_ss_RP_001_001a}
\begin{alignat}{6}
h=&R_{(1;\Delta x)}(f)
                                                                                                       \label{Eq_Def_AELRP_s_RPERR_ss_RP_001_001b}\\
f=&R^{-1}_{(1;\Delta x)}(h)
                                                                                                       \label{Eq_Def_AELRP_s_RPERR_ss_RP_001_001c}
\end{alignat}
\end{subequations}
and will call $f$ and $h$
a reconstruction pair on $[a,b]$,
in view of the computation of the 1-derivative.\footnote{\label{ff_Def_AELRP_s_RPERR_ss_RP_001_01}
                                                         By \cite[Lemma 2.2, p. 271]{Gerolymos_2011a}, $\eqref{Eq_Def_AELRP_s_RPERR_ss_RP_001_001a}\;\Longrightarrow\;
                                                         f^{(n)}(x)=\dfrac{h^{(n-1)}(x+\frac{1}{2}\Delta x)-h^{(n-1)}(x-\frac{1}{2}\Delta x)}{\Delta x}\;\forall x\in[a,b]\;\forall n\in\{1,\cdots,N\}$, exactly,
                                                         assuming $f(x)$ and $h(x)$ are of class $C^N[a-\tfrac{1}{2}\Delta x,b+\tfrac{1}{2}\Delta x]$.}\qed
\end{definition}
%
%-----------------------------------------------------------------------------------------------------------------------------------
%
\begin{definition}[{\rm Lagrange reconstructing polynomial {\cite[Definition 2.3, p. 271]{Gerolymos_2011a}}}]
\label{Def_AELRP_s_RPERR_ss_RP_002}
Let $p_{  I,M_-,M_+}(x_i+\xi\Delta x;x_i,\Delta x;f)$ denote the Lagrange interpolating polynomial \cite[pp. 186--189]{Henrici_1964a}
of the real function $f:\mathbb{R}\longrightarrow\mathbb{R}$ on the stencil $\tsc{s}_{i,M_-,M_+}$ \defref{Def_AELRP_s_EPR_ss_PR_001}.
Its reconstruction pair \defref{Def_AELRP_s_RPERR_ss_RP_001}
\begin{equation}
p_{R_1,M_-,M_+}(x_i+\xi\Delta x;x_i,\Delta x;f):=[R_{(1;\Delta x)}(p_{I,M_-,M_+})](x_i+\xi\Delta x;x_i,\Delta x;f)
                                                                                                       \label{Eq_Def_AELRP_s_RPERR_ss_RP_002_001}
\end{equation}
will be called the Lagrange reconstructing polynomial on the stencil $\tsc{s}_{i,M_-,M_+}$.\qed
\end{definition}
%
%-----------------------------------------------------------------------------------------------------------------------------------

%Let $p_{  I,M_-,M_+}(x_i+\xi\Delta x;x_i,\Delta x;f)$ denote the Lagrange interpolating polynomial \cite[pp. 186--189]{Henrici_1964a} of the real function $f:\mathbb{R}\longrightarrow\mathbb{R}$
%on the stencil $\tsc{s}_{i,M_-,M_+}$ \defref{Def_AELRP_s_EPR_ss_PR_001} and $p_{R_1,M_-,M_+}(x_i+\xi\Delta x;x_i,\Delta x;f)$ the associated Lagrange reconstructing polynomial~\cite{Shu_2009a,
%                                                                                                                                                                                      Gerolymos_2011a}.
We study representations where the polynomial approximation on $\tsc{s}_{i,M_-,M_+}$ \defref{Def_AELRP_s_EPR_ss_PR_001} is expressed as a weighted sum of the corresponding
polynomial approximations on the $K_\mathrm{s}+1$ substencils \defref{Def_RLRPCS_s_I_002}
\begin{subequations}
                                                                                                       \label{Eq_RLRPCS_s_I_001}
\begin{alignat}{6}
p_{R_1,M_-,M_+}(x_i+\xi\Delta x;x_i,\Delta x;f)=&\sum_{k_\mathrm{s}=0}^{K_\mathrm{s}} \sigma_{R_1,M_-,M_+,K_\mathrm{s},k_\mathrm{s}}(\xi)\;p_{R_1,M_--k_\mathrm{s},M_+-K_\mathrm{s}+k_\mathrm{s}}(x_i+\xi\Delta x;x_i,\Delta x;f)
                                                                                                       \label{Eq_RLRPCS_s_I_001a}\\
p_{  I,M_-,M_+}(x_i+\xi\Delta x;x_i,\Delta x;f)=&\sum_{k_\mathrm{s}=0}^{K_\mathrm{s}} \sigma_{  I,M_-,M_+,K_\mathrm{s},k_\mathrm{s}}(\xi)\;p_{  I,M_--k_\mathrm{s},M_+-K_\mathrm{s}+k_\mathrm{s}}(x_i+\xi\Delta x;x_i,\Delta x;f)
                                                                                                       \label{Eq_RLRPCS_s_I_001b}
\end{alignat}
\end{subequations}
with weight-functions ($\sigma_{  I,M_-,M_+,K_\mathrm{s},k_\mathrm{s}}(\xi)$ in the interpolation case or $\sigma_{R_1,M_-,M_+,K_\mathrm{s},k_\mathrm{s}}(\xi)$ in the reconstruction case)
which are independent of the approximated function ($f(x)$ in the interpolation case or $h(x)$ in the reconstruction case).
The subscripts $M_\pm$, $K_\mathrm{s}$ and $k_\mathrm{s}$ in \eqref{Eq_RLRPCS_s_I_001} indicate that the weight-functions depend on the stencil ($M_\pm$),
on the level of subdivision ($K_\mathrm{s}$) and on the particular substencil ($k_\mathrm{s}$).
Because the weight-functions are independent of the approximated function ($f(x)$ or $h(x)$) they are usually called {\em linear} weights \cite{Shu_2009a}. Alternatively, since the weights combine the interpolating (or
reconstructing) polynomials on the substencils to exactly the interpolating (or reconstructing) polynomial on the entire stencil, they recover the highest possible accuracy (between weighted combinations
of the substencils) and, for this reason, they are alternatively called {\em optimal} (in the sense of accuracy) weights \cite{Jiang_Shu_1996a,
                                                                                                                               Balsara_Shu_2000a}.

The underlying linear interpolation or reconstruction used in \tsc{weno} \cite{Shu_1998a,
                                                                               Shu_2009a}
schemes on the general stencil $\{i-M_-,\cdots,i+M_+\}$ \defref{Def_AELRP_s_EPR_ss_PR_001}
can be obtained by writing the approximation error \cite[(56a), p. 292]{Gerolymos_2011a}
for the $K_\mathrm{s}+1$ ($k_\mathrm{s}\in\{0,\cdots,K_\mathrm{s}\}$) substencils $\{i-M_-+k_\mathrm{s},\cdots,i+M_+-K_\mathrm{s}+k_\mathrm{s}\}$ \defref{Def_AELRP_s_EPR_ss_PR_001},
each of which has an error of $O(\Delta x^{M-K_\mathrm{s}+1})$ \cite[Proposition 4.7, p. 292]{Gerolymos_2011a}.
At any \underline{fixed point $x_i+\xi\Delta x$}, we can in this way construct a $(K_\mathrm{s}+1)\times(K_\mathrm{s}+1)$ linear system
({\em eg} \cite[(13), p. 8489]{Gerolymos_Senechal_Vallet_2009a}) for the weights which linearly combine the approximated values on the substencils
to obtain an $O(\Delta x^{M+1})$-accurate approximation, recovering the accuracy (and indeed the exact value \cite{Shu_1998a,
                                                                                                                   Shu_2009a})
of the entire stencil $\{i-M_-,\cdots,i+M_+\}$, at the chosen fixed point $x_i+\xi\Delta x$.
It is known by numerical experiment \cite{Shu_1998a,
                                          Shu_2009a},
that, for stencils symmetric around $x_i$ ({\em ie} $M_-=M_+$) these linear or optimal weights
for the ($K_\mathrm{s}=M_-=M_+$)-subdivision,
can be calculated at the fixed point $\xi=\tfrac{1}{2}$ ({\em eg} \cite[Tab. 3, p. 8484]{Gerolymos_Senechal_Vallet_2009a}),
{\em ie} for this choice of $\{M\pm,K_\mathrm{s},\xi\}$ the linear system \cite[(13), p. 8489]{Gerolymos_Senechal_Vallet_2009a} is not singular.
Shu \cite{Shu_1998a} has given examples of other choices of $\{M\pm,K_\mathrm{s},\xi\}$ for which the linear system is singular.
Obviously the weights are functions of $\xi$, parametrized by $\{M_\pm,K_{\rm s},k_\mathrm{s}\}$.

The Neville-Aitken algorithm \cite[pp. 204--209]{Henrici_1964a} constructs the interpolating polynomial on $\{i-M_-,\cdots,i+M_+\}$, by recursive combination of the
interpolating polynomials on substencils, with weights which are also polynomials of $x$ \cite[pp. 204--209]{Henrici_1964a}.
Carlini~\etal~\cite{Carlini_Ferretti_Russo_2005a}, working on the Lagrange interpolating polynomial in the context of centered (central) \tsc{weno} schemes,
recognized the connexion between the Neville-Aitken algorithm \cite[pp. 204--209]{Henrici_1964a} and the determination of the optimal weights,
and gave the explicit expression \cite[(3.6,4.10), pp. 1074--1079]{Carlini_Ferretti_Russo_2005a} of the polynomial weight-functions $\sigma_{I,r-1,r,r-1,k_\mathrm{s}}(\xi)$ which combine the Lagrange interpolating polynomials
on the $K_\mathrm{s}+1=(r-1)+1$ substencils $\{i-(r-1)+k_\mathrm{s},\cdots,i+r-(r-1)+k_\mathrm{s}\}$ to obtain the Lagrange interpolating polynomial
on the big stencil $\{i-(r-1),\cdots,i+r\}$ which contains an odd number of $M=2r-1$ intervals and an even number of $M+1=2r$ points.
This result was also confirmed by
Liu~\etal~\cite[(2.2), p. 506]{Liu_Shu_Zhang_2009a} who further gave the analytical expression \cite[(2.18), p. 511]{Liu_Shu_Zhang_2009a}
for the polynomial weight-functions $\sigma_{I,r,r,r,k_\mathrm{s}}(\xi)$ which combine the Lagrange interpolating polynomials
on the $K_\mathrm{s}+1=r+1$ substencils $\{i-r+k_\mathrm{s},\cdots,i+r-r+k_\mathrm{s}\}$ to obtain the Lagrange interpolating polynomial
on the big stencil $\{i-r,\cdots,i+r\}$ which contains an even number of $M=2r$ intervals and an odd number of $M+1=2r+1$ points.
For both cases it is shown \cite{Carlini_Ferretti_Russo_2005a,
                                 Liu_Shu_Zhang_2009a}
that $\forall\xi\in[-1,1]$ the linear weights are positive, and as a consequence the above combination of substencils is convex $\forall\xi\in[-1,1]$.
In a recent work \cite{Gerolymos_2011a_news} we extended these results for the general $K_\mathrm{s}$-level subdivision of an arbitrary stencil
$\tsc{x}_{i-M_-,i+M_+}:=\{x_{i-M_-},\cdots,x_{i+M_+}\}\subset\mathbb{R}$ of $M+1:=M_-+M_++1$ distinct ordered points on an inhomogeneous grid
to $K_\mathrm{s}+1\leq M$ substencils $\tsc{x}_{i-M_-+k_\mathrm{s},i+M_+-K_\mathrm{s}+k_\mathrm{s}}:=\left\{x_{i-M_-+k_\mathrm{s}},\cdots,x_{i+M_+-K_\mathrm{s}+k_\mathrm{s}}\right\}$
($k_\mathrm{s}\in\{0,\cdots,K_{\rm s}\}$), and used a general recurrence relation \cite[(4e), Lemma 2.1]{Gerolymos_2011a_news}
to obtain a simplified expression \cite[Proposition 3.1]{Gerolymos_2011a_news} of the weight-functions for the Lagrange interpolating polynomial \eqref{Eq_RLRPCS_s_I_001b}, and to prove positivity
in the interval $x\in[x_{i-M_-+K_\mathrm{s}-1},x_{i+M_+-K_\mathrm{s}+1}]$ which contains at least 1 cell (at least 2 grid-points) iff  $K_\mathrm{s}\leq\left\lceil\frac{M}{2}\right\rceil$
\cite[Proposition 3.2]{Gerolymos_2011a_news}.

Looking more carefully into \eqref{Eq_RLRPCS_s_I_001b} one notices that it is directly related to M\"uhlbach's theorem \cite[Theorem 2.1, p. 100]{Muhlbach_1978a},
corresponding to \cite[(2.2,2.3), p. 100]{Muhlbach_1978a}. M\"uhlbach \cite{Muhlbach_1978a} expresses the coefficient $\sigma_{  I,M_-,M_+,K_\mathrm{s},k_\mathrm{s}}(\xi)$ \eqref{Eq_RLRPCS_s_I_001b}
in terms of quotients of determinants of interpolation-error functions, directly obtained by the Cramer solution \cite[Proposition 5.1.1, p. 72]{Allaire_Kaber_2008a} of error-eliminating linear systems.
M\"uhlbach \cite{Muhlbach_1978a} studies Chebyshev-systems satisfying interpolatory conditions. In the reconstructing polynomial case \eqref{Eq_RLRPCS_s_I_001a},
the usual linear system approach \cite[(13), p. 8489]{Gerolymos_Senechal_Vallet_2009a} is equivalent to the algorithm of M\"uhlbach \cite[Theorem 2.1, p. 100]{Muhlbach_1978a}
with the important difference that in \eqref{Eq_RLRPCS_s_I_001a} we study polynomials $p_{R_1}(x)$ whose linear functionals $p_I(x)=[R^{-1}_{(1;\Delta x)}(p_{R_1})](x)$ \defref{Def_AELRP_s_RPERR_ss_RP_001}
satisfy interpolatory conditions, so that the existence and uniqueness proofs in \cite[Theorem 2.1, p. 100]{Muhlbach_1978a} are not directly applicable.
Nonetheless, the general recurrence relation for weight-functions proven in \cite[(4e), Lemma 2.1]{Gerolymos_2011a_news}, only requires that the ($K_\mathrm{s}=1$)-level subdivision
can be defined. Therefore, finding a general expression for the weight-functions $\sigma_{R_1,M_-,M_+,K_\mathrm{s},k_\mathrm{s}}(\xi)$ \eqref{Eq_RLRPCS_s_I_001a},
is tantamount to solving the problem of the $(K_\mathrm{s}=1)$-level subdivision for the Lagrange reconstructing polynomial.

Although the reconstructing polynomial \cite{Shu_1998a,
                                             Shu_2009a,
                                             Gerolymos_2011a}
is even more widely used in \tsc{weno} discretizations, the development of practical \tsc{weno} schemes \cite{Jiang_Shu_1996a,
                                                                                                              Balsara_Shu_2000a,
                                                                                                              Gerolymos_Senechal_Vallet_2009a},
invariably followed the aforementioned linear system approach \cite[(13), p. 8489]{Gerolymos_Senechal_Vallet_2009a}, using symbolic calculation.
There is little analytical work on the weight-functions $\sigma_{R_1,M_-,M_+,K_\mathrm{s},k_\mathrm{s}}(\xi)$ which combine the Lagrange reconstructing polynomials
on the $K_\mathrm{s}+1\leq M$ substencils $\{i-M_-+k_\mathrm{s},\cdots,i+M_+-K_\mathrm{s}+k_\mathrm{s}\}$ to obtain the Lagrange reconstructing polynomial
on the big stencil $\{i-M_-,\cdots,i+M_+\}$. Only recently, Liu~\etal~\cite{Liu_Shu_Zhang_2009a} studied
particular families of stencils and subdivisions, using symbolic computation.\footnote{\label{ff_RLRPCS_s_I_001}
                                                                                          Liu~\etal~\cite{Liu_Shu_Zhang_2009a} have examined, using symbolic calculation,
                                                                                          the computation and positivity of linear (optimal) weight-functions
                                                                                          in \tsc{weno} interpolation, reconstruction and integration.
                                                                                         }
Liu~\etal~\cite{Liu_Shu_Zhang_2009a} have concentrated on the usual \tsc{weno} substencils.\footnote{\label{ff_RLRPCS_s_I_002}In the nomenclature of Shu \cite{Shu_1998a,
                                                                                                                                                               Shu_2009a},
                                                                                                     used in Liu~\etal~\cite{Liu_Shu_Zhang_2009a}, stencils are defined in terms of cell-interfaces (half-points),
                                                                                                     and the term nodes in \cite{Liu_Shu_Zhang_2009a} denotes cells,
                                                                                                     so that the stencil $\{i-(r-1),\cdots,i+(r-1)\}$ is defined in \cite[Tab. 3.2, p. 516]{Liu_Shu_Zhang_2009a}
                                                                                                     as $\{i-r+\tfrac{1}{2},\cdots,i+r-\tfrac{1}{2}\}$,
                                                                                                     and the stencil $\{i-(r-1),\cdots,i+r\}$ is defined in \cite[Tab. 3.5, p. 518]{Liu_Shu_Zhang_2009a}
                                                                                                     as $\{i-r+\tfrac{1}{2},\cdots,i+r+\tfrac{1}{2}\}$
                                                                                                    }
In the reconstruction case, it was shown by construction \cite{Liu_Shu_Zhang_2009a} that the optimal weight-functions are not polynomials,
as in the interpolation case \cite{Carlini_Ferretti_Russo_2005a,
                                   Liu_Shu_Zhang_2009a,
                                   Gerolymos_2011a_news},
but, instead, rational functions of $\xi$ ($x=x_i+\xi\Delta x$),
implying that at the poles of these rational functions the weight-functions cannot be defined.
For upwind-biased schemes \cite{Jiang_Shu_1996a,
                                Balsara_Shu_2000a,
                                Gerolymos_Senechal_Vallet_2009a}
the big stencil $\{i-(r-1),\cdots,i+(r-1)\}$ ($r\in{\mathbb N}_{\geq2}$) which is centered around the point $i$, and upwind-biased with respect to the cell-face $i+\tfrac{1}{2}$, is
subdivided \cite[Tab. 3.2, p. 516]{Liu_Shu_Zhang_2009a} into $K_\mathrm{s}+1=r$ substencils $\{i-(r-1)+k_\mathrm{s},\cdots,i+k_\mathrm{s}\}$, $k_\mathrm{s}\in\{0,\cdots,K_\mathrm{s}\}$.
For centered schemes, the big stencil $\{i-(r-1),\cdots,i+r\}$ which is centered with respect to the cell-face $i+\tfrac{1}{2}$ \cite{Shu_1998a,
                                                                                                                                      Qiu_Shu_2002a,
                                                                                                                                      Shu_2009a},
and as a consequence downwind-biased with respect to the point $i$, is subdivided into $K_\mathrm{s}+1=r+1$ substencils $\{i-(r-1)+k_\mathrm{s},\cdots,i+k_\mathrm{s}\}$, $k_\mathrm{s}\in\{0,\cdots,K_\mathrm{s}\}$.
In \cite[(3.2--3.4), p. 514]{Liu_Shu_Zhang_2009a} an algorithm is sketched for computing the rational weight-functions,
which are tabulated up to $r=7$ \cite[Tab. 3.2, p. 516]{Liu_Shu_Zhang_2009a} for the upwind-biased case (even number of intervals)
and up to $r=6$ \cite[Tab. 3.5, p. 518]{Liu_Shu_Zhang_2009a} for the centered case (odd number of intervals).
We remarked in \cite[p. 298]{Gerolymos_2011a} that both these families can be grouped together as the subdivision of the general stencil
$\{i-\lfloor\frac{M}{2}\rfloor,\cdots,i+M-\lfloor\frac{M}{2}\rfloor\}$ into $K_\mathrm{s}+1=\left\lceil\frac{M}{2}\right\rceil+1$ substencils, in the range $M\in\{2\cdots,11\}$.
These weights were further analyzed to determine the regions of convexity of the representation (positivity of the weight-functions).
These important results \cite{Liu_Shu_Zhang_2009a} include explicit expressions of the weight-functions for the particular stencils which were studied, but a general analytical expression
of the optimal weight-functions for the representation of the Lagrange reconstructing polynomial by combination of substencils is not yet available, contrary to the interpolating polynomial case \cite{Carlini_Ferretti_Russo_2005a,
                                                                                                                                                                                                         Liu_Shu_Zhang_2009a,
                                                                                                                                                                                                         Gerolymos_2011a_news}.
The work of Liu~\etal~\cite{Liu_Shu_Zhang_2009a} is based on the reconstruction via primitive approach \cite[pp. 243--244]{Harten_Engquist_Osher_Chakravarthy_1987a},
as developed in \cite{Shu_1998a,
                      Shu_2009a},
where the integral (primitive) $\int_{0}^{x} h(\zeta)\;d\zeta$ of the function $h(x)$, which is reconstructed from its sliding averages $f(x)$ \defref{Def_AELRP_s_RPERR_ss_RP_001}, is used.

Despite the enormous successes of the reconstruction via primitive approach \cite[pp. 243--244]{Harten_Engquist_Osher_Chakravarthy_1987a} in designing and analyzing practical \tsc{weno} schemes \cite{Shu_1998a,
                                                                                                                                                                                                        Shu_2009a,
                                                                                                                                                                                                        Liu_Shu_Zhang_2009a}
the reconstruction via deconvolution approach \cite[244--246]{Harten_Engquist_Osher_Chakravarthy_1987a} is conceptually more straightforward, since it directly uses the unknown function which is reconstructed from
cell-averages, and sometimes simplifies analytical work. In a recent work \cite[Lemma 2.5, p. 272]{Gerolymos_2011a} we have provided the analytical solution
of the deconvolution problem \cite[(3.13b), p. 244]{Harten_Engquist_Osher_Chakravarthy_1987a}, which expresses the unknown function $h(x)$, which is reconstructed from its sliding averages, as a series of the
derivatives of the sliding averages $f^{(n)}(x)$ \cite[(10b), p. 272]{Gerolymos_2011a}. This analytical solution of the deconvolution problem \cite[(3.13b), p. 244]{Harten_Engquist_Osher_Chakravarthy_1987a}
allows the analytical computation of the approximation error of the Lagrange reconstructing polynomial \cite[Proposition 4.7, p. 292]{Gerolymos_2011a},
which would have been necessary to build the general linear system \cite[(13), p. 8489]{Gerolymos_Senechal_Vallet_2009a} for the weight-functions $\sigma_{R_1,M_-,M_+,K_\mathrm{s},k_\mathrm{s}}(\xi)$
in \eqref{Eq_RLRPCS_s_I_001a}.

In the present work\footnote{\label{ff_RLRPCS_s_I_003}
                             In \cite[\S6.1, pp. 297--300]{Gerolymos_2011a}, we had sketched, without giving any proof or analysis,
                             some of the problems which are solved in the present paper. Furthermore, at that time, we had not proven the conjectured convexity \cite{Shu_1998a,
                                                                                                                                                                      Shu_2009a,
                                                                                                                                                                      Liu_Shu_Zhang_2009a},
                             in the neighborhood of $\xi=\tfrac{1}{2}$.
                            }
we use relations and concepts developed in \cite{Gerolymos_2011a}, along with the general recurrence relation for the generation of weight-functions
proven in \cite[Lemma 2.1]{Gerolymos_2011a_news},
to extend the analysis of Liu~\etal~\cite{Liu_Shu_Zhang_2009a}, both by providing general analytical expressions (and existence and uniqueness proofs) of the rational weight-functions,
but also by studying the general case of the subdivision of an arbitrarily biased stencil on a homogeneous grid, $\{i-M_-,\cdots,i+M_+\}$ ($M_\pm\in{\mathbb Z}\;:\;M:=M_-+M_+\geq 2$) containing $M$ intervals,
into $K_\mathrm{s}+1\leq M$ substencils \defref{Def_RLRPCS_s_I_002} of equal length of $M-K_\mathrm{s}$ intervals,
each shifted by 1 cell with respect to its neighbors ($K_\mathrm{s}$ is free to take all possible values $\in\{1,\cdots,M-1\}$).
We also prove several relations concerning the Lagrange reconstructing polynomial.

In \S\ref{RLRPCS_s_RB} we very briefly summarize those results for the Lagrange reconstructing polynomial and its approximation error obtained in \cite{Gerolymos_2011a}
which are necessary in the present work.\footnote{\label{ff_RLRPCS_s_I_004}
                                                  In the present work we also make extensive use of relations concerning reconstruction pairs \cite[Definition 2.1, p. 270]{Gerolymos_2011a},
                                                  and in particular polynomial reconstruction pairs \cite[Theorem 5.1, p. 296]{Gerolymos_2011a}.
                                                 }

In \S\ref{RLRPCS_s_FPLIR} we study the basis polynomials $\alpha_{R_1,M_-,M_+,\ell}(\xi)$ ($\ell\in\{-M_-,\cdots,M_+\}$ which \cite[Proposition 4.5, p. 287]{Gerolymos_2011a}
represent the Lagrange reconstructing polynomial on $\tsc{s}_{i,M_-,M_+}:=\{i-M_-,\cdots,i+M_+\}$, with coordinates the values $f_{i+\ell}:=f(x_i+\ell\Delta x)$ of the cell-averages
of the reconstructed function. These results, which include an analysis of the roots of $\alpha_{R_1,M_-,M_+,\ell}(\xi)$ and relations with the polynomials
$\lambda_{R_1,M_-,M_+,n}(\xi)$ appearing in the expression of the approximation error of the Lagrange reconstructing polynomial \cite[Proposition 4.7, p. 292]{Gerolymos_2011a}
are the starting point for the construction of the Lagrange reconstructing polynomial as a combination of the Lagrange reconstructing polynomials on substencils.

In \S\ref{RLRPCS_s_RCsSs} we use the results of \S\ref{RLRPCS_s_FPLIR} to establish a $1$-level subdivision rule \lemref{Lem_RLRPCS_s_RCsSs_ss_Ks1_001},
by which, applying \cite[Lemma 2.1]{Gerolymos_2011a_news}, we construct \prpref{Prp_RLRPCS_s_RCsSs_ss_Ks_001}
an analytical recursive expression of the weight-functions for a general subdivision of an
arbitrarily biased stencil on a homogeneous grid. We prove the uniqueness of the rational weight-functions \prpref{Prp_RLRPCS_s_RCsSs_ss_Ks_002},
and we show by studying their poles (all of which are real) that it is always possible to define the weight-functions at half-nodes ($\xi=n+\tfrac{1}{2}\;,\;n\in{\mathbb Z}$).
Finally, we prove \thmref{Thm_RLRPCS_s_RCsSs_ss_C_001} the convexity of the representation \eqref{Eq_RLRPCS_s_I_001a} in the neighborhood of $\xi=\tfrac{1}{2}$,
for all subdivisions \defref{Def_RLRPCS_s_I_002} for which all of the substencils contain either point $i$ or point $i+1$ (or both).

%-----------------------------------------------------------------------------------------------------------------------------------
%
%
%
%
%
%
%
%
%
\section{Reconstruction background}\label{RLRPCS_s_RB}
%
%
%
%
%
%
%
%
%
%-----------------------------------------------------------------------------------------------------------------------------------

In a recent work \cite{Gerolymos_2011a} we have studied the exact and approximate reconstruction of a function $h(x)$.
We have obtained the general analytical solution of the deconvolution of Taylor-series problem \cite[(3.13), pp. 244--254]{Harten_Engquist_Osher_Chakravarthy_1987a},
and used this solution in developing analytical relations for the approximation error of
polynomial reconstruction on an arbitrary stencil in a homogeneous grid \cite{Gerolymos_2011a}. We briefly summarize those results of \cite{Gerolymos_2011a} which are the starting point of
the analysis presented in the present work, and which are necessary for completeness.
\begin{lemma}[{\rm Derivatives of reconstruction pairs}]
\label{Lem_RLRPCS_s_RB_001}
Let $h=R_{(1;\Delta x)}(f)$ be a reconstruction pair \defref{Def_AELRP_s_RPERR_ss_RP_001}, and assume that $f(x)$ and $h(x)$ are of class $C^N[a-\tfrac{1}{2}\Delta x,b+\tfrac{1}{2}\Delta x]$.
Then
\begin{alignat}{6}
h=R_{(1;\Delta x)}(f)\Longrightarrow h^{(n)}=R_{(1;\Delta x)}(f^{n})\qquad\forall n\in\{1,\cdots,N\}
                                                                                                       \label{Eq_Lem_RLRPCS_s_RB_001_001}
\end{alignat}
\end{lemma}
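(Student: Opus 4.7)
The plan is a straightforward induction on $n$, bootstrapping off the identity recalled in footnote~\ref{ff_Def_AELRP_s_RPERR_ss_RP_001_01} (Lemma 2.2 of \cite{Gerolymos_2011a}), which already gives
\begin{equation*}
f^{(n)}(x)=\dfrac{h^{(n-1)}(x+\tfrac{1}{2}\Delta x)-h^{(n-1)}(x-\tfrac{1}{2}\Delta x)}{\Delta x}\qquad\forall n\in\{1,\dots,N\},\;\forall x\in[a,b].
\end{equation*}
The $C^N$ regularity of $h$ is what makes this a legitimate starting point and what lets us invoke the fundamental theorem of calculus on $h^{(n-1)}$ below.

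For the base case $n=1$, I would simply rewrite the right-hand side above (with $n=1$) via the fundamental theorem of calculus applied to the $C^1$ function $h$:
\begin{equation*}
\dfrac{h(x+\tfrac{1}{2}\Delta x)-h(x-\tfrac{1}{2}\Delta x)}{\Delta x}=\dfrac{1}{\Delta x}\int_{x-\frac{1}{2}\Delta x}^{x+\frac{1}{2}\Delta x}h'(\zeta)\,d\zeta,
\end{equation*}
which, compared with \defrefnp{Def_AELRP_s_RPERR_ss_RP_001}, states exactly that $h'=R_{(1;\Delta x)}(f')$ on $[a,b]$.

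For the inductive step, assume $h^{(k)}=R_{(1;\Delta x)}(f^{(k)})$ for some $k$ with $1\le k<N$. The pair $(f^{(k)},h^{(k)})$ is then itself a reconstruction pair \defrefnp{Def_AELRP_s_RPERR_ss_RP_001}, with $h^{(k)}\in C^{N-k}$ and $N-k\ge 1$. Applying the base case argument to this pair (or, equivalently, applying the identity from footnote~\ref{ff_Def_AELRP_s_RPERR_ss_RP_001_01} with $n=1$ to $(f^{(k)},h^{(k)})$ and then using the fundamental theorem of calculus on $h^{(k)}$) yields
\begin{equation*}
f^{(k+1)}(x)=\bigl(f^{(k)}\bigr)'(x)=\dfrac{1}{\Delta x}\int_{x-\frac{1}{2}\Delta x}^{x+\frac{1}{2}\Delta x}h^{(k+1)}(\zeta)\,d\zeta,
\end{equation*}
i.e.\ $h^{(k+1)}=R_{(1;\Delta x)}(f^{(k+1)})$, closing the induction.

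There is no real obstacle: the only subtlety is checking that the smoothness class is preserved at every inductive step so that the fundamental theorem of calculus applies to $h^{(n-1)}$, which is guaranteed by the $C^N$ assumption on both $f$ and $h$. I would be explicit about this bookkeeping in the final write-up but not dwell on it.
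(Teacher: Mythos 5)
Your proposal is correct and follows essentially the same route as the paper: the paper also establishes the case $n=1$ by direct integration of $h'$ over $[x-\tfrac{1}{2}\Delta x,x+\tfrac{1}{2}\Delta x]$ combined with the fundamental property of reconstruction pairs (the footnote identity), and then closes by induction. Your write-up merely makes the inductive step and the smoothness bookkeeping explicit, which the paper leaves implicit.
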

%
%-----------------------------------------------------------------------------------------------------------------------------------
%-----------------------------------------------------------------------------------------------------------------------------------
%
\begin{proof}
We have by direct integration
\begin{alignat}{6}
\dfrac{1}{\Delta x}\int_{x-\frac{1}{2}\Delta x}^{x+\frac{1}{2}\Delta x}{h'(\zeta)d\zeta}=\dfrac{h(x+\frac{1}{2}\Delta x)-h(x-\frac{1}{2}\Delta x)}{\Delta x}
                                                                                        \stackrel{\text{\cite[(9)]{Gerolymos_2011a}}}{=}f'(x) \quad\forall x\in[a,b]
                                                                                                       \label{Eq_Lem_RLRPCS_s_RB_001_002}
\end{alignat}
from the fundamental property of reconstruction pairs \cite[Lemma 2.2, p. 271]{Gerolymos_2011a}, proving \eqref{Eq_Lem_RLRPCS_s_RB_001_001} for $n=1$, and by induction $\forall n\in\{1,\cdots,N\}$.
\qed
\end{proof}

Expressions for the Lagrange interpolating polynomial and its approximation error are widely available in the literature \cite[pp. 186--189]{Henrici_1964a},
and they are only included in the following to define notation, for completeness, but also to highlight analogies and differences
between Lagrange interpolation and Lagrange reconstruction. In \cite[Propositions 4.5, 4.6, 4.7]{Gerolymos_2011a}
we developed corresponding analytical expressions for the Lagrange reconstructing polynomial, which can be summarized as
%-----------------------------------------------------------------------------------------------------------------------------------
%
\begin{proposition}[{\rm Lagrange polynomial interpolation and reconstruction on $\tsc{s}_{i,M_-,M_+}$ \cite{Gerolymos_2011a}}]
\label{Prp_RLRPCS_s_RB_001}
Assume $M_\pm\in{\mathbb Z}:M:=M_-+M_+\geq0$ \eqref{Eq_Def_AELRP_s_EPR_ss_PR_001_001b}.
Let $h=R_{(1;\Delta x)}(f)$ be a reconstruction pair \defref{Def_AELRP_s_RPERR_ss_RP_001}.
%We will note $f_{i+\ell}:=f(x_i+\ell\Delta x)$.
Then the Lagrange interpolating ($p_{I,M_-,M_+}$) and reconstructing ($p_{R_1,M_-,M_+}$) polynomials of $f(x)$ \defref{Def_AELRP_s_RPERR_ss_RP_002} on $\tsc{s}_{i,M_-,M_+}$ \defref{Def_AELRP_s_EPR_ss_PR_001} are
\begin{subequations}
                                                                                                       \label{Eq_Prp_RLRPCS_s_RB_001_001}
\begin{alignat}{6}
p_{R_1,M_-,M_+}(x_i+\xi\Delta x;x_i,\Delta x;f)=&\sum_{\ell=-M_-}^{M_+}    \alpha_{R_1,M_-,M_+,\ell}(\xi)\;f(x_i+\ell\Delta x)
                                                                                                       \label{Eq_Prp_AELRP_s_EPR_ss_PR_001_001d}\\
                                               =&h(x_i+\xi\Delta x)+\sum_{n=M+1}^{N_\tsc{tj}}    \mu_{R_1,M_-,M_+,n}(\xi)\;\Delta x^n\;f^{(n)}(x_i)+O(\Delta x^{N_\tsc{tj}+1})
                                                                                                       \label{Eq_Prp_AELRP_s_EPR_ss_AELPR_001_001c}\\
                                               =&h(x_i+\xi\Delta x)+\sum_{n=M+1}^{N_\tsc{tj}}\lambda_{R_1,M_-,M_+,n}(\xi)\;\Delta x^n\;h^{(n)}(x_i+\xi\Delta x)+O(\Delta x^{N_\tsc{tj}+1})
                                                                                                       \label{Eq_Prp_AELRP_s_EPR_ss_AELPR_002_001a}
\end{alignat}
\end{subequations}
\begin{subequations}
                                                                                                       \label{Eq_Prp_RLRPCS_s_RB_001_002}
\begin{alignat}{6}
p_{I  ,M_-,M_+}(x_i+\xi\Delta x;x_i,\Delta x;f)=&\sum_{\ell=-M_-}^{M_+}    \alpha_{I  ,M_-,M_+,\ell}(\xi)\;f(x_i+\ell\Delta x)
                                                                                                       \label{Eq_Prp_AELRP_s_EPR_ss_PR_001_001e}\\
                                               =&f(x_i+\xi\Delta x)+\sum_{n=M+1}^{N_\tsc{tj}}    \mu_{I  ,M_-,M_+,n}(\xi)\;\Delta x^n\;f^{(n)}(x_i)+O(\Delta x^{N_\tsc{tj}+1})
                                                                                                       \label{Eq_Prp_AELRP_s_EPR_ss_AELPR_001_001e}\\
                                               =&f(x_i+\xi\Delta x)+\sum_{n=M+1}^{N_\tsc{tj}}\lambda_{I  ,M_-,M_+,n}(\xi)\;\Delta x^n\;f^{(n)}(x_i+\xi\Delta x)+O(\Delta x^{N_\tsc{tj}+1})
                                                                                                       \label{Eq_Prp_AELRP_s_EPR_ss_AELPR_002_001b}
\end{alignat}
\end{subequations}
and, provided that $h(x)$ is sufficiently smooth $\forall x\in[x_{i-M_-}-\tfrac{1}{2}\Delta x,x_{i+M_+}+\tfrac{1}{2}\Delta x]$,
their approximation errors are defined by
\eqrefsab{Eq_Prp_AELRP_s_EPR_ss_AELPR_001_001c}{Eq_Prp_AELRP_s_EPR_ss_AELPR_001_001e},
or equivalently by \eqrefsab{Eq_Prp_AELRP_s_EPR_ss_AELPR_002_001a}{Eq_Prp_AELRP_s_EPR_ss_AELPR_002_001b}.
The functions
$\alpha_{R_1,M_-,M_+,\ell}(\xi)$, $\mu_{R_1,M_-,M_+,n}(\xi)$,    $\lambda_{R_1,M_-,M_+,n}(\xi)$,
$\alpha_{  I,M_-,M_+,\ell}(\xi)$, $\mu_{  I,M_-,M_+,n}(\xi)$ and $\lambda_{  I,M_-,M_+,n}(\xi)$ are polynomials in
\begin{alignat}{6}
\xi:=\frac{x-x_i}{\Delta x}
                                                                                                       \label{Eq_Prp_AELRP_s_EPR_ss_PR_001_001f}
\end{alignat}
with coefficients depending only on $M_\pm$ and are defined by
\begin{subequations}
                                                                                                       \label{Eq_Prp_RLRPCS_s_RB_001_003}
\begin{alignat}{6}
\alpha_{R_1,M_-,M_+,\ell}(\xi):=&\sum_{m=0}^{M}\left(\sum_{k=0}^{\lfloor\frac{M-m}{2}\rfloor}\frac{\tau_{2k}(m+2k)!}
                                                                                                {m!              }({^{M_+}_{M_-}V}^{-1})_{m+2k+1,\ell+M_-+1}\right)\xi^m
&\qquad;\qquad&-M_-\leq\ell\leq+M_+
                                                                                                       \label{Eq_Prp_AELRP_s_EPR_ss_PR_001_001g}\\
\mu_{R_1,M_-,M_+,s}(\xi):=&\sum_{k=0}^{\lfloor\frac{s}{2}\rfloor}\frac{-\tau_{2k}}
                                                                      {(s-2k)!   }\xi^{s-2k}
                         + \sum_{m=0}^{M}\left(\sum_{k=0}^{\lfloor\frac{M-m}{2}\rfloor}\tau_{2k}\nu_{M_-,M_+,m+2k,s}\frac{(m+2k)!}{s!\;m!}\right)\xi^m
&\qquad;\qquad& s\geq M+1
                                                                                                       \label{Eq_Prp_AELRP_s_EPR_ss_AELPR_001_001f}\\
\lambda_{R_1,M_-,M_+,n}(\xi):=&\sum_{\ell=0}^{n-M-1}\mu_{R_1,M_-,M_+,n-\ell}(\xi)
                                                    \dfrac{(-1)^{\ell+1}}
                                                          {(\ell+1)!    }\left((\xi-\tfrac{1}{2})^{\ell+1}
                                                                              -(\xi+\tfrac{1}{2})^{\ell+1}\right)
&\qquad;\qquad& n\geq M+1
                                                                                                       \label{Eq_Prp_AELRP_s_EPR_ss_AELPR_002_001c}
\end{alignat}
\end{subequations}
\begin{subequations}
                                                                                                       \label{Eq_Prp_RLRPCS_s_RB_001_004}
\begin{alignat}{6}
\alpha_{I  ,M_-,M_+,\ell}(\xi):=&\sum_{m=0}^{M}({^{M_+}_{M_-}V}^{-1})_{m+1,\ell+M_-+1}\;\xi^m
&\qquad;\qquad&-M_-\leq\ell\leq+M_+
                                                                                                       \label{Eq_Prp_AELRP_s_EPR_ss_PR_001_001h}\\
\mu_{I  ,M_-,M_+,s}(\xi):=&\frac{ 1}
                                {s!}\left(-\xi^s+\sum_{m=0}^{M}\nu_{M_-,M_+,m,s}\xi^m\right)
&\qquad;\qquad& s\geq M+1
                                                                                                       \label{Eq_Prp_AELRP_s_EPR_ss_AELPR_001_001g}\\
\lambda_{I  ,M_-,M_+,n}(\xi):=&\sum_{\ell=0}^{n-M-1}\dfrac{(-\xi)^\ell}
                                                          {\ell!      }\mu_{I,M_-,M_+,n-\ell}(\xi)
&\qquad;\qquad& n\geq M+1
                                                                                                       \label{Eq_Prp_AELRP_s_EPR_ss_AELPR_002_001d}
\end{alignat}
\end{subequations}
where $({^{M_+}_{M_-}V}^{-1})_{ij}$ are the elements of the inverse Vandermonde matrix on $\tsc{s}_{i,M_-,M_+}$ \textup{\cite[Definition 4.3, p. 283]{Gerolymos_2011a}},
expressed by \textup{\cite[(43a,43b), pp. 283--284]{Gerolymos_2011a}},\footnote{\label{ff_Prp_RLRPCS_s_RB_001_001}
                                                                                By \cite[Lemma 4.4, pp. 283--284]{Gerolymos_2011a}
                                                                                \begin{alignat}{6}
                                                                                ({^{M_+}_{M_-}V}^{-1})_{ij}=\sum_{n=0}^{M+1-i}(M_-)^n\;\binom{n+i-1}{n}\;({^{M}_{0}V}^{-1})_{i+n,j}\qquad\begin{array}{l}\forall i,j\in\{1,\cdots,M+1\}\\
                                                                                                                                                                                                         M:=M_-+M_+\\\end{array}
                                                                                                                    \notag
                                                                                \end{alignat}
                                                                                \begin{alignat}{6}
                                                                                ({^{M}_{0}V}^{-1})_{ij}=(-1)^{i+j}\sum_{k=1}^{M+1}\frac{     1}
                                                                                                                                       {(k-1)!}\binom{k-1}
                                                                                                                                                     {j-1}\strlngfk{k-1}
                                                                                                                                                                   {i-1}
                                                                                                                                   \qquad\forall i,j\in\{1,\cdots,M+1\}
                                                                                                                    \notag
                                                                                \end{alignat}
                                                                               }
$\nu_{M_-,M_+,m,s}$ are defined by\footnote{\label{ff_Prp_RLRPCS_s_RB_001_002}
                                            By \cite[Lemma 4.4, pp. 283--284]{Gerolymos_2011a}
                                            \begin{alignat}{6}
                                            \nu_{M_-,M_+,m,k}=
                                            \sum_{\ell=-M_-}^{M_+}({^{M_+}_{M_-}V}^{-1})_{m+1,\ell+M_-+1}\;\ell^k=\delta_{mk}\qquad\left.\begin{array}{c} 0 \leq k \leq M\\
                                                                                                                                                          0 \leq m \leq M\\\end{array}\right.
                                                                                                       \notag
                                            \end{alignat}
                                            \begin{alignat}{6}
                                            \sum_{m=0}^{M}\nu_{M_-,M_+,m,k}\;\ell^m=\ell^k                                   \qquad\left.\begin{array}{l} \forall   k\in{\mathbb N}_0      \\
                                                                                                                                                          \forall\ell\in\{-M_-,\cdots,M_+\}\\\end{array}\right.
                                                                                                       \notag
                                            \end{alignat}
                                           }
\begin{alignat}{6}
\nu_{M_-,M_+,m,k}:=\sum_{\ell=-M_-}^{M_+}({^{M_+}_{M_-}V}^{-1})_{m+1,\ell+M_-+1}\;\ell^k
\qquad;\qquad k\in\mathbb{N}_0
                                                                                                       \label{Eq_Lem_AELRP_s_EPR_ss_PR_001_001c}
\end{alignat}
and the numbers $\tau_{n}$ satisfy\footnote{\label{ff_Prp_RLRPCS_s_RB_001_003}
                                            By \cite[Theorem 2.9, pp. 275--276]{Gerolymos_2011a}
                                            the numbers $\tau_n$ can be defined as $\tau_{n}:=\dfrac{1}{n!}g_\tau^{(n)}(0)$
                                            from the generating function $g_\tau(x):=\dfrac{\tfrac{1}{2} x      }
                                                                                           {\sinh{\tfrac{1}{2}x}}$}
\begin{subequations}
                                                                                                       \label{Eq_Prp_RLRPCS_s_RB_001_005}
\begin{alignat}{6}
\tau_0 &=& 1
&\qquad;\qquad&
\tau_{2k} &=& \sum_{s=0}^{k-1}\frac{-\tau_{2s}           }
                                   {2^{2k-2s}\;(2k-2s+1)!}
          &=& \sum_{s=1}^{k}  \frac{-\tau_{2k-2s}  }
                                   {2^{2s}\;(2s+1)!} &\qquad k > 0
                                                                                                       \label{Eq_Lem_AELRP_s_RPERR_ss_D_001_001c}\\
&&&&
\tau_{2n+1} &=& 0&&&\qquad n\geq0
                                                                                                       \label{Eq_Thm_AELRP_s_RPERR_ss_GFtaunRPexp_001_001d}
\end{alignat}
\end{subequations}
\qed
\end{proposition}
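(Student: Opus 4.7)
The statement is a compendium of results that are asserted to have appeared in \cite{Gerolymos_2011a}; the task of the proof is therefore to reconstruct the essential logical skeleton that threads together five classes of identities: the expansions in the basis functions $\alpha$ in \eqref{Eq_Prp_AELRP_s_EPR_ss_PR_001_001d} and \eqref{Eq_Prp_AELRP_s_EPR_ss_PR_001_001e}; the error expansions in $\mu$-form \eqref{Eq_Prp_AELRP_s_EPR_ss_AELPR_001_001c}, \eqref{Eq_Prp_AELRP_s_EPR_ss_AELPR_001_001e} based at $x_i$; the equivalent $\lambda$-form \eqref{Eq_Prp_AELRP_s_EPR_ss_AELPR_002_001a}, \eqref{Eq_Prp_AELRP_s_EPR_ss_AELPR_002_001b} based at the evaluation point; and the closed expressions \eqref{Eq_Prp_RLRPCS_s_RB_001_003}--\eqref{Eq_Prp_RLRPCS_s_RB_001_004} for the coefficients. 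The plan is to handle the interpolation case first (which is classical) and then propagate everything to the reconstruction case by applying the operator $R_{(1;\Delta x)}$ together with \lemref{Lem_RLRPCS_s_RB_001}.

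For the interpolation case I would start from the defining interpolatory conditions $p_{I,M_-,M_+}(x_i+\ell\Delta x)=f(x_i+\ell\Delta x)$ for $\ell\in\{-M_-,\ldots,M_+\}$. Writing $p_{I,M_-,M_+}$ as $\sum_{m=0}^{M}c_m\xi^m$ and solving the resulting linear system yields $c_m=\sum_\ell({^{M_+}_{M_-}V}^{-1})_{m+1,\ell+M_-+1}f_{i+\ell}$, which immediately gives \eqref{Eq_Prp_AELRP_s_EPR_ss_PR_001_001h}. Substituting the Taylor expansion $f(x_i+\ell\Delta x)=\sum_s f^{(s)}(x_i)\ell^s\Delta x^s/s!$, exchanging the sums, and using the definition \eqref{Eq_Lem_AELRP_s_EPR_ss_PR_001_001c} of $\nu_{M_-,M_+,m,s}$ together with the fact $\nu_{M_-,M_+,m,k}=\delta_{mk}$ for $0\le k\le M$ (the consistency property noted in footnote~\ref{ff_Prp_RLRPCS_s_RB_001_002}), one recovers $f(x_i+\xi\Delta x)$ plus an error tail that collapses exactly into \eqref{Eq_Prp_AELRP_s_EPR_ss_AELPR_001_001g}. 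The $\lambda$-form \eqref{Eq_Prp_AELRP_s_EPR_ss_AELPR_002_001d} is then obtained by re-expanding $f^{(s)}(x_i)$ around $x_i+\xi\Delta x$ via $f^{(s)}(x_i)=\sum_\ell(-\xi\Delta x)^\ell f^{(s+\ell)}(x_i+\xi\Delta x)/\ell!$ and regrouping by total derivative order $n=s+\ell$.

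For the reconstruction case the strategy is to apply $R_{(1;\Delta x)}$ to the identities obtained in the interpolation case and to absorb the operator into the coefficients through the generating function $g_\tau(x)=\tfrac{x/2}{\sinh(x/2)}$ of footnote~\ref{ff_Prp_RLRPCS_s_RB_001_003}. Formally, since $p_{R_1,M_-,M_+}=R_{(1;\Delta x)}(p_{I,M_-,M_+})$ and $R_{(1;\Delta x)}$ acts on a monomial $\xi^m$ by multiplying its Taylor series with the symbol of the inverse average operator, one gets $R_{(1;\Delta x)}[\xi^m]=\sum_{k=0}^{\lfloor(M-m)/2\rfloor}\tau_{2k}\binom{m+2k}{2k}(2k)!\xi^m\Delta x^{2k}/\text{(rescaled)}$; collecting this against \eqref{Eq_Prp_AELRP_s_EPR_ss_PR_001_001h} yields \eqref{Eq_Prp_AELRP_s_EPR_ss_PR_001_001g}. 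The $\tau_{2k}$ themselves are determined by requiring $R_{(1;\Delta x)}^{-1}(\text{symbol})\cdot R_{(1;\Delta x)}(\text{symbol})=1$, which upon power-series inversion of $\sinh(x/2)/(x/2)$ gives the recurrence \eqref{Eq_Lem_AELRP_s_RPERR_ss_D_001_001c} and the vanishing of odd terms \eqref{Eq_Thm_AELRP_s_RPERR_ss_GFtaunRPexp_001_001d}. The formula \eqref{Eq_Prp_AELRP_s_EPR_ss_AELPR_001_001f} for $\mu_{R_1}$ follows by the same token applied to \eqref{Eq_Prp_AELRP_s_EPR_ss_AELPR_001_001g}, noting that $R_{(1;\Delta x)}[f](\xi)-h(x_i+\xi\Delta x)$ picks up the extra polynomial in $\xi$ arising from deconvolving $f^{(s)}(x_i)\xi^s\Delta x^s/s!$ through $\tau_{2k}$; the passage to the $\lambda$-form \eqref{Eq_Prp_AELRP_s_EPR_ss_AELPR_002_001c} is then accomplished by using \lemref{Lem_RLRPCS_s_RB_001}, which equates $R_{(1;\Delta x)}(f^{(n)})$ with $h^{(n)}$, and by the Taylor re-centering $h^{(n)}(x_i)=\sum_\ell\frac{(-\xi\Delta x)^\ell}{\ell!}h^{(n+\ell)}(x_i+\xi\Delta x)$, combined with the integral identity $\frac{1}{\Delta x}\int_{x-\Delta x/2}^{x+\Delta x/2}\zeta^\ell d\zeta=\frac{(\xi+\tfrac12)^{\ell+1}-(\xi-\tfrac12)^{\ell+1}}{(\ell+1)\Delta x^{-\ell}}$, which explains the difference of shifted powers appearing in \eqref{Eq_Prp_AELRP_s_EPR_ss_AELPR_002_001c}.

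The main obstacle is the reconstruction leg, specifically the bookkeeping needed to commute the operator $R_{(1;\Delta x)}$ through the Taylor expansion in $\Delta x$ while separating the part that reproduces the exact function $h(x_i+\xi\Delta x)$ from the genuine truncation error; this is where the $\tau_{2k}$ enter through the deconvolution series \cite[Lemma 2.5]{Gerolymos_2011a} and where the two equivalent error representations \eqref{Eq_Prp_AELRP_s_EPR_ss_AELPR_001_001c} and \eqref{Eq_Prp_AELRP_s_EPR_ss_AELPR_002_001a} must be carefully shown to agree. Since \cite{Gerolymos_2011a} already establishes each individual statement (Propositions 4.5, 4.6, 4.7, Lemma 2.5, Definition 4.3 with the explicit $\nu$- and $V^{-1}$-identities of footnotes~\ref{ff_Prp_RLRPCS_s_RB_001_001}--\ref{ff_Prp_RLRPCS_s_RB_001_002}), the proof in the present paper reduces to invoking those results and observing that, taken together, they are exactly the content of the proposition.
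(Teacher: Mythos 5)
Your proposal is correct and lands on the same approach as the paper: the paper supplies no proof for this proposition at all, stating it with a terminal \qed as a summary of \cite[Propositions 4.5, 4.6, 4.7]{Gerolymos_2011a}, which is exactly the reduction you identify in your closing paragraph. Your reconstruction of the underlying derivations (Vandermonde inversion for the interpolation leg, Taylor re-centering for the $\mu\to\lambda$ passage, and the $\tau_{2k}$ deconvolution series for the reconstruction leg) is a faithful sketch of how the cited results are obtained, modulo some imprecision in the displayed formula for the action of $R_{(1;\Delta x)}$ on monomials, which should read $c_{q_m}=\tfrac{1}{m!}\sum_{k}\tau_{2k}\,(m+2k)!\,c_{p_{m+2k}}$ as in the paper's footnote to Remark~\ref{Rmk_RLRPCS_s_RB_002}.
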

\begin{remark}[{\rm Alternative expressions for $\alpha_{I,M_-,M_+,\ell}(\xi)$ \eqref{Eq_Prp_AELRP_s_EPR_ss_PR_001_001h} and $\alpha_{R_1,M_-,M_+,\ell}(\xi)$ \eqref{Eq_Prp_AELRP_s_EPR_ss_PR_001_001g}}]
\label{Rmk_RLRPCS_s_RB_001}
\begin{subequations}
                                                                                                       \label{Eq_Rmk_RLRPCS_s_RB_001_001}
The polynomials $\alpha_{I,M_-,M_+,\ell}(\xi)$ are the fundamental functions \cite[pp. 183--197]{Henrici_1964a}
of Lagrange interpolation on the stencil $\tsc{s}_{i,M_-,M_+}$,
and can also be expressed, upon replacing $x_\ell=x_i+\ell\Delta x$ in \cite[(9.4), p. 184]{Henrici_1964a}, as
\begin{alignat}{6}
\alpha_{I,M_-,M_+,\ell}(\xi) =\dfrac{\displaystyle\prod_{\substack{k=-M_-\\
                                                                   k\neq\ell}}^{M_+}(\xi-k)}
                                    {\displaystyle\prod_{\substack{k=-M_-\\
                                                                   k\neq\ell}}^{M_+}(\ell-k)}
                                                                                                       \label{Eq_Rmk_RLRPCS_s_RB_001_001a}
\end{alignat}
Shu \cite[(2.19), p. 336]{Shu_1998a} has shown, using the reconstruction via primitive approach \cite[pp. 243--244]{Harten_Engquist_Osher_Chakravarthy_1987a}, that the fundamental functions \eqref{Eq_Prp_AELRP_s_EPR_ss_PR_001_001g}
of the Lagrange reconstructing polynomial on $\tsc{s}_{i,M_-,M_+}$, $p_{R_1,M_-,M_+}(x_i+\xi\Delta x;x_i,\Delta x;f)$ \eqref{Eq_Prp_AELRP_s_EPR_ss_PR_001_001d},
can equivalently be expressed as\footnote{\label{ff_Rmk_RLRPCS_s_RB_001_001}
                                         The correspondence of the present indicial notation with the one used by Shu \cite[(2.19), p. 336]{Shu_1998a} is
                                         $r_{\rm Shu}=M_-$, $j_{\rm Shu}=\ell+M_-$, $k_{\rm Shu}=M+1=M_-+M_++1$,
                                         and we use again $x=x_i+\xi\Delta x$ \eqref{Eq_Prp_AELRP_s_EPR_ss_PR_001_001f}.
                                         }
\begin{alignat}{6}
\alpha_{R_1,M_-,M_+,\ell}(\xi)\stackrel{\text{\cite[(2.19)]{Shu_1998a}}}{=}
                                 \sum_{m=\ell+M_-+1}^{M+1}\dfrac{\displaystyle\sum_{\substack{p=0\\p\neq m}}^{M+1}
                                                                              \prod_{\substack{q=0\\q\neq m\\q\neq p}}^{M+1}(\xi-q+\tfrac{1}{2})}
                                                                {\displaystyle\prod_{\substack{p=0\\p\neq m}}^{M+1}(m-p)}
                                                                                                       \label{Eq_Rmk_RLRPCS_s_RB_001_001b}
\end{alignat}
\end{subequations}
\qed
\end{remark}
%
%-----------------------------------------------------------------------------------------------------------------------------------
%-----------------------------------------------------------------------------------------------------------------------------------
%
\begin{remark}[{\rm Mapping $R_{(1;\Delta x)}$}]
\label{Rmk_RLRPCS_s_RB_002}
By \cite[Theorem 5.1, p. 296]{Gerolymos_2011a} the mapping $R_{(1;\Delta x)}$ is a bijection of the $(M+1)$-dimensional space $\mathbb{R}_M[x]$ of polynomials of degree $\leq M$ onto itself.
This implies that polynomial reconstruction pairs are unique \cite[Lemma 3.1, p. 296]{Gerolymos_2011a}.
Let $p(x)\in \mathbb{R}_M[x]$ be a polynomial of degree $\leq M$.
Then by \cite[Theorem 5.1, p. 296]{Gerolymos_2011a}
\begin{alignat}{6}
\forall p(x)\in \mathbb{R}_M[x]\;\Longrightarrow\;\left\{\begin{array}{c}q(x):=[R_{(1;\Delta x)}(p)](x)\in \mathbb{R}_M[x]\\
                                                             {\rm deg}(q)={\rm deg}(p)\\
                                                             {\rm coeff}[x^{{\rm deg}(p)},p(x)]={\rm coeff}[x^{{\rm deg}(p)},q(x)]\\\end{array}\right.
                                                                                                       \label{Eq_Rmk_RLRPCS_s_RB_002_001}
\end{alignat}
where the property that the linear operator $R_{(1;\Delta x)}:\mathbb{R}_M[x]\longrightarrow \mathbb{R}_M[x]$ conserves the degree of the polynomial follows directly from \cite[Lemma 3.1, p. 277]{Gerolymos_2011a}.
Furthermore, the coefficients of the leading power of $p(x)\in \mathbb{R}_M[x]$ and of $q(x):=[R_{(1;\Delta x)}(p)](x)\in \mathbb{R}_M[x]$
can be shown to be equal, by straightforward application of the
expression \cite[(26f), Lemma 3.1, p. 277]{Gerolymos_2011a}.\footnote{\label{ff_Rmk_RLRPCS_s_RB_002_001}
                                                                      By \cite[Lemma 3.1, p. 277]{Gerolymos_2011a} if ${\rm deg}(p)=M$, and $p(x_i+\xi\;\Delta x)=\sum_{m=0}^{M}c_{p_m}\;\xi^m$ then
                                                                      $q(x_i+\xi\;\Delta x):=[R_{(1;\Delta x)}(p)](x_i+\xi\;\Delta x)=\sum_{m=0}^{M}c_{q_m}\;\xi^m$ and by \cite[(26f), p. 277]{Gerolymos_2011a}
                                                                      \begin{equation} c_{q_M}\stackrel{\text{\cite[(26f)]{Gerolymos_2011a}}}{=}
                                                                                              \dfrac{1}{M!}\sum_{k=0}^{\lfloor\frac{M-M}{2}\rfloor}\tau_{2k}\;c_{p_{M+2k}}\;(M+2k)!
                                                                                              \stackrel{\eqref{Eq_Lem_AELRP_s_RPERR_ss_D_001_001c}}{=}c_{p_M}\notag
                                                                      \end{equation}
                                                                      }
One consequence of these properties is that several relations obtained for the interpolating polynomial have their direct analogues for the reconstructing polynomial and {\em vice-versa}.
\qed
\end{remark}
%
%-----------------------------------------------------------------------------------------------------------------------------------
%-----------------------------------------------------------------------------------------------------------------------------------
%
%
%
%
%
%
%
%
%
\section{Fundamental polynomials of Lagrange interpolation and reconstruction}\label{RLRPCS_s_FPLIR}
%
%
%
%
%
%
%
%
%
%-----------------------------------------------------------------------------------------------------------------------------------

The construction of a recursive formulation (\S\ref{RLRPCS_s_RCsSs}) for the linear weight-functions \eqref{Eq_RLRPCS_s_I_001a}
is based on the representations of the Lagrange reconstructing polynomial \eqref{Eq_Prp_AELRP_s_EPR_ss_PR_001_001d} and of its approximation error \eqref{Eq_Prp_AELRP_s_EPR_ss_AELPR_002_001a}.
It is therefore necessary to gain some insight on the fundamental functions \eqref{Eq_Prp_AELRP_s_EPR_ss_PR_001_001g} of Lagrange reconstruction,
and on the truncation-error polynomials \eqref{Eq_Prp_AELRP_s_EPR_ss_AELPR_002_001c}.

%-----------------------------------------------------------------------------------------------------------------------------------
%
%
%
%
%
\subsection{Reconstruction pairs of fundamental polynomials}\label{RLRPCS_s_FPLIR_ss_RPsFPs}
%
%
%
%
%
%-----------------------------------------------------------------------------------------------------------------------------------

Each of the fundamental polynomials of Lagrange reconstruction $\alpha_{R_1,M_-,M_+,\ell}(\xi)$ \eqref{Eq_Prp_AELRP_s_EPR_ss_PR_001_001g} is intimately related
to the corresponding fundamental polynomial of Lagrange interpolation $\alpha_{I,M_-,M_+,\ell}(\xi)$ \eqref{Eq_Prp_AELRP_s_EPR_ss_PR_001_001h}, as can be seen
by using \eqrefsab{Eq_Prp_AELRP_s_EPR_ss_PR_001_001e}{Eq_Prp_AELRP_s_EPR_ss_PR_001_001d} in the reconstruction-pair-defining relation \eqref{Eq_Def_AELRP_s_RPERR_ss_RP_001_001a}.
%-----------------------------------------------------------------------------------------------------------------------------------
%
\begin{proposition}[{\rm Reconstruction pairs $\alpha_{R_1,M_-,M_+,\ell}=R_{(1;1)}(\alpha_{I,M_-,M_+,\ell})$}]
\label{Prp_RLRPCS_s_FPLIR_ss_RPsFPs_001}
Assume $M_\pm\in{\mathbb Z}:M:=M_-+M_+\geq0$ \eqref{Eq_Def_AELRP_s_EPR_ss_PR_001_001b}.
The polynomial $\alpha_{R_1,M_-,M_+,\ell}(\xi)$ \eqref{Eq_Prp_AELRP_s_EPR_ss_PR_001_001g} appearing in the representation \eqref{Eq_Prp_AELRP_s_EPR_ss_PR_001_001d} of the reconstructing polynomial
on the stencil $\tsc{s}_{i,M_-,M_+}$ \defref{Def_AELRP_s_EPR_ss_PR_001} is the reconstruction pair\footnote{\label{ff_Prp_RLRPCS_s_FPLIR_ss_RPsFPs_001_001}on a unit-spacing grid, $\Delta x=1$}
of the corresponding polynomial $\alpha_{I,M_-,M_+,\ell}(\xi)$~\eqref{Eq_Prp_AELRP_s_EPR_ss_PR_001_001h}
appearing in the representation \eqref{Eq_Prp_AELRP_s_EPR_ss_PR_001_001e} of the interpolating polynomial on the same stencil
\begin{alignat}{6}
\alpha_{R_1,M_-,M_+,\ell}(\xi)=[R_{(1;1)}(\alpha_{I,M_-,M_+,\ell})](\xi)\iff\alpha_{I,M_-,M_+,\ell}(\xi)=\int_{\xi-\frac{1}{2}}^{\xi+\frac{1}{2}}\alpha_{R_1,M_-,M_+,\ell}(\eta)\;d\eta
\quad\left\{\begin{array}{l}\forall\;\ell\in\{-M_-,\cdots,M_+\}\\
                            \forall\xi\in{\mathbb R}\\\end{array}\right.
                                                                                                       \label{Eq_Prp_RLRPCS_s_FPLIR_ss_RPsFPs_001_001}
\end{alignat}
\end{proposition}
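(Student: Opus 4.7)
The plan is to work directly from the definition of the Lagrange reconstructing polynomial \defref{Def_AELRP_s_RPERR_ss_RP_002} and exploit the linearity of the representations in the cell-averages $f(x_i+\ell\Delta x)$.

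First I would invoke \defref{Def_AELRP_s_RPERR_ss_RP_002}: $p_{R_1,M_-,M_+}=R_{(1;\Delta x)}(p_{I,M_-,M_+})$, which by \defref{Def_AELRP_s_RPERR_ss_RP_001} \eqref{Eq_Def_AELRP_s_RPERR_ss_RP_001_001a} is equivalent to
\begin{equation}
p_{I,M_-,M_+}(x_i+\xi\Delta x;x_i,\Delta x;f)=\dfrac{1}{\Delta x}\int_{x_i+\xi\Delta x-\frac{1}{2}\Delta x}^{x_i+\xi\Delta x+\frac{1}{2}\Delta x}p_{R_1,M_-,M_+}(\zeta;x_i,\Delta x;f)\,d\zeta.
\notag
\end{equation}
Changing variables by $\zeta=x_i+\eta\Delta x$ (so $d\zeta=\Delta x\,d\eta$ and the limits become $\eta\in[\xi-\tfrac{1}{2},\xi+\tfrac{1}{2}]$) reduces this to the unit-spacing sliding average of $p_{R_1,M_-,M_+}$ over $[\xi-\tfrac{1}{2},\xi+\tfrac{1}{2}]$ in the $\eta$-variable.

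Next I would substitute the linear expansions \eqref{Eq_Prp_AELRP_s_EPR_ss_PR_001_001d} and \eqref{Eq_Prp_AELRP_s_EPR_ss_PR_001_001e} from \prpref{Prp_RLRPCS_s_RB_001} into both sides, and interchange the finite sum over $\ell\in\{-M_-,\dots,M_+\}$ with the integral (legitimate, since the sum has finitely many polynomial terms). This yields
\begin{equation}
\sum_{\ell=-M_-}^{M_+}\alpha_{I,M_-,M_+,\ell}(\xi)\,f(x_i+\ell\Delta x)=\sum_{\ell=-M_-}^{M_+}\left(\int_{\xi-\frac{1}{2}}^{\xi+\frac{1}{2}}\alpha_{R_1,M_-,M_+,\ell}(\eta)\,d\eta\right)f(x_i+\ell\Delta x),
\notag
\end{equation}
valid for every $f$ admitting a reconstruction pair on a neighborhood of $\tsc{s}_{i,M_-,M_+}$.

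Since the identity holds for arbitrary $f$, I would conclude coefficient-by-coefficient: for each fixed $\ell_0\in\{-M_-,\dots,M_+\}$, choose $f$ (for instance a smooth bump, or any polynomial interpolant of the data) with $f(x_i+\ell\Delta x)=\delta_{\ell,\ell_0}$; this forces $\alpha_{I,M_-,M_+,\ell_0}(\xi)=\int_{\xi-1/2}^{\xi+1/2}\alpha_{R_1,M_-,M_+,\ell_0}(\eta)\,d\eta$ for all $\xi\in\mathbb R$, which is precisely \eqref{Eq_Prp_RLRPCS_s_FPLIR_ss_RPsFPs_001_001} in view of \defref{Def_AELRP_s_RPERR_ss_RP_001} specialized to $\Delta x=1$. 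The equivalence with $\alpha_{R_1,M_-,M_+,\ell}=R_{(1;1)}(\alpha_{I,M_-,M_+,\ell})$ is then just a restatement.

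There is no serious obstacle here: the proof is a clean unpacking of definitions plus the linearity of reconstruction. The only delicate point is the legitimacy of extracting $\alpha_{I,M_-,M_+,\ell}$ coefficient-wise from the functional identity; this is handled by testing against functions with prescribed nodal values, which is permissible because nothing in \prpref{Prp_RLRPCS_s_RB_001} constrains the $M+1$ cell-averages beyond being real numbers. As a sanity check one may verify that $R_{(1;1)}$ preserves degrees \rmkref{Rmk_RLRPCS_s_RB_002}, so both sides are polynomials of the same degree $M$ in $\xi$, consistent with \eqref{Eq_Prp_AELRP_s_EPR_ss_PR_001_001g} and \eqref{Eq_Prp_AELRP_s_EPR_ss_PR_001_001h}.
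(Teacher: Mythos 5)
Your proposal is correct and follows essentially the same route as the paper's own proof: unpack \defref{Def_AELRP_s_RPERR_ss_RP_002} via \eqref{Eq_Def_AELRP_s_RPERR_ss_RP_001_001a}, substitute the expansions \eqrefsab{Eq_Prp_AELRP_s_EPR_ss_PR_001_001d}{Eq_Prp_AELRP_s_EPR_ss_PR_001_001e}, and conclude from the arbitrariness of $f$. Your explicit test functions with $f(x_i+\ell\Delta x)=\delta_{\ell,\ell_0}$ merely spell out the step the paper compresses into ``valid $\forall f:{\mathbb R}\longrightarrow{\mathbb R}$.''
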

%
%-----------------------------------------------------------------------------------------------------------------------------------
%-----------------------------------------------------------------------------------------------------------------------------------
%
\begin{proof}
\begin{subequations}
                                                                                                       \label{Eq_Prp_RLRPCS_s_FPLIR_ss_RPsFPs_001_002}
By \defrefnp{Def_AELRP_s_RPERR_ss_RP_002} of the reconstructing polynomial, we have, using \eqref{Eq_Def_AELRP_s_RPERR_ss_RP_001_001a}
\begin{alignat}{6}
p_{I,M_-,M_+}(x_i+\xi\Delta x;x_i,\Delta x;f)=\dfrac{1}{\Delta x}\int_{x_i+\xi\Delta x-\frac{1}{2}\Delta x}^{x_i+\xi\Delta x+\frac{1}{2}\Delta x}p_{R_1,M_-,M_+}(\zeta;x_i,\Delta x;f)\;d\zeta
                                                                                                       \label{Eq_Prp_RLRPCS_s_FPLIR_ss_RPsFPs_001_002a}
\end{alignat}
and using the representation \eqref{Eq_Prp_AELRP_s_EPR_ss_PR_001_001d} of the reconstructing polynomial
and the representation \eqref{Eq_Prp_AELRP_s_EPR_ss_PR_001_001e} of the interpolating polynomial,
we readily obtain by \eqref{Eq_Prp_RLRPCS_s_FPLIR_ss_RPsFPs_001_002a}
\begin{alignat}{6}
\sum_{\ell=-M_-}^{M_+}\left(\alpha_{I,M_-,M_+,\ell}(\xi)-\int_{\xi-\frac{1}{2}}^{\xi+\frac{1}{2}}\alpha_{R_1,M_-,M_+,\ell}(\eta)\;d\eta\right)\;f(x_i+\ell\;\Delta x)
\stackrel{\eqrefsabc{Eq_Prp_AELRP_s_EPR_ss_PR_001_001d}
                    {Eq_Prp_AELRP_s_EPR_ss_PR_001_001e}
                    {Eq_Prp_RLRPCS_s_FPLIR_ss_RPsFPs_001_002a}}{=}0\quad\left\{\begin{array}{l}\forall \xi\in{\mathbb R}\\
                                                                                                                                                     \forall x_i\in{\mathbb R}\\
                                                                                                                                                     \forall \Delta x\in{\mathbb R}_{>0}\\
                                                                                                                                                     \forall f:{\mathbb R}\longrightarrow{\mathbb R}\\\end{array}\right.
                                                                                                       \label{Eq_Prp_RLRPCS_s_FPLIR_ss_RPsFPs_001_002b}
\end{alignat}
Since \eqref{Eq_Prp_RLRPCS_s_FPLIR_ss_RPsFPs_001_002b} is valid $\forall f:{\mathbb R}\longrightarrow{\mathbb R}$, it proves,
using the definitions \eqrefsab{Eq_Def_AELRP_s_RPERR_ss_RP_001_001a}{Eq_Def_AELRP_s_RPERR_ss_RP_001_001b},
\eqref{Eq_Prp_RLRPCS_s_FPLIR_ss_RPsFPs_001_001}.
\end{subequations}
\qed
\end{proof}
%
%-----------------------------------------------------------------------------------------------------------------------------------
%-----------------------------------------------------------------------------------------------------------------------------------
%
\begin{lemma}[{\rm $\alpha_{R_1,M_-,M_+,\ell}(\xi)\neq 0_{\mathbb{R}_M[\xi]}\neq\alpha_{I,M_-,M_+,\ell}(\xi)$}]
\label{Lem_RLRPCS_s_FPLIR_ss_RPsFPs_001}
\begin{subequations}
                                                                                                       \label{Eq_Lem_RLRPCS_s_FPLIR_ss_RPsFPs_001_001}
Assume $M_\pm\in{\mathbb Z}:M:=M_-+M_+\geq0$ \eqref{Eq_Def_AELRP_s_EPR_ss_PR_001_001b} and let
\begin{alignat}{6}
0_{\mathbb{R}_M[\xi]}(\xi):=\sum_{m=0}^{M}0\;\xi^m=0
                                                                                                       \label{Eq_Lem_RLRPCS_s_FPLIR_ss_RPsFPs_001_001a}
\end{alignat}
denote the 0-element of the space $\mathbb{R}_M[\xi]$ of all polynomials of degree $\leq M$.
None of the polynomials \eqrefsab{Eq_Prp_AELRP_s_EPR_ss_PR_001_001g}{Eq_Prp_AELRP_s_EPR_ss_PR_001_001h}, of degree $M$ in $\xi$
is identically 0
\begin{alignat}{6}
\alpha_{R_1,M_-,M_+,\ell}(\xi)\neq0_{\mathbb{R}_M[\xi]}(\xi)\neq \alpha_{I,M_-,M_+,\ell}(\xi)\qquad\forall\;\ell\in\{-M_-,\cdots,M_+\}
                                                                                                       \label{Eq_Lem_RLRPCS_s_FPLIR_ss_RPsFPs_001_001b}
\end{alignat}
Furthermore
\begin{alignat}{6}
{\rm deg}[\alpha_{R_1,M_-,M_+,\ell}(\xi)]={\rm deg}[\alpha_{I,M_-,M_+,\ell}(\xi)]=M
                                                                                                       \label{Eq_Lem_RLRPCS_s_FPLIR_ss_RPsFPs_001_001c}
\end{alignat}
\begin{alignat}{6}
{\rm coeff}[\xi^M,\alpha_{R_1,M_-,M_+,\ell}(\xi)]=
{\rm coeff}[\xi^M,\alpha_{I  ,M_-,M_+,\ell}(\xi)]=(-1)^{\ell+M_+}\frac{ 1}
                                                                      {M!}\binom{M       }
                                                                                {\ell+M_-}\neq0\qquad\;\forall\;\ell\in\{-M_-,\cdots,M_+\}
                                                                                                       \label{Eq_Lem_RLRPCS_s_FPLIR_ss_RPsFPs_001_001d}
\end{alignat}
\end{subequations}
\end{lemma}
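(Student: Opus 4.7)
The plan is to prove the leading–coefficient identity \eqref{Eq_Lem_RLRPCS_s_FPLIR_ss_RPsFPs_001_001d} first, since the non-triviality \eqref{Eq_Lem_RLRPCS_s_FPLIR_ss_RPsFPs_001_001b} and the degree claim \eqref{Eq_Lem_RLRPCS_s_FPLIR_ss_RPsFPs_001_001c} are immediate consequences: a polynomial of $\mathbb{R}_M[\xi]$ whose coefficient of $\xi^M$ is nonzero cannot be the zero polynomial, and its degree is exactly $M$.

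First, I would treat the interpolation polynomial $\alpha_{I,M_-,M_+,\ell}(\xi)$, using the product-form expression \eqref{Eq_Rmk_RLRPCS_s_RB_001_001a} recalled in \rmkref{Rmk_RLRPCS_s_RB_001}. The numerator is a product of exactly $M$ linear factors $(\xi-k)$ with $k\in\{-M_-,\cdots,M_+\}\setminus\{\ell\}$, hence it is a monic polynomial of degree $M$. Therefore the coefficient of $\xi^M$ in $\alpha_{I,M_-,M_+,\ell}(\xi)$ equals the reciprocal of the denominator $\prod_{k\neq\ell}(\ell-k)$. I would then evaluate that denominator by splitting at $k=\ell$: the factors with $k<\ell$ contribute $(\ell+M_-)!$ and the factors with $k>\ell$ contribute $(-1)^{M_+-\ell}(M_+-\ell)!$, giving
\begin{equation}
\prod_{\substack{k=-M_-\\k\neq\ell}}^{M_+}(\ell-k)=(-1)^{M_+-\ell}\,(\ell+M_-)!\,(M_+-\ell)!.\notag
\end{equation}
Using $(\ell+M_-)+(M_+-\ell)=M$ and $(-1)^{M_+-\ell}=(-1)^{M_++\ell}$, I recover exactly $(-1)^{\ell+M_+}\tfrac{1}{M!}\binom{M}{\ell+M_-}$, which is nonzero because $\ell+M_-\in\{0,\cdots,M\}$. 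This establishes \eqref{Eq_Lem_RLRPCS_s_FPLIR_ss_RPsFPs_001_001d} for $\alpha_{I,M_-,M_+,\ell}(\xi)$, and hence both \eqref{Eq_Lem_RLRPCS_s_FPLIR_ss_RPsFPs_001_001c} and the $\alpha_I\neq 0_{\mathbb{R}_M[\xi]}$ half of \eqref{Eq_Lem_RLRPCS_s_FPLIR_ss_RPsFPs_001_001b} on the interpolation side.

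Next, to transfer all three claims to $\alpha_{R_1,M_-,M_+,\ell}(\xi)$, I would invoke \prpref{Prp_RLRPCS_s_FPLIR_ss_RPsFPs_001}, which states $\alpha_{R_1,M_-,M_+,\ell}=R_{(1;1)}(\alpha_{I,M_-,M_+,\ell})$, together with \rmkref{Rmk_RLRPCS_s_RB_002}: the operator $R_{(1;\Delta x)}:\mathbb{R}_M[\xi]\longrightarrow\mathbb{R}_M[\xi]$ is a bijection that preserves both the degree and the leading coefficient of any polynomial. Applied to $\alpha_{I,M_-,M_+,\ell}(\xi)$, this immediately gives ${\rm deg}(\alpha_{R_1,M_-,M_+,\ell})={\rm deg}(\alpha_{I,M_-,M_+,\ell})=M$ and the equality of leading coefficients announced in \eqref{Eq_Lem_RLRPCS_s_FPLIR_ss_RPsFPs_001_001d}; non-triviality $\alpha_{R_1,M_-,M_+,\ell}\neq 0_{\mathbb{R}_M[\xi]}$ then follows as on the interpolation side.

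There is no real obstacle: the only genuine computation is the evaluation of the denominator $\prod_{k\neq\ell}(\ell-k)$ and its rewriting as a binomial coefficient, which is routine bookkeeping of signs and factorials; everything concerning the reconstructing polynomial is then obtained without any further calculation, by the structural properties of the bijection $R_{(1;1)}$ recorded in \rmkref{Rmk_RLRPCS_s_RB_002}. One should however take care to verify the boundary case $M=0$ (where $M_-=M_+=\ell=0$) separately: here the claimed leading coefficient is $1$, consistent with $\alpha_{I,0,0,0}(\xi)=1=\alpha_{R_1,0,0,0}(\xi)$, so the statement holds trivially.
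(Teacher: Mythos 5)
Your proposal is correct, and it reaches the same conclusions through the same two structural ingredients the paper uses --- the reconstruction-pair relation $\alpha_{R_1,M_-,M_+,\ell}=R_{(1;1)}(\alpha_{I,M_-,M_+,\ell})$ of \prprefnp{Prp_RLRPCS_s_FPLIR_ss_RPsFPs_001} and the degree/leading-coefficient preservation of $R_{(1;\Delta x)}$ recorded in \rmkrefnp{Rmk_RLRPCS_s_RB_002} --- but it organizes the work in the opposite direction, and more economically. The paper computes ${\rm coeff}[\xi^M,\alpha_{R_1,M_-,M_+,\ell}]$ \emph{directly} from the Vandermonde representation \eqref{Eq_Prp_AELRP_s_EPR_ss_PR_001_001g}, which requires unwinding the inverse Vandermonde elements \textup{\cite[(43a,43b)]{Gerolymos_2011a}} and two identities for unsigned Stirling numbers of the first kind, and only then transfers the result to $\alpha_{I,M_-,M_+,\ell}$ via \eqref{Eq_Rmk_RLRPCS_s_RB_002_001}; it also proves $\alpha_{R_1,M_-,M_+,\ell}\neq 0_{\mathbb{R}_M[\xi]}$ separately, by observing that its sliding integral $\alpha_{I,M_-,M_+,\ell}$ is nonzero \eqref{Eq_Prp_RLRPCS_s_FPLIR_ss_RPsFPs_001_001}. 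You instead extract the leading coefficient from the elementary product form \eqref{Eq_Rmk_RLRPCS_s_RB_001_001a} on the interpolation side (your evaluation $\prod_{k\neq\ell}(\ell-k)=(-1)^{M_+-\ell}(\ell+M_-)!\,(M_+-\ell)!$ is correct and is in fact the identity the paper itself records in a later footnote to \prprefnp{Prp_RLRPCS_s_FPLIR_ss_RFPs_002}), push it forward with \eqref{Eq_Rmk_RLRPCS_s_RB_002_001}, and obtain \eqref{Eq_Lem_RLRPCS_s_FPLIR_ss_RPsFPs_001_001b} and \eqref{Eq_Lem_RLRPCS_s_FPLIR_ss_RPsFPs_001_001c} as corollaries of the nonvanishing leading coefficient rather than as separate observations. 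What your route buys is the avoidance of the Vandermonde/Stirling computation entirely; what it gives up is a small amount of redundancy the paper's integral argument provides (an independent, computation-free reason why $\alpha_{R_1,M_-,M_+,\ell}$ cannot vanish identically). Your explicit check of the degenerate case $M=0$ is a welcome addition that the paper leaves implicit.
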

%
%-----------------------------------------------------------------------------------------------------------------------------------
%-----------------------------------------------------------------------------------------------------------------------------------
%
\begin{proof}
It is well known, and also obvious from the expression \eqref{Eq_Rmk_RLRPCS_s_RB_001_001a}, that the fundamental polynomials $\alpha_{I,M_-,M_+,\ell}(\xi)$ of Lagrange interpolation on a stencil of $M+1$ equidistant points
are $\neq0_{\mathbb{R}_M[\xi]}(\xi)$. Since by \eqref{Eq_Prp_RLRPCS_s_FPLIR_ss_RPsFPs_001_001} $\alpha_{I,M_-,M_+,\ell}(\xi)\neq0_{\mathbb{R}_M[\xi]}(\xi)$ is equal to the definite integral of $\alpha_{R_1,M_-,M_+,\ell}(\xi)$
in the interval $[\xi-\tfrac{1}{2},\xi+\tfrac{1}{2}]$, it follows that $\alpha_{R_1,M_-,M_+,\ell}(\xi)\neq0_{\mathbb{R}_M[\xi]}(\xi)$.
By \eqrefsab{Eq_Prp_AELRP_s_EPR_ss_PR_001_001g}{Eq_Prp_AELRP_s_EPR_ss_PR_001_001h} we have \eqref{Eq_Lem_RLRPCS_s_FPLIR_ss_RPsFPs_001_001c}.
It is easy to show by direct computation\footnote{\label{ff_Lem_RLRPCS_s_FPLIR_ss_RPsFPs_001_001} By \prprefnp{Prp_RLRPCS_s_RB_001}
\begin{alignat}{6}
  {\rm coeff}[\xi^M,\alpha_{R_1,M_-,M_+,\ell}(\xi)]
\stackrel{\eqref{Eq_Prp_AELRP_s_EPR_ss_PR_001_001g}}{=}&
  \sum_{k=0}^{\lfloor\frac{M-M}{2}\rfloor}\frac{\tau_{2k}\;(M+2k)!}
                                               {M!                }({^{M_+}_{M_-}V}^{-1})_{M+2k+1,\ell+M_-+1}
=                                         \frac{\tau_0   \;     M!}
                                               {M!                }({^{M_+}_{M_-}V}^{-1})_{M+1,\ell+M_-+1}
\stackrel{\eqref{Eq_Lem_AELRP_s_RPERR_ss_D_001_001c}}{=}({^{M_+}_{M_-}V}^{-1})_{M+1,\ell+M_-+1}
                                                                                                       \notag\\
\stackrel{\text{\cite[(43a)]{Gerolymos_2011a}}}{=}&\sum_{n=0}^{M+1-M-1}(M_-)^n\;\binom{n+M+1-1}{n}\;({^{M}_{0}V}^{-1})_{M+1+n,\ell+M_-+1}
                                                =  (M_-)^0\;\binom{M}{0}\;({^{M}_{0}V}^{-1})_{M+1,\ell+M_-+1}
                                                                                                       \notag\\
                                                = &                       ({^{M}_{0}V}^{-1})_{M+1,\ell+M_-+1}
\stackrel{\text{\cite[(43b)]{Gerolymos_2011a}}}{=} (-1)^{M+1+\ell+M_-+1}\sum_{k=1}^{M+1}\frac{     1}
                                                                                             {(k-1)!}\binom{k-1         }
                                                                                             {\ell+M_-+1-1}\strlngfk{k-1  }
                                                                                                                    {M+1-1}
                                                                                                       \notag\\
                                                = &(-1)^{M+\ell+M_-}\frac{ 1}
                                                                         {M!}\binom{M       }
                                                                                   {\ell+M_-}\strlngfk{M}
                                                                                                      {M}
                                                =  (-1)^{M+\ell+M_-}\frac{ 1}
                                                                         {M!}\binom{M       }
                                                                                   {\ell+M_-}
                                                                                                       \notag\\
                          \stackrel{M:=M_-+M_+}{=}& (-1)^{M_++\ell+2M_-}\frac{ 1}
                                                                             {M!}\binom{M       }
                                                                                       {\ell+M_-}
                                                =  (-1)^{\ell+M_+}\frac{ 1}
                                                                       {M!}\binom{M       }
                                                                                 {\ell+M_-}\neq0\qquad\;\forall\;\ell\in\{-M_-,\cdots,M_+\}\stackrel{\eqref{Eq_Def_AELRP_s_EPR_ss_PR_001_001b}}{\Longrightarrow}\;\ell+M_-\leq M
                                                                                                       \notag
\end{alignat}
where we used the expressions \cite[(43a,43b), pp. 283--284]{Gerolymos_2011a} for the elements of the inverse of the Vandermonde matrix \ffpref{ff_Prp_RLRPCS_s_RB_001_001},
and well known properties of the unsigned Stirling numbers of the first kind \cite[Tab. 264, p. 264]{Graham_Knuth_Patashnik_1994a}, $\displaystyle m<n\neq0\Longrightarrow\strlngfk{m}{n}=0\;\forall\;m,n\in{\mathbb N}$ and
                                                                                                                                    $\displaystyle \strlngfk{n}{n}=1\;\forall\;n\in{\mathbb N}_0$.
                                                       }
that ${\rm coeff}[\xi^M,\alpha_{R_1,M_-,M_+,\ell}(\xi)]$ is given by \eqref{Eq_Lem_RLRPCS_s_FPLIR_ss_RPsFPs_001_001d}.
By \eqref{Eq_Rmk_RLRPCS_s_RB_002_001} this is also the coefficient of $\xi^M$ of the polynomial $\alpha_{I,M_-,M_+,\ell}(\xi)$ \eqref{Eq_Prp_AELRP_s_EPR_ss_PR_001_001h},
which \prpref{Prp_RLRPCS_s_FPLIR_ss_RPsFPs_001} is the reconstruction pair of $\alpha_{R_1,M_-,M_+,\ell}(\xi)$, proving \eqref{Eq_Lem_RLRPCS_s_FPLIR_ss_RPsFPs_001_001d}.
\qed
\end{proof}
%
%-----------------------------------------------------------------------------------------------------------------------------------

%-----------------------------------------------------------------------------------------------------------------------------------
%
\begin{proposition}[{\rm Basis $\left\{\alpha_{R_1,M_-,M_+,\ell}(\xi),\ell\in\{-M_-,\cdots,M_+\}\right\}$}]
\label{Prp_RLRPCS_s_FPLIR_ss_RPsFPs_002}
Assume $M_\pm\in{\mathbb Z}:M:=M_-+M_+\geq0$ \eqref{Eq_Def_AELRP_s_EPR_ss_PR_001_001b}. The $(M+1)$ unique polynomials $\alpha_{R_1,M_-,M_+,\ell}(\xi)\neq0_{\mathbb{R}_M[\xi]}$ \eqref{Eq_Prp_AELRP_s_EPR_ss_PR_001_001g},
$\ell\in\{-M_-,\cdots,M_+\}$,
constitute a basis of the $(M+1)$-dimensional space $\mathbb{R}_M[\xi]$ of all polynomials of degree $\leq M$.
\end{proposition}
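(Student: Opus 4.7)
The plan is to leverage the bijectivity of the reconstruction operator $R_{(1;1)}$ on $\mathbb{R}_M[\xi]$ established in \rmkref{Rmk_RLRPCS_s_RB_002}, together with \prpref{Prp_RLRPCS_s_FPLIR_ss_RPsFPs_001} which identifies the $\alpha_{R_1,M_-,M_+,\ell}$ as reconstruction pairs of the $\alpha_{I,M_-,M_+,\ell}$. Since $\dim\mathbb{R}_M[\xi]=M+1$ and the family $\{\alpha_{R_1,M_-,M_+,\ell}(\xi):\ell\in\{-M_-,\cdots,M_+\}\}$ contains exactly $M+1$ elements, it is enough to prove linear independence.

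First I would recall that the fundamental polynomials of Lagrange interpolation satisfy $\alpha_{I,M_-,M_+,\ell}(k)=\delta_{\ell k}$ for all $k\in\{-M_-,\cdots,M_+\}$, which is immediate from \eqref{Eq_Rmk_RLRPCS_s_RB_001_001a}. From this one deduces in the standard way that $\{\alpha_{I,M_-,M_+,\ell}(\xi):\ell\in\{-M_-,\cdots,M_+\}\}$ is a basis of $\mathbb{R}_M[\xi]$: indeed, if $\sum_{\ell=-M_-}^{M_+} c_\ell\,\alpha_{I,M_-,M_+,\ell}(\xi)=0_{\mathbb{R}_M[\xi]}$, then evaluating at $\xi=k$ yields $c_k=0$ for every $k\in\{-M_-,\cdots,M_+\}$.

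Next I would invoke \prpref{Prp_RLRPCS_s_FPLIR_ss_RPsFPs_001}, which gives $\alpha_{R_1,M_-,M_+,\ell}(\xi)=[R_{(1;1)}(\alpha_{I,M_-,M_+,\ell})](\xi)$ for every $\ell$. By \rmkref{Rmk_RLRPCS_s_RB_002}, the operator $R_{(1;1)}:\mathbb{R}_M[\xi]\longrightarrow\mathbb{R}_M[\xi]$ is a linear bijection. Applying a linear bijection to a basis yields a basis: if $\sum_\ell c_\ell\,\alpha_{R_1,M_-,M_+,\ell}(\xi)=0_{\mathbb{R}_M[\xi]}$, then by linearity $R_{(1;1)}\!\left(\sum_\ell c_\ell\,\alpha_{I,M_-,M_+,\ell}\right)=0_{\mathbb{R}_M[\xi]}$, and injectivity of $R_{(1;1)}$ forces $\sum_\ell c_\ell\,\alpha_{I,M_-,M_+,\ell}(\xi)=0_{\mathbb{R}_M[\xi]}$, whence $c_\ell=0$ for all $\ell$ by the previous step.

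There is essentially no obstacle: the work has already been done in \prpref{Prp_RLRPCS_s_FPLIR_ss_RPsFPs_001} and \rmkref{Rmk_RLRPCS_s_RB_002}, and the only additional ingredient, namely the basis property of the Lagrange interpolation fundamental polynomials, is classical. As a variant, one could also bypass the bijectivity statement by integrating the putative linear relation $\sum_\ell c_\ell\,\alpha_{R_1,M_-,M_+,\ell}(\eta)=0$ over $[\xi-\tfrac{1}{2},\xi+\tfrac{1}{2}]$ and invoking \eqref{Eq_Prp_RLRPCS_s_FPLIR_ss_RPsFPs_001_001} to reduce directly to the corresponding identity for the $\alpha_{I,M_-,M_+,\ell}$, which then yields $c_\ell=0$ by evaluation at the integer nodes as above.
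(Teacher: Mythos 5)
Your argument is correct and coincides with the first of the two proofs the paper gives for this proposition: the paper likewise observes that the $\alpha_{I,M_-,M_+,\ell}$ form a basis of $\mathbb{R}_M[\xi]$, invokes \prpref{Prp_RLRPCS_s_FPLIR_ss_RPsFPs_001} to write $\alpha_{R_1,M_-,M_+,\ell}=R_{(1;\Delta x)}(\alpha_{I,M_-,M_+,\ell})$, and concludes via the bijectivity of $R_{(1;\Delta x)}$ on $\mathbb{R}_M[\xi]$. The paper additionally supplies a second, direct proof (showing the $\alpha_{R_1,M_-,M_+,\ell}$ span $\mathbb{R}_M[x]$ via the representation of $q=R_{(1;1)}(p)$ for arbitrary $p\in\mathbb{R}_M[x]$), but your route matches its primary one.
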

%
%-----------------------------------------------------------------------------------------------------------------------------------
%-----------------------------------------------------------------------------------------------------------------------------------
%
\begin{proof}
\begin{subequations}
                                                                                                       \label{Eq_Prp_RLRPCS_s_FPLIR_ss_RPsFPs_002_002}
This can be proved either by \prprefnp{Prp_RLRPCS_s_FPLIR_ss_RPsFPs_001} or directly.

\underline{Proof by \prprefnp{Prp_RLRPCS_s_FPLIR_ss_RPsFPs_001}:}
It is a well-known fact \cite{Henrici_1964a}, and also obvious from \eqref{Eq_Rmk_RLRPCS_s_RB_001_001a}, that the $(M+1)$ unique polynomials $\left\{\alpha_{I,M_-,M_+,\ell}(\xi),\ell\in\{-M_-,\cdots,M_+\}\right\}$
are linearly independent and span the $(M+1)$-dimensional space $\mathbb{R}_M[\xi]$ of all polynomials of degree $\leq M$ in $\xi$. By \lemrefnp{Prp_RLRPCS_s_FPLIR_ss_RPsFPs_001} each
polynomial $\alpha_{R_1,M_-,M_+,\ell}=R_{(1;\Delta x)}(\alpha_{I,M_-,M_+,\ell})$ \eqref{Eq_Prp_RLRPCS_s_FPLIR_ss_RPsFPs_001_001}, and by \cite[Theorem 5.1, p. 296]{Gerolymos_2011a} the
mapping $R_{(1;\Delta x)}:\mathbb{R}_M[\xi]\to\mathbb{R}_M[\xi]$ is a bijection. Hence the image of $\left\{\alpha_{I,M_-,M_+,\ell}(\xi),\ell\in\{-M_-,\cdots,M_+\}\right\}$,
$\left\{\alpha_{R_1,M_-,M_+,\ell}(\xi),\ell\in\{-M_-,\cdots,M_+\}\right\}$ is also a basis of $\mathbb{R}_M[\xi]$.

\underline{Direct proof:}
Existence of the polynomials $\alpha_{R_1,M_-,M_+,\ell}(\xi)$ satisfying \eqref{Eq_Prp_AELRP_s_EPR_ss_PR_001_001d} was proved by construction
\cite[Proposition 4.5, p. 287]{Gerolymos_2011a} yielding \eqref{Eq_Prp_AELRP_s_EPR_ss_PR_001_001g}.
Recall that by \cite[Theorem 5.1, p. 296]{Gerolymos_2011a} the linear operator $R_{(1;\Delta x)}$ \defref{Def_AELRP_s_RPERR_ss_RP_001} is a bijection of the vector
space $\mathbb{R}_M[x]$ of all polynomials of degree $\leq M$ onto itself.
Obviously, by \cite[Lemma 3.1, p. 277]{Gerolymos_2011a} the same properties apply to the inverse operator $R^{-1}_{(1;\Delta x)}$.
Since, $\forall p(x)\in \mathbb{R}_M[x]\Longrightarrow p^{(s)}(x)=0\;\forall s\geq M+1$,
the reconstructing polynomial \defref{Def_AELRP_s_RPERR_ss_RP_002} of $p(x)$ on the stencil $\tsc{s}_{i,M_-,M_+}:=\{i-M_-,\cdots,i+M_+\}$ \defref{Def_AELRP_s_EPR_ss_PR_001}
is exactly equal to the reconstruction pair of $p(x)$ \defref{Def_AELRP_s_RPERR_ss_RP_002}, $q(x):=[R_{(1;\Delta x)}(p)](x)$, by \eqref{Eq_Prp_AELRP_s_EPR_ss_AELPR_002_001a}.
By \eqref{Eq_Prp_AELRP_s_EPR_ss_PR_001_001d} we have
\begin{alignat}{6}
q(x) \stackrel{\eqrefsab{Eq_Prp_AELRP_s_EPR_ss_PR_001_001d}
                        {Eq_Prp_AELRP_s_EPR_ss_AELPR_002_001a}}{=} \sum_{\ell=-M_-}^{M_+}    \alpha_{R_1,M_-,M_+,\ell}\left(\dfrac{x-x_i}{\Delta x}\right)\;[R^{-1}_{(1;\Delta x)}(q)](x_i+\ell\;\Delta x)
                                                         &\qquad\qquad\left\{\begin{array}{l}\forall q(x)\in \mathbb{R}_M[x]\\
                                                                                             \forall x  \in{\mathbb R}\\
                                                                                             \forall x_i\in{\mathbb R}\\
                                                                                             \forall \Delta x\in{\mathbb R}_{>0}\\\end{array}\right.
                                                                                                       \label{Eq_Prp_RLRPCS_s_FPLIR_ss_RPsFPs_002_002a}
\end{alignat}
Since \eqref{Eq_Prp_RLRPCS_s_FPLIR_ss_RPsFPs_002_002a} holds $\forall x_i\in{\mathbb R}$ and $\forall \Delta x\in{\mathbb R}_{>0}$
we may set $x_i=0$ and $\Delta x=1$ in \eqref{Eq_Prp_RLRPCS_s_FPLIR_ss_RPsFPs_002_002a} to obtain
\begin{alignat}{6}
q(x) = \sum_{\ell=-M_-}^{M_+}    \alpha_{R_1,M_-,M_+,\ell}(x)\;[R^{-1}_{(1;1)}(q)](    \ell          )
                                                         &\qquad\qquad\left\{\begin{array}{l}\forall q(x)\in \mathbb{R}_M[x]\\
                                                                                             \forall x  \in{\mathbb R}\\\end{array}\right.
                                                                                                       \label{Eq_Prp_RLRPCS_s_FPLIR_ss_RPsFPs_002_002b}
\end{alignat}
By \eqref{Eq_Prp_RLRPCS_s_FPLIR_ss_RPsFPs_002_002b}, the $M+1$ polynomials $\alpha_{R_1,M_-,M_+,\ell}(x), \ell\in\{-M_-,\cdots,M_+\}$ span $\mathbb{R}_M[x]$, and
since ${\rm dim}(\mathbb{R}_M[x])=M+1$ they form a basis of $\mathbb{R}_M[x]$.
They are therefore linearly independent \cite{Roman_2005a}, and as a consequence $\neq0_{\mathbb{R}_M[\xi]}$ \eqref{Eq_Lem_RLRPCS_s_FPLIR_ss_RPsFPs_001_001b},
a fact already proven in \lemrefnp{Lem_RLRPCS_s_FPLIR_ss_RPsFPs_001}.
\end{subequations}
\qed
\end{proof}
%
%-----------------------------------------------------------------------------------------------------------------------------------

%-----------------------------------------------------------------------------------------------------------------------------------
%
%
%
%
%
\subsection{Roots of fundamental polynomials}\label{RLRPCS_s_FPLIR_ss_RFPs}
%
%
%
%
%
%-----------------------------------------------------------------------------------------------------------------------------------

Because of \eqref{Eq_Prp_RLRPCS_s_FPLIR_ss_RPsFPs_001_001} for every value returned by the polynomial $\alpha_{I,M_-,M_+,\ell}(\xi_I)$ at point $\xi_I\in\mathbb{R}$,
there exists a nearby point $\xi_{R_1}\in\mathbb{R}$ such that $\alpha_{R_1,M_-,M_+,\ell}(\xi_{R_1})=\alpha_{I,M_-,M_+,\ell}(\xi_I)$, the distance between the 2 points being
$\abs{\xi_{R_1}-\xi_I}<\tfrac{1}{2}$. This can be formalized as
%-----------------------------------------------------------------------------------------------------------------------------------
%
\begin{lemma}[{\rm ${\alpha_{I,M_-,M_+,\ell}\Big([\xi_1,\xi_2]\Big)\subseteq\alpha_{R_1,M_-,M_+,\ell}\Big((\xi_1-\tfrac{1}{2},\xi_2+\tfrac{1}{2})\Big)\;\forall\xi_1,\xi_2\in\mathbb{R}\;:\;\xi_1\leq\xi_2}$}]
\label{Lem_RLRPCS_s_FPLIR_ss_RFPs_001}
\begin{subequations}
                                                                                                       \label{Eq_Lem_RLRPCS_s_FPLIR_ss_RFPs_001_001}
Assume $M_\pm\in{\mathbb Z}:M:=M_-+M_+\geq0$ \eqref{Eq_Def_AELRP_s_EPR_ss_PR_001_001b}. Then
\begin{equation}
\forall\ell\in\{-M_-,\cdots,M_+\}\qquad
\forall\xi_I\in\mathbb{R}\quad\exists\;\xi_{R_1}\in(\xi_I-\tfrac{1}{2},\xi_I+\tfrac{1}{2})\subset\mathbb{R}\,:\,
                                                                     \alpha_{R_1,M_-,M_+,\ell}(\xi_{R_1})=\alpha_{I,M_-,M_+,\ell}(\xi_I)
                                                                                                       \label{Eq_Lem_RLRPCS_s_FPLIR_ss_RFPs_001_001a}
\end{equation}
where $\alpha_{R_1,M_-,M_+,\ell}(\xi)$ \eqref{Eq_Prp_AELRP_s_EPR_ss_PR_001_001g} 
and $\alpha_{I,M_-,M_+,\ell}(\xi)$ \eqref{Eq_Prp_AELRP_s_EPR_ss_PR_001_001h} are the fundamental polynomials of Lagrange reconstruction and interpolation,
respectively \prpref{Prp_RLRPCS_s_RB_001}, implying that
\begin{equation}
\forall\ell\in\{-M_-,\cdots,M_+\}\qquad
\alpha_{I,M_-,M_+,\ell}\Big([\xi_1,\xi_2]\Big)\subseteq\alpha_{R_1,M_-,M_+,\ell}\Big((\xi_1-\tfrac{1}{2},\xi_2+\tfrac{1}{2})\Big)\;\forall\xi_1,\xi_2\in\mathbb{R}\;:\;\xi_1\leq\xi_2
                                                                                                       \label{Eq_Lem_RLRPCS_s_FPLIR_ss_RFPs_001_001b}
\end{equation}
\end{subequations}
\end{lemma}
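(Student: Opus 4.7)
The plan is to derive both statements from the integral identity \eqref{Eq_Prp_RLRPCS_s_FPLIR_ss_RPsFPs_001_001} of \prprefnp{Prp_RLRPCS_s_FPLIR_ss_RPsFPs_001}, combined with the strict form of the mean value theorem for integrals applied to the continuous (indeed polynomial) integrand $\alpha_{R_1,M_-,M_+,\ell}(\eta)$.

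First I would fix $\ell\in\{-M_-,\cdots,M_+\}$ and an arbitrary $\xi_I\in\mathbb{R}$, and apply \eqref{Eq_Prp_RLRPCS_s_FPLIR_ss_RPsFPs_001_001} on the interval $[\xi_I-\tfrac{1}{2},\xi_I+\tfrac{1}{2}]$, of length $1$, to obtain
\begin{equation*}
\alpha_{I,M_-,M_+,\ell}(\xi_I)=\int_{\xi_I-\frac{1}{2}}^{\xi_I+\frac{1}{2}}\alpha_{R_1,M_-,M_+,\ell}(\eta)\,d\eta .
\end{equation*}
Then, defining the primitive $F(x):=\int_{\xi_I-\frac{1}{2}}^{x}\alpha_{R_1,M_-,M_+,\ell}(\eta)\,d\eta$, the fundamental theorem of calculus yields $F'(x)=\alpha_{R_1,M_-,M_+,\ell}(x)$ everywhere on $[\xi_I-\tfrac{1}{2},\xi_I+\tfrac{1}{2}]$ (the integrand is a polynomial). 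The classical mean value theorem applied to $F$ on this closed interval of length $1$ produces a point $\xi_{R_1}\in(\xi_I-\tfrac{1}{2},\xi_I+\tfrac{1}{2})$ (strictly interior, this being the strong form of the \textsc{mvt} for derivatives) such that $F'(\xi_{R_1})=F(\xi_I+\tfrac{1}{2})-F(\xi_I-\tfrac{1}{2})$, which is exactly $\alpha_{R_1,M_-,M_+,\ell}(\xi_{R_1})=\alpha_{I,M_-,M_+,\ell}(\xi_I)$, proving \eqref{Eq_Lem_RLRPCS_s_FPLIR_ss_RFPs_001_001a}.

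The set-inclusion \eqref{Eq_Lem_RLRPCS_s_FPLIR_ss_RFPs_001_001b} then follows immediately: for $\xi_1\leq\xi_2$, any $y\in\alpha_{I,M_-,M_+,\ell}([\xi_1,\xi_2])$ is of the form $y=\alpha_{I,M_-,M_+,\ell}(\xi_I)$ with $\xi_I\in[\xi_1,\xi_2]$, and \eqref{Eq_Lem_RLRPCS_s_FPLIR_ss_RFPs_001_001a} supplies $\xi_{R_1}\in(\xi_I-\tfrac{1}{2},\xi_I+\tfrac{1}{2})\subseteq(\xi_1-\tfrac{1}{2},\xi_2+\tfrac{1}{2})$ with $\alpha_{R_1,M_-,M_+,\ell}(\xi_{R_1})=y$, so that $y\in\alpha_{R_1,M_-,M_+,\ell}((\xi_1-\tfrac{1}{2},\xi_2+\tfrac{1}{2}))$.

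No essential obstacle is anticipated: the only subtle point is ensuring that $\xi_{R_1}$ lies in the \emph{open} interval rather than only in the closed one, and this is handled by invoking the \textsc{mvt} for the differentiable primitive $F$ on $[\xi_I-\tfrac{1}{2},\xi_I+\tfrac{1}{2}]$ rather than the weaker version of the mean value theorem for integrals, which would only place $\xi_{R_1}$ in the closed interval.
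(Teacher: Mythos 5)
Your proof is correct and follows essentially the same route as the paper: the integral identity of \prprefnp{Prp_RLRPCS_s_FPLIR_ss_RPsFPs_001} combined with the mean value theorem (the paper cites the mean value theorem for the definite integral directly, while you pass through the primitive and Lagrange's \tsc{mvt} to guarantee that $\xi_{R_1}$ is strictly interior — a slightly more careful handling of the same idea). Your direct verification of the set-inclusion \eqref{Eq_Lem_RLRPCS_s_FPLIR_ss_RFPs_001_001b} is also fine; the paper merely remarks that it follows ``by contradiction.''
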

%
%-----------------------------------------------------------------------------------------------------------------------------------
%-----------------------------------------------------------------------------------------------------------------------------------
%
\begin{proof}
The proof follows immediately from \prprefnp{Prp_RLRPCS_s_FPLIR_ss_RPsFPs_001}. By \eqref{Eq_Prp_RLRPCS_s_FPLIR_ss_RPsFPs_001_001}
\begin{alignat}{6}
\forall\xi_I\in\mathbb{R}\qquad
\alpha_{I,M_-,M_+,\ell}(\xi_I)=\int_{\xi_I-\frac{1}{2}}^{\xi_I+\frac{1}{2}}\alpha_{R_1,M_-,M_+,\ell}(\eta)\;d\eta
\qquad\forall\ell\in\{-M_-,\cdots,M_+\}
                                                                                                       \label{Eq_Lem_RLRPCS_s_FPLIR_ss_RFPs_001_002}
\end{alignat}
Using the mean value theorem for the definite integral \cite[p. 352]{Zorich_2004a} in \eqref{Eq_Lem_RLRPCS_s_FPLIR_ss_RFPs_001_002} yields \eqref{Eq_Lem_RLRPCS_s_FPLIR_ss_RFPs_001_001a},
from which \eqref{Eq_Lem_RLRPCS_s_FPLIR_ss_RFPs_001_001b} is easily proved by contradiction.
\qed
\end{proof}
%
%-----------------------------------------------------------------------------------------------------------------------------------

The fundamental polynomials of the Lagrange interpolating polynomial $\alpha_{I,M_-,M_+,\ell}(\xi)$ \eqref{Eq_Prp_AELRP_s_EPR_ss_PR_001_001h} are polynomials of degree $M$ in $\xi$ \eqref{Eq_Prp_AELRP_s_EPR_ss_PR_001_001h},
and it is well known \cite{Henrici_1964a}
and obvious from their expression \eqref{Eq_Rmk_RLRPCS_s_RB_001_001a} that their $M$ roots are the integer nodes $\{-M_-,\cdots,M_+\}\setminus\{\ell\}$
\begin{subequations}
                                                                                                       \label{Eq_RLRPCS_s_FPLIR_ss_RFPs_001}
\begin{alignat}{6}
\alpha_{I,M_-,M_+,\ell}(n)=&0     &\qquad\forall n\in\{-M_-,\cdots,M_+\}\setminus\{\ell\}                                 &\qquad\forall\ell\in\{-M_-,\cdots,M_+\}
                                                                                                       \label{Eq_RLRPCS_s_FPLIR_ss_RFPs_001a}\\
\alpha_{I,M_-,M_+,\ell}(\xi)\neq&0&\qquad\forall \xi\in\mathbb{R}\setminus\Big\{\{-M_-,\cdots,M_+\}\setminus\{\ell\}\Big\}&\qquad\forall\ell\in\{-M_-,\cdots,M_+\}
                                                                                                       \label{Eq_RLRPCS_s_FPLIR_ss_RFPs_001b}
\end{alignat}
\end{subequations}
The fundamental polynomials of the Lagrange reconstructing polynomial $\alpha_{R_1,M_-,M_+,\ell}(\xi)$ \eqrefsab{Eq_Prp_AELRP_s_EPR_ss_PR_001_001g}{Eq_Rmk_RLRPCS_s_RB_001_001b}
are also polynomials of degree $M$ in $\xi$ \eqref{Eq_Prp_AELRP_s_EPR_ss_PR_001_001g},
but the expressions \eqrefsab{Eq_Prp_AELRP_s_EPR_ss_PR_001_001g}{Eq_Rmk_RLRPCS_s_RB_001_001b} are too complicated to directly give information about their roots.
It is nonetheless easy, using \lemrefnp{Lem_RLRPCS_s_FPLIR_ss_RFPs_001}, to show that
%-----------------------------------------------------------------------------------------------------------------------------------
%
\begin{proposition}[{\rm Roots of $\alpha_{R_1,M_-,M_+,\ell}(\xi)$ \eqref{Eq_Prp_AELRP_s_EPR_ss_PR_001_001g}}]
\label{Prp_RLRPCS_s_FPLIR_ss_RFPs_001}
Assume $M_\pm\in{\mathbb Z}:M:=M_-+M_+\geq0$ \eqref{Eq_Def_AELRP_s_EPR_ss_PR_001_001b}.
The $M$ roots of the degree $M$ in $\xi$ polynomials $\alpha_{R_1,M_-,M_+,\ell}(\xi)$ \eqref{Eq_Prp_AELRP_s_EPR_ss_PR_001_001g} are all distinct and real, and there is exactly 1 root
in each open interval $(n-\tfrac{1}{2},n+\tfrac{1}{2})$ $\forall\;n\in\{-M_-,\cdots,M_+\}\setminus\{\ell\}$, {\em ie}
\begin{subequations}
                                                                                                       \label{Eq_Prp_RLRPCS_s_FPLIR_ss_RFPs_001_001}
\begin{equation}
\left.\begin{array}{l}\forall   n\in\{-M_-,\cdots,M_+\}\setminus\{\ell\}\\
                      \forall\ell\in\{-M_-,\cdots,M_+\}\\\end{array}\right\}\exists!\;\xi_{R_1,M_-,M_+,\ell,n}\in(n-\tfrac{1}{2},n+\tfrac{1}{2})\subset\mathbb{R}\,:\,\alpha_{R_1,M_-,M_+,\ell}(\xi_{R_1,M_-,M_+,\ell,n})=0
                                                                                                       \label{Eq_Prp_RLRPCS_s_FPLIR_ss_RFPs_001_001a}
\end{equation}
\begin{equation}
\alpha_{R_1,M_-,M_+,\ell}(\xi)\neq0\qquad\forall\xi\in\mathbb{R}\setminus\bigg\{\xi_{R_1,M_-,M_+,\ell,n};\;n\in\{-M_-,\cdots,M_+\}\setminus\{\ell\}\bigg\}\qquad\forall\ell\in\{-M_-,\cdots,M_+\}
                                                                                                       \label{Eq_Prp_RLRPCS_s_FPLIR_ss_RFPs_001_001b}
\end{equation}
\end{subequations}
\end{proposition}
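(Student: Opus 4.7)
The plan is to exploit Lemma 3.3 to transfer the known root locations of the interpolation fundamental polynomial $\alpha_{I,M_-,M_+,\ell}(\xi)$ to the reconstruction fundamental polynomial $\alpha_{R_1,M_-,M_+,\ell}(\xi)$, and then use a degree argument to show that all roots of the latter have been accounted for. The entire proof rides on two facts already established: $\alpha_{I,M_-,M_+,\ell}$ vanishes exactly at the $M$ integer nodes $\{-M_-,\dots,M_+\}\setminus\{\ell\}$ (shown in \eqref{Eq_RLRPCS_s_FPLIR_ss_RFPs_001a}), and $\alpha_{R_1,M_-,M_+,\ell}$ is a polynomial of degree exactly $M$ (shown in \lemref{Lem_RLRPCS_s_FPLIR_ss_RPsFPs_001}).

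First, I would fix $\ell\in\{-M_-,\dots,M_+\}$ and, for each integer $n\in\{-M_-,\dots,M_+\}\setminus\{\ell\}$, apply \lemref{Lem_RLRPCS_s_FPLIR_ss_RFPs_001} with the choice $\xi_I=n$. Since $\alpha_{I,M_-,M_+,\ell}(n)=0$ by \eqref{Eq_RLRPCS_s_FPLIR_ss_RFPs_001a}, \eqref{Eq_Lem_RLRPCS_s_FPLIR_ss_RFPs_001_001a} produces a real number $\xi_{R_1,M_-,M_+,\ell,n}\in(n-\tfrac{1}{2},n+\tfrac{1}{2})$ such that $\alpha_{R_1,M_-,M_+,\ell}(\xi_{R_1,M_-,M_+,\ell,n})=0$. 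Running $n$ over its $M$ admissible values furnishes $M$ real zeros of $\alpha_{R_1,M_-,M_+,\ell}$, located one in each of the open intervals $(n-\tfrac{1}{2},n+\tfrac{1}{2})$.

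Next I would observe that these $M$ intervals are pairwise disjoint (consecutive intervals share only the excluded endpoint $n+\tfrac{1}{2}$), so the $M$ zeros produced above are automatically distinct. By the fundamental theorem of algebra, a polynomial of degree $M$ admits at most $M$ roots in $\mathbb{C}$ counted with multiplicity, and since $\mathrm{deg}[\alpha_{R_1,M_-,M_+,\ell}(\xi)]=M$ by \eqref{Eq_Lem_RLRPCS_s_FPLIR_ss_RPsFPs_001_001c}, the $M$ distinct real zeros already found exhaust all the roots. This simultaneously yields: all roots are real, all roots are simple, uniqueness of the root inside each interval $(n-\tfrac{1}{2},n+\tfrac{1}{2})$, and the nonvanishing statement \eqref{Eq_Prp_RLRPCS_s_FPLIR_ss_RFPs_001_001b} on the complement.

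There is no serious obstacle here: the whole argument is a one-line application of \lemref{Lem_RLRPCS_s_FPLIR_ss_RFPs_001} at the integer nodes, packaged with a counting argument. The only subtlety worth spelling out in the written proof is that the open intervals $(n-\tfrac{1}{2},n+\tfrac{1}{2})$ for distinct integers $n$ are disjoint, which is what upgrades ``at least one root per interval'' to ``exactly one simple real root per interval and no others anywhere in $\mathbb{R}$ or $\mathbb{C}$''.
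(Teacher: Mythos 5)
Your proposal is correct and follows essentially the same route as the paper's own proof: apply \lemrefnp{Lem_RLRPCS_s_FPLIR_ss_RFPs_001} (the mean-value consequence of the reconstruction-pair integral identity) at each of the $M$ integer roots of $\alpha_{I,M_-,M_+,\ell}(\xi)$ to produce one real zero of $\alpha_{R_1,M_-,M_+,\ell}(\xi)$ in each open interval $(n-\tfrac{1}{2},n+\tfrac{1}{2})$, then invoke ${\rm deg}[\alpha_{R_1,M_-,M_+,\ell}]=M$ and the fundamental theorem of algebra to conclude these exhaust all roots. Your explicit remark that the open intervals are pairwise disjoint is a useful clarification of a step the paper leaves implicit.
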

%
%-----------------------------------------------------------------------------------------------------------------------------------
%-----------------------------------------------------------------------------------------------------------------------------------
%
\begin{proof}
The proof follows immediately from \lemrefnp{Lem_RLRPCS_s_FPLIR_ss_RFPs_001}, by writing \eqref{Eq_Lem_RLRPCS_s_FPLIR_ss_RFPs_001_001a}
at each of the $M$ roots \eqref{Eq_RLRPCS_s_FPLIR_ss_RFPs_001a} of $\alpha_{I,M_-,M_+,\ell}(\xi)$.
By \prprefnp{Lem_RLRPCS_s_FPLIR_ss_RPsFPs_001} the polynomial $\alpha_{R_1,M_-,M_+,\ell}(\xi)\neq0_{\mathbb{R}_M[\xi]}(\xi)$.
Furthermore ${\rm deg}[\alpha_{R_1,M_-,M_+,\ell}(\xi)]=M$ \eqref{Eq_Lem_RLRPCS_s_FPLIR_ss_RPsFPs_001_001c},
and since there are exactly $M:=M_-+M_+$ elements in $\bigg\{\{-M_-,\cdots,M_+\}\setminus\{\ell\}\bigg\}$,
the roots \eqref{Eq_Prp_RLRPCS_s_FPLIR_ss_RFPs_001_001a} are,
by the fundamental theorem of algebra and its corollaries \cite[pp. 282--289]{Zorich_2004a},
the only roots of $\alpha_{R_1,M_-,M_+,\ell}(\xi)$, which proves \eqref{Eq_Prp_RLRPCS_s_FPLIR_ss_RFPs_001_001b},
and uniqueness ($\exists!$) in \eqref{Eq_Prp_RLRPCS_s_FPLIR_ss_RFPs_001_001a}, by contradiction.
\qed
\end{proof}
%
%-----------------------------------------------------------------------------------------------------------------------------------
%-----------------------------------------------------------------------------------------------------------------------------------
%
\begin{remark}[{\rm Extrema of $\alpha_{R_1,M_-,M_+,\ell}(\xi)$ \eqref{Eq_Prp_AELRP_s_EPR_ss_PR_001_001g}}]
\label{Rmk_RLRPCS_s_FPLIR_ss_RFPs_001}
It is straightforward to show that each fundamental polynomial of Lagrange reconstruction $\alpha_{R_1,M_-,M_+,\ell}(\xi)$ \eqref{Eq_Prp_AELRP_s_EPR_ss_PR_001_001g} has
$M-1$ extrema, where $\alpha'_{R_1,M_-,M_+,\ell}(\xi)=0$, one in each interval between 2 consecutive roots $\xi_{R_1,M_-,M_+,\ell,n}$ \eqref{Eq_Prp_RLRPCS_s_FPLIR_ss_RFPs_001_001a}.
Indeed, for any nonzero polynomial $p(\xi)\in\mathbb{R}_M[\xi]$ with $M$ distinct real roots we know, by Rolle's theorem \cite[pp. 215--216]{Zorich_2004a},
that there is a point where $p'(\xi)=0$ in each of the $M-1$ intervals between 2 consecutive distinct real roots, these $M-1$ points being exactly
the $M-1$ roots of $p'(\xi)\in\mathbb{R}_{M-1}[\xi]$. Both $\alpha_{R_1,M_-,M_+,\ell}(\xi)\in\mathbb{R}_M[\xi]$ \eqref{Eq_Prp_AELRP_s_EPR_ss_PR_001_001g}, by \prprefnp{Prp_RLRPCS_s_FPLIR_ss_RFPs_001},
and $\alpha_{I  ,M_-,M_+,\ell}(\xi)\in\mathbb{R}_M[\xi]$ \eqref{Eq_Prp_AELRP_s_EPR_ss_PR_001_001h}, by \eqref{Eq_RLRPCS_s_FPLIR_ss_RFPs_001}, have $M$ real distinct roots.
Therefore, $\alpha'_{R_1,M_-,M_+,\ell}(\xi)\in\mathbb{R}_{M-1}[\xi]$ and $\alpha'_{I,M_-,M_+,\ell}(\xi)\in\mathbb{R}_{M-1}[\xi]$ have $M-1$ real distinct roots, corresponding to the $M-1$ extrema of
$\alpha_{R_1,M_-,M_+,\ell}(\xi)$ and $\alpha_{I,M_-,M_+,\ell}(\xi)$, respectively. Since \prpref{Prp_RLRPCS_s_FPLIR_ss_RPsFPs_001} $\alpha_{R_1,M_-,M_+,\ell}(\xi)=[R_{(1;1)}(\alpha_{I  ,M_-,M_+,\ell})](\xi)$,
by \lemrefnp{Lem_RLRPCS_s_RB_001}, $\alpha'_{R_1,M_-,M_+,\ell}(\xi)=[R_{(1;1)}(\alpha'_{I  ,M_-,M_+,\ell})](\xi)$, so that their corresponding $M-1$ distinct real roots,
which are also the corresponding extrema of $\alpha_{R_1,M_-,M_+,\ell}(\xi)=[R_{(1;1)}(\alpha_{I  ,M_-,M_+,\ell})](\xi)$, are distant by $<\tfrac{1}{2}$ \lemref{Lem_RLRPCS_s_FPLIR_ss_RFPs_001}.
\qed
\end{remark}
%
%-----------------------------------------------------------------------------------------------------------------------------------
%-----------------------------------------------------------------------------------------------------------------------------------
%
\begin{proposition}[{\rm Factorization of $\alpha_{I,M_-,M_+,\ell}(\xi)$ \eqref{Eq_Prp_AELRP_s_EPR_ss_PR_001_001h} and $\alpha_{R_1,M_-,M_+,\ell}(\xi)$ \eqref{Eq_Prp_AELRP_s_EPR_ss_PR_001_001g}}]
\label{Prp_RLRPCS_s_FPLIR_ss_RFPs_002}
Assume $M_\pm\in{\mathbb Z}:M:=M_-+M_+\geq0$, and $\ell\in\{-M_-,\cdots,M_+\}$.
Then the fundamental polynomials of Lagrange reconstruction $\alpha_{R_1,M_-,M_+,\ell}(\xi)$ \eqref{Eq_Prp_AELRP_s_EPR_ss_PR_001_001g}
and interpolation $\alpha_{I,M_-,M_+,\ell}(\xi)$ \eqref{Eq_Prp_AELRP_s_EPR_ss_PR_001_001h} on the stencil $\tsc{s}_{i,M_-,M_+}$ \defref{Def_AELRP_s_EPR_ss_PR_001}
can be factorized as
\begin{subequations}
                                                                                                       \label{Eq_Prp_RLRPCS_s_FPLIR_ss_RFPs_002_001}
\begin{alignat}{6}
\alpha_{R_1,M_-,M_+,\ell}(\xi)=&(-1)^{\ell+M_+}\frac{ 1}
                                                    {M!}\binom{M       }
                                                              {\ell+M_-}\prod_{\substack{n=-M_-\\
                                                                                         n\neq\ell}}^{M_+}(\xi-\xi_{R_1,M_-,M_+,\ell,n})
                                                                                                       \label{Eq_Prp_RLRPCS_s_FPLIR_ss_RFPs_002_001a}\\
\alpha_{I,M_-,M_+,\ell}(\xi)  =&(-1)^{\ell+M_+}\frac{ 1}
                                                    {M!}\binom{M       }
                                                              {\ell+M_-}\prod_{\substack{n=-M_-\\
                                                                                         n\neq\ell}}^{M_+}(\xi-n)
                                                                                                       \label{Eq_Prp_RLRPCS_s_FPLIR_ss_RFPs_002_001b}
\end{alignat}
where $\xi_{R_1,M_-,M_+,\ell,n}\in\mathbb{R}$ \textup{(}$n\in\{-M_-,\cdots,M_+\}\setminus\{\ell\}$\textup{)} are the $M$ real and distinct roots of $\alpha_{R_1,M_-,M_+,\ell}(\xi)$ \prpref{Prp_RLRPCS_s_FPLIR_ss_RFPs_001}.
\end{subequations}
\end{proposition}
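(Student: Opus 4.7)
The proof I plan is essentially a direct synthesis of results already established in the excerpt, with no new technical obstacle. The statement factorizes two polynomials of degree $M$ whose roots and leading coefficients are both known from prior results, so all that remains is to invoke the fundamental theorem of algebra in the appropriate normalization.

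First, I would record the three ingredients needed for each polynomial. For $\alpha_{I,M_-,M_+,\ell}(\xi)$: by \eqref{Eq_Lem_RLRPCS_s_FPLIR_ss_RPsFPs_001_001c} it has degree exactly $M$; by \eqref{Eq_RLRPCS_s_FPLIR_ss_RFPs_001a} the $M$ integers in $\{-M_-,\ldots,M_+\}\setminus\{\ell\}$ are roots, and they are pairwise distinct; by \eqref{Eq_Lem_RLRPCS_s_FPLIR_ss_RPsFPs_001_001d} the leading coefficient equals $(-1)^{\ell+M_+}\tfrac{1}{M!}\binom{M}{\ell+M_-}$. For $\alpha_{R_1,M_-,M_+,\ell}(\xi)$: by \eqref{Eq_Lem_RLRPCS_s_FPLIR_ss_RPsFPs_001_001c} it also has degree exactly $M$; by \prpref{Prp_RLRPCS_s_FPLIR_ss_RFPs_001} the $M$ real numbers $\xi_{R_1,M_-,M_+,\ell,n}$, $n\in\{-M_-,\ldots,M_+\}\setminus\{\ell\}$, are roots, all distinct (each lies in its own open interval $(n-\tfrac12,n+\tfrac12)$, and these intervals are pairwise disjoint); by \eqref{Eq_Lem_RLRPCS_s_FPLIR_ss_RPsFPs_001_001d} the leading coefficient is the same $(-1)^{\ell+M_+}\tfrac{1}{M!}\binom{M}{\ell+M_-}$.

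Second, I would apply the fundamental theorem of algebra (specifically the corollary that a polynomial $p\in\mathbb{R}_M[\xi]$ of degree $M$ with leading coefficient $c_M\neq 0$ and $M$ pairwise distinct roots $r_1,\ldots,r_M$ equals $c_M\prod_{k=1}^M(\xi-r_k)$) to each polynomial separately. Since in each case we have exhibited $M$ distinct roots of a degree-$M$ polynomial, these account for all roots (with simple multiplicity), and matching the leading coefficient yields \eqref{Eq_Prp_RLRPCS_s_FPLIR_ss_RFPs_002_001a} and \eqref{Eq_Prp_RLRPCS_s_FPLIR_ss_RFPs_002_001b} immediately.

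The only point worth emphasizing in the write-up is why the roots listed exhaust the set of roots with correct multiplicity: this is precisely the counting argument that the number of listed distinct roots equals the degree $M$, combined with the fact that a nonzero polynomial of degree $M$ has at most $M$ roots counted with multiplicity. No step requires computation beyond what has been done in \lemref{Lem_RLRPCS_s_FPLIR_ss_RPsFPs_001} and \prpref{Prp_RLRPCS_s_FPLIR_ss_RFPs_001}; there is no anticipated obstacle, and the proof should run in just a few lines.
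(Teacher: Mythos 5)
Your proposal is correct and follows essentially the same route as the paper's proof: both assemble the degree statement \eqref{Eq_Lem_RLRPCS_s_FPLIR_ss_RPsFPs_001_001c}, the leading coefficient \eqref{Eq_Lem_RLRPCS_s_FPLIR_ss_RPsFPs_001_001d}, and the $M$ distinct roots (from \eqref{Eq_RLRPCS_s_FPLIR_ss_RFPs_001} for interpolation, from \prprefnp{Prp_RLRPCS_s_FPLIR_ss_RFPs_001} for reconstruction), and then invoke the standard factorization of a polynomial over its roots. Nothing is missing.
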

%
%-----------------------------------------------------------------------------------------------------------------------------------
%-----------------------------------------------------------------------------------------------------------------------------------
%
\begin{proof}
Every polynomial $p(x)\in\mathbb{R}[x]$ can be factorized as $p(x)={\rm coeff}[x^{{\rm deg}(p)},p(x)]\prod_{n=1}^{{\rm deg}(p)}(x-x_{p_n})$,
where $x_{p_n}\in\mathbb{C}\;(n\in\{1,\cdots,{\rm deg}(p)\})$ are its ${\rm deg}(p)\in\mathbb{N}$ roots \cite[pp. 284--285]{Zorich_2004a}.
We know that ${\rm deg}(\alpha_{R_1,M_-,M_+,\ell})={\rm deg}(\alpha_{I,M_-,M_+,\ell})=M_-+M_+=M$ \eqref{Eq_Lem_RLRPCS_s_FPLIR_ss_RPsFPs_001_001c}.
The $M$ roots of $\alpha_{I,M_-,M_+,\ell}(\xi)$ are $n\in\big(\{-M_-,\cdots,M_+\}\setminus\{\ell\}\big)\subset\mathbb{Z}$ \eqref{Eq_RLRPCS_s_FPLIR_ss_RFPs_001},
and the $M$ roots of $\alpha_{R_1,M_-,M_+,\ell}(\xi)$, $\xi_{R_1,M_-,M_+,\ell,n}\in\mathbb{R}\;(n\in\{-M_-,\cdots,M_+\}\setminus\{\ell\})$ are real \prpref{Prp_RLRPCS_s_FPLIR_ss_RFPs_001}.
Furthermore ${\rm coeff}[\xi^M,\alpha_{R_1,M_-,M_+,\ell}(\xi)]={\rm coeff}[\xi^M,\alpha_{I  ,M_-,M_+,\ell}(\xi)]$ are given by \eqref{Eq_Lem_RLRPCS_s_FPLIR_ss_RPsFPs_001_001d}.
These facts prove \eqref{Eq_Prp_RLRPCS_s_FPLIR_ss_RFPs_002_001}.\footnote{\label{ff_Prp_RLRPCS_s_FPLIR_ss_RFPs_002_001}
                                                                        Notice that by comparison of \eqref{Eq_Prp_RLRPCS_s_FPLIR_ss_RFPs_002_001b} with \eqref{Eq_Rmk_RLRPCS_s_RB_001_001a}
                                                                        \begin{equation} \dfrac{1}{\displaystyle\prod_{\substack{k=-M_-\\
                                                                                                                                 k\neq\ell}}^{M_+}(\ell-k)}=(-1)^{\ell+M_+}\frac{ 1}
                                                                                                                                                                                {M!}\binom{M       }
                                                                                                                                                                                          {\ell+M_-}\notag\end{equation}
                                                                       as can be easily verified by direct computation.
                                                                       }
\qed
\end{proof}
%
%-----------------------------------------------------------------------------------------------------------------------------------
%-----------------------------------------------------------------------------------------------------------------------------------
%
\begin{example}[{\rm Fundamental polynomials $\alpha_{I,M_-,M_+,\ell}(\xi)$ \eqref{Eq_Prp_AELRP_s_EPR_ss_PR_001_001h} and $\alpha_{R_1,M_-,M_+,\ell}(\xi)$ \eqref{Eq_Prp_AELRP_s_EPR_ss_PR_001_001g}}]
\label{Xmp_RLRPCS_s_FPLIR_ss_RFPs_001}
Consider the fundamental polynomials of Lagrange reconstruction, $\alpha_{R_1,M_-,M_+,\ell}(\xi)$ \eqref{Eq_Prp_AELRP_s_EPR_ss_PR_001_001g},
and the corresponding fundamental polynomials of Lagrange interpolation, $\alpha_{I,M_-,M_+,\ell}(\xi)$ \eqref{Eq_Prp_AELRP_s_EPR_ss_PR_001_001h},
on the stencils \defref{Def_AELRP_s_EPR_ss_PR_001} $\tsc{s}_{i,3,3}$ \figref{Fig_Xmp_RLRPCS_s_FPLIR_ss_RFPs_001_001} and $\tsc{s}_{i,3,4}$ \figref{Fig_Xmp_RLRPCS_s_FPLIR_ss_RFPs_001_002}.
We know that the corresponding polynomials,
$\alpha_{R_1,M_-,M_+,\ell}(\xi)\in\mathbb{R}_M[\xi]$ \eqref{Eq_Prp_AELRP_s_EPR_ss_PR_001_001g} and $\alpha_{I,M_-,M_+,\ell}(\xi)\in\mathbb{R}_M[\xi]$ \eqref{Eq_Prp_AELRP_s_EPR_ss_PR_001_001h},
$\forall\ell\in\{-M_-,\cdots,M_+\}$,
have $M:=M_-+M_+$ distinct real roots \prpref{Prp_RLRPCS_s_FPLIR_ss_RFPs_001},
each root of the fundamental polynomial of Lagrange reconstruction $\xi_{R_1,M_-,M_+,\ell,n}$ \eqref{Eq_Prp_RLRPCS_s_FPLIR_ss_RFPs_001_001a}
being close to the root $n\in\{-M_-,\cdots,M_+\}\setminus\{\ell\}$ of the corresponding fundamental polynomial of Lagrange interpolation \eqref{Eq_RLRPCS_s_FPLIR_ss_RFPs_001a},
{\em viz} $\abs{\xi_{R_1,M_-,M_+,\ell,n}-n}<\tfrac{1}{2}$ \eqref{Eq_Prp_RLRPCS_s_FPLIR_ss_RFPs_001_001a}. Furthermore \rmkref{Rmk_RLRPCS_s_FPLIR_ss_RFPs_001}
both $\alpha_{I,M_-,M_+,\ell}(\xi)$ \eqref{Eq_Prp_AELRP_s_EPR_ss_PR_001_001h} and $\alpha_{R_1,M_-,M_+,\ell}(\xi)$ \eqref{Eq_Prp_AELRP_s_EPR_ss_PR_001_001g} have $M-1$ corresponding extrema,
again distant $<\tfrac{1}{2}$. For these reasons the shapes of $\alpha_{I,M_-,M_+,\ell}(\xi)$ \eqref{Eq_Prp_AELRP_s_EPR_ss_PR_001_001h} and $\alpha_{R_1,M_-,M_+,\ell}(\xi)$ \eqref{Eq_Prp_AELRP_s_EPR_ss_PR_001_001g}
are quite similar \figrefsab{Fig_Xmp_RLRPCS_s_FPLIR_ss_RFPs_001_001}{Fig_Xmp_RLRPCS_s_FPLIR_ss_RFPs_001_002}.
For the stencil $\tsc{s}_{i,3,3}$ \figref{Fig_Xmp_RLRPCS_s_FPLIR_ss_RFPs_001_001} which is symmetric around $\xi=0$,
we observe that $\xi_{R_1,3,3,\ell,n}\notin\mathbb{Z}\;\forall\ell\in\{-3,+3\}\;\forall n\in\{-3,\cdots,+3\}\setminus\{\ell\}$.
On the contrary, for the stencil $\tsc{s}_{i,3,4}$ \figref{Fig_Xmp_RLRPCS_s_FPLIR_ss_RFPs_001_002} which is symmetric around $\xi=\tfrac{1}{2}$,
we observe that there are two integer roots, $\xi_{R_1,3,4,-3,1}=+1\in\mathbb{Z}$ and $\xi_{R_1,3,4,+4,0}=0\in\mathbb{Z}$.
Although we have not worked out a formal proof concerning integer roots, we can formulate the following conjecture \rstref{Rst_RLRPCS_s_FPLIR_ss_RFPs_001},
obtained using symbolic computation.
\qed
\end{example}
%
%-----------------------------------------------------------------------------------------------------------------------------------
%-----------------------------------------------------------------------------------------------------------------------------------
%
\begin{figure}[ht!]
\begin{picture}(500,450)
\put(30,-10){\includegraphics[angle=0,width=400pt]{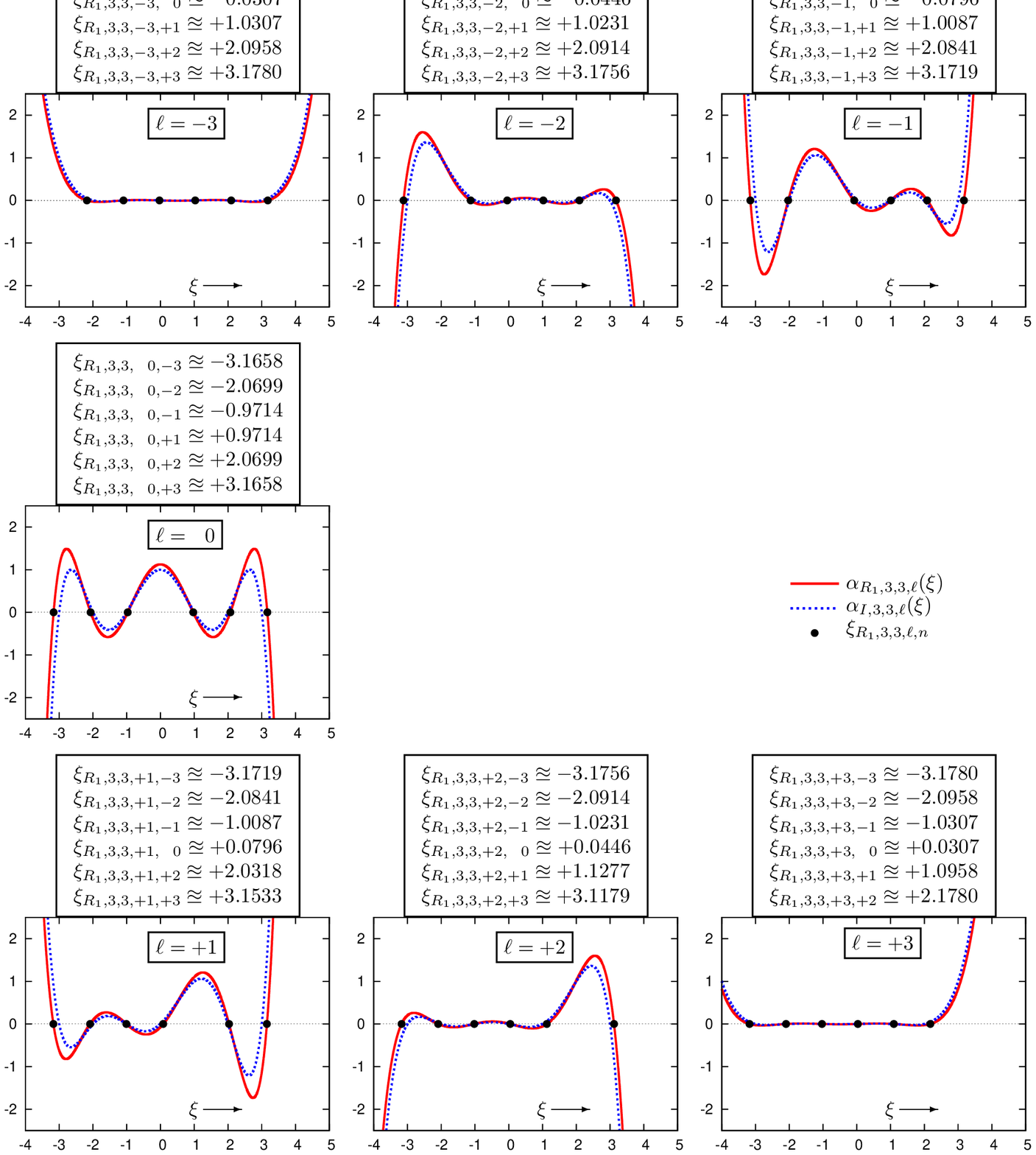}}
\end{picture}
\caption{Fundamenatal polynomials \prpref{Prp_RLRPCS_s_RB_001} of Lagrange interpolation, $\alpha_{I  ,M_-,M_+,\ell}(\xi)$ \eqref{Eq_Prp_AELRP_s_EPR_ss_PR_001_001h},
and reconstruction, $\alpha_{R_1,M_-,M_+,\ell}(\xi)$ \eqref{Eq_Prp_AELRP_s_EPR_ss_PR_001_001g}, on the stencil $\tsc{s}_{i,3,3}$ ($\ell\in\{-3,\cdots,+3\}$),
and locations of the 6 real roots of each $\alpha_{R_1,3,3,\ell}(\xi)\in\mathbb{R}_6[\xi]$ \prpref{Prp_RLRPCS_s_FPLIR_ss_RFPs_001},
$\xi_{R_1,M_-,M_+,\ell,n}$ ($n\in\{-M_-,\cdots,M_+\}\setminus\{\ell\}$), appearing in the factorization \eqref{Eq_Prp_RLRPCS_s_FPLIR_ss_RFPs_002_001a}
of $\alpha_{R_1,M_-,M_+,\ell}(\xi)$ \prpref{Prp_RLRPCS_s_FPLIR_ss_RFPs_002};
notice that in the present case ($M:=M_-+M_+=6$) $\xi_{R_1,3,3,\ell,n}\notin\mathbb{Z}\;\forall\ell\in\{-3,+3\}\;\forall n\in\{-3,\cdots,+3\}\setminus\{\ell\}$ \rstref{Rst_RLRPCS_s_FPLIR_ss_RFPs_001}.}
\label{Fig_Xmp_RLRPCS_s_FPLIR_ss_RFPs_001_001}
\end{figure}
%
%-----------------------------------------------------------------------------------------------------------------------------------
%-----------------------------------------------------------------------------------------------------------------------------------
%
\begin{figure}[ht!]
\begin{picture}(500,470)
\put(30,-10){\includegraphics[angle=0,width=400pt]{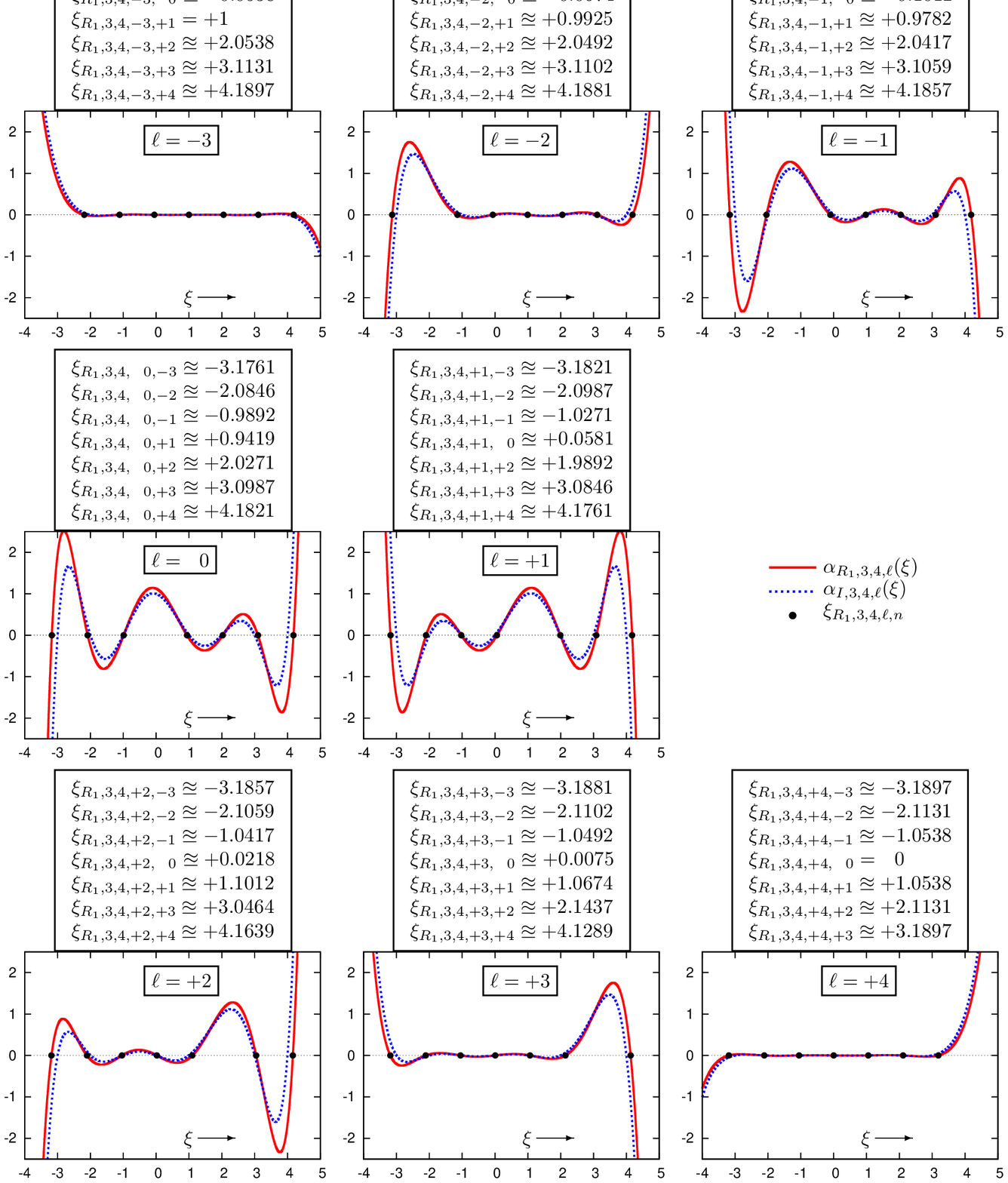}}
\end{picture}
\caption{Fundamenatal polynomials \prpref{Prp_RLRPCS_s_RB_001} of Lagrange interpolation, $\alpha_{I  ,M_-,M_+,\ell}(\xi)$ \eqref{Eq_Prp_AELRP_s_EPR_ss_PR_001_001h},
and reconstruction, $\alpha_{R_1,M_-,M_+,\ell}(\xi)$ \eqref{Eq_Prp_AELRP_s_EPR_ss_PR_001_001g}, on the stencil $\tsc{s}_{i,3,4}$ ($\ell\in\{-3,\cdots,+4\}$),
and locations of the 7 real roots of each $\alpha_{R_1,3,4,\ell}(\xi)\in\mathbb{R}_7[\xi]$ \prpref{Prp_RLRPCS_s_FPLIR_ss_RFPs_001},
$\xi_{R_1,M_-,M_+,\ell,n}$ ($n\in\{-M_-,\cdots,M_+\}\setminus\{\ell\}$), appearing in the factorization \eqref{Eq_Prp_RLRPCS_s_FPLIR_ss_RFPs_002_001a}
of $\alpha_{R_1,M_-,M_+,\ell}(\xi)$ \prpref{Prp_RLRPCS_s_FPLIR_ss_RFPs_002};
notice that in the present case ($M:=M_-+M_+=7$) $\xi_{R_1,3,4,-3,1}=+1\in\mathbb{Z}$ and $\xi_{R_1,3,4,+4,0}=0\in\mathbb{Z}$ \rstref{Rst_RLRPCS_s_FPLIR_ss_RFPs_001}.}
\label{Fig_Xmp_RLRPCS_s_FPLIR_ss_RFPs_001_002}
\end{figure}
%
%-----------------------------------------------------------------------------------------------------------------------------------
%\clearpage
%-----------------------------------------------------------------------------------------------------------------------------------
%
\begin{result}[{\rm Integer roots of $\alpha_{R_1,M_-,M_+,\ell}(\xi)$ \eqref{Eq_Prp_AELRP_s_EPR_ss_PR_001_001g}}]
\label{Rst_RLRPCS_s_FPLIR_ss_RFPs_001}
\begin{subequations}
                                                                                                       \label{Eq_Rst_RLRPCS_s_FPLIR_ss_RFPs_001_001}
Let
\begin{alignat}{6}
M_{\pm}\in\left\{-20,\cdots,+20\right\}\;:\;M:=M_-+M_+\geq1
                                                                                                       \label{Eq_Rst_RLRPCS_s_FPLIR_ss_RFPs_001_001a}
\end{alignat}
Then for $M$ even
\begin{alignat}{6}
M=2k\;;\;k\in{\mathbb N}\;\Longrightarrow\;\xi_{R_1,M_-,M_+,\ell,n}\notin{\mathbb Z}\qquad\left\{\begin{array}{l} \forall\;\ell\in\{-M_-,\cdots,M_+\}\\
                                                                                                                                                     \\
                                                                                                    \forall\;n\in\{-M_-,\cdots,M_+\}\setminus\{\ell\}\\\end{array}\right.
                                                                                                       \label{Eq_Rst_RLRPCS_s_FPLIR_ss_RFPs_001_001b}
\end{alignat}
and for $M$ odd
\begin{alignat}{6}
M=2k+1\;;\;k\in{\mathbb N}_0\;\Longrightarrow\;\left\{\begin{array}{l}\xi_{R_1,M_-,M_+,-M_-,-M_-+\left\lceil\frac{M}{2}\right\rceil}=-M_-+\left\lceil\frac{M}{2}\right\rceil=\dfrac{M_+-M_-+1}{2}\\
                                                                                                                                                                          \\
                                                                      \xi_{R_1,M_-,M_+,+M_+,+M_+-\left\lceil\frac{M}{2}\right\rceil}=+M_+-\left\lceil\frac{M}{2}\right\rceil=\dfrac{M_+-M_--1}{2}\\
                                                                                                                                                                          \\
                                                                       \xi_{R_1,M_-,M_+,\ell,n}\notin{\mathbb Z}\qquad\left\{\begin{array}{l} \forall\;\ell\in\{-M_-+1,\cdots,M_+-1\}\\
                                                                                                                                                                                     \\
                                                                                                                                    \forall\;n\in\{-M_-,\cdots,M_+\}\setminus\{\ell\}\\\end{array}\right.\end{array}\right.
                                                                                                       \label{Eq_Rst_RLRPCS_s_FPLIR_ss_RFPs_001_001c}
\end{alignat}
\end{subequations}
\end{result}
%
%-----------------------------------------------------------------------------------------------------------------------------------
%-----------------------------------------------------------------------------------------------------------------------------------
%
%\renewcommand{\proofname}{Verification}
\begin{vrfctn}
By \prprefnp{Prp_RLRPCS_s_FPLIR_ss_RFPs_001}, we know that all of the roots of the basis polynomials $\alpha_{R_1,M_-,M_+,\ell}(\xi)$ are real.
Since by \eqref{Eq_Prp_AELRP_s_EPR_ss_PR_001_001g} ${\rm deg}(\alpha_{R_1,M_-,M_+,\ell})=M$ ($M:=M_-+M_+$),
there are $M$ real roots, with exactly 1 root in each of the $M$ open intervals $(n-\tfrac{1}{2},n+\tfrac{1}{2})$ $\forall\;n\in\{-M_-,\cdots,M_+\}\setminus\{\ell\}$ \eqref{Eq_Prp_RLRPCS_s_FPLIR_ss_RFPs_001_001a}.
Hence, if $\alpha_{R_1,M_-,M_+,\ell}(\xi)$ has integer roots, these must belong to the set $\left\{\{-M_-,\cdots,M_+\}\setminus\{\ell\}\right\}\subset{\mathbb Z}$, {\em ie}
if $\alpha_{R_1,M_-,M_+,\ell}(\xi)$ has integer roots these must lie on the points of the stencil $\tsc{s}_{i,M_-,M_+}:=\left\{i-M_-,\cdots,i+M_+\right\}$ \eqref{Eq_Def_AELRP_s_EPR_ss_PR_001_001c},
except the point $i+\ell$ itself. As a consequence,
the result was obtained by direct calculation of $\alpha_{R_1,M_-,M_+,\ell}(\xi)$ \eqref{Eq_Prp_AELRP_s_EPR_ss_PR_001_001g} $\forall \ell,n\in\{-M_-,\cdots,M_+\}$ for the range of stencils studied.
\qed
\end{vrfctn}
%\renewcommand{\proofname}{\sc Proof}
%
%-----------------------------------------------------------------------------------------------------------------------------------

%-----------------------------------------------------------------------------------------------------------------------------------
%
%
%
%
%
\subsection{Some identities concerning the fundamental polynomials of Lagrange reconstruction}\label{RLRPCS_s_FPLIR_ss_SICFPLR}
%
%
%
%
%
%-----------------------------------------------------------------------------------------------------------------------------------

%-----------------------------------------------------------------------------------------------------------------------------------
%
%
%
%
%
%
%
%
%
%OLD--> REPLACED\section{Some new results on the Lagrange reconstructing polynomial}\label{RLRPCS_s_SNRLRP}
%
%
%
%
%
%
%
%
%
%-----------------------------------------------------------------------------------------------------------------------------------

To build (\S\ref{RLRPCS_s_RCsSs}) the recursive construction of the weight-functions \cite{Shu_1998a,
                                                                                           Shu_2009a,
                                                                                           Liu_Shu_Zhang_2009a}
$\sigma_{R_1,M_-,M_+,K_{\rm s},k_{\rm s}}(\xi)$ for the combination \eqref{Eq_RLRPCS_s_I_001a}
%p_{  I,M_-,M_+}(x_i+\xi\Delta x;x_i,\Delta x;f)=&\sum_{k_{\rm s}=0}^{K_{\rm s}} \sigma_{  I,M_-,M_+,K_{\rm s},k_{\rm s}}(\xi)\;p_{  I,M_--k_{\rm s},M_+-K_{\rm s}+k_{\rm s}}(x_i+\xi\Delta x;x_i,\Delta x;f)
of the polynomial reconstructions $p_{R_1,M_--k_{\rm s},M_+-K_{\rm s}+k_{\rm s}}(x_i+\xi\Delta x;x_i,\Delta x;f)$ on the substencils \defref{Def_RLRPCS_s_I_002} of $\tsc{s}_{i,M_-,M_+}$ \defref{Def_AELRP_s_EPR_ss_PR_001}
to the polynomial reconstruction $p_{R_1,M_-,M_+}(x_i+\xi\Delta x;x_i,\Delta x;f)$ on the big stencil $\tsc{s}_{i,M_-,M_+}$,
we will first examine \lemref{Lem_RLRPCS_s_RCsSs_ss_Ks1_001} the elementary subdivision of $\tsc{s}_{i,M_-,M_+}$ \defref{Def_AELRP_s_EPR_ss_PR_001} into the
substencils $\tsc{s}_{i,M_--1,M_+}$ (which omits the leftmost point $i-M_-$) and $\tsc{s}_{i,M_-,M_+-1}$ (which omits the rightmost point $i+M_+$).
To obtain the general result for $\sigma_{R_1,M_-,M_+,1,0}(\xi)$ and $\sigma_{R_1,M_-,M_+,1,1}(\xi)$ \lemref{Lem_RLRPCS_s_RCsSs_ss_Ks1_001}
we need to show that the leading terms of the approximation error \eqref{Eq_Prp_AELRP_s_EPR_ss_AELPR_002_001a} of the Lagrange reconstructing polynomial,
on 2 overlapping stencils of equal length, but shifted by 1 cell, are different.
Since the error-expansion \eqref{Eq_Prp_AELRP_s_EPR_ss_AELPR_002_001a} polynomials \eqref{Eq_Prp_AELRP_s_EPR_ss_AELPR_002_001c}, $\lambda_{R_1,M_--1,M_+,M}(\xi)$ and $\lambda_{R_1,M_-,M_+-1,M}(\xi)$,
are of degree $M$ \cite[fn8, p. 294, Proposition 4.7]{Gerolymos_2011a},
they can be projected on the basis $\left\{\alpha_{R_1,M_-,M_+,\ell}(\xi),\ell\in\{-M_-,\cdots,M_+\}\right\}$ of $\mathbb{R}_M[\xi]$ \prpref{Prp_RLRPCS_s_FPLIR_ss_RPsFPs_002},
and the same projection is possible for the polynomials
$\left\{\alpha_{R_1,M_--1,M_+,\ell}(\xi),\ell\in\{-M_-+1,\cdots,M_+\}\right\}\in\mathbb{R}_{M-1}[\xi]\subset\mathbb{R}_M[\xi]$ and
$\left\{\alpha_{R_1,M_-,M_+-1,\ell}(\xi),\ell\in\{-M_-,\cdots,M_+-1\}\right\}\in\mathbb{R}_{M-1}[\xi]\subset\mathbb{R}_M[\xi]$.
%-----------------------------------------------------------------------------------------------------------------------------------
%
\begin{proposition}[{\rm Identities on $\alpha_{R_1,M_-,M_+,\ell}(\xi)$ \eqref{Eq_Prp_AELRP_s_EPR_ss_PR_001_001g} and $\lambda_{R_1,M_-,M_+,n}(\xi)$ \eqref{Eq_Prp_AELRP_s_EPR_ss_PR_001_001g}}]
\label{Prp_RLRPCS_s_FPLIR_ss_SICFPLR_001}
Assume the conditions and definitions of \prprefnp{Prp_RLRPCS_s_RB_001},
and consider the stencil $\tsc{s}_{i,M_-,M_+}$ \defref{Def_AELRP_s_EPR_ss_PR_001} and its substencils \defref{Def_RLRPCS_s_I_002}
$\tsc{s}_{i,M_-,M_+-1}$ and $\tsc{s}_{i,M_--1,M_+}$.
\begin{subequations}
                                                                                                       \label{Eq_Prp_RLRPCS_s_FPLIR_ss_SICFPLR_001_001}
The following identities hold $\forall \xi\in{\mathbb R}$
\begin{alignat}{6}
\alpha_{R_1,M_--1,M_+,M_+}(\xi)=&(-1)^{M-1}\;\alpha_{R_1,M_-,M_+-1,-M_-}(\xi)
                                                                                                       \label{Eq_Prp_RLRPCS_s_FPLIR_ss_SICFPLR_001_001a}\\
\alpha_{R_1,M_-,M_+-1,\ell}(\xi)\neq&\alpha_{R_1,M_--1,M_+,\ell}(\xi)\qquad\forall\;\ell\in\{-M_-+1,\cdots,M_+-1\}
                                                                                                       \label{Eq_Prp_RLRPCS_s_FPLIR_ss_SICFPLR_001_001b}\\
\lambda_{R_1,M_--1,M_+,M}(\xi)=&(-1)^{M-1}\;\alpha_{R_1,M_-,M_+,-M_-}(\xi)
                                                                                                       \label{Eq_Prp_RLRPCS_s_FPLIR_ss_SICFPLR_001_001c}\\
\lambda_{R_1,M_-,M_+-1,M}(\xi)=&-\alpha_{R_1,M_-,M_+,M_+}(\xi)
                                                                                                       \label{Eq_Prp_RLRPCS_s_FPLIR_ss_SICFPLR_001_001d}\\
0_{\mathbb{R}_{M-1}[\xi]}(\xi)\stackrel{\eqref{Eq_Lem_RLRPCS_s_FPLIR_ss_RPsFPs_001_001b}}{\neq}\alpha_{R_1,M_--1,M_+,M_+}(\xi)=&\lambda_{R_1,M_--1,M_+,M}(\xi)-\lambda_{R_1,M_-,M_+-1,M}(\xi)
                                                                                                       \label{Eq_Prp_RLRPCS_s_FPLIR_ss_SICFPLR_001_001e}
\end{alignat}
\end{subequations}
\end{proposition}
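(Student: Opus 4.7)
The plan is to reduce each of the identities \eqref{Eq_Prp_RLRPCS_s_FPLIR_ss_SICFPLR_001_001a}, \eqref{Eq_Prp_RLRPCS_s_FPLIR_ss_SICFPLR_001_001c}, \eqref{Eq_Prp_RLRPCS_s_FPLIR_ss_SICFPLR_001_001d} and \eqref{Eq_Prp_RLRPCS_s_FPLIR_ss_SICFPLR_001_001e} to its Lagrange-interpolation counterpart, which is a purely algebraic identity on products of linear factors verifiable by direct manipulation, and then transport the result to the reconstruction setting by applying the operator $R_{(1;1)}$, using \prpref{Prp_RLRPCS_s_FPLIR_ss_RPsFPs_001} for the $\alpha$-part and an auxiliary companion identity for the $\lambda$-part. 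The inequality \eqref{Eq_Prp_RLRPCS_s_FPLIR_ss_SICFPLR_001_001b} is handled separately by a root-localization argument.

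First I would establish the auxiliary identity $\lambda_{R_1,\cdot,\cdot,M}(\xi) = [R_{(1;1)}(\lambda_{I,\cdot,\cdot,M})](\xi)$ whenever the underlying stencil has length $M-1$ (so that the leading-order index in \eqref{Eq_Prp_AELRP_s_EPR_ss_AELPR_002_001a} equals $M$). I would prove it by applying both the interpolating and the reconstructing representations to the monomial $p(x) = x^M/M!$. For this $p$ of exact degree $M$, the reconstruction pair $h := [R_{(1;\Delta x)}(p)](x)$ is itself a polynomial of degree $M$ with $h^{(M)}(x)\equiv 1$ by \rmkref{Rmk_RLRPCS_s_RB_002}, while all higher derivatives of both $p$ and $h$ vanish; the error series \eqref{Eq_Prp_AELRP_s_EPR_ss_AELPR_002_001a} therefore collapses to its single leading term, giving the \emph{exact} identity $p_{R_1,M_--1,M_+}(x_i+\xi\Delta x;x_i,\Delta x;p) - h(x_i+\xi\Delta x) = \lambda_{R_1,M_--1,M_+,M}(\xi)\,\Delta x^M$. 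Applying $R_{(1;\Delta x)}$ to the analogous exact identity at the interpolation level and performing the change of variable $\zeta = x_i+\eta\,\Delta x$ in the sliding-average integral then reproduces $R_{(1;1)}(\lambda_{I,M_--1,M_+,M})$ on the right-hand side.

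Next I would evaluate all quantities on the interpolation side explicitly. A short denominator telescoping in \eqref{Eq_Rmk_RLRPCS_s_RB_001_001a} gives
\begin{align*}
\alpha_{I,M_--1,M_+,M_+}(\xi) &= \dfrac{1}{(M-1)!}\prod_{k=-M_-+1}^{M_+-1}(\xi-k),&
\alpha_{I,M_-,M_+-1,-M_-}(\xi) &= \dfrac{(-1)^{M-1}}{(M-1)!}\prod_{k=-M_-+1}^{M_+-1}(\xi-k),\\
\alpha_{I,M_-,M_+,-M_-}(\xi)  &= \dfrac{(-1)^M}{M!}\prod_{k=-M_-+1}^{M_+}(\xi-k),&
\alpha_{I,M_-,M_+,M_+}(\xi)   &= \dfrac{1}{M!}\prod_{k=-M_-}^{M_+-1}(\xi-k),
\end{align*}
and the classical Lagrange remainder (equivalently, the same monomial trick applied to Lagrange interpolation) yields $\lambda_{I,M_--1,M_+,M}(\xi) = -\tfrac{1}{M!}\prod_{k=-M_-+1}^{M_+}(\xi-k)$ and $\lambda_{I,M_-,M_+-1,M}(\xi) = -\tfrac{1}{M!}\prod_{k=-M_-}^{M_+-1}(\xi-k)$. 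Direct inspection of these closed forms instantly delivers the interpolation analogues of \eqref{Eq_Prp_RLRPCS_s_FPLIR_ss_SICFPLR_001_001a}, \eqref{Eq_Prp_RLRPCS_s_FPLIR_ss_SICFPLR_001_001c} and \eqref{Eq_Prp_RLRPCS_s_FPLIR_ss_SICFPLR_001_001d}, while the analogue of \eqref{Eq_Prp_RLRPCS_s_FPLIR_ss_SICFPLR_001_001e} drops out upon factoring the common product $\prod_{k=-M_-+1}^{M_+-1}(\xi-k)$ from $\lambda_{I,M_--1,M_+,M}(\xi)-\lambda_{I,M_-,M_+-1,M}(\xi)$ and using $(\xi+M_-)-(\xi-M_+)=M$. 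Applying the linear operator $R_{(1;1)}$ to each of these four interpolation identities and invoking \prpref{Prp_RLRPCS_s_FPLIR_ss_RPsFPs_001} together with the auxiliary identity of the previous paragraph delivers \eqref{Eq_Prp_RLRPCS_s_FPLIR_ss_SICFPLR_001_001a}, \eqref{Eq_Prp_RLRPCS_s_FPLIR_ss_SICFPLR_001_001c}, \eqref{Eq_Prp_RLRPCS_s_FPLIR_ss_SICFPLR_001_001d} and \eqref{Eq_Prp_RLRPCS_s_FPLIR_ss_SICFPLR_001_001e}; the non-triviality $\alpha_{R_1,M_--1,M_+,M_+}(\xi)\neq 0_{\mathbb{R}_{M-1}[\xi]}(\xi)$ in \eqref{Eq_Prp_RLRPCS_s_FPLIR_ss_SICFPLR_001_001e} is already furnished by \lemref{Lem_RLRPCS_s_FPLIR_ss_RPsFPs_001}.

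For \eqref{Eq_Prp_RLRPCS_s_FPLIR_ss_SICFPLR_001_001b} I would invoke the root-localization result \prpref{Prp_RLRPCS_s_FPLIR_ss_RFPs_001}: for any $\ell\in\{-M_-+1,\cdots,M_+-1\}$, the polynomial $\alpha_{R_1,M_-,M_+-1,\ell}(\xi)$ has a real root in $(-M_--\tfrac{1}{2},-M_-+\tfrac{1}{2})$ and, being of degree $M-1$ with all $M-1$ roots confined to intervals centered on the remaining nodes of $\tsc{s}_{i,M_-,M_+-1}$, cannot have a root in $(M_+-\tfrac{1}{2},M_++\tfrac{1}{2})$; conversely $\alpha_{R_1,M_--1,M_+,\ell}(\xi)$ has a root in $(M_+-\tfrac{1}{2},M_++\tfrac{1}{2})$ but none in $(-M_--\tfrac{1}{2},-M_-+\tfrac{1}{2})$. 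Their zero sets being incompatible, the two polynomials must differ.

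The main obstacle is the preliminary transfer step for $\lambda$, because the error expansion \eqref{Eq_Prp_AELRP_s_EPR_ss_AELPR_002_001a} is only asymptotic in $\Delta x$, so interchanging the sliding-average integral with the Taylor expansion term-by-term would otherwise require extra justification. The monomial trick $p(x)=x^M/M!$ neatly bypasses this subtlety: making the series collapse to its single leading term converts what would be an asymptotic statement into an exact polynomial identity, after which the $R_{(1;1)}$ transfer is a one-line computation.
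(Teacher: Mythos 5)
Your proposal is correct, but it takes a genuinely different route from the paper's proof. The paper works entirely at the reconstruction level: it applies the exact representation identities for polynomial data to the specific degree-$(M-1)$ and degree-$M$ test polynomials $\prod_m(x-x_i-m\Delta x)$, which vanish at all but one (or none) of the stencil points, thereby isolating individual coefficients $\alpha_{R_1,\cdot,\cdot,\pm M_\pm}(\xi)$ and relating them to $\mu_{R_1,\cdot,\cdot,M}(\xi)=\lambda_{R_1,\cdot,\cdot,M}(\xi)$; identity \eqref{Eq_Prp_RLRPCS_s_FPLIR_ss_SICFPLR_001_001b} is obtained there in a sharper form, namely an explicit nonzero rational multiple relating $\alpha_{R_1,M_-,M_+-1,k}(\xi)-\alpha_{R_1,M_--1,M_+,k}(\xi)$ to $\alpha_{R_1,M_-,M_+-1,-M_-}(\xi)$. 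You instead prove everything at the interpolation level, where the closed-form factorizations of $\alpha_{I,\cdot,\cdot,\ell}(\xi)$ and of the leading Lagrange error polynomial make the four identities one-line verifications, and then transport them through the linear bijection $R_{(1;1)}$ via \prpref{Prp_RLRPCS_s_FPLIR_ss_RPsFPs_001} together with your auxiliary identity $\lambda_{R_1,\cdot,\cdot,M}=R_{(1;1)}(\lambda_{I,\cdot,\cdot,M})$; the monomial trick $p(x)=x^M/M!$ that establishes this auxiliary identity is sound (it collapses the asymptotic error expansion to an exact polynomial identity, exactly as the paper itself does when it feeds polynomial data into \eqref{Eq_Prp_AELRP_s_EPR_ss_AELPR_001_001c}), and the resulting lemma is a reusable statement in the spirit of \rmkrefnp{Rmk_RLRPCS_s_RB_002} that the paper never states explicitly. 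Your root-localization argument for \eqref{Eq_Prp_RLRPCS_s_FPLIR_ss_SICFPLR_001_001b} is also valid --- $\alpha_{R_1,M_-,M_+-1,\ell}(\xi)$ has a root in $(-M_--\tfrac{1}{2},-M_-+\tfrac{1}{2})$ by \eqref{Eq_Prp_RLRPCS_s_FPLIR_ss_RFPs_001_001a} while $\alpha_{R_1,M_--1,M_+,\ell}(\xi)$ has none there by \eqref{Eq_Prp_RLRPCS_s_FPLIR_ss_RFPs_001_001b} --- though it yields only the inequality of the two polynomials, whereas the paper's computation \eqref{Eq_Prp_RLRPCS_s_FPLIR_ss_SICFPLR_001_003c} quantifies their difference. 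Overall your approach buys shorter, more transparent verifications at the cost of one extra transfer lemma; the paper's approach stays self-contained within the reconstruction framework and produces slightly stronger intermediate formulas.
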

%
%-----------------------------------------------------------------------------------------------------------------------------------
%-----------------------------------------------------------------------------------------------------------------------------------
%
\begin{proof}
Let $p(x)\in \mathbb{R}_M[x]$ be a polynomial of degree $\leq M$.
Then by \cite[Theorem 5.1, p. 296]{Gerolymos_2011a} its reconstruction pair $q(x):=[R_{(1;\Delta x)}(p)](x)\in \mathbb{R}_M[x]$,
and ${\rm deg}(q)={\rm deg}(p)$ \rmkref{Rmk_RLRPCS_s_RB_002}.

\begin{subequations}
                                                                                                       \label{Eq_Prp_RLRPCS_s_FPLIR_ss_SICFPLR_001_002}
\underline{Proof of \eqref{Eq_Prp_RLRPCS_s_FPLIR_ss_SICFPLR_001_001a}:} By \prprefnp{Prp_RLRPCS_s_RB_001}, $\forall p(x)\in \mathbb{R}_{M-1}[x]$,
its reconstructing polynomials \defref{Def_AELRP_s_RPERR_ss_RP_002} on the 2 stencils
$\tsc{s}_{i,M_-,M_+-1}:=\{i-M_-,\cdots,i+M_+-1\}$ and $\tsc{s}_{i,M_--1,M_+}:=\{i-M_-+1,\cdots,i+M_+\}$, which contain the same number of $M$ points
but are shifted by 1 cell, are exactly equal to the reconstruction pair of $p(x)$ \defref{Def_AELRP_s_RPERR_ss_RP_002} $q(x):=[R_{(1;\Delta x)}(p)](x)$,
because of \eqref{Eq_Prp_AELRP_s_EPR_ss_AELPR_001_001c}, since $p(x)\in \mathbb{R}_{M-1}[x]\Longrightarrow p^{(s)}(x)=0_{\mathbb{R}_{M-1}[x]}(x)\;\forall s\geq M$.
Hence, by \eqref{Eq_Prp_AELRP_s_EPR_ss_PR_001_001d},
\begin{alignat}{6}
 &                                                         p_{R_1,M_-  ,M_+-1}(x_i+\xi\Delta x;x_i,\Delta x;p)\stackrel{\eqref{Eq_Prp_AELRP_s_EPR_ss_PR_001_001d}}{=}&
                                                           \sum_{\ell=-M_-  }^{M_+-1} \alpha_{R_1,M_-  ,M_+-1,\ell}(\xi)\;p(x_i+\ell\;\Delta x)\stackrel{\eqref{Eq_Prp_AELRP_s_EPR_ss_AELPR_001_001c}}{=}
                                                                                                       \notag\\
\stackrel{\eqref{Eq_Prp_AELRP_s_EPR_ss_AELPR_001_001c}}{=}&p_{R_1,M_--1,M_+  }(x_i+\xi\Delta x;x_i,\Delta x;p)\stackrel{\eqref{Eq_Prp_AELRP_s_EPR_ss_PR_001_001d}}{=}&
                                                           \sum_{\ell=-M_-+1}^{M_+  } \alpha_{R_1,M_--1,M_+  ,\ell}(\xi)\;p(x_i+\ell\;\Delta x)\stackrel{\eqref{Eq_Prp_AELRP_s_EPR_ss_AELPR_001_001c}}{=}
                                                                                                       \notag\\
\stackrel{\eqref{Eq_Prp_AELRP_s_EPR_ss_AELPR_001_001c}}{=}&q(x_i+\xi\Delta x):=[R_{(1;\Delta x)}(p)](x_i+\xi\Delta x)&\qquad\qquad\left\{\begin{array}{l}\forall p(x)\in \mathbb{R}_{M-1}[x]\\
                                                                                                                                                         \forall \xi\in{\mathbb R}\\
                                                                                                                                                         \forall x_i\in{\mathbb R}\\
                                                                                                                                                         \forall \Delta x\in{\mathbb R}_{>0}\\\end{array}\right.
                                                                                                        \label{Eq_Prp_RLRPCS_s_FPLIR_ss_SICFPLR_001_002a}
\end{alignat}
whence
\begin{alignat}{6}
\begin{array}{rr}
                                                                           \alpha_{R_1,M_-  ,M_+-1,-M_-}(\xi)\;p(x_i-M_-\;\Delta x)+& \\
\displaystyle
\sum_{\ell=-M_-+1}^{M_+-1}\left(\alpha_{R_1,M_-  ,M_+-1,\ell}(\xi)-\alpha_{R_1,M_--1,M_+  ,\ell}(\xi)\right)\;p(x_i+\ell\;\Delta x)-& \\
                                                                            \alpha_{R_1,M_--1,M_+  ,M_+}(\xi)\;p(x_i+M_+\;\Delta x)=&0\\\end{array}
                                                          \qquad\qquad\left\{\begin{array}{l}\forall p(x)\in \mathbb{R}_{M-1}[x]\\
                                                                                             \forall \xi\in{\mathbb R}\\
                                                                                             \forall x_i\in{\mathbb R}\\
                                                                                             \forall \Delta x\in{\mathbb R}_{>0}\\\end{array}\right.
                                                                                                       \label{Eq_Prp_RLRPCS_s_FPLIR_ss_SICFPLR_001_002b}
\end{alignat}
Applying \eqref{Eq_Prp_RLRPCS_s_FPLIR_ss_SICFPLR_001_002b} to the polynomial
\begin{alignat}{6}
\mathbb{R}_{M-1}[x]\ni\prod_{m=-M_-+1}^{M_+-1}(x-x_i-m\Delta x)=0\quad\forall x\in\left\{x_i-(M_--1)\Delta x,\cdots,x_i+(M_+-1)\Delta x\right\}
                                                                                                        \label{Eq_Prp_RLRPCS_s_FPLIR_ss_SICFPLR_001_002c}
\end{alignat}
yields
\begin{alignat}{6}
                         \alpha_{R_1,M_-  ,M_+-1,-M_-}(\xi)\;\left(\prod_{m=-M_-+1}^{M_+-1}(-M_--m)\right)+&
                                                                                                       \notag\\
                         \sum_{\ell=-M_-+1}^{M_+-1}\left(\alpha_{R_1,M_-  ,M_+-1,\ell}(\xi)-\alpha_{R_1,M_--1,M_+  ,\ell}(\xi)\right)\;
                                                   \underbrace{\left(\prod_{m=-M_-+1}^{M_+-1}(\ell-m)\right)}_{=0\;\forall\;\ell\in\{-M_-+1,\cdots,M_+-1\}}-&
                                                                                                       \notag\\
                         \alpha_{R_1,M_--1,M_+  ,+M_+}(\xi)\;\left(\prod_{m=-M_-+1}^{M_+-1}(+M_+-m)\right)=&0\qquad\qquad\left\{\begin{array}{l}\forall \xi\in{\mathbb R}\\
                                                                                                                                                \forall \Delta x\in{\mathbb R}_{>0}\\\end{array}\right.
                                                                                                       \label{Eq_Prp_RLRPCS_s_FPLIR_ss_SICFPLR_001_002d}
\end{alignat}
{\em ie}
\begin{alignat}{6}
\alpha_{R_1,M_-  ,M_+-1,-M_-}(\xi)\;\left(\prod_{m=-M_-+1}^{M_+-1}(-M_--m)\right)=
\alpha_{R_1,M_--1,M_+  ,+M_+}(\xi)\;\left(\prod_{m=-M_-+1}^{M_+-1}(+M_+-m)\right)  \qquad\qquad\forall \xi\in{\mathbb R}
                                                                                                       \label{Eq_Prp_RLRPCS_s_FPLIR_ss_SICFPLR_001_002e}
\end{alignat}
Since
\begin{alignat}{6}
\prod_{m=-M_-+1}^{M_+-1}(+M_+-m) \stackrel{k:=M_+-m}{=}&\prod_{k=M-1}^{1}k   =&(M-1)!
                                                                                                       \label{Eq_Prp_RLRPCS_s_FPLIR_ss_SICFPLR_001_002f}\\
\prod_{m=-M_-+1}^{M_+-1}(-M_--m)\stackrel{k:=-M_--m}{=}&\prod_{k=-1}^{-(M-1)}k=&(-1)^{M-1}\;(M-1)!
                                                                                                       \label{Eq_Prp_RLRPCS_s_FPLIR_ss_SICFPLR_001_002g}
\end{alignat}
using \eqrefsab{Eq_Prp_RLRPCS_s_FPLIR_ss_SICFPLR_001_002f}{Eq_Prp_RLRPCS_s_FPLIR_ss_SICFPLR_001_002g} in
\eqref{Eq_Prp_RLRPCS_s_FPLIR_ss_SICFPLR_001_002d} proves \eqref{Eq_Prp_RLRPCS_s_FPLIR_ss_SICFPLR_001_001a}.
\end{subequations}

\begin{subequations}
                                                                                                       \label{Eq_Prp_RLRPCS_s_FPLIR_ss_SICFPLR_001_003}
\underline{Proof of \eqref{Eq_Prp_RLRPCS_s_FPLIR_ss_SICFPLR_001_001b}:} Applying \eqref{Eq_Prp_RLRPCS_s_FPLIR_ss_SICFPLR_001_002b}, successively for $k\in\{-M_-+1,\cdots,M_+-1\}$, to the polynomials
\begin{alignat}{6}
\mathbb{R}_{M-1}[x]\ni
\prod_{\substack{m=-M_-+1\\
                 m\neq k}  }^{M_+}(x-x_i-m\Delta x)=0\quad\forall x\in\left\{x_i-(M_--1)\Delta x,\cdots,x_i+M_+\Delta x\right\}\setminus\{x_i+k\Delta x\}
                                                                                                        \label{Eq_Prp_RLRPCS_s_FPLIR_ss_SICFPLR_001_003a}
\end{alignat}
yields
\begin{alignat}{6}
\left(\alpha_{R_1,M_-  ,M_+-1,k}(\xi)-\alpha_{R_1,M_--1,M_+  ,k}(\xi)\right)\prod_{\substack{m=-M_-+1\\
                                                                                             m\neq k}  }^{M_+}(k-m)=-\alpha_{R_1,M_-  ,M_+-1,-M_-}(\xi)\prod_{\substack{m=-M_-+1\\
                                                                                                                                                                         m\neq -M_-}  }^{M_+}(-M_--m)
                                                                                                       \notag\\
\forall\;k\in\{-M_-+1,\cdots,M_+-1\}
                                                                                                        \label{Eq_Prp_RLRPCS_s_FPLIR_ss_SICFPLR_001_003b}
\end{alignat}
and using \eqref{Eq_Prp_RLRPCS_s_FPLIR_ss_SICFPLR_001_002g}
\begin{alignat}{6}
\alpha_{R_1,M_-  ,M_+-1,k}(\xi)-\alpha_{R_1,M_--1,M_+  ,k}(\xi)=-\dfrac{(-1)^M\;M!}
                                                                       {\displaystyle\prod_{\substack{n=k+M_--1\\
                                                                                                      n\neq 0}  }^{k-M_+}n}\;\alpha_{R_1,M_-  ,M_+-1,-M_-}(\xi)\stackrel{\text{\eqref{Eq_Lem_RLRPCS_s_FPLIR_ss_RPsFPs_001_001b}}}{\neq}0_{\mathbb{R}_{M-1}[\xi]}(\xi)
                                                                                                       \notag\\
\forall\;k\in\{-M_-+1,\cdots,M_+-1\}
                                                                                                        \label{Eq_Prp_RLRPCS_s_FPLIR_ss_SICFPLR_001_003c}
\end{alignat}
proving \eqref{Eq_Prp_RLRPCS_s_FPLIR_ss_SICFPLR_001_001b}
by \eqref{Eq_Lem_RLRPCS_s_FPLIR_ss_RPsFPs_001_001b}.\footnote{\label{ff_Prp_RLRPCS_s_FPLIR_ss_SICFPLR_001_001}\begin{alignat}{6}\prod_{\substack{m=-M_-+1\\
                                                                                                                                                  m\neq k}  }^{M_+}(k-m)\stackrel{n:=k-m}{=}
                                                                                                                                \prod_{\substack{n=k+M_--1\\
                                                                                                                                                   n\neq 0}  }^{k-M_+}n\neq0\notag\end{alignat}}
\end{subequations}

\begin{subequations}
                                                                                                       \label{Eq_Prp_RLRPCS_s_FPLIR_ss_SICFPLR_001_004}
\underline{Proof of \eqrefsab{Eq_Prp_RLRPCS_s_FPLIR_ss_SICFPLR_001_001c}{Eq_Prp_RLRPCS_s_FPLIR_ss_SICFPLR_001_001d}:} Notice first that by \eqref{Eq_Prp_AELRP_s_EPR_ss_AELPR_002_001c},
for $n=M+1\stackrel{\eqref{Eq_Def_AELRP_s_EPR_ss_PR_001_001b}}{=}M_-+M_++1$,
\begin{alignat}{6}
\lambda_{R_1,M_-,M_+,M+1}(\xi)\stackrel{\eqref{Eq_Prp_AELRP_s_EPR_ss_AELPR_002_001c}}{=}&\sum_{\ell=0}^{M+1-M-1}\mu_{R_1,M_-,M_+,M+1-\ell}(\xi)
                                                                                                               \dfrac{(-1)^{\ell+1}}
                                                                                                                     {(\ell+1)!    }\left((\xi-\tfrac{1}{2})^{\ell+1}
                                                                                                                                         -(\xi+\tfrac{1}{2})^{\ell+1}\right)
                                                                                                       \notag\\
                                                                                      = &\mu_{R_1,M_-,M_+,M+1}(\xi)\dfrac{-1}
                                                                                                                         {1!}\left( \xi-\tfrac{1}{2}
                                                                                                                                  - \xi-\tfrac{1}{2}\right)
                                                                                                       \notag\\
                                                                                      =& \mu_{R_1,M_-,M_+,M+1}(\xi)\qquad\left\{\begin{array}{l}\forall\xi\in{\mathbb R}\\
                                                                                                                                                \forall M_\pm\in{\mathbb Z}: M:=M_-+M_+\geq1\\\end{array}\right.
                                                                                                       \label{Eq_Prp_RLRPCS_s_FPLIR_ss_SICFPLR_001_004a}
\end{alignat}
where $\mu_{R_1,M_-,M_+,n}(\xi)$ is defined by \eqref{Eq_Prp_AELRP_s_EPR_ss_AELPR_001_001f}. By \eqref{Eq_Prp_RLRPCS_s_FPLIR_ss_SICFPLR_001_004a}
\begin{alignat}{6}
\lambda_{R_1,M_-,M_+-1,M}(\xi)\stackrel{\eqref{Eq_Prp_RLRPCS_s_FPLIR_ss_SICFPLR_001_004a}}{=}&\mu_{R_1,M_-,M_+-1,M}(\xi)
                                                                                                       \label{Eq_Prp_RLRPCS_s_FPLIR_ss_SICFPLR_001_004b}\\
\lambda_{R_1,M_--1,M_+,M}(\xi)\stackrel{\eqref{Eq_Prp_RLRPCS_s_FPLIR_ss_SICFPLR_001_004a}}{=}&\mu_{R_1,M_--1,M_+,M}(\xi)
                                                                                              \qquad\left\{\begin{array}{l}\forall\xi\in{\mathbb R}\\
                                                                                        \forall M_\pm\in{\mathbb Z}: M:=M_-+M_+\geq2\\\end{array}\right.
                                                                                                       \label{Eq_Prp_RLRPCS_s_FPLIR_ss_SICFPLR_001_004c}
\end{alignat}
Since $\forall p(x)\in \mathbb{R}_M[x]\Longrightarrow p^{(n)}(x)=0_{\mathbb{R}_M[x]}(x)\;\forall n\geq M+1$,we have by \eqrefsab{Eq_Prp_AELRP_s_EPR_ss_PR_001_001d}{Eq_Prp_AELRP_s_EPR_ss_AELPR_001_001c},
\begin{alignat}{6}
\sum_{\ell=-M_-  }^{M_+-1} \alpha_{R_1,M_-  ,M_+-1,\ell}(\xi)\;p(x_i+\ell\;\Delta x)\stackrel{\eqrefsab{Eq_Prp_AELRP_s_EPR_ss_PR_001_001d}{Eq_Prp_AELRP_s_EPR_ss_AELPR_001_001c}}{=}&
                                                                                     q(x_i+\xi\Delta x)+\mu_{R_1,M_-  ,M_+-1,M}(\xi)\Delta x^M\;p^{(M)}(x_i)
                                                                                                       \label{Eq_Prp_RLRPCS_s_FPLIR_ss_SICFPLR_001_004d}\\
\sum_{\ell=-M_-  }^{M_+  } \alpha_{R_1,M_-  ,M_+  ,\ell}(\xi)\;p(x_i+\ell\;\Delta x)\stackrel{\eqrefsab{Eq_Prp_AELRP_s_EPR_ss_PR_001_001d}{Eq_Prp_AELRP_s_EPR_ss_AELPR_001_001c}}{=}&
                                                                                     q(x_i+\xi\Delta x)
                                                        \qquad\qquad\left\{\begin{array}{l}\forall p(x)\in \mathbb{R}_M[x]\\
                                                                                           q(x)=[R_{(1;\Delta x)}(p)](x)~~~\text{\eqref{Eq_Rmk_RLRPCS_s_RB_002_001}}\\
                                                                                             \forall \xi\in{\mathbb R}\\
                                                                                             \forall x_i\in{\mathbb R}\\
                                                                                             \forall \Delta x\in{\mathbb R}_{>0}\\\end{array}\right.
                                                                                                       \label{Eq_Prp_RLRPCS_s_FPLIR_ss_SICFPLR_001_004e}\\
\sum_{\ell=-M_--1}^{M_+  } \alpha_{R_1,M_--1,M_+  ,\ell}(\xi)\;p(x_i+\ell\;\Delta x)\stackrel{\eqrefsab{Eq_Prp_AELRP_s_EPR_ss_PR_001_001d}{Eq_Prp_AELRP_s_EPR_ss_AELPR_001_001c}}{=}&
                                                                                     q(x_i+\xi\Delta x)+\mu_{R_1,M_--1,M_+  ,M}(\xi)\Delta x^M\;p^{(M)}(x_i)
                                                                                                        \label{Eq_Prp_RLRPCS_s_FPLIR_ss_SICFPLR_001_004f}
\end{alignat}
for the reconstructing polynomials
$p_{R_1,M_-,M_+-1}(x_i+\xi\Delta x;x_i,\Delta x;p)$ \eqref{Eq_Prp_RLRPCS_s_FPLIR_ss_SICFPLR_001_004d},
$p_{R_1,M_-,M_+}(x_i+\xi\Delta x;x_i,\Delta x;p)$ \eqref{Eq_Prp_RLRPCS_s_FPLIR_ss_SICFPLR_001_004e}, and
$p_{R_1,M_--1,M_+}(x_i+\xi\Delta x;x_i,\Delta x;p)$ \eqref{Eq_Prp_RLRPCS_s_FPLIR_ss_SICFPLR_001_004f}.
Consider the polynomials
\begin{alignat}{6}
\mathbb{R}_M[x]\ni\prod_{m=-M_-}^{M_+-1}(x-x_i-m\Delta x)=0\quad\forall x\in\left\{x_i-M_-\Delta x,\cdots,x_i+(M_+-1)\Delta x\right\}
                                                                                                        \label{Eq_Prp_RLRPCS_s_FPLIR_ss_SICFPLR_001_004g}\\
\mathbb{R}_M[x]\ni\prod_{m=-M_-+1}^{M_+}(x-x_i-m\Delta x)=0\quad\forall x\in\left\{x_i-(M_--1)\Delta x,\cdots,x_i+M_+\Delta x\right\}
                                                                                                        \label{Eq_Prp_RLRPCS_s_FPLIR_ss_SICFPLR_001_004h}
\end{alignat}
Obviously,
\begin{alignat}{6}
\dfrac{d^M}{dx^M}\left(\prod_{m=-M_-}^{M_+-1}(x-x_i-m\Delta x)\right)=
\dfrac{d^M}{dx^M}\left(\prod_{m=-M_-+1}^{M_+}(x-x_i-m\Delta x)\right)=M!\quad\left\{\begin{array}{l}\forall x\in{\mathbb R}\\
                                                                                                    \forall x_i\in{\mathbb R}\\
                                                                                                    \forall \Delta x\in{\mathbb R}_{>0}\\\end{array}\right.
                                                                                                        \label{Eq_Prp_RLRPCS_s_FPLIR_ss_SICFPLR_001_004i}
\end{alignat}
Applying \eqref{Eq_Prp_RLRPCS_s_FPLIR_ss_SICFPLR_001_004e} to the polynomials \eqrefsab{Eq_Prp_RLRPCS_s_FPLIR_ss_SICFPLR_001_004g}{Eq_Prp_RLRPCS_s_FPLIR_ss_SICFPLR_001_004h},
\eqref{Eq_Prp_RLRPCS_s_FPLIR_ss_SICFPLR_001_004d} to the polynomial \eqref{Eq_Prp_RLRPCS_s_FPLIR_ss_SICFPLR_001_004g}, and
\eqref{Eq_Prp_RLRPCS_s_FPLIR_ss_SICFPLR_001_004f} to the polynomial \eqref{Eq_Prp_RLRPCS_s_FPLIR_ss_SICFPLR_001_004h}, we have, using \eqref{Eq_Prp_RLRPCS_s_FPLIR_ss_SICFPLR_001_004i},
\begin{alignat}{6}
\eqrefsab{Eq_Prp_RLRPCS_s_FPLIR_ss_SICFPLR_001_004e}{Eq_Prp_RLRPCS_s_FPLIR_ss_SICFPLR_001_004g}\Longrightarrow&\;
\alpha_{R_1,M_-,M_+,+M_+}(\xi)\;\Delta x^M\;\prod_{m=-M_-}^{M_+-1}( M_+-m)=R_{(1;\Delta x)}\left(\prod_{m=-M_-}^{M_+-1}(x-x_i-m\Delta x)\right)&
                                                                                                       \label{Eq_Prp_RLRPCS_s_FPLIR_ss_SICFPLR_001_004j}\\
\eqrefsab{Eq_Prp_RLRPCS_s_FPLIR_ss_SICFPLR_001_004e}{Eq_Prp_RLRPCS_s_FPLIR_ss_SICFPLR_001_004h}\Longrightarrow&\;
\alpha_{R_1,M_-,M_+,-M_-}(\xi)\;\Delta x^M\;\prod_{m=-M_-+1}^{M_+}(-M_--m)=R_{(1;\Delta x)}\left(\prod_{m=-M_-+1}^{M_+}(x-x_i-m\Delta x)\right)&
                                                                                                       \label{Eq_Prp_RLRPCS_s_FPLIR_ss_SICFPLR_001_004k}\\
\eqrefsab{Eq_Prp_RLRPCS_s_FPLIR_ss_SICFPLR_001_004d}{Eq_Prp_RLRPCS_s_FPLIR_ss_SICFPLR_001_004g}\Longrightarrow&\;
0=R_{(1;\Delta x)}\left(\prod_{m=-M_-}^{M_+-1}(x-x_i-m\Delta x)\right)+\mu_{R_1,M_-  ,M_+-1,M}(\xi)\Delta x^M\;M!&
                                                                                                       \label{Eq_Prp_RLRPCS_s_FPLIR_ss_SICFPLR_001_004l}\\
\eqrefsab{Eq_Prp_RLRPCS_s_FPLIR_ss_SICFPLR_001_004f}{Eq_Prp_RLRPCS_s_FPLIR_ss_SICFPLR_001_004h}\Longrightarrow&\;
0=R_{(1;\Delta x)}\left(\prod_{m=-M_-+1}^{M_+}(x-x_i-m\Delta x)\right)+\mu_{R_1,M_--1,M_+  ,M}(\xi)\Delta x^M\;M!&
                                                                                                       \label{Eq_Prp_RLRPCS_s_FPLIR_ss_SICFPLR_001_004m}\\
                                                                                          &\forall x\in{\mathbb R}\quad
                                                                                           \xi\;\Delta x:=x-x_i\quad
                                                                                           \forall x_i\in{\mathbb R}\quad
                                                                                           \forall \Delta x\in{\mathbb R}_{>0}&
                                                                                                       \notag
\end{alignat}
and combining \eqref{Eq_Prp_RLRPCS_s_FPLIR_ss_SICFPLR_001_004j} with \eqref{Eq_Prp_RLRPCS_s_FPLIR_ss_SICFPLR_001_004l},
and \eqref{Eq_Prp_RLRPCS_s_FPLIR_ss_SICFPLR_001_004k} with \eqref{Eq_Prp_RLRPCS_s_FPLIR_ss_SICFPLR_001_004m}, we have
\begin{alignat}{6}
\alpha_{R_1,M_-,M_+,+M_+}(\xi)\;\prod_{m=-M_-}^{M_+-1}( M_+-m)\stackrel{\eqrefsab{Eq_Prp_RLRPCS_s_FPLIR_ss_SICFPLR_001_004j}
                                                                                 {Eq_Prp_RLRPCS_s_FPLIR_ss_SICFPLR_001_004l}}{=}&-\mu_{R_1,M_-  ,M_+-1,M}(\xi)\;M!&\quad\forall\xi\in{\mathbb R}
                                                                                                       \label{Eq_Prp_RLRPCS_s_FPLIR_ss_SICFPLR_001_004n}\\
\alpha_{R_1,M_-,M_+,-M_-}(\xi)\;\prod_{m=-M_-+1}^{M_+}(-M_--m)\stackrel{\eqrefsab{Eq_Prp_RLRPCS_s_FPLIR_ss_SICFPLR_001_004k}
                                                                                 {Eq_Prp_RLRPCS_s_FPLIR_ss_SICFPLR_001_004m}}{=}&-\mu_{R_1,M_--1,M_+  ,M}(\xi)\;M!&\quad\forall\xi\in{\mathbb R}
                                                                                                       \label{Eq_Prp_RLRPCS_s_FPLIR_ss_SICFPLR_001_004o}
\end{alignat}
which\footnote{\label{ff_Prp_RLRPCS_s_FPLIR_ss_SICFPLR_001_002}By analogy with \eqrefsab{Eq_Prp_RLRPCS_s_FPLIR_ss_SICFPLR_001_002f}{Eq_Prp_RLRPCS_s_FPLIR_ss_SICFPLR_001_002g} we have
               \begin{alignat}{6}\prod_{m=-M_-}^{M_+-1}(+M_+-m)\stackrel{k:=M_+-m}{=}&\prod_{k=M}^{1}k=M!
                                 \qquad;\qquad
                                 \prod_{m=-M_-+1}^{M_+}(-M_--m)\stackrel{k:=-M_--m}{=}&\prod_{k=-1}^{-M}k=(-1)^M\;M!\notag\end{alignat}}
by \eqrefsab{Eq_Prp_RLRPCS_s_FPLIR_ss_SICFPLR_001_004b}{Eq_Prp_RLRPCS_s_FPLIR_ss_SICFPLR_001_004c}
prove \eqrefsab{Eq_Prp_RLRPCS_s_FPLIR_ss_SICFPLR_001_001c}{Eq_Prp_RLRPCS_s_FPLIR_ss_SICFPLR_001_001d}.
\end{subequations}

\begin{subequations}
                                                                                                       \label{Eq_Prp_RLRPCS_s_FPLIR_ss_SICFPLR_001_005}
\underline{Proof of \eqref{Eq_Prp_RLRPCS_s_FPLIR_ss_SICFPLR_001_001e}:}  Applying \eqref{Eq_Prp_RLRPCS_s_FPLIR_ss_SICFPLR_001_004f} to the polynomial \eqref{Eq_Prp_RLRPCS_s_FPLIR_ss_SICFPLR_001_004g} yields
\begin{alignat}{6}
\alpha_{R_1,M_-1,M_+,+M_+}(\xi)\;\Delta x^M\;\prod_{m=-M_-}^{M_+-1}( M_+-m)\stackrel{\eqrefsab{Eq_Prp_RLRPCS_s_FPLIR_ss_SICFPLR_001_004f}
                                                                                              {Eq_Prp_RLRPCS_s_FPLIR_ss_SICFPLR_001_004g}}{=}R_{(1;\Delta x)}\left(\prod_{m=-M_-}^{M_+-1}(x-x_i-m\Delta x)\right)
                                                                                                                                             +\mu_{R_1,M_--1,M_+  ,M}(\xi)\Delta x^M\;M!
                                                                                                       \label{Eq_Prp_RLRPCS_s_FPLIR_ss_SICFPLR_001_005a}\\
                                                                                           \forall x\in{\mathbb R}\quad
                                                                                           \xi\;\Delta x:=x-x_i\quad
                                                                                           \forall x_i\in{\mathbb R}\quad
                                                                                           \forall \Delta x\in{\mathbb R}_{>0}
                                                                                                       \notag
\end{alignat}
Combining \eqref{Eq_Prp_RLRPCS_s_FPLIR_ss_SICFPLR_001_005a} and \eqref{Eq_Prp_RLRPCS_s_FPLIR_ss_SICFPLR_001_004l} yields$^{\text{\ref{ff_Prp_RLRPCS_s_FPLIR_ss_SICFPLR_001_002}}}$
\begin{alignat}{6}
\alpha_{R_1,M_-1,M_+,+M_+}(\xi)\stackrel{\eqrefsab{Eq_Prp_RLRPCS_s_FPLIR_ss_SICFPLR_001_005a}{Eq_Prp_RLRPCS_s_FPLIR_ss_SICFPLR_001_004l}}{=}-\mu_{R_1,M_-  ,M_+-1,M}(\xi)+\mu_{R_1,M_--1,M_+  ,M}(\xi)\qquad\forall \xi\in{\mathbb R}
                                                                                                       \label{Eq_Prp_RLRPCS_s_FPLIR_ss_SICFPLR_001_005b}
\end{alignat}
which by \eqrefsab{Eq_Prp_RLRPCS_s_FPLIR_ss_SICFPLR_001_004b}{Eq_Prp_RLRPCS_s_FPLIR_ss_SICFPLR_001_004c}
proves \eqref{Eq_Prp_RLRPCS_s_FPLIR_ss_SICFPLR_001_001e}.
\end{subequations}
\qed
\end{proof}
%
%-----------------------------------------------------------------------------------------------------------------------------------

%-----------------------------------------------------------------------------------------------------------------------------------
%
%
%
%
%
%
%
%
%
\section{Reconstruction by combination of substencils}\label{RLRPCS_s_RCsSs}
%
%
%
%
%
%
%
%
%
%-----------------------------------------------------------------------------------------------------------------------------------

%-----------------------------------------------------------------------------------------------------------------------------------
%
%
%
%
%
\subsection{Substencils of $\tsc{s}_{i,M_-,M_+}$}\label{RLRPCS_s_RCsSs_ss_sSs}
%
%
%
%
%
%-----------------------------------------------------------------------------------------------------------------------------------

\tsc{weno} reconstruction \cite{Shu_2009a} on $\tsc{s}_{i,M_-,M_+}$ achieves high-order in smooth regions and monotonicity near discontinuities by a nonlinear
(depending on the values $f_{i+\ell}$ of $f(x)$ on the points of the stencil $\tsc{s}_{i,M_-,M_+}$) combination of reconstructions
on substencils whose union equals the stencil.\footnote{\label{ff_RLRPCS_s_RCsSs_ss_sSs_001} Shu \cite{Shu_1998a,
                                                                                                        Shu_2009a}
                                                        uses the terms big stencil and small stencils to denote the stencil and its substencils.
                                                       }
Central to this development is the determination of the underlying optimal (linear in $f$ in the sense that the weight-functions depend only on $x$ and not on $f$)
combination of the reconstructing polynomials on the substencils to exactly obtain the reconstructing polynomial of the entire stencil.
\begin{figure}[ht!]
\begin{picture}(500,220)
%\put(90,-10){\includegraphics[angle=0,width=240pt]{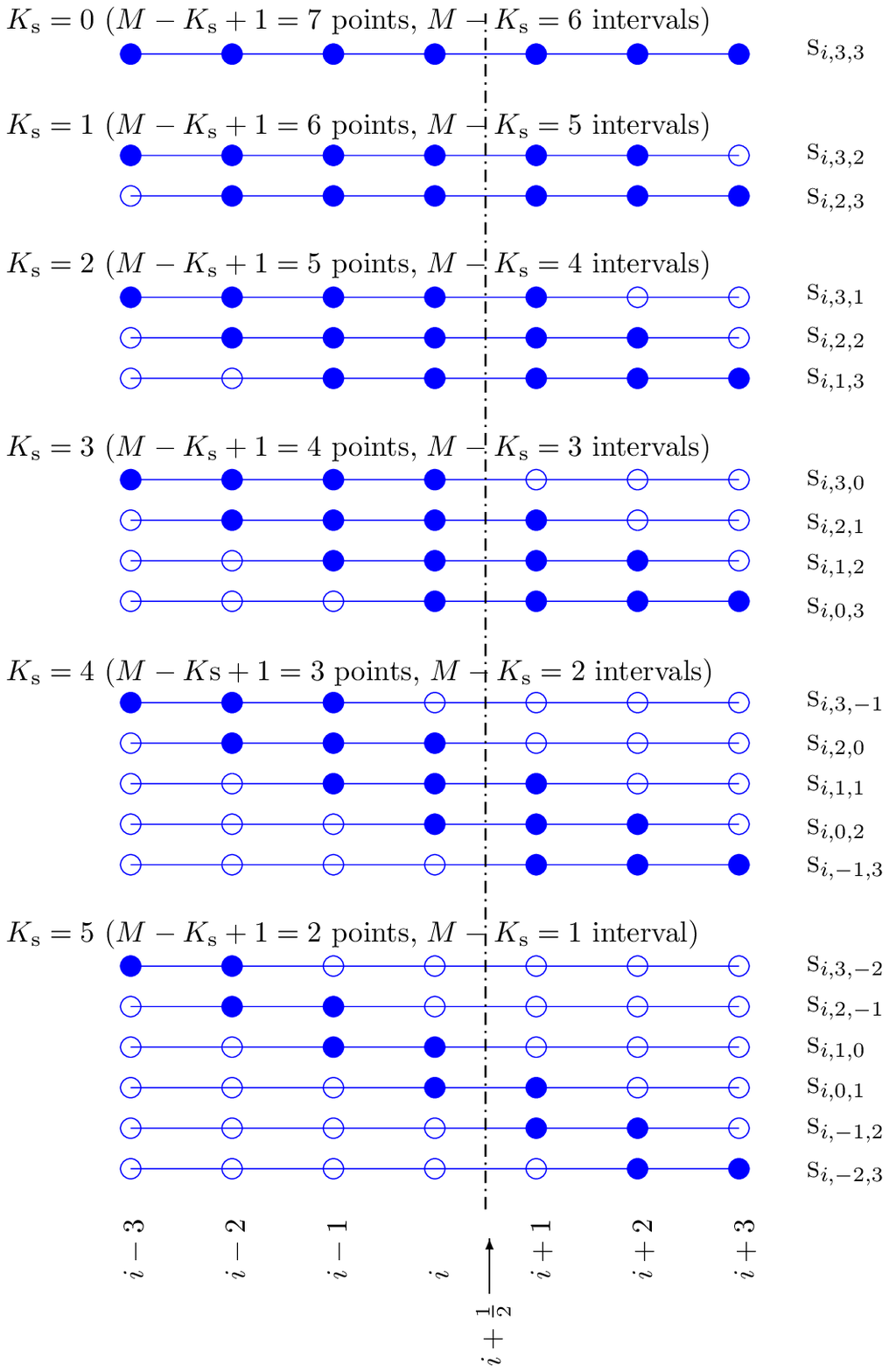}}
\put(110,-10){\includegraphics[angle=0,width=180pt]{Fig_JCompApplMath_01_substencils_3_3.eps}}
\end{picture}
\caption{Successive subdivisions of the stencil $\tsc{s}_{i,3,3}$, for different values of $K_{\rm s}\in\{0,\cdots,M-1=5\}$ \defref{Def_RLRPCS_s_I_002}.}
\label{Fig_Xmp_RLRPCS_s_RCsSs_ss_sSs_001_001}
%\end{figure}
%
%-----------------------------------------------------------------------------------------------------------------------------------
%-----------------------------------------------------------------------------------------------------------------------------------
%
%\begin{figure}[ht!]
\begin{picture}(500,360)
%\put(70,-10){\includegraphics[angle=0,width=270pt]{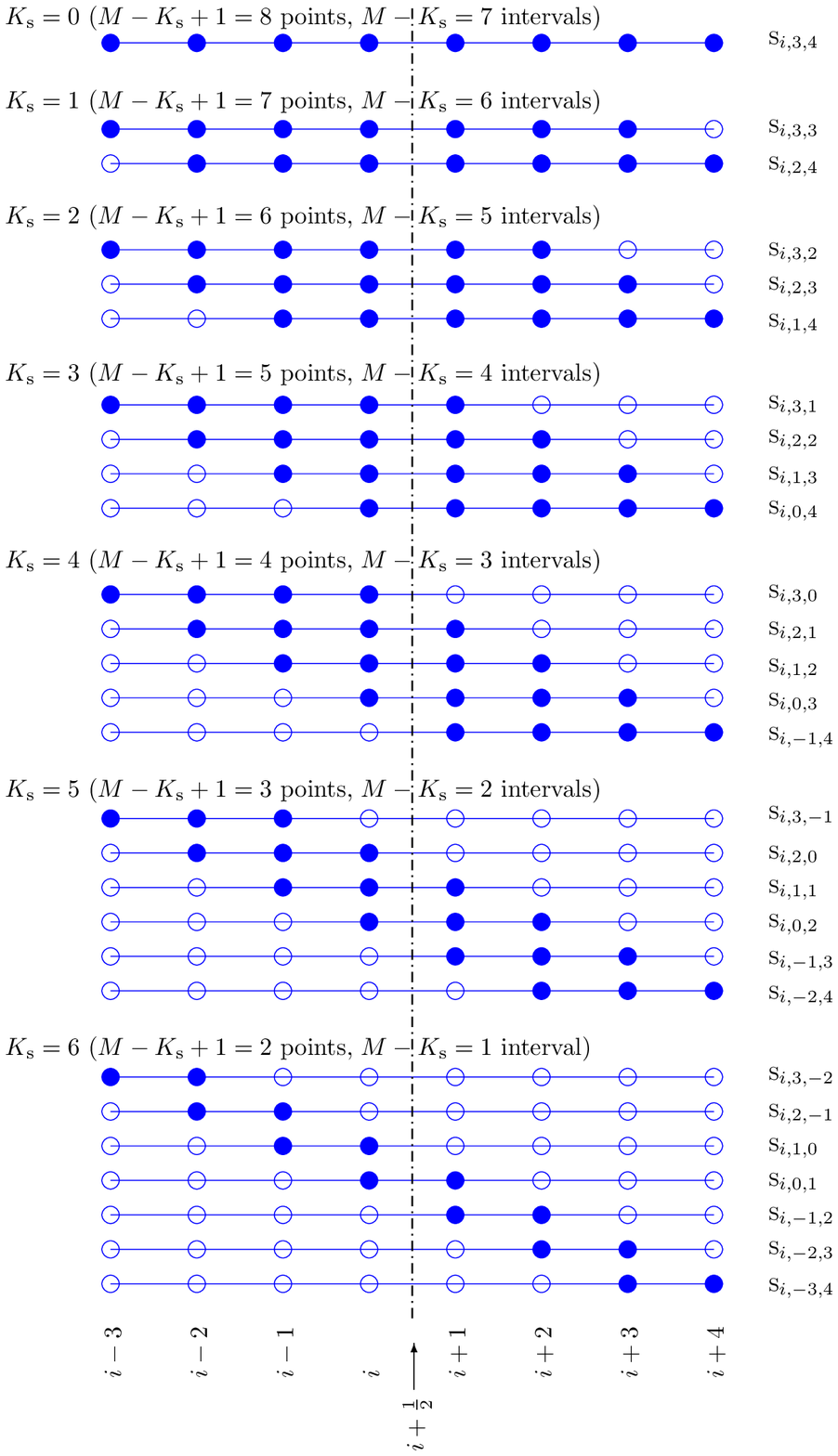}}
\put(110,-10){\includegraphics[angle=0,width=200pt]{Fig_JCompApplMath_01_substencils_3_4.eps}}
\end{picture}
\caption{Successive subdivisions of the stencil $\tsc{s}_{i,3,4}$, for different values of $K_{\rm s}\in\{0,\cdots,M-1=6\}$ \defref{Def_RLRPCS_s_I_002}.}
\label{Fig_Xmp_RLRPCS_s_RCsSs_ss_sSs_001_002}
\end{figure}
%
%-----------------------------------------------------------------------------------------------------------------------------------
%-----------------------------------------------------------------------------------------------------------------------------------
%
\begin{example}[{\rm Substencils \defref{Def_RLRPCS_s_I_002}}]
\label{Xmp_RLRPCS_s_RCsSs_ss_sSs_001}
Notice that a given stencil $\tsc{s}_{i,M_-,M_+}$ can be divided into different families of substencils \defref{Def_RLRPCS_s_I_002},
depending on the chosen value of $K_{\rm s}\leq M-1$ \eqref{Eq_Def_RLRPCS_s_I_002_001b}.
The $0$-level of subdivision ($K_{\rm s}=0$) corresponds to the original stencil, without subdivision.
The $(M-1)$-level of subdivision ($K_{\rm s}=M-1$) corresponds to the subdivision of the original stencil to $K_{\rm s}+1=M$ substencils of length equal to 1 cell,
{\em ie} to the substencils $\{S_{i,M_-,M_-+1},\cdots,S_{i,M_+-1,M_+}\}$, on each of which polynomial interpolation,
and as a consequence polynomial reconstruction \rmkref{Rmk_RLRPCS_s_RB_002}, are of degree $1$ (linear).
As an example, we consider the successive subdivisions of the stencil $\tsc{s}_{i,3,3}$ \figref{Fig_Xmp_RLRPCS_s_RCsSs_ss_sSs_001_001} which corresponds to a stencil symmetric around point $i$,
and of the stencil $\tsc{s}_{i,3,4}$ \figref{Fig_Xmp_RLRPCS_s_RCsSs_ss_sSs_001_002} which corresponds to a stencil symmetric around point $i+\tfrac{1}{2}$.
We called the substencils of \defrefnp{Def_RLRPCS_s_I_002} Neville substencils \cite{Gerolymos_2011a_news} because they are those used in the Neville algorithm \cite[pp. 207--208]{Henrici_1964a}
for the recursive construction of the interpolating polynomial.
\qed
\end{example}
%
%-----------------------------------------------------------------------------------------------------------------------------------
\clearpage
%-----------------------------------------------------------------------------------------------------------------------------------
%
%
%
%
%
\subsection{$(K_{\rm s}=1)$-level subdivision}\label{RLRPCS_s_RCsSs_ss_Ks1}
%
%
%
%
%
%-----------------------------------------------------------------------------------------------------------------------------------

The starting point for developing a recursive formulation for the weight-functions is to consider the $(K_{\rm s}=1)$-level subdivision of $\tsc{s}_{i,M_-,M_+}$ The resulting substencils
$\tsc{s}_{i,M_-,M_+-1}$ and $\tsc{s}_{i,M_--1,M_+}$ have equal lengths of $M-1$ cells ($M$ points), but are shifted by 1 cell \figrefsab{Fig_Xmp_RLRPCS_s_RCsSs_ss_sSs_001_001}
                                                                                                                                        {Fig_Xmp_RLRPCS_s_RCsSs_ss_sSs_001_002}.
If a ($K_\mathrm{s}=1$)-level subdivision rule can be established, then it can be readily extended to ($K_\mathrm{s}>1$)-levels using the general
recurrence relation proven in \cite[Lemma 2.1]{Gerolymos_2011a_news}.
%-----------------------------------------------------------------------------------------------------------------------------------
%
\begin{lemma}[{\rm Rational weight-functions for $(K_{\rm s}=1)$-level subdivision}]
\label{Lem_RLRPCS_s_RCsSs_ss_Ks1_001}
Assume the conditions and definitions of \prprefnp{Prp_RLRPCS_s_RB_001},
and consider the stencil $\tsc{s}_{i,M_-,M_+}$ and its substencils \defref{Def_RLRPCS_s_I_002}
$\tsc{s}_{i,M_-,M_+-1}$ and $\tsc{s}_{i,M_--1,M_+}$.
Define the functions $\sigma_{R_1,M_-,M_+,1,0}(\xi)$ and $\sigma_{R_1,M_-,M_+,1,1}(\xi)$ by
\begin{subequations}
                                                                                                       \label{Eq_Lem_RLRPCS_s_RCsSs_ss_Ks1_001_001}
\begin{alignat}{6}
\sigma_{R_1,M_-,M_+,1,0}(\xi):=&\dfrac{\alpha_{R_1,M_-  ,M_+  ,-M_-}(\xi)}
                                      {\alpha_{R_1,M_-  ,M_+-1,-M_-}(\xi)}\stackrel{\eqref{Eq_Prp_RLRPCS_s_FPLIR_ss_SICFPLR_001_001}}{=}\dfrac{\lambda_{R_1,M_--1,M_+  ,M}(\xi)}
                                                                                                                                              {\lambda_{R_1,M_--1,M_+  ,M}(\xi)-\lambda_{R_1,M_-  ,M_+-1,M}(\xi)}
                                                                                                       \label{Eq_Lem_RLRPCS_s_RCsSs_ss_Ks1_001_001a}\\
\sigma_{R_1,M_-,M_+,1,1}(\xi):=&\dfrac{\alpha_{R_1,M_-  ,M_+  , M_+}(\xi)}
                                      {\alpha_{R_1,M_--1,M_+  , M_+}(\xi)}\stackrel{\eqref{Eq_Prp_RLRPCS_s_FPLIR_ss_SICFPLR_001_001}}{=}\dfrac{\lambda_{R_1,M_-  ,M_+-1,M}(\xi)}
                                                                                                                                              {\lambda_{R_1,M_-  ,M_+-1,M}(\xi)-\lambda_{R_1,M_--1,M_+  ,M}(\xi)}
                                                                                                       \label{Eq_Lem_RLRPCS_s_RCsSs_ss_Ks1_001_001b}
\end{alignat}
satisfying the consistency condition
\begin{alignat}{6}
\sigma_{R_1,M_-,M_+,1,0}(\xi)+\sigma_{R_1,M_-,M_+,1,1}(\xi)=1
                                                                                                       \label{Eq_Lem_RLRPCS_s_RCsSs_ss_Ks1_001_001c}
\end{alignat}
Then the reconstructing polynomial on $\tsc{s}_{i,M_-,M_+}$ \prpref{Prp_RLRPCS_s_RB_001} can be constructed by combination of the reconstructing polynomials
on the 2 $(K_{\rm s}=1)$-level-subdivision substencils as
\begin{alignat}{6}
 &p_{R_1,M_-,M_+}(x_i+\xi\Delta x;x_i,\Delta x;f)=
                                                                                                       \notag\\
=&\dfrac{\lambda_{R_1,M_--1,M_+  ,M}(\xi)\;p_{R_1,M_-  ,M_+-1}(x_i+\xi\Delta x;x_i,\Delta x;f)
        -\lambda_{R_1,M_-  ,M_+-1,M}(\xi)\;p_{R_1,M_--1,M_+  }(x_i+\xi\Delta x;x_i,\Delta x;f)}
        {\lambda_{R_1,M_--1,M_+  ,M}(\xi)-\lambda_{R_1,M_-  ,M_+-1,M}(\xi)}
                                                                                                       \label{Eq_Lem_RLRPCS_s_RCsSs_ss_Ks1_001_001d}\\
                                                                                          &\forall \xi\in{\mathbb R}\quad
                                                                                           \forall x_i\in{\mathbb R}\quad
                                                                                           \forall \Delta x\in{\mathbb R}_{>0}\quad
                                                                                           \forall f:{\mathbb R}\longrightarrow{\mathbb R}
                                                                                                       \notag
\end{alignat}
and can be represented, almost everywhere, as
\begin{alignat}{6}
 p_{R_1,M_-,M_+}(x_i+\xi\Delta x;x_i,\Delta x;f)=&\sigma_{R_1,M_-,M_+,1,0}(\xi)\;p_{R_1,M_-  ,M_+-1}(x_i+\xi\Delta x;x_i,\Delta x;f)
                                                                                                       \notag\\
                                                +&\sigma_{R_1,M_-,M_+,1,1}(\xi)\;p_{R_1,M_--1,M_+  }(x_i+\xi\Delta x;x_i,\Delta x;f)
                                                                                                       \label{Eq_Lem_RLRPCS_s_RCsSs_ss_Ks1_001_001e}
\end{alignat}
\begin{alignat}{6}
                                                                                           \forall \xi\in{\mathbb R}\setminus\{\xi\in{\mathbb R}:\alpha_{R_1,M_--1,M_+  , M_+}(\xi)=0\}\quad
                                                                                           \forall x_i\in{\mathbb R}\quad
                                                                                           \forall \Delta x\in{\mathbb R}_{>0}\quad
                                                                                           \forall f:{\mathbb R}\longrightarrow{\mathbb R}
                                                                                                       \notag
\end{alignat}
The functions $\sigma_{R_1,M_-,M_+,1,0}(\xi)$ and $\sigma_{R_1,M_-,M_+,1,1}(\xi)$ satisfying \eqrefsab{Eq_Lem_RLRPCS_s_RCsSs_ss_Ks1_001_001c}{Eq_Lem_RLRPCS_s_RCsSs_ss_Ks1_001_001e}
are unique.
\end{subequations}
\end{lemma}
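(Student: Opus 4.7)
The plan is to establish in sequence: (i) the equivalence of the two definitional forms \eqrefsab{Eq_Lem_RLRPCS_s_RCsSs_ss_Ks1_001_001a}{Eq_Lem_RLRPCS_s_RCsSs_ss_Ks1_001_001b} together with the summation condition \eqref{Eq_Lem_RLRPCS_s_RCsSs_ss_Ks1_001_001c}; (ii) the polynomial identity obtained by clearing denominators in \eqref{Eq_Lem_RLRPCS_s_RCsSs_ss_Ks1_001_001d}; (iii) the rational representation \eqref{Eq_Lem_RLRPCS_s_RCsSs_ss_Ks1_001_001e} valid outside a finite exceptional set; and (iv) uniqueness. For (i), abbreviating $A:=\lambda_{R_1,M_--1,M_+,M}(\xi)$ and $B:=\lambda_{R_1,M_-,M_+-1,M}(\xi)$, the $\lambda$-form reads $\sigma_{R_1,M_-,M_+,1,0}=A/(A-B)$ and $\sigma_{R_1,M_-,M_+,1,1}=B/(B-A)=-B/(A-B)$, summing to $1$ as an immediate algebraic identity; and using \eqrefsabc{Eq_Prp_RLRPCS_s_FPLIR_ss_SICFPLR_001_001a}{Eq_Prp_RLRPCS_s_FPLIR_ss_SICFPLR_001_001c}{Eq_Prp_RLRPCS_s_FPLIR_ss_SICFPLR_001_001d} together with \eqref{Eq_Prp_RLRPCS_s_FPLIR_ss_SICFPLR_001_001e} of \prprefnp{Prp_RLRPCS_s_FPLIR_ss_SICFPLR_001} (a common factor $(-1)^{M-1}$ cancels in both numerator and denominator) this $\lambda$-form coincides with the $\alpha$-form in \eqrefsab{Eq_Lem_RLRPCS_s_RCsSs_ss_Ks1_001_001a}{Eq_Lem_RLRPCS_s_RCsSs_ss_Ks1_001_001b}. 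The denominator $A-B=\alpha_{R_1,M_--1,M_+,M_+}(\xi)$ by \eqref{Eq_Prp_RLRPCS_s_FPLIR_ss_SICFPLR_001_001e}, which is not identically zero by \eqref{Eq_Lem_RLRPCS_s_FPLIR_ss_RPsFPs_001_001b}, so $\sigma_{R_1,M_-,M_+,1,k_\mathrm{s}}$ are well-defined as rational functions of $\xi$.

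The core step (ii) is to prove the polynomial identity
\begin{equation*}
A\,p_{R_1,M_-,M_+-1}(x_i+\xi\Delta x;f) - B\,p_{R_1,M_--1,M_+}(x_i+\xi\Delta x;f) = (A-B)\,p_{R_1,M_-,M_+}(x_i+\xi\Delta x;f)
\end{equation*}
in $\xi\in\mathbb{R}$, valid for every $f:\mathbb{R}\to\mathbb{R}$; the claim \eqref{Eq_Lem_RLRPCS_s_RCsSs_ss_Ks1_001_001d} is then the quotient form of this identity. Both sides depend linearly on the $M+1$ values $(f_{i+\ell})_{\ell=-M_-}^{M_+}$, and since any assignment of $M+1$ real numbers to $M+1$ distinct points is realized by a unique polynomial in $\mathbb{R}_M[x]$ (Vandermonde), it suffices to verify the identity for $f\in\mathbb{R}_M[x]$. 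I split $\mathbb{R}_M[x]=\mathbb{R}_{M-1}[x]\oplus\mathbb{R}\cdot(x-x_i)^M$ and check the two pieces. For $f\in\mathbb{R}_{M-1}[x]$, all three reconstructing polynomials reduce to $q:=[R_{(1;\Delta x)}(f)](x_i+\xi\Delta x)$: for the substencils by the argument of \eqref{Eq_Prp_RLRPCS_s_FPLIR_ss_SICFPLR_001_002a}, and for the big stencil by \eqref{Eq_Prp_AELRP_s_EPR_ss_AELPR_001_001c} whose error-sum is empty since $f^{(n)}\equiv 0$ for $n\geq M$; both sides collapse to $(A-B)q$. For $f(x)=(x-x_i)^M$, $f^{(M)}\equiv M!$ and $f^{(n)}\equiv 0$ for $n\geq M+1$, so by \eqref{Eq_Prp_AELRP_s_EPR_ss_AELPR_001_001c} the substencil reconstructions are $q+\mu_{R_1,M_-,M_+-1,M}(\xi)\Delta x^M M!$ and $q+\mu_{R_1,M_--1,M_+,M}(\xi)\Delta x^M M!$, while $p_{R_1,M_-,M_+}$ equals $q$ exactly; noting that the definition \eqref{Eq_Prp_AELRP_s_EPR_ss_AELPR_002_001c} applied at its leading order $n=M$ forces $\mu_{R_1,M_-,M_+-1,M}=\lambda_{R_1,M_-,M_+-1,M}=B$ and $\mu_{R_1,M_--1,M_+,M}=\lambda_{R_1,M_--1,M_+,M}=A$, the leading error contributions on the left-hand side read $AB\Delta x^M M!-BA\Delta x^M M!=0$ and the identity again reduces to $(A-B)q=(A-B)q$. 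The main obstacle I anticipate is recognizing that the choice of weight-functions dictated by killing the asymptotic $\Delta x^M$ error terms actually produces an \emph{exact} identity, a miracle which rests precisely on the coincidence $\mu_{R_1,\cdot,\cdot,M}=\lambda_{R_1,\cdot,\cdot,M}$ at the first nontrivial error order, visible from \eqref{Eq_Prp_AELRP_s_EPR_ss_AELPR_002_001c} since its summation range collapses to a single term.

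The representation \eqref{Eq_Lem_RLRPCS_s_RCsSs_ss_Ks1_001_001e} then follows by dividing the polynomial identity of (ii) through by $A-B=\alpha_{R_1,M_--1,M_+,M_+}(\xi)$ \eqref{Eq_Prp_RLRPCS_s_FPLIR_ss_SICFPLR_001_001e}, which by \prprefnp{Prp_RLRPCS_s_FPLIR_ss_RFPs_001} has exactly $M-1$ distinct real roots; division is admissible outside this finite exceptional set, yielding the ``almost everywhere'' clause. For uniqueness, suppose $\tilde\sigma_0(\xi),\tilde\sigma_1(\xi)$ are rational functions of $\xi$ for which \eqref{Eq_Lem_RLRPCS_s_RCsSs_ss_Ks1_001_001e} holds for every $f$. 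Specializing to $f$ with $f_{i-M_-}=1$ and $f_{i+\ell}=0$ for $\ell\neq -M_-$, and noting that $-M_-\notin\tsc{s}_{i,M_--1,M_+}$, the expansion \eqref{Eq_Prp_AELRP_s_EPR_ss_PR_001_001d} collapses the identity to $\tilde\sigma_0(\xi)\,\alpha_{R_1,M_-,M_+-1,-M_-}(\xi)=\alpha_{R_1,M_-,M_+,-M_-}(\xi)$, pinning $\tilde\sigma_0=\sigma_{R_1,M_-,M_+,1,0}$ wherever $\alpha_{R_1,M_-,M_+-1,-M_-}(\xi)\neq 0$; the dual test supported at $x_{i+M_+}$ fixes $\tilde\sigma_1=\sigma_{R_1,M_-,M_+,1,1}$, completing the proof.
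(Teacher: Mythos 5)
Your proof is correct, and every step is justifiable from the results you cite; the underlying mechanism --- cancellation of the leading $O(\Delta x^{M})$ error term, made exact by the coincidence $\lambda_{R_1,\cdot,\cdot,M}=\mu_{R_1,\cdot,\cdot,M}$ coming from the collapsed sum in \eqref{Eq_Prp_AELRP_s_EPR_ss_AELPR_002_001c}, combined with the identities of \prprefnp{Prp_RLRPCS_s_FPLIR_ss_SICFPLR_001} --- is the same one the paper uses. Where you differ is in the organization, in three respects. First, you establish the cleared-denominator polynomial identity (essentially \eqref{Eq_Lem_RLRPCS_s_RCsSs_ss_Ks1_001_001d}) before the almost-everywhere representation \eqref{Eq_Lem_RLRPCS_s_RCsSs_ss_Ks1_001_001e}, whereas the paper proves \eqref{Eq_Lem_RLRPCS_s_RCsSs_ss_Ks1_001_001e} on the restricted domain first and then runs a divisibility argument to extend \eqref{Eq_Lem_RLRPCS_s_RCsSs_ss_Ks1_001_001d} to all of $\mathbb{R}$; your order makes the $\forall\xi\in\mathbb{R}$ claim immediate (modulo the paper's \rmkrefnp{Rmk_RLRPCS_s_RCsSs_ss_Ks1_001} convention that the quotient is understood as exact polynomial division at the zeros of the denominator). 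Second, to pass from $f\in\mathbb{R}_M[x]$ to arbitrary $f$ you invoke linearity in the stencil data plus Vandermonde realizability, and then need to check only the two pieces of $\mathbb{R}_{M-1}[x]\oplus\mathbb{R}\cdot(x-x_i)^M$; the paper instead tests against the $M+1$ polynomials $\prod_{m\neq k}(x-x_i-m\Delta x)$ to extract the individual coefficient identities \eqrefsatob{Eq_Crl_RLRPCS_s_RCsSs_ss_Ks1_001_001a}{Eq_Crl_RLRPCS_s_RCsSs_ss_Ks1_001_001c}, which it needs anyway for \crlrefnp{Crl_RLRPCS_s_RCsSs_ss_Ks1_001} and for the uniqueness argument of \prprefnp{Prp_RLRPCS_s_RCsSs_ss_Ks_002}; your route is shorter but yields less as a by-product. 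Third, your uniqueness argument (delta-type test data supported at the endpoints $i-M_-$ and $i+M_+$, each of which only one substencil sees) is more direct than the lemma's own contradiction via the consistency condition, is essentially the mechanism the paper deploys later in \prprefnp{Prp_RLRPCS_s_RCsSs_ss_Ks_002}, and in fact establishes the slightly stronger statement that \eqref{Eq_Lem_RLRPCS_s_RCsSs_ss_Ks1_001_001e} alone already determines the weight-functions on its domain of validity.
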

%
%-----------------------------------------------------------------------------------------------------------------------------------
%-----------------------------------------------------------------------------------------------------------------------------------
%
\begin{proof}
\begin{subequations}
                                                                                                       \label{Eq_Lem_RLRPCS_s_RCsSs_ss_Ks1_001_002}
By \prprefnp{Prp_RLRPCS_s_FPLIR_ss_SICFPLR_001} we have
\begin{alignat}{6}
\dfrac{\alpha_{R_1,M_-  ,M_+  ,-M_-}(\xi)}
      {\alpha_{R_1,M_-  ,M_+-1,-M_-}(\xi)}\stackrel{\eqrefsab{Eq_Prp_RLRPCS_s_FPLIR_ss_SICFPLR_001_001a}
                                                             {Eq_Prp_RLRPCS_s_FPLIR_ss_SICFPLR_001_001c}}{=} \dfrac{(-1)^{M-1}\;\lambda_{R_1,M_--1,M_+,M}(\xi)}
                                                                                                                   {(-1)^{M-1}\;\alpha_{R_1,M_--1,M_+  ,+M_+}(\xi)}
                                          \stackrel{\eqref{Eq_Prp_RLRPCS_s_FPLIR_ss_SICFPLR_001_001e}}{=}    \dfrac{\lambda_{R_1,M_--1,M_+,M}(\xi)}
                                                                                                                   {\lambda_{R_1,M_--1,M_+,M}(\xi)-\lambda_{R_1,M_-,M_+-1,M}(\xi)}
                                                                                                       \label{Eq_Lem_RLRPCS_s_RCsSs_ss_Ks1_001_002a}
\end{alignat}
proving \eqref{Eq_Lem_RLRPCS_s_RCsSs_ss_Ks1_001_001a}, and
\begin{alignat}{6}
\dfrac{\alpha_{R_1,M_-  ,M_+  , M_+}(\xi)}
      {\alpha_{R_1,M_--1,M_+  , M_+}(\xi)}\stackrel{\eqrefsab{Eq_Prp_RLRPCS_s_FPLIR_ss_SICFPLR_001_001d}
                                                             {Eq_Prp_RLRPCS_s_FPLIR_ss_SICFPLR_001_001e}}{=}\dfrac{-\lambda_{R_1,M_-,M_+-1,M}(\xi)}
                                                                                                                  {\lambda_{R_1,M_--1,M_+,M}(\xi)-\lambda_{R_1,M_-,M_+-1,M}(\xi)}
                                                                                                       \label{Eq_Lem_RLRPCS_s_RCsSs_ss_Ks1_001_002b}
\end{alignat}
proving \eqref{Eq_Lem_RLRPCS_s_RCsSs_ss_Ks1_001_001b}.
Obviously \eqref{Eq_Lem_RLRPCS_s_RCsSs_ss_Ks1_001_001c} holds because
\begin{alignat}{6}
\sigma_{R_1,M_-,M_+,1,0}(\xi)+\sigma_{R_1,M_-,M_+,1,1}(\xi)
\stackrel{\eqrefsab{Eq_Lem_RLRPCS_s_RCsSs_ss_Ks1_001_001a}{Eq_Lem_RLRPCS_s_RCsSs_ss_Ks1_001_001b}}{=}&
\dfrac{\lambda_{R_1,M_--1,M_+,M}(\xi)}
      {\lambda_{R_1,M_--1,M_+,M}(\xi)-\lambda_{R_1,M_-,M_+-1,M}(\xi)}
                                                                                                       \notag\\
+&
\dfrac{-\lambda_{R_1,M_-,M_+-1,M}(\xi)}
      {\lambda_{R_1,M_--1,M_+,M}(\xi)-\lambda_{R_1,M_-,M_+-1,M}(\xi)}
                                                                                                       \notag\\
=&
\dfrac{\lambda_{R_1,M_--1,M_+,M}(\xi)-\lambda_{R_1,M_-,M_+-1,M}(\xi)}
      {\lambda_{R_1,M_--1,M_+,M}(\xi)-\lambda_{R_1,M_-,M_+-1,M}(\xi)}=1
                                                                                                       \label{Eq_Lem_RLRPCS_s_RCsSs_ss_Ks1_001_002c}
\end{alignat}
\end{subequations}

\begin{subequations}
                                                                                                       \label{Eq_Lem_RLRPCS_s_RCsSs_ss_Ks1_001_003}
\underline{Proof of \eqref{Eq_Lem_RLRPCS_s_RCsSs_ss_Ks1_001_001e}:}
Let $p(x)\in \mathbb{R}_M[x]$ \eqref{Eq_Rmk_RLRPCS_s_RB_002_001}. Then, by \eqref{Eq_Rmk_RLRPCS_s_RB_002_001}, $q(x):=[R_{(1;\Delta x)}(p)](x)\in \mathbb{R}_M[x]$.
Since $\forall q(x)\in \mathbb{R}_M[x]\Longrightarrow q^{(n)}(x)=0_{\mathbb{R}_M[x]}\;\forall n\geq M+1$,
we have, by application of \eqref{Eq_Prp_AELRP_s_EPR_ss_AELPR_002_001a}, and taking into account \rmkrefnp{Rmk_RLRPCS_s_RB_002},
\begin{alignat}{6}
p_{R_1,M_-  ,M_+-1}(x_i+\xi\Delta x;x_i,\Delta x;p)\stackrel{\eqrefsab{Eq_Prp_AELRP_s_EPR_ss_AELPR_002_001a}
                                                                      {Eq_Rmk_RLRPCS_s_RB_002_001}}{=}&q(x_i+\xi\Delta x)+\lambda_{R_1,M_-  ,M_+-1,M}(\xi)\;\Delta x^M\;q^{(M)}(x_i+\xi\Delta x)
                                                                                                       \label{Eq_Lem_RLRPCS_s_RCsSs_ss_Ks1_001_003a}\\
p_{R_1,M_-  ,M_+  }(x_i+\xi\Delta x;x_i,\Delta x;p)\stackrel{\eqrefsab{Eq_Prp_AELRP_s_EPR_ss_AELPR_002_001a}
                                                                      {Eq_Rmk_RLRPCS_s_RB_002_001}}{=}&q(x_i+\xi\Delta x)
                                                        \qquad\qquad\left\{\begin{array}{l}\forall p(x)\in \mathbb{R}_M[x]\\
                                                                                           q(x):=[R_{(1;\Delta x)}(p)](x)~~~\text{\eqref{Eq_Rmk_RLRPCS_s_RB_002_001}}\\
                                                                                             \forall \xi\in{\mathbb R}\\
                                                                                             \forall x_i\in{\mathbb R}\\
                                                                                             \forall \Delta x\in{\mathbb R}_{>0}\\\end{array}\right.
                                                                                                       \label{Eq_Lem_RLRPCS_s_RCsSs_ss_Ks1_001_003b}\\
p_{R_1,M_--1,M_+  }(x_i+\xi\Delta x;x_i,\Delta x;p)\stackrel{\eqrefsab{Eq_Prp_AELRP_s_EPR_ss_AELPR_002_001a}
                                                                      {Eq_Rmk_RLRPCS_s_RB_002_001}}{=}&q(x_i+\xi\Delta x)+\lambda_{R_1,M_--1,M_+  ,M}(\xi)\;\Delta x^M\;q^{(M)}(x_i+\xi\Delta x)
                                                                                                       \label{Eq_Lem_RLRPCS_s_RCsSs_ss_Ks1_001_003c}
\end{alignat}
Combining
\eqref{Eq_Lem_RLRPCS_s_RCsSs_ss_Ks1_001_003a} weighted by \eqref{Eq_Lem_RLRPCS_s_RCsSs_ss_Ks1_001_001a},and
\eqref{Eq_Lem_RLRPCS_s_RCsSs_ss_Ks1_001_003c} weighted by \eqref{Eq_Lem_RLRPCS_s_RCsSs_ss_Ks1_001_001b} yields
\begin{alignat}{6}
&\sigma_{R_1,M_-,M_+,1,0}(\xi)\;p_{R_1,M_-  ,M_+-1}(x_i+\xi\Delta x;x_i,\Delta x;p)+
%                                                                                                      \notag\\
 \sigma_{R_1,M_-,M_+,1,1}(\xi)\;p_{R_1,M_--1,M_+  }(x_i+\xi\Delta x;x_i,\Delta x;p)=
                                                                                                       \notag\\
\stackrel{\eqrefsab{Eq_Lem_RLRPCS_s_RCsSs_ss_Ks1_001_003a}{Eq_Lem_RLRPCS_s_RCsSs_ss_Ks1_001_003c}}{=}&
                                                                                     \left(\sigma_{R_1,M_-,M_+,1,0}(\xi)
                                                                                          +\sigma_{R_1,M_-,M_+,1,1}(\xi)\right)\;q(x_i+\xi\Delta x)
                                                                                                       \notag\\
                                                                                   +&\left(\sigma_{R_1,M_-,M_+,1,0}(\xi)\;\lambda_{R_1,M_-  ,M_+-1,M}(\xi)
                                                                                          +\sigma_{R_1,M_-,M_+,1,1}(\xi)\;\lambda_{R_1,M_--1,M_+  ,M}(\xi)\right)\;\Delta x^M\;q^{(M)}(x_i+\xi\Delta x)
                                                                                                       \notag\\
\stackrel{\eqrefsatob{Eq_Lem_RLRPCS_s_RCsSs_ss_Ks1_001_001a}{Eq_Lem_RLRPCS_s_RCsSs_ss_Ks1_001_001c}}{=}&
                                                                                     q(x_i+\xi\Delta x)
                                                                                                       \notag\\
                                                                                   +&\left(\dfrac{ \lambda_{R_1,M_--1,M_+,M}(\xi)\;\lambda_{R_1,M_-  ,M_+-1,M}(\xi)}
                                                                                                 { \lambda_{R_1,M_--1,M_+,M}(\xi) -\lambda_{R_1,M_-  ,M_+-1,M}(\xi)}
                                                                                          +\dfrac{-\lambda_{R_1,M_-,M_+-1,M}(\xi)\;\lambda_{R_1,M_--1,M_+  ,M}(\xi)}
                                                                                                 { \lambda_{R_1,M_--1,M_+,M}(\xi) -\lambda_{R_1,M_-  ,M_+-1,M}(\xi)}\right)\;\Delta x^M\;q^{(M)}(x_i+\xi\Delta x)
                                                                                                       \notag\\
=&q(x_i+\xi\Delta x)
\stackrel{\eqref{Eq_Lem_RLRPCS_s_RCsSs_ss_Ks1_001_003b}}{:=}
p_{R_1,M_-  ,M_+  }(x_i+\xi\Delta x;x_i,\Delta x;p)
                                                              \qquad\left\{\begin{array}{l} \forall p(x)\in \mathbb{R}_M[x]\\
                                                                                            q(x):=[R_{(1;\Delta x)}(p)](x)~\text{\eqref{Eq_Rmk_RLRPCS_s_RB_002_001}}\\
                                                                                             \forall \xi\in{\mathbb R}\\
                                                                                             \forall x_i\in{\mathbb R}\\
                                                                                             \forall \Delta x\in{\mathbb R}_{>0}\\\end{array}\right.
                                                                                                       \label{Eq_Lem_RLRPCS_s_RCsSs_ss_Ks1_001_003d}
\end{alignat}
%proving \eqref{Eq_Lem_RLRPCS_s_RCsSs_ss_Ks1_001_001e}.
%
%\underline{Proof of \eqref{Eq_Lem_RLRPCS_s_RCsSs_ss_Ks1_001_001d}:}
%{\em ie}
\begin{alignat}{6}
\Longrightarrow
&\sigma_{R_1,M_-,M_+,1,0}(\xi)\;p_{R_1,M_-  ,M_+-1}(x_i+\xi\Delta x;x_i,\Delta x;p)+
                                                                                                       \notag\\
&\sigma_{R_1,M_-,M_+,1,1}(\xi)\;p_{R_1,M_--1,M_+  }(x_i+\xi\Delta x;x_i,\Delta x;p)=p_{R_1,M_-  ,M_+  }(x_i+\xi\Delta x;x_i,\Delta x;p)
                                                                                                       \label{Eq_Lem_RLRPCS_s_RCsSs_ss_Ks1_001_003e}\\
                                                                                          &\forall p(x)\in \mathbb{R}_M[x]\qquad
                                                                                             \forall \xi\in{\mathbb R}\setminus\{\xi\in{\mathbb R}:\alpha_{R_1,M_--1,M_+  , M_+}(\xi)=0\}\qquad
                                                                                             \forall x_i\in{\mathbb R}\qquad
                                                                                             \forall \Delta x\in{\mathbb R}_{>0}
                                                                                                       \notag
\end{alignat}
which shows that \eqref{Eq_Lem_RLRPCS_s_RCsSs_ss_Ks1_001_001e} is valid $\forall f(x)\in\mathbb{R}_M[x]$.
Using the representation \eqref{Eq_Prp_AELRP_s_EPR_ss_PR_001_001d} of the reconstructing polynomial in \eqref{Eq_Lem_RLRPCS_s_RCsSs_ss_Ks1_001_003e}
\begin{alignat}{6}
0\stackrel{\eqrefsab{Eq_Lem_RLRPCS_s_RCsSs_ss_Ks1_001_003e}{Eq_Prp_AELRP_s_EPR_ss_PR_001_001d}}{=}&
   \left(\alpha_{R_1,M_-,M_+,-M_-}(\xi)-\sigma_{R_1,M_-,M_+,1,0}(\xi)\;\alpha_{R_1,M_-  ,M_+-1,-M_-}(\xi)\right)\;p(x_i-M_-\;\Delta x)
                                                                                                       \notag\\
 +&\sum_{\ell=-M_-+1}^{M_+-1}\left(\alpha_{R_1,M_-,M_+,\ell}(\xi)-\sigma_{R_1,M_-,M_+,1,0}(\xi)\;\alpha_{R_1,M_-  ,M_+-1,\ell}(\xi)
                                                                 -\sigma_{R_1,M_-,M_+,1,1}(\xi)\;\alpha_{R_1,M_--1,M_+  ,\ell}(\xi)\right)\;p(x_i+\ell\;\Delta x)
                                                                                                       \notag\\
 +&\left(\alpha_{R_1,M_-,M_+,+M_+}(\xi)-\sigma_{R_1,M_-,M_+,1,1}(\xi)\;\alpha_{R_1,M_--1,M_+  ,+M_+}(\xi)\right)\;p(x_i+M_+\;\Delta x)
                                                                                                       \notag\\
\stackrel{\eqrefsab{Eq_Lem_RLRPCS_s_RCsSs_ss_Ks1_001_001a}{Eq_Lem_RLRPCS_s_RCsSs_ss_Ks1_001_001b}}{=}&
   \sum_{\ell=-M_-+1}^{M_+-1}\left(\alpha_{R_1,M_-,M_+,\ell}(\xi)-\sigma_{R_1,M_-,M_+,1,0}(\xi)\;\alpha_{R_1,M_-  ,M_+-1,\ell}(\xi)
                                                                 -\sigma_{R_1,M_-,M_+,1,1}(\xi)\;\alpha_{R_1,M_--1,M_+  ,\ell}(\xi)\right)\;p(x_i+\ell\;\Delta x)
                                                                                                       \notag\\
                                                                                          &\forall p(x)\in \mathbb{R}_M[x]\qquad
                                                                                             \forall \xi\in{\mathbb R}\setminus\{\xi\in{\mathbb R}:\alpha_{R_1,M_--1,M_+  , M_+}(\xi)=0\}\qquad
                                                                                             \forall x_i\in{\mathbb R}\qquad
                                                                                             \forall \Delta x\in{\mathbb R}_{>0}
                                                                                                       \label{Eq_Lem_RLRPCS_s_RCsSs_ss_Ks1_001_003f}
\end{alignat}
where we used
\begin{alignat}{6}
\alpha_{R_1,M_-  ,M_+  ,-M_-}(\xi)\stackrel{\text{\eqref{Eq_Lem_RLRPCS_s_RCsSs_ss_Ks1_001_001a}}}{=}&\sigma_{R_1,M_-,M_+,1,0}(\xi)\;\alpha_{R_1,M_-  ,M_+-1,-M_-}(\xi)\qquad\forall\xi\in{\mathbb R}
                                                                                                       \label{Eq_Lem_RLRPCS_s_RCsSs_ss_Ks1_001_003g}\\
\alpha_{R_1,M_-  ,M_+  , M_+}(\xi)\stackrel{\text{\eqref{Eq_Lem_RLRPCS_s_RCsSs_ss_Ks1_001_001b}}}{=}&\sigma_{R_1,M_-,M_+,1,1}(\xi)\;\alpha_{R_1,M_--1,M_+  , M_+}(\xi)\qquad\forall\xi\in{\mathbb R}
                                                                                                       \label{Eq_Lem_RLRPCS_s_RCsSs_ss_Ks1_001_003h}
\end{alignat}
Applying \eqref{Eq_Lem_RLRPCS_s_RCsSs_ss_Ks1_001_003f} successively to the polynomials
\begin{alignat}{6}
\mathbb{R}_M[x]\ni
\prod_{\substack{m=-M_-\\
                 m\neq k}  }^{M_+}(x-x_i-m\Delta x)=0\quad\forall x\in\left\{x_i-M_-\Delta x,\cdots,x_i+M_+\Delta x\right\}\setminus\{x_i+k\Delta x\}
                                                                                                        \label{Eq_Lem_RLRPCS_s_RCsSs_ss_Ks1_001_003i}
\end{alignat}
yields
\begin{alignat}{6}
\alpha_{R_1,M_-,M_+,\ell}(\xi)=&\sigma_{R_1,M_-,M_+,1,0}(\xi)\;\alpha_{R_1,M_-  ,M_+-1,\ell}(\xi)
                                                                                                       \notag\\
                              +&\sigma_{R_1,M_-,M_+,1,1}(\xi)\;\alpha_{R_1,M_--1,M_+  ,\ell}(\xi)\qquad\left\{\begin{array}{l}\forall\xi\in{\mathbb R}\setminus\{\xi\in{\mathbb R}:\alpha_{R_1,M_--1,M_+  , M_+}(\xi)=0\}\\
                                                                                                                              \forall\ell\in\{-M_-+1,\cdots,M_+-1\}\\\end{array}\right.
                                                                                                        \label{Eq_Lem_RLRPCS_s_RCsSs_ss_Ks1_001_003j}
\end{alignat}
Combining the representation \eqref{Eq_Prp_AELRP_s_EPR_ss_PR_001_001d} of the reconstructing polynomial with
\eqrefsabc{Eq_Lem_RLRPCS_s_RCsSs_ss_Ks1_001_003g}{Eq_Lem_RLRPCS_s_RCsSs_ss_Ks1_001_003h}{Eq_Lem_RLRPCS_s_RCsSs_ss_Ks1_001_003j}
proves \eqref{Eq_Lem_RLRPCS_s_RCsSs_ss_Ks1_001_001e}, $\forall f:\mathbb{R}\to\mathbb{R}$.
\end{subequations}

\begin{subequations}
                                                                                                       \label{Eq_Lem_RLRPCS_s_RCsSs_ss_Ks1_001_004}
\underline{Proof of \eqref{Eq_Lem_RLRPCS_s_RCsSs_ss_Ks1_001_001d}:} Obviously, the functions $\sigma_{R_1,M_-,M_+,1,0}(\xi)$ and $\sigma_{R_1,M_-,M_+,1,1}(\xi)$ are defined everywhere ($\forall\xi\in{\mathbb R}$) except at
\begin{alignat}{6}
                                                               &\{\xi_{R_1,M_--1,M_+,M_+,n};\;n\in\{-M_-+1,\cdots,M_+-1\}
                                                                                                       \label{Eq_Lem_RLRPCS_s_RCsSs_ss_Ks1_001_004a}\\
\stackrel{\eqref{Eq_Prp_RLRPCS_s_FPLIR_ss_RFPs_001_001}}{=}    &\left\{\xi\in{\mathbb R}:\alpha_{R_1,M_--1,M_+  ,+M_+}(\xi)=0\right\}
                                                                                                       \label{Eq_Lem_RLRPCS_s_RCsSs_ss_Ks1_001_004b}\\
\stackrel{\eqref{Eq_Prp_RLRPCS_s_FPLIR_ss_SICFPLR_001_001a}}{=}&\left\{\xi\in{\mathbb R}:\alpha_{R_1,M_-  ,M_+-1,-M_-}(\xi)=0\right\}
                                                                                                       \label{Eq_Lem_RLRPCS_s_RCsSs_ss_Ks1_001_004c}\\
\stackrel{\eqref{Eq_Prp_RLRPCS_s_FPLIR_ss_SICFPLR_001_001e}}{=}&\left\{\xi\in{\mathbb R}:\lambda_{R_1,M_--1,M_+,M}(\xi)-\lambda_{R_1,M_-,M_+-1,M}(\xi)=0\right\}
                                                                                                       \label{Eq_Lem_RLRPCS_s_RCsSs_ss_Ks1_001_004d}
\end{alignat}
Recall that \prpref{Prp_RLRPCS_s_FPLIR_ss_RFPs_001} all of the $M-1$ roots of the polynomial $\alpha_{R_1,M_--1,M_+  ,+M_+}(\xi)$ are real \eqref{Eq_Prp_RLRPCS_s_FPLIR_ss_RFPs_001_001}.
However, using \eqrefsab{Eq_Prp_RLRPCS_s_FPLIR_ss_SICFPLR_001_001a}{Eq_Prp_RLRPCS_s_FPLIR_ss_SICFPLR_001_001b} in \eqref{Eq_Lem_RLRPCS_s_RCsSs_ss_Ks1_001_001e} yields
\begin{alignat}{6}
 &p_{R_1,M_-,M_+}(x_i+\xi\Delta x;x_i,\Delta x;f)\stackrel{\eqrefsabc{Eq_Lem_RLRPCS_s_RCsSs_ss_Ks1_001_001e}{Eq_Prp_RLRPCS_s_FPLIR_ss_SICFPLR_001_001a}{Eq_Prp_RLRPCS_s_FPLIR_ss_SICFPLR_001_001b}}{=}
                                                                                                       \notag\\
=&\dfrac{\lambda_{R_1,M_--1,M_+  ,M}(\xi)\;p_{R_1,M_-  ,M_+-1}(x_i+\xi\Delta x;x_i,\Delta x;f)
        -\lambda_{R_1,M_-  ,M_+-1,M}(\xi)\;p_{R_1,M_--1,M_+  }(x_i+\xi\Delta x;x_i,\Delta x;f)}
        {\lambda_{R_1,M_--1,M_+  ,M}(\xi)-\lambda_{R_1,M_-  ,M_+-1,M}(\xi)}
                                                                                                       \label{Eq_Lem_RLRPCS_s_RCsSs_ss_Ks1_001_004d}\\
                                                                                          &\forall \xi\in{\mathbb R}\setminus\{\xi\in{\mathbb R}:\alpha_{R_1,M_--1,M_+  , M_+}(\xi)=0\}\quad
                                                                                           \forall x_i\in{\mathbb R}\quad
                                                                                           \forall \Delta x\in{\mathbb R}_{>0}\quad
                                                                                           \forall f:{\mathbb R}\longrightarrow{\mathbb R}
                                                                                                       \notag
\end{alignat}
or equivalently, using the representation \eqref{Eq_Prp_AELRP_s_EPR_ss_PR_001_001d} of the reconstructing polynomial in \eqref{Eq_Lem_RLRPCS_s_RCsSs_ss_Ks1_001_004d}
\begin{alignat}{6}
 &p_{R_1,M_-,M_+}(x_i+\xi\Delta x;x_i,\Delta x;f)\stackrel{\eqrefsab{Eq_Lem_RLRPCS_s_RCsSs_ss_Ks1_001_004d}{Eq_Prp_AELRP_s_EPR_ss_PR_001_001d}}{=}
                                                                                                       \notag\\
                                                  &\dfrac{\lambda_{R_1,M_--1,M_+,M}(\xi)\;\alpha_{R_1,M_-  ,M_+-1,-M_-}(\xi)}
                                                         {\lambda_{R_1,M_--1,M_+,M}(\xi)-\lambda_{R_1,M_-,M_+-1,M}(\xi)     }\;f(x_i-M_-\Delta x)
                                                                                                       \notag\\
                                                 +&\sum_{\ell=-M_-+1}^{M_+-1}\dfrac{\lambda_{R_1,M_--1,M_+,M}(\xi)\;\alpha_{R_1,M_-  ,M_+-1,\ell}(\xi)
                                                                                   -\lambda_{R_1,M_-,M_+-1,M}(\xi)\;\alpha_{R_1,M_--1,M_+  ,\ell}(\xi)}
                                                                                   {\lambda_{R_1,M_--1,M_+,M}(\xi)-\lambda_{R_1,M_-,M_+-1,M}(\xi)     }\;f(x_i+\ell\Delta x)
                                                                                                       \notag\\
                                                 +&\dfrac{-\lambda_{R_1,M_-,M_+-1,M}(\xi)\;\alpha_{R_1,M_--1,M_+  , M_+}(\xi)}
                                                         {\lambda_{R_1,M_--1,M_+,M}(\xi)-\lambda_{R_1,M_-,M_+-1,M}(\xi)      }\;f(x_i+M_+\Delta x)
                                                                                                       \label{Eq_Lem_RLRPCS_s_RCsSs_ss_Ks1_001_004e}\\
                                                                                          &\forall \xi\in{\mathbb R}\setminus\{\xi\in{\mathbb R}:\alpha_{R_1,M_--1,M_+  , M_+}(\xi)=0\}\quad
                                                                                           \forall x_i\in{\mathbb R}\quad
                                                                                           \forall \Delta x\in{\mathbb R}_{>0}\quad
                                                                                           \forall f:{\mathbb R}\longrightarrow{\mathbb R}
                                                                                                       \notag
\end{alignat}
To prove \eqref{Eq_Lem_RLRPCS_s_RCsSs_ss_Ks1_001_001d} by \eqref{Eq_Lem_RLRPCS_s_RCsSs_ss_Ks1_001_004e} we need to show that it is valid $\forall\xi\in{\mathbb R}$.
Rewriting \eqrefsab{Eq_Lem_RLRPCS_s_RCsSs_ss_Ks1_001_002a}{Eq_Lem_RLRPCS_s_RCsSs_ss_Ks1_001_002b} we have
\begin{alignat}{6}
\alpha_{R_1,M_-  ,M_+  ,-M_-}(\xi)\stackrel{\eqref{Eq_Lem_RLRPCS_s_RCsSs_ss_Ks1_001_002a}}{=}&\dfrac{\lambda_{R_1,M_--1,M_+,M}(\xi)\;\alpha_{R_1,M_-  ,M_+-1,-M_-}(\xi)}
                                                                                                    {\lambda_{R_1,M_--1,M_+,M}(\xi)-\lambda_{R_1,M_-,M_+-1,M}(\xi)}\qquad\forall \xi\in{\mathbb R}\Longrightarrow
                                                                                                       \notag\\
                                   &      \Big(\lambda_{R_1,M_--1,M_+,M}(\xi)-\lambda_{R_1,M_-,M_+-1,M}(\xi)\Big)\;\mid\;\Big(\lambda_{R_1,M_--1,M_+,M}(\xi)\;\alpha_{R_1,M_-  ,M_+-1,-M_-}(\xi)\Big)
                                                                                                       \label{Eq_Lem_RLRPCS_s_RCsSs_ss_Ks1_001_004f}\\
\alpha_{R_1,M_-  ,M_+  ,+M_+}(\xi)\stackrel{\eqref{Eq_Lem_RLRPCS_s_RCsSs_ss_Ks1_001_002a}}{=}&\dfrac{-\lambda_{R_1,M_-,M_+-1,M}(\xi)\;\alpha_{R_1,M_--1,M_+  , M_+}(\xi)}
                                                                                                    {\lambda_{R_1,M_--1,M_+,M}(\xi)-\lambda_{R_1,M_-,M_+-1,M}(\xi)}\qquad\forall \xi\in{\mathbb R}\Longrightarrow
                                                                                                       \notag\\
                                   &      \Big(\lambda_{R_1,M_--1,M_+,M}(\xi)-\lambda_{R_1,M_-,M_+-1,M}(\xi)\Big)\;\mid\;\Big(\lambda_{R_1,M_-,M_+-1,M}(\xi)\;\alpha_{R_1,M_--1,M_+  , M_+}(\xi)\Big)
                                                                                                       \label{Eq_Lem_RLRPCS_s_RCsSs_ss_Ks1_001_004g}
\end{alignat}
Recall that
by \eqref{Eq_Prp_AELRP_s_EPR_ss_AELPR_002_001c} ${\rm deg}(\lambda_{R_1,M_-,M_+-1,M})={\rm deg}(\lambda_{R_1,M_--1,M_+,M})=M$ \cite[Proposition 4.7, p. 294]{Gerolymos_2011a},
by \eqref{Eq_Prp_AELRP_s_EPR_ss_PR_001_001g} ${\rm deg}(\alpha_{R_1,M_-  ,M_+-1,-M_-})={\rm deg}(\alpha_{R_1,M_--1,M_+  , M_+})=M-1$ and
${\rm deg}(\alpha_{R_1,M_-  ,M_+  ,-M_-})={\rm deg}(\alpha_{R_1,M_-  ,M_+  ,+M_+})=M$, and
by \eqref{Eq_Prp_RLRPCS_s_FPLIR_ss_SICFPLR_001_001e} $\mathrm{deg}(\lambda_{R_1,M_--1,M_+,M}-\lambda_{R_1,M_-,M_+-1,M})=\mathrm{deg}(\alpha_{R_1,M_--1,M_+,M_+})=M-1$.

Using again \eqrefsab{Eq_Prp_RLRPCS_s_FPLIR_ss_SICFPLR_001_001a}{Eq_Prp_RLRPCS_s_FPLIR_ss_SICFPLR_001_001b} in \eqref{Eq_Lem_RLRPCS_s_RCsSs_ss_Ks1_001_003j} yields
\begin{alignat}{6}
                               &\forall \xi\in{\mathbb R}\setminus\{\xi\in{\mathbb R}:\alpha_{R_1,M_--1,M_+  , M_+}(\xi)=0\}\qquad\qquad\forall\ell\in\{-M_-+1,\cdots,M_+-1\}&
                                                                                                       \notag\\
&\alpha_{R_1,M_-,M_+,\ell}(\xi)\stackrel{\eqrefsabc{Eq_Prp_RLRPCS_s_FPLIR_ss_SICFPLR_001_001a}
                                                   {Eq_Prp_RLRPCS_s_FPLIR_ss_SICFPLR_001_001b}
                                                   {Eq_Lem_RLRPCS_s_RCsSs_ss_Ks1_001_003j}    }{=}\dfrac{\lambda_{R_1,M_--1,M_+,M}(\xi)\;\alpha_{R_1,M_-  ,M_+-1,\ell}(\xi)
                                                                                                        -\lambda_{R_1,M_-,M_+-1,M}(\xi)\;\alpha_{R_1,M_--1,M_+  ,\ell}(\xi)}
                                                                                                        {\lambda_{R_1,M_--1,M_+,M}(\xi)-\lambda_{R_1,M_-,M_+-1,M}(\xi)     }&
                                                                                                        \label{Eq_Lem_RLRPCS_s_RCsSs_ss_Ks1_001_004h}
\end{alignat}
Since the set $\{\xi\in{\mathbb R}:\alpha_{R_1,M_--1,M_+  , M_+}(\xi)=0\}$ contains only $M-1$ isolated points \prpref{Prp_RLRPCS_s_FPLIR_ss_RFPs_001},
the result of the polynomial division \eqref{Eq_Lem_RLRPCS_s_RCsSs_ss_Ks1_001_004h} must be valid $\forall\xi\in\mathbb{R}$, implying
\begin{alignat}{6}
                               &      \Big(\lambda_{R_1,M_--1,M_+,M}(\xi)-\lambda_{R_1,M_-,M_+-1,M}(\xi)\Big)\;\mid\;
                                                                                                       \notag\\
                               &      \Big(\lambda_{R_1,M_--1,M_+,M}(\xi)\;\alpha_{R_1,M_-  ,M_+-1,\ell}(\xi)
                                          -\lambda_{R_1,M_-,M_+-1,M}(\xi)\;\alpha_{R_1,M_--1,M_+  ,\ell}(\xi))\Big)
                                                                                                       \notag\\
                               &\forall\ell\in\{-M_-+1,\cdots,M_+-1\}
                                                                                                        \label{Eq_Lem_RLRPCS_s_RCsSs_ss_Ks1_001_004i}
\end{alignat}
By \eqrefsabc{Eq_Lem_RLRPCS_s_RCsSs_ss_Ks1_001_004f}{Eq_Lem_RLRPCS_s_RCsSs_ss_Ks1_001_004g}{Eq_Lem_RLRPCS_s_RCsSs_ss_Ks1_001_004i}, we have that \eqref{Eq_Lem_RLRPCS_s_RCsSs_ss_Ks1_001_004e}
is valid $\forall\xi\in{\mathbb R}$, proving \eqref{Eq_Lem_RLRPCS_s_RCsSs_ss_Ks1_001_001d}.
\end{subequations}

\begin{subequations}
                                                                                                       \label{Eq_Lem_RLRPCS_s_RCsSs_ss_Ks1_001_005}
\underline{Proof of uniqueness:} We have proved existence by construction, $\forall\xi\in{\mathbb R}\setminus\{\xi\in{\mathbb R}:\alpha_{R_1,M_--1,M_+  , M_+}(\xi)=0\}$,
of rational weighting functions, $\sigma_{R_1,M_-,M_+,1,0}(\xi)$ \eqref{Eq_Lem_RLRPCS_s_RCsSs_ss_Ks1_001_001a} and $\sigma_{R_1,M_-,M_+,1,1}(\xi)$ \eqref{Eq_Lem_RLRPCS_s_RCsSs_ss_Ks1_001_001b},
satisfying the consistency relation \eqref{Eq_Lem_RLRPCS_s_RCsSs_ss_Ks1_001_001c},
which combine the reconstructing polynomials on the substencils \defref{Def_RLRPCS_s_I_002} $\tsc{s}_{i,M_-,M_+-1}$ and $\tsc{s}_{i,M_--1,M_+}$
into the reconstructing polynomial on $\tsc{s}_{i,M_-,M_+}$ \eqref{Eq_Lem_RLRPCS_s_RCsSs_ss_Ks1_001_001e}.
To prove uniqueness, recall that by \eqref{Eq_Lem_RLRPCS_s_RCsSs_ss_Ks1_001_001c} $\sigma_{R_1,M_-,M_+,1,1}(\xi)=1-\sigma_{R_1,M_-,M_+,1,0}(\xi)$, and rewrite
\eqref{Eq_Lem_RLRPCS_s_RCsSs_ss_Ks1_001_001e} as 
\begin{alignat}{6}
p_{R_1,M_-,M_+}(x_i+\xi\Delta x;x_i,\Delta x;f)\stackrel{\eqrefsab{Eq_Lem_RLRPCS_s_RCsSs_ss_Ks1_001_001c}{Eq_Lem_RLRPCS_s_RCsSs_ss_Ks1_001_001e}}{=}
  \sigma_{R_1,M_-,M_+,1,0}(\xi)\Big(&p_{R_1,M_-  ,M_+-1}(x_i+\xi\Delta x;x_i,\Delta x;f)
                                                                                                       \notag\\
                                   -&p_{R_1,M_--1,M_+  }(x_i+\xi\Delta x;x_i,\Delta x;f)\Big)
                                                                                                       \notag\\
                                   +&p_{R_1,M_--1,M_+  }(x_i+\xi\Delta x;x_i,\Delta x;f)
                                                                                                       \label{Eq_Lem_RLRPCS_s_RCsSs_ss_Ks1_001_005a}
\end{alignat}
\begin{alignat}{6}
                                                                                           \forall \xi\in{\mathbb R}\setminus\{\xi\in{\mathbb R}:\alpha_{R_1,M_--1,M_+  , M_+}(\xi)=0\}\quad
                                                                                           \forall x_i\in{\mathbb R}\quad
                                                                                           \forall \Delta x\in{\mathbb R}_{>0}\quad
                                                                                           \forall f:{\mathbb R}\longrightarrow{\mathbb R}
                                                                                                       \notag
\end{alignat}
Hence, assuming the existence of 2 different weight-functions $\Big[\sigma_{R_1,M_-,M_+,1,0}\Big]_\tsc{a}(\xi)\neq\Big[\sigma_{R_1,M_-,M_+,1,0}\Big]_\tsc{b}(\xi)$
satisfying \eqref{Eq_Lem_RLRPCS_s_RCsSs_ss_Ks1_001_005a} $\forall \xi\in{\mathbb R}\;\forall f:{\mathbb R}\longrightarrow{\mathbb R}$ would imply
$p_{R_1,M_-  ,M_+-1}(x_i+\xi\Delta x;x_i,\Delta x;f)=p_{R_1,M_--1,M_+  }(x_i+\xi\Delta x;x_i,\Delta x;f)$ $\forall\xi\in{\mathbb R}\;\forall f:{\mathbb R}\longrightarrow{\mathbb R}$.
This is obviously a contradiction, since, by \prprefnp{Prp_RLRPCS_s_RB_001}, the 2 polynomials $p_{R_1,M_-  ,M_+-1}(x_i+\xi\Delta x;x_i,\Delta x;f)$
and $p_{R_1,M_--1,M_+  }(x_i+\xi\Delta x;x_i,\Delta x;f)$ are defined by 2 different sets of values,
$\{f_{i-M_-},\cdots,f_{i+M_+-1}\}$ and $\{f_{i-M_-+1},\cdots,f_{i+M_+}\}$,
respectively.\footnote{\label{ff_Lem_RLRPCS_s_RCsSs_ss_Ks1_001_001}A more detailed proof is given in \prprefnp{Prp_RLRPCS_s_RCsSs_ss_Ks_002}.
                      }
\end{subequations}
\qed
\end{proof}
%
%-----------------------------------------------------------------------------------------------------------------------------------
%-----------------------------------------------------------------------------------------------------------------------------------
%
\begin{remark}[{\rm \eqref{Eq_Lem_RLRPCS_s_RCsSs_ss_Ks1_001_001d} {\em vs} \eqref{Eq_Lem_RLRPCS_s_RCsSs_ss_Ks1_001_001e}}]
\label{Rmk_RLRPCS_s_RCsSs_ss_Ks1_001}
The expression \eqref{Eq_Lem_RLRPCS_s_RCsSs_ss_Ks1_001_001d} of the Lagrange reconstructing polynomial $p_{R_1,M_-,M_+}(x_i+\xi\Delta x;x_i,\Delta x;f)$ on $S_{i,M_-,M_+}$
is valid $\forall\xi\in\mathbb{R}$, because the rational expression \eqref{Eq_Lem_RLRPCS_s_RCsSs_ss_Ks1_001_001d} yields exactly $p_{R_1,M_-,M_+}(x_i+\xi\Delta x;x_i,\Delta x;f)$
by polynomial division. On the other hand, the weight functions $\sigma_{R_1,M_-,M_+,1,0}(\xi)$ \eqref{Eq_Lem_RLRPCS_s_RCsSs_ss_Ks1_001_001a} and $\sigma_{R_1,M_-,M_+,1,1}(\xi)$ \eqref{Eq_Lem_RLRPCS_s_RCsSs_ss_Ks1_001_001b},
are not defined at the poles of the rational expressions \eqrefsab{Eq_Lem_RLRPCS_s_RCsSs_ss_Ks1_001_001a}{Eq_Lem_RLRPCS_s_RCsSs_ss_Ks1_001_001b},
where the representation \eqref{Eq_Lem_RLRPCS_s_RCsSs_ss_Ks1_001_001e} is not possible.
\qed
\end{remark}
%
%-----------------------------------------------------------------------------------------------------------------------------------
%-----------------------------------------------------------------------------------------------------------------------------------
%
\begin{corollary}[{\rm Identities for $(K_{\rm s}=1)$-level subdivision}]
\label{Crl_RLRPCS_s_RCsSs_ss_Ks1_001}
Assume the conditions and definitions of \lemrefnp{Lem_RLRPCS_s_RCsSs_ss_Ks1_001}. Then the following identities hold
\begin{subequations}
                                                                                                       \label{Eq_Crl_RLRPCS_s_RCsSs_ss_Ks1_001_001}
\begin{alignat}{6}
\alpha_{R_1,M_-  ,M_+  ,-M_-}(\xi)=&\sigma_{R_1,M_-,M_+,1,0}(\xi)\;\alpha_{R_1,M_-  ,M_+-1,-M_-}(\xi)
                                                                                                       \label{Eq_Crl_RLRPCS_s_RCsSs_ss_Ks1_001_001a}\\
\alpha_{R_1,M_-  ,M_+  ,\ell}(\xi)=&\sigma_{R_1,M_-,M_+,1,0}(\xi)\;\alpha_{R_1,M_-  ,M_+-1,\ell}(\xi)
                                                                                                       \notag\\
                                  +&\sigma_{R_1,M_-,M_+,1,1}(\xi)\;\alpha_{R_1,M_--1,M_+  ,\ell}(\xi)
                                  \qquad\forall\ell\in\{-M_-+1,\cdots,M_+-1\}
                                                                                                       \label{Eq_Crl_RLRPCS_s_RCsSs_ss_Ks1_001_001b}\\
\alpha_{R_1,M_-  ,M_+  ,+M_+}(\xi)=&\sigma_{R_1,M_-,M_+,1,1}(\xi)\;\alpha_{R_1,M_--1,M_+  ,+M_+}(\xi)
                                                                                                       \label{Eq_Crl_RLRPCS_s_RCsSs_ss_Ks1_001_001c}\\
                               0=&\sigma_{R_1,M_-,M_+,1,0}(\xi)\;\lambda_{R_1,M_-  ,M_+-1,M}(\xi)
                                + \sigma_{R_1,M_-,M_+,1,1}(\xi)\;\lambda_{R_1,M_--1,M_+  ,M}(\xi)
                                                                                                       \label{Eq_Crl_RLRPCS_s_RCsSs_ss_Ks1_001_001d}\\
\lambda_{R_1,M_-  ,M_+  ,n}(\xi)=&\sigma_{R_1,M_-,M_+,1,0}(\xi)\;\lambda_{R_1,M_-  ,M_+-1,n}(\xi)
                                + \sigma_{R_1,M_-,M_+,1,1}(\xi)\;\lambda_{R_1,M_--1,M_+  ,n}(\xi)
                                  \qquad\forall n\geq M+1
                                                                                                       \label{Eq_Crl_RLRPCS_s_RCsSs_ss_Ks1_001_001e}\\
&\forall \xi\in{\mathbb R}\setminus\{\xi\in{\mathbb R}:\alpha_{R_1,M_--1,M_+  , M_+}(\xi)=0\}
                                                                                                       \notag
\end{alignat}
\end{subequations}
\end{corollary}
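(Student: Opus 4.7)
The plan is to harvest identities \eqref{Eq_Crl_RLRPCS_s_RCsSs_ss_Ks1_001_001a}, \eqref{Eq_Crl_RLRPCS_s_RCsSs_ss_Ks1_001_001b}, and \eqref{Eq_Crl_RLRPCS_s_RCsSs_ss_Ks1_001_001c} directly from within the proof of the preceding \lemrefnp{Lem_RLRPCS_s_RCsSs_ss_Ks1_001}, and then derive \eqref{Eq_Crl_RLRPCS_s_RCsSs_ss_Ks1_001_001d} and \eqref{Eq_Crl_RLRPCS_s_RCsSs_ss_Ks1_001_001e} by unwinding the approximation-error expansion of the reconstructing polynomial. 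Identity \eqref{Eq_Crl_RLRPCS_s_RCsSs_ss_Ks1_001_001a} is the restatement \eqref{Eq_Lem_RLRPCS_s_RCsSs_ss_Ks1_001_003g} of the defining equation \eqref{Eq_Lem_RLRPCS_s_RCsSs_ss_Ks1_001_001a} upon clearing the denominator, and \eqref{Eq_Crl_RLRPCS_s_RCsSs_ss_Ks1_001_001c} is similarly \eqref{Eq_Lem_RLRPCS_s_RCsSs_ss_Ks1_001_003h} obtained from \eqref{Eq_Lem_RLRPCS_s_RCsSs_ss_Ks1_001_001b}. Identity \eqref{Eq_Crl_RLRPCS_s_RCsSs_ss_Ks1_001_001b} is precisely \eqref{Eq_Lem_RLRPCS_s_RCsSs_ss_Ks1_001_003j}, derived by specializing the combination rule \eqref{Eq_Lem_RLRPCS_s_RCsSs_ss_Ks1_001_003f} to each of the degree-$M$ polynomials $\prod_{m\neq k}(x-x_i-m\Delta x)\in\mathbb{R}_M[x]$, which vanish at every stencil node except $i+k$ and thus isolate $\alpha_{R_1,\cdot,\cdot,k}(\xi)$.

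Identity \eqref{Eq_Crl_RLRPCS_s_RCsSs_ss_Ks1_001_001d} falls out from direct substitution of the closed forms \eqref{Eq_Lem_RLRPCS_s_RCsSs_ss_Ks1_001_001a}--\eqref{Eq_Lem_RLRPCS_s_RCsSs_ss_Ks1_001_001b}: abbreviating $a:=\lambda_{R_1,M_--1,M_+,M}(\xi)$ and $b:=\lambda_{R_1,M_-,M_+-1,M}(\xi)$, one gets $\sigma_{R_1,M_-,M_+,1,0}(\xi)\,b+\sigma_{R_1,M_-,M_+,1,1}(\xi)\,a=\tfrac{a\,b-b\,a}{a-b}=0$, valid wherever $a\neq b$, which by \eqref{Eq_Prp_RLRPCS_s_FPLIR_ss_SICFPLR_001_001e} is exactly the prescribed domain $\xi\in\mathbb{R}\setminus\{\xi:\alpha_{R_1,M_--1,M_+,M_+}(\xi)=0\}$.

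For \eqref{Eq_Crl_RLRPCS_s_RCsSs_ss_Ks1_001_001e}, the plan is to substitute the approximation-error representation \eqref{Eq_Prp_AELRP_s_EPR_ss_AELPR_002_001a} into each of the three Lagrange reconstructing polynomials appearing in the combination rule \eqref{Eq_Lem_RLRPCS_s_RCsSs_ss_Ks1_001_001e}, applied to $f=R^{-1}_{(1;\Delta x)}(h)$ for an arbitrary $h\in C^{N_\tsc{tj}}$ on $[x_{i-M_-}-\tfrac{1}{2}\Delta x,x_{i+M_+}+\tfrac{1}{2}\Delta x]$. The two substencil expansions begin at $n=M$, whereas the full-stencil expansion begins at $n=M+1$; the consistency relation \eqref{Eq_Lem_RLRPCS_s_RCsSs_ss_Ks1_001_001c} cancels the exact $h(x_i+\xi\Delta x)$ terms on both sides. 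What remains is an asymptotic identity in $\Delta x$ whose $n$-th coefficient is
\begin{equation*}
\bigl[\sigma_{R_1,M_-,M_+,1,0}(\xi)\,\lambda_{R_1,M_-,M_+-1,n}(\xi)+\sigma_{R_1,M_-,M_+,1,1}(\xi)\,\lambda_{R_1,M_--1,M_+,n}(\xi)-\lambda_{R_1,M_-,M_+,n}(\xi)\bigr]\,h^{(n)}(x_i+\xi\Delta x),
\end{equation*}
with the convention $\lambda_{R_1,M_-,M_+,M}(\xi):=0$. Since the $\lambda$-polynomials and the rational $\sigma$-weights depend only on $\xi$ (and not on $h$ or $\Delta x$), testing with the monomial family $h_k(x):=(x-x_i-\xi\Delta x)^k/k!$, which achieves $h_k^{(n)}(x_i+\xi\Delta x)=\delta_{kn}$, separates the contributions and delivers \eqref{Eq_Crl_RLRPCS_s_RCsSs_ss_Ks1_001_001d} from the $n=M$ equation and \eqref{Eq_Crl_RLRPCS_s_RCsSs_ss_Ks1_001_001e} for each $n\geq M+1$. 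The only delicate step is this coefficient-matching argument, which becomes routine once one observes that the $h$-dependence is fully carried by the derivatives $h^{(n)}(x_i+\xi\Delta x)$, separable by the standard polynomial test family.
\qed
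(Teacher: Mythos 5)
Your proposal is correct and follows essentially the same route as the paper: identities \eqref{Eq_Crl_RLRPCS_s_RCsSs_ss_Ks1_001_001a}--\eqref{Eq_Crl_RLRPCS_s_RCsSs_ss_Ks1_001_001c} are read off from \eqref{Eq_Lem_RLRPCS_s_RCsSs_ss_Ks1_001_003g}, \eqref{Eq_Lem_RLRPCS_s_RCsSs_ss_Ks1_001_003h} and \eqref{Eq_Lem_RLRPCS_s_RCsSs_ss_Ks1_001_003j}, \eqref{Eq_Crl_RLRPCS_s_RCsSs_ss_Ks1_001_001d} is immediate from the definitions \eqrefsab{Eq_Lem_RLRPCS_s_RCsSs_ss_Ks1_001_001a}{Eq_Lem_RLRPCS_s_RCsSs_ss_Ks1_001_001b}, and \eqref{Eq_Crl_RLRPCS_s_RCsSs_ss_Ks1_001_001e} is obtained by inserting the error expansion \eqref{Eq_Prp_AELRP_s_EPR_ss_AELPR_002_001a} into \eqref{Eq_Lem_RLRPCS_s_RCsSs_ss_Ks1_001_001e} and matching coefficients of $\Delta x^n$ against polynomial test functions. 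Your only (harmless) deviation is the choice of the centered monomials $h_k(x)=(x-x_i-\xi\Delta x)^k/k!$, which kills all but the $n=k$ derivative at the evaluation point and so replaces the paper's inductive sweep over $q\in\mathbb{R}_n[x]$ with a one-shot coefficient separation.
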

%
%-----------------------------------------------------------------------------------------------------------------------------------
%-----------------------------------------------------------------------------------------------------------------------------------
%
\begin{proof}
We have already proved
\eqref{Eq_Crl_RLRPCS_s_RCsSs_ss_Ks1_001_001a} as \eqref{Eq_Lem_RLRPCS_s_RCsSs_ss_Ks1_001_003g},
\eqref{Eq_Crl_RLRPCS_s_RCsSs_ss_Ks1_001_001c} as \eqref{Eq_Lem_RLRPCS_s_RCsSs_ss_Ks1_001_003h}, and
\eqref{Eq_Crl_RLRPCS_s_RCsSs_ss_Ks1_001_001b} as \eqref{Eq_Lem_RLRPCS_s_RCsSs_ss_Ks1_001_003j}. They are summarized separately here for future use.
Identity \eqref{Eq_Crl_RLRPCS_s_RCsSs_ss_Ks1_001_001d} follows directly from the definitions of
$\sigma_{R_1,M_-,M_+,1,0}(\xi)$ \eqref{Eq_Lem_RLRPCS_s_RCsSs_ss_Ks1_001_001a} and
$\sigma_{R_1,M_-,M_+,1,1}(\xi)$ \eqref{Eq_Lem_RLRPCS_s_RCsSs_ss_Ks1_001_001b}, and was used in the calculations leading to \eqref{Eq_Lem_RLRPCS_s_RCsSs_ss_Ks1_001_003d}.
To prove the relation \eqref{Eq_Crl_RLRPCS_s_RCsSs_ss_Ks1_001_001e}, we replace 
$p_{R_1,M_-,M_+}(x_i+\xi\Delta x;x_i,\Delta x;f)$, $p_{R_1,M_-  ,M_+-1}(x_i+\xi\Delta x;x_i,\Delta x;f)$ and $p_{R_1,M_--1,M_+  }(x_i+\xi\Delta x;x_i,\Delta x;f)$ in \eqref{Eq_Lem_RLRPCS_s_RCsSs_ss_Ks1_001_001e}
by their expansions in terms of the derivatives $h^{(n)}(x_i+\xi\Delta x)$ \eqref{Eq_Prp_AELRP_s_EPR_ss_AELPR_002_001a}, and obtain,
using \eqrefsab{Eq_Lem_RLRPCS_s_RCsSs_ss_Ks1_001_001c}{Eq_Crl_RLRPCS_s_RCsSs_ss_Ks1_001_001d}
\begin{alignat}{6}
\sum_{n=M+1}^{N_\tsc{tj}}\Big(\sigma_{R_1,M_-,M_+,1,0}(\xi)\;\lambda_{R_1,M_-  ,M_+-1,n}(\xi)
                             +\sigma_{R_1,M_-,M_+,1,1}(\xi)\;\lambda_{R_1,M_--1,M_+  ,n}(\xi)\Big)\;\Delta x^n\;h^{(n)}(x+\xi\Delta x)
\stackrel{\eqrefsab{Eq_Lem_RLRPCS_s_RCsSs_ss_Ks1_001_001c}{Eq_Crl_RLRPCS_s_RCsSs_ss_Ks1_001_001d}}{=}O(\Delta x^{N_\tsc{tj}+1})
                                                                                                       \label{Eq_Crl_RLRPCS_s_RCsSs_ss_Ks1_001_002}\\
\forall \xi\in{\mathbb R}\setminus\{\xi\in{\mathbb R}:\alpha_{R_1,M_--1,M_+  , M_+}(\xi)=0\}\qquad\qquad\forall h\in C^{N_\tsc{tj}+1}({\mathbb R}).
                                                                                                       \notag
\end{alignat}
Using polynomials $q(x)\in \mathbb{R}_n[x]$ \rmkref{Rmk_RLRPCS_s_RB_002}, recursively for $n\geq M+1$, in \eqref{Eq_Crl_RLRPCS_s_RCsSs_ss_Ks1_001_002} proves \eqref{Eq_Crl_RLRPCS_s_RCsSs_ss_Ks1_001_001e}, by induction.
\qed
\end{proof}
%
%-----------------------------------------------------------------------------------------------------------------------------------

%-----------------------------------------------------------------------------------------------------------------------------------
%
%
%
%
%
\subsection{$(K_\mathrm{s}\geq1)$-level subdivision}\label{RLRPCS_s_RCsSs_ss_Ks}
%
%
%
%
%
%-----------------------------------------------------------------------------------------------------------------------------------

We have shown in \cite[Lemma 2.1]{Gerolymos_2011a_news} that if a general family of functions $p_{M_-,M_+}(x)$, depending on 2 integer indices $M_\pm\in\mathbb{Z}:M_-+M_+\geq1$,
is equipped with a $(K_\mathrm{s}=1)$-level subdivision rule, defined by a relation of the form \eqref{Eq_RLRPCS_s_I_001a} with $K_\mathrm{s}=1$,
then, by recurrence, we can construct weight-functions satisfying \eqref{Eq_RLRPCS_s_I_001a} $\forall K_\mathrm{s}\leq M-1$.
By \lemrefnp{Lem_RLRPCS_s_RCsSs_ss_Ks1_001}, we can always define uniquely the optimal weight-functions \eqref{Eq_Lem_RLRPCS_s_RCsSs_ss_Ks1_001_001} of the
$(K_\mathrm{s}=1)$-level subdivision of $\tsc{s}_{i,M_-,M_+}$ \defref{Def_RLRPCS_s_I_002}.
Therefore, the Lagrange reconstructing polynomials $p_{R_1,M_-,M_+}(x_i+\xi\Delta x;x_i,\Delta x;f)$ \eqref{Eq_Prp_AELRP_s_EPR_ss_PR_001_001d}
satisfy the conditions of \cite[Lemma 2.1]{Gerolymos_2011a_news}.

%-----------------------------------------------------------------------------------------------------------------------------------
%
\begin{proposition}[{\rm Recursive generation of weight-functions for the Lagrange reconstructing polynomial}]
\label{Prp_RLRPCS_s_RCsSs_ss_Ks_001}
\begin{subequations}
                                                                                                       \label{Eq_Prp_RLRPCS_s_RCsSs_ss_Ks_001_001}
Assume the conditions of \lemrefnp{Lem_RLRPCS_s_RCsSs_ss_Ks1_001}. Then, $\forall M_\pm\in{\mathbb Z}:\;M:=M_-+M_+\geq2$, $\forall K_\mathrm{s}\leq M-1$,
the reconstructing polynomial on $\tsc{s}_{i,M_-,M_+}$ \prpref{Prp_RLRPCS_s_RB_001}
can be represented, almost everywhere, by combination of the reconstructing polynomials on the $K_\mathrm{s}$-level substencils \defref{Def_RLRPCS_s_I_002} of $\tsc{s}_{i,M_-,M_+}$, as
\begin{alignat}{6}
 &p_{R_1,M_-,M_+}(x_i+\xi\Delta x;x_i,\Delta x;f)=\sum_{k_\mathrm{s}=0}^{K_\mathrm{s}}\sigma_{R_1,M_-,M_+,K_\mathrm{s},k_\mathrm{s}}(\xi)\;p_{R_1,M_--k_\mathrm{s},M_+-K_\mathrm{s}+k_\mathrm{s}}(x_i+\xi\Delta x;x_i,\Delta x;f)
                                                                                                       \label{Eq_Prp_RLRPCS_s_RCsSs_ss_Ks_001_001a}\\
                                                                                          &\forall \xi\in{\mathbb R}\setminus\mathcal{S}_{R_1,M_-,M_+,K_\mathrm{s}}\quad
                                                                                           \forall x_i\in{\mathbb R}\quad
                                                                                           \forall \Delta x\in{\mathbb R}_{>0}\quad
                                                                                           \forall f:{\mathbb R}\longrightarrow{\mathbb R}
                                                                                                       \notag
\end{alignat}
where the rational weight-functions $\sigma_{R_1,M_-,M_+,K_\mathrm{s},k_\mathrm{s}}(\xi)$ are defined recursively by
\begin{alignat}{6}
&\sigma_{R_1,M_-,M_+,K_\mathrm{s},k_\mathrm{s}}(\xi)=\left\{\begin{array}{ll}\dfrac{\alpha_{R_1,M_-             ,M_+               ,-M_-+k_\mathrm{s}M}(\xi)}
                                                                                   {\alpha_{R_1,M_--k_\mathrm{s},M_+-1+k_\mathrm{s},-M_-+k_\mathrm{s}M}(\xi)}      &K_\mathrm{s}=1       \\
                                                                                                                                                                   &                     \\
                                                                             {\displaystyle\sum_{\ell_\mathrm{s}=\max(0,k_\mathrm{s}-1)}^{\min(K_\mathrm{s}-1,k_\mathrm{s})}
                                                                                           \sigma_{R_1,M_-             ,M_+,K_\mathrm{s}-1                 ,\ell_\mathrm{s}}(\xi)\;
                                                                                           \sigma_{R_1,M_--\ell_{\rm s},M_+-(K_\mathrm{s}-1)+\ell_\mathrm{s},1,k_\mathrm{s}-\ell_\mathrm{s}}(\xi)
                                                                             }                                                                                     &K_\mathrm{s}\geq2    \\\end{array}\right.
                                                                                                       \notag\\
&\forall k_\mathrm{s}\in\{0,\cdots,K_\mathrm{s}\}\qquad\forall K_\mathrm{s}\in\{1,\cdots,M-1\}
                                                                                                       \label{Eq_Prp_RLRPCS_s_RCsSs_ss_Ks_001_001b}
\end{alignat}
and satisfy the consistency condition
\begin{alignat}{6}
 \sum_{k_\mathrm{s}=0}^{K_\mathrm{s}}\sigma_{R_1,M_-,M_+,K_\mathrm{s},k_\mathrm{s}}(\xi)=1\qquad\forall\xi\in{\mathbb R}
                                                                                                       \label{Eq_Prp_RLRPCS_s_RCsSs_ss_Ks_001_001c}
\end{alignat}
The set of poles of the rational weight-functions $\mathcal{S}_{R_1,M_-,M_+,K_\mathrm{s}}$ \eqref{Eq_Prp_RLRPCS_s_RCsSs_ss_Ks_001_001a} satisfies
\begin{alignat}{6}
\mathcal{S}_{R_1,M_-,M_+,1           }                  &:=     &\{\xi\in{\mathbb R}:\alpha_{R_1,M_--1,M_+  , M_+}(\xi)=0\}
\stackrel{\eqref{Eq_Prp_RLRPCS_s_FPLIR_ss_RFPs_001_001}}{=}      \{\xi_{R_1,M_--1,M_+,M_+,n};\;n\in\{-M_-+1,\cdots,M_+-1\}
                                                                                                       \label{Eq_Prp_RLRPCS_s_RCsSs_ss_Ks_001_001d}\\
\mathcal{S}_{R_1,M_-,M_+,K_\mathrm{s}}&\subseteq&\bigcup_{   L_\mathrm{s}=0}^{K_\mathrm{s}-1}\displaylimits
                                                 \bigcup_{\ell_\mathrm{s}=0}^{L_\mathrm{s}  }{\mathcal S}_{\sigma_{M_--\ell_\mathrm{s},M_+-L_\mathrm{s}+\ell_\mathrm{s},1}}
                                       =         \left\{\xi\in{\mathbb R}:\prod_{   L_\mathrm{s}=0}^{K_\mathrm{s}-1}
                                                                          \prod_{\ell_\mathrm{s}=0}^{L_\mathrm{s}  }
                                                                          \alpha_{R_1,M_--1-\ell_\mathrm{s},M_+-L_\mathrm{s}+\ell_\mathrm{s},M_+-L_\mathrm{s}+\ell_\mathrm{s}}(\xi)=0\right\}
                                                                                                       \notag\\
&&\forall K_\mathrm{s}\in\{1,\cdots,M-1\}
                                                                                                       \label{Eq_Prp_RLRPCS_s_RCsSs_ss_Ks_001_001e}
\end{alignat}
\end{subequations}
\end{proposition}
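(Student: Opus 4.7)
The natural approach is induction on $K_\mathrm{s}$, bootstrapping from the $1$-level subdivision rule of \lemrefnp{Lem_RLRPCS_s_RCsSs_ss_Ks1_001} by means of the general weight-function recurrence \cite[Lemma~2.1]{Gerolymos_2011a_news}. The base case $K_\mathrm{s}=1$ is furnished verbatim by \lemrefnp{Lem_RLRPCS_s_RCsSs_ss_Ks1_001}: it yields the representation, the two weight-functions \eqrefsab{Eq_Lem_RLRPCS_s_RCsSs_ss_Ks1_001_001a}{Eq_Lem_RLRPCS_s_RCsSs_ss_Ks1_001_001b}, consistency \eqref{Eq_Lem_RLRPCS_s_RCsSs_ss_Ks1_001_001c}, uniqueness, and identifies the pole set \eqref{Eq_Prp_RLRPCS_s_RCsSs_ss_Ks_001_001d}.

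\textbf{Inductive step.} Assume the statement holds up to level $K_\mathrm{s}-1\geq 1$. Apply the inductive representation to $\tsc{s}_{i,M_-,M_+}$ to write $p_{R_1,M_-,M_+}(x_i+\xi\Delta x;x_i,\Delta x;f)$ as a linear combination of the $K_\mathrm{s}$ reconstructing polynomials on the $(K_\mathrm{s}-1)$-level substencils $\tsc{s}_{i,M_--\ell_\mathrm{s},M_+-(K_\mathrm{s}-1)+\ell_\mathrm{s}}$, $\ell_\mathrm{s}\in\{0,\ldots,K_\mathrm{s}-1\}$, with rational weights $\sigma_{R_1,M_-,M_+,K_\mathrm{s}-1,\ell_\mathrm{s}}(\xi)$. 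Since $K_\mathrm{s}\leq M-1$, each such substencil has length $M-(K_\mathrm{s}-1)\geq 2$, so \lemrefnp{Lem_RLRPCS_s_RCsSs_ss_Ks1_001} applies and decomposes it into the right-truncated substencil $\tsc{s}_{i,M_--\ell_\mathrm{s},M_+-K_\mathrm{s}+\ell_\mathrm{s}}$ (weight $\sigma_{R_1,M_--\ell_\mathrm{s},M_+-(K_\mathrm{s}-1)+\ell_\mathrm{s},1,0}$) and the left-truncated substencil $\tsc{s}_{i,M_--\ell_\mathrm{s}-1,M_+-(K_\mathrm{s}-1)+\ell_\mathrm{s}}$ (weight $\sigma_{R_1,M_--\ell_\mathrm{s},M_+-(K_\mathrm{s}-1)+\ell_\mathrm{s},1,1}$). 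Substituting and re-indexing via $k_\mathrm{s}:=\ell_\mathrm{s}+k'$, $k'\in\{0,1\}$, recovers the $K_\mathrm{s}+1$ distinct $K_\mathrm{s}$-level substencils of $\tsc{s}_{i,M_-,M_+}$; the $k_\mathrm{s}$-th substencil receives contributions from precisely the $\ell_\mathrm{s}\in\{\max(0,k_\mathrm{s}-1),\ldots,\min(K_\mathrm{s}-1,k_\mathrm{s})\}$, yielding the convolution-like recurrence \eqref{Eq_Prp_RLRPCS_s_RCsSs_ss_Ks_001_001b}.

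\textbf{Consistency and pole set.} Consistency \eqref{Eq_Prp_RLRPCS_s_RCsSs_ss_Ks_001_001c} is obtained by applying \eqref{Eq_Prp_RLRPCS_s_RCsSs_ss_Ks_001_001a} to $f\equiv 1$: by \rmkrefnp{Rmk_RLRPCS_s_RB_002} the reconstructing polynomial of the constant $1$ on any stencil is $1$, whence the sum of the $\sigma_{R_1,M_-,M_+,K_\mathrm{s},k_\mathrm{s}}(\xi)$ equals $1$ wherever they are defined; since this is a rational identity, it extends to all $\xi\in\mathbb{R}$ (poles of a finite sum forced to equal the constant $1$ must be removable). Alternatively, inductively multiplying the consistency of the $(K_\mathrm{s}-1)$-level weights by the pairwise sum rule \eqref{Eq_Lem_RLRPCS_s_RCsSs_ss_Ks1_001_001c} yields the same conclusion. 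The pole-set bound \eqref{Eq_Prp_RLRPCS_s_RCsSs_ss_Ks_001_001e} is immediate from the recurrence: each composition can only introduce poles from the pole set of the outer $(K_\mathrm{s}-1)$-level weights (controlled inductively) and from the denominators $\alpha_{R_1,M_--\ell_\mathrm{s}-1,M_+-(K_\mathrm{s}-1)+\ell_\mathrm{s},M_+-(K_\mathrm{s}-1)+\ell_\mathrm{s}}$ appearing in the $1$-level weights applied to each intermediate substencil, in line with \eqref{Eq_Prp_RLRPCS_s_RCsSs_ss_Ks_001_001d}; taking the union over all $L_\mathrm{s}=K_\mathrm{s}-1$ recursive layers gives the double union in \eqref{Eq_Prp_RLRPCS_s_RCsSs_ss_Ks_001_001e}.

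\textbf{Main obstacle.} The only non-routine aspect is the bookkeeping in the index-collection step: one must verify that the pairs $(\ell_\mathrm{s},k')\in\{0,\ldots,K_\mathrm{s}-1\}\times\{0,1\}$ partition onto the labels $k_\mathrm{s}\in\{0,\ldots,K_\mathrm{s}\}$ via $k_\mathrm{s}=\ell_\mathrm{s}+k'$ in exactly the fibre structure encoded in \eqref{Eq_Prp_RLRPCS_s_RCsSs_ss_Ks_001_001b}, and that the intermediate substencils are genuinely distinct so that no spurious cancellation occurs. Everything else is algebraic: existence, the recurrence, consistency, and the pole inclusion all follow formally once the base case \lemrefnp{Lem_RLRPCS_s_RCsSs_ss_Ks1_001} is in hand and the recurrence of \cite[Lemma~2.1]{Gerolymos_2011a_news} is invoked (uniqueness at each step is deferred to \prprefnp{Prp_RLRPCS_s_RCsSs_ss_Ks_002}).
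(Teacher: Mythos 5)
Your proposal is correct and follows the same route as the paper: establish the $(K_\mathrm{s}=1)$ base case via \lemrefnp{Lem_RLRPCS_s_RCsSs_ss_Ks1_001} and then apply the general recurrence of \cite[Lemma 2.1]{Gerolymos_2011a_news}, whose inductive content (the re-indexing $k_\mathrm{s}=\ell_\mathrm{s}+k'$ and the resulting convolution-like fibre structure) you simply unfold explicitly instead of citing as a black box. The consistency and pole-set arguments you give likewise match what the paper obtains from that cited lemma, including the $\subseteq$ in \eqref{Eq_Prp_RLRPCS_s_RCsSs_ss_Ks_001_001e} to allow for pole cancellation.
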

%
%-----------------------------------------------------------------------------------------------------------------------------------
%-----------------------------------------------------------------------------------------------------------------------------------
%
\begin{proof}
The case $K_{\rm s}=1$ follows from \lemrefnp{Lem_RLRPCS_s_RCsSs_ss_Ks1_001}, with the set of isolated singular points $\mathcal{S}_{R_1,M_-,M_+,1}$ defined by \eqref{Eq_Prp_RLRPCS_s_RCsSs_ss_Ks_001_001d},
because of \eqref{Eq_Lem_RLRPCS_s_RCsSs_ss_Ks1_001_001e}. Since the $(K_{\rm s}=1)$-level subdivision rule is established, the conditions
of \cite[Lemma 2.1]{Gerolymos_2011a_news} are satisfied, proving \eqrefsatob{Eq_Prp_RLRPCS_s_RCsSs_ss_Ks_001_001a}{Eq_Prp_RLRPCS_s_RCsSs_ss_Ks_001_001c},
and the recursive definition \eqref{Eq_Prp_RLRPCS_s_RCsSs_ss_Ks_001_001e} of the set of isolated singular points $\mathcal{S}_{R_1,M_-,M_+,K_\mathrm{s}}$.
The $\subseteq$ relation is used in \eqref{Eq_Prp_RLRPCS_s_RCsSs_ss_Ks_001_001e} for $K_\mathrm{s}>1$,
because there may be pole cancellation by the multiplications in \eqref{Eq_Prp_RLRPCS_s_RCsSs_ss_Ks_001_001b}.
\qed
\end{proof}
%
%-----------------------------------------------------------------------------------------------------------------------------------
%-----------------------------------------------------------------------------------------------------------------------------------
%
\begin{corollary}[{\rm Representation of the fundamental polynomials $\alpha_{R_1,M_-,M_+,\ell}(\xi)$~\eqref{Eq_Prp_AELRP_s_EPR_ss_PR_001_001g}}] \label{Crl_RLRPCS_s_RCsSs_ss_Ks_001}
Assume the conditions of \prprefnp{Prp_RLRPCS_s_RCsSs_ss_Ks_001}. Then, the fundamental polynomials of Lagrange reconstruction
$\alpha_{R_1,M_-,M_+,\ell}(\xi)$ \eqref{Eq_Prp_AELRP_s_EPR_ss_PR_001_001g} on $\tsc{s}_{i,M_-,M_+}$ \defref{Def_AELRP_s_EPR_ss_PR_001},
can be represented by a weighted combination of the basis \prpref{Prp_RLRPCS_s_FPLIR_ss_RPsFPs_002} Lagrange reconstructing polynomials
on the $K_\mathrm{s}$-level substencils \defref{Def_RLRPCS_s_I_002} of $\tsc{s}_{i,M_-,M_+}$ as
\begin{alignat}{6}
\alpha_{R_1,M_-,M_+,\ell}(\xi)=&\sum_{k_\mathrm{s}=\max(0,\ell+K_\mathrm{s}-M_+)}^{\min(K_\mathrm{s},\ell+M_-)}
                                \sigma_{R_1,M_-,M_+,K_\mathrm{s},k_\mathrm{s}}(\xi)\;
                                \alpha_{R_1,M_--k_\mathrm{s},M_+-K_\mathrm{s}+k_\mathrm{s},\ell}(\xi)
                                \qquad\forall \xi\in{\mathbb R}\setminus\mathcal{S}_{R_1,M_-,M_+,K_\mathrm{s}}
                                                                                                       \label{Eq_Crl_RLRPCS_s_RCsSs_ss_Ks_001_001}
\end{alignat}
where the weight-functions $\sigma_{R_1,M_-,M_+,K_\mathrm{s},k_\mathrm{s}}(\xi)$ are defined by \eqref{Eq_Prp_RLRPCS_s_RCsSs_ss_Ks_001_001b} in \prprefnp{Prp_RLRPCS_s_RCsSs_ss_Ks_001},
and the set of isolated singular points $\mathcal{S}_{R_1,M_-,M_+,K_\mathrm{s}}$ by \eqrefsab{Eq_Prp_RLRPCS_s_RCsSs_ss_Ks_001_001d}
                                                                                             {Eq_Prp_RLRPCS_s_RCsSs_ss_Ks_001_001e}.
\end{corollary}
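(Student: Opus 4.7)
The plan is to derive the claimed identity by expanding both sides of \eqref{Eq_Prp_RLRPCS_s_RCsSs_ss_Ks_001_001a} in the basis representation \eqref{Eq_Prp_AELRP_s_EPR_ss_PR_001_001d} and then matching the coefficients of each nodal value $f(x_i+\ell\Delta x)$, exploiting the fact that \eqref{Eq_Prp_RLRPCS_s_RCsSs_ss_Ks_001_001a} holds for arbitrary $f:\mathbb{R}\longrightarrow\mathbb{R}$.

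First, I apply \eqref{Eq_Prp_AELRP_s_EPR_ss_PR_001_001d} to the left-hand side of \eqref{Eq_Prp_RLRPCS_s_RCsSs_ss_Ks_001_001a}, writing $p_{R_1,M_-,M_+}(x_i+\xi\Delta x;x_i,\Delta x;f)=\sum_{\ell=-M_-}^{M_+}\alpha_{R_1,M_-,M_+,\ell}(\xi)\,f(x_i+\ell\Delta x)$. On the right-hand side, for each $k_\mathrm{s}\in\{0,\cdots,K_\mathrm{s}\}$ I apply \eqref{Eq_Prp_AELRP_s_EPR_ss_PR_001_001d} on the substencil $\tsc{s}_{i,M_--k_\mathrm{s},M_+-K_\mathrm{s}+k_\mathrm{s}}$ to expand the substencil reconstruction in its own fundamental polynomials $\alpha_{R_1,M_--k_\mathrm{s},M_+-K_\mathrm{s}+k_\mathrm{s},\ell}(\xi)$, where $\ell$ runs over $\{-M_-+k_\mathrm{s},\cdots,M_+-K_\mathrm{s}+k_\mathrm{s}\}$.

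Next, I interchange the order of the resulting double sum to group terms by their dependence on $f(x_i+\ell\Delta x)$, $\ell\in\{-M_-,\cdots,M_+\}$. A given $\ell$ is carried by the $k_\mathrm{s}$-th substencil iff $-M_-+k_\mathrm{s}\leq\ell\leq M_+-K_\mathrm{s}+k_\mathrm{s}$, i.e.\ iff $\ell+K_\mathrm{s}-M_+\leq k_\mathrm{s}\leq\ell+M_-$, and intersecting this with $0\leq k_\mathrm{s}\leq K_\mathrm{s}$ yields precisely the range $\max(0,\ell+K_\mathrm{s}-M_+)\leq k_\mathrm{s}\leq\min(K_\mathrm{s},\ell+M_-)$ that appears in \eqref{Eq_Crl_RLRPCS_s_RCsSs_ss_Ks_001_001}.

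Finally, since \eqref{Eq_Prp_RLRPCS_s_RCsSs_ss_Ks_001_001a} is valid $\forall f:\mathbb{R}\longrightarrow\mathbb{R}$ and the $M+1$ nodal values $\{f(x_i+\ell\Delta x);\,\ell\in\{-M_-,\cdots,M_+\}\}$ may be prescribed independently, I successively test with functions $f_m$ satisfying $f_m(x_i+\ell\Delta x)=\delta_{m\ell}$, for each $m\in\{-M_-,\cdots,M_+\}$. This isolates the coefficient of $f(x_i+m\Delta x)$ on both sides and yields \eqref{Eq_Crl_RLRPCS_s_RCsSs_ss_Ks_001_001} at every $\xi\in\mathbb{R}\setminus\mathcal{S}_{R_1,M_-,M_+,K_\mathrm{s}}$, the domain of validity being inherited from \prprefnp{Prp_RLRPCS_s_RCsSs_ss_Ks_001}. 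The only obstacle is the bookkeeping required in the interchange of summation to verify the two-sided bound on $k_\mathrm{s}$; once this is done the result is an immediate consequence of the linearity of \eqref{Eq_Prp_AELRP_s_EPR_ss_PR_001_001d} and the arbitrariness of $f$.
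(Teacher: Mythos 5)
Your proposal is correct and follows essentially the same route as the paper's own proof: expand each substencil reconstructing polynomial via \eqref{Eq_Prp_AELRP_s_EPR_ss_PR_001_001d}, interchange the double sum with exactly the index bookkeeping you describe (the paper records this in a footnote), and identify the coefficient of each $f(x_i+\ell\Delta x)$ using the arbitrariness of $f$. Your explicit delta-function test $f_m(x_i+\ell\Delta x)=\delta_{m\ell}$ just makes the paper's final coefficient-matching step slightly more explicit; nothing of substance differs.
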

%
%-----------------------------------------------------------------------------------------------------------------------------------
%-----------------------------------------------------------------------------------------------------------------------------------
%
\begin{proof} Rewrite \eqref{Eq_Prp_RLRPCS_s_RCsSs_ss_Ks_001_001a}
as\footnote{\label{ff_Crl_RLRPCS_s_RCsSs_ss_Ks_001_001}
\begin{alignat}{6}
\begin{array}{c}0\leq k_\mathrm{s}\leq K_\mathrm{s}                        \\
                -M_-+k_\mathrm{s}\leq\ell\leq M_+-K_\mathrm{s}+k_\mathrm{s}\\\end{array}\stackrel{\eqref{Eq_Def_RLRPCS_s_I_002_001b}}{\iff}
\begin{array}{c}0\leq k_\mathrm{s}\leq K_\mathrm{s}                        \\
                -M_-\leq\ell\leq M_+                                       \\
                -M_-+k_\mathrm{s}\leq\ell\leq M_+-K_\mathrm{s}+k_\mathrm{s}\\\end{array}\iff
\begin{array}{c}0\leq k_\mathrm{s}\leq K_\mathrm{s}\\
                -M_-\leq\ell\leq M_+               \\
                k_\mathrm{s}\leq\ell+M_-           \\
                \ell-M_++K_\mathrm{s}\leq k_\mathrm{s}\\\end{array}\iff
\begin{array}{c}-M_-\leq\ell\leq M_+                              \\
                0\leq k_\mathrm{s}\leq K_\mathrm{s}               \\
                \ell-M_++K_\mathrm{s}\leq k_\mathrm{s}\leq\ell+M_-\\\end{array}
                                                                                                       \notag
\end{alignat}
           }
\begin{alignat}{6}
p_{R_1,M_-,M_+}(x_i+\xi\Delta x;x_i,\Delta x;f)
\stackrel{\eqref{Eq_Prp_RLRPCS_s_RCsSs_ss_Ks_001_001a}}{=}&\sum_{k_\mathrm{s}=0}^{K_\mathrm{s}}\sigma_{R_1,M_-,M_+,K_\mathrm{s},k_\mathrm{s}}(\xi)\;p_{R_1,M_--k_\mathrm{s},M_+-K_\mathrm{s}+k_\mathrm{s}}(x_i+\xi\Delta x;x_i,\Delta x;f)
                                                                                                       \notag\\
\stackrel{\eqref{Eq_Prp_AELRP_s_EPR_ss_PR_001_001d}}{=}&\sum_{k_\mathrm{s}=0}^{K_\mathrm{s}}\left(\sigma_{R_1,M_-,M_+,K_\mathrm{s},k_\mathrm{s}}(\xi)\;
                                                                                           \sum_{\ell=-M_-+k_\mathrm{s}}^{M_+-K_\mathrm{s}+k_\mathrm{s}}
                                                                                           \Bigl(\alpha_{R_1,M_--k_\mathrm{s},M_+-K_\mathrm{s}+k_\mathrm{s},\ell}(\xi)\;f(x_i+\ell\Delta x)\Bigr)\right)
                                                                                                       \notag\\
\stackrel{\ffref{ff_Crl_RLRPCS_s_RCsSs_ss_Ks_001_001}}{=}&\sum_{\ell=-M_-}^{M_+}\underbrace{\left(\sum_{k_\mathrm{s}=\max(0,\ell+K_\mathrm{s}-M_+)}^{\min(K_\mathrm{s},\ell+M_-)}
                                                                                                  \sigma_{R_1,M_-,M_+,K_\mathrm{s},k_\mathrm{s}}(\xi)\;
                                                                                                   \alpha_{R_1,M_--k_\mathrm{s},M_+-K_\mathrm{s}+k_\mathrm{s},\ell}(\xi)\right)
                                                                                           }_{\displaystyle \alpha_{R_1,M_-,M_+,\ell}(\xi)}\;f(x_i+\ell\Delta x)
                                                                                                       \label{Eq_Crl_RLRPCS_s_RCsSs_ss_Ks_001_002}
\end{alignat}
proving \eqref{Eq_Crl_RLRPCS_s_RCsSs_ss_Ks_001_001} by \eqref{Eq_Prp_AELRP_s_EPR_ss_PR_001_001d}.
\qed
\end{proof}
%
%-----------------------------------------------------------------------------------------------------------------------------------
%-----------------------------------------------------------------------------------------------------------------------------------
%
\begin{proposition}[{\rm Uniqueness of weight-functions $\sigma_{R_1,M_-,M_+,K_\mathrm{s},k_\mathrm{s}}(\xi)$ \eqref{Eq_Prp_RLRPCS_s_RCsSs_ss_Ks_001_001b}}]
\label{Prp_RLRPCS_s_RCsSs_ss_Ks_002}
Assume the conditions of \prprefnp{Prp_RLRPCS_s_RCsSs_ss_Ks_001}.
The functions $\sigma_{R_1,M_-,M_+,K_\mathrm{s},k_\mathrm{s}}(\xi)$ satisfying \eqref{Eq_Prp_RLRPCS_s_RCsSs_ss_Ks_001_001a} are unique.
\end{proposition}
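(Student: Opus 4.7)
The plan is to prove uniqueness by the standard difference argument: suppose two families of weight-functions, $\sigma^{\tsc{a}}_{R_1,M_-,M_+,K_\mathrm{s},k_\mathrm{s}}(\xi)$ and $\sigma^{\tsc{b}}_{R_1,M_-,M_+,K_\mathrm{s},k_\mathrm{s}}(\xi)$, both satisfy \eqref{Eq_Prp_RLRPCS_s_RCsSs_ss_Ks_001_001a} for every $f:\mathbb{R}\to\mathbb{R}$ and every $\xi$ outside the set of isolated poles, and show that their differences $\delta_{k_\mathrm{s}}(\xi):=\sigma^{\tsc{a}}_{R_1,M_-,M_+,K_\mathrm{s},k_\mathrm{s}}(\xi)-\sigma^{\tsc{b}}_{R_1,M_-,M_+,K_\mathrm{s},k_\mathrm{s}}(\xi)$ vanish identically. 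Subtracting the two instances of \eqref{Eq_Prp_RLRPCS_s_RCsSs_ss_Ks_001_001a} cancels $p_{R_1,M_-,M_+}$ on the left and yields
\begin{equation}
\sum_{k_\mathrm{s}=0}^{K_\mathrm{s}}\delta_{k_\mathrm{s}}(\xi)\;p_{R_1,M_--k_\mathrm{s},M_+-K_\mathrm{s}+k_\mathrm{s}}(x_i+\xi\Delta x;x_i,\Delta x;f)=0\qquad\forall f:\mathbb{R}\to\mathbb{R}.
\notag
\end{equation}

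Next I would expand each substencil reconstructing polynomial through the fundamental-polynomial representation \eqref{Eq_Prp_AELRP_s_EPR_ss_PR_001_001d} and exchange the order of summation (as done in the proof of \crlrefnp{Crl_RLRPCS_s_RCsSs_ss_Ks_001}), regrouping by the grid value $f(x_{i+\ell})$. Since the resulting expression vanishes $\forall f$, we may test against the $M+1$ delta-like choices $f(x_{i+m})=\delta_{m,\ell}$ to conclude, for each $\ell\in\{-M_-,\cdots,M_+\}$,
\begin{equation}
\sum_{k_\mathrm{s}=\max(0,\ell+K_\mathrm{s}-M_+)}^{\min(K_\mathrm{s},\ell+M_-)}\delta_{k_\mathrm{s}}(\xi)\;\alpha_{R_1,M_--k_\mathrm{s},M_+-K_\mathrm{s}+k_\mathrm{s},\ell}(\xi)=0.
\notag
\end{equation}

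The conclusion then comes by induction on $\ell$ from left to right. For $\ell=-M_-$ only $k_\mathrm{s}=0$ contributes (the leftmost grid-point belongs to the leftmost substencil alone), giving $\delta_0(\xi)\;\alpha_{R_1,M_-,M_+-K_\mathrm{s},-M_-}(\xi)=0$; since $\alpha_{R_1,M_-,M_+-K_\mathrm{s},-M_-}\not\equiv 0_{\mathbb{R}_{M-K_\mathrm{s}}[\xi]}$ by \lemrefnp{Lem_RLRPCS_s_FPLIR_ss_RPsFPs_001}, and its zeros form a finite set \prpref{Prp_RLRPCS_s_FPLIR_ss_RFPs_001}, the rational function $\delta_0$ vanishes on a cofinite set, hence identically. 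For $\ell=-M_-+j$ with $1\leq j\leq K_\mathrm{s}$, only $k_\mathrm{s}\in\{0,\cdots,j\}$ contribute; having $\delta_0=\cdots=\delta_{j-1}=0$, only the $k_\mathrm{s}=j$ term survives, and the same nonvanishing-$\alpha$ argument applied to the leftmost fundamental polynomial $\alpha_{R_1,M_--j,M_+-K_\mathrm{s}+j,-M_-+j}$ of substencil $j$ forces $\delta_j\equiv 0$.

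The main obstacle is essentially bookkeeping rather than a conceptual difficulty: one must carefully establish which substencils contain the pivot point $i-M_-+j$ so that the induction isolates exactly one nonzero term at each step. This hinges on the elementary (but crucial) fact that point $i+\ell$ lies in substencil $k_\mathrm{s}$ iff $\max(0,\ell+K_\mathrm{s}-M_+)\leq k_\mathrm{s}\leq\min(K_\mathrm{s},\ell+M_-)$, combined with the non-triviality of the leftmost fundamental polynomial of each substencil \eqref{Eq_Lem_RLRPCS_s_FPLIR_ss_RPsFPs_001_001b}. The passage from ``zero on a cofinite set'' to ``zero as a rational function'' is immediate since each $\delta_{k_\mathrm{s}}$ is a difference of rational functions and therefore itself rational.
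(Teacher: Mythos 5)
Your argument is correct and is essentially the proof the paper gives: both reduce uniqueness to the system of identities $\alpha_{R_1,M_-,M_+,\ell}(\xi)=\sum_{k_\mathrm{s}}\sigma_{R_1,M_-,M_+,K_\mathrm{s},k_\mathrm{s}}(\xi)\,\alpha_{R_1,M_--k_\mathrm{s},M_+-K_\mathrm{s}+k_\mathrm{s},\ell}(\xi)$ of \crlrefnp{Crl_RLRPCS_s_RCsSs_ss_Ks_001} (which, as the paper also notes, holds for \emph{any} weights satisfying \eqref{Eq_Prp_RLRPCS_s_RCsSs_ss_Ks_001_001a}), and then exploit its triangular structure together with the non-vanishing of the fundamental polynomials \eqref{Eq_Lem_RLRPCS_s_FPLIR_ss_RPsFPs_001_001b}. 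The only cosmetic differences are that you phrase it via the differences $\delta_{k_\mathrm{s}}$ and sweep from the left end only, whereas the paper identifies the weights directly and sweeps inward from both ends; both variants isolate exactly one surviving term at each step.
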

%
%-----------------------------------------------------------------------------------------------------------------------------------
%-----------------------------------------------------------------------------------------------------------------------------------
%
\begin{proof}
\begin{subequations}
                                                                                                       \label{Eq_Prp_RLRPCS_s_RCsSs_ss_Ks_002_001}
We have proved by construction \eqref{Eq_Prp_RLRPCS_s_RCsSs_ss_Ks_001_001b} the existence of weight-functions $\sigma_{R_1,M_-,M_+,K_\mathrm{s},k_\mathrm{s}}(\xi)$
satisfying \eqrefsab{Eq_Prp_RLRPCS_s_RCsSs_ss_Ks_001_001a}{Eq_Prp_RLRPCS_s_RCsSs_ss_Ks_001_001c}. Uniqueness for the case $K_\mathrm{s}=1$ was proved
in \lemrefnp{Lem_RLRPCS_s_RCsSs_ss_Ks1_001}. Notice first that \crlrefnp{Crl_RLRPCS_s_RCsSs_ss_Ks_001} does not require the validity of the particular expression \eqref{Eq_Prp_RLRPCS_s_RCsSs_ss_Ks_001_001b}
of the weight-functions, and is therefore valid for any set of weight-functions $\sigma_{R_1,M_-,M_+,K_\mathrm{s},k_\mathrm{s}}(\xi)$ satisfying \eqref{Eq_Prp_RLRPCS_s_RCsSs_ss_Ks_001_001a}.
To prove therefore uniqueness we can use \eqref{Eq_Crl_RLRPCS_s_RCsSs_ss_Ks_001_001}, which can be explicitly written as
\begin{alignat}{6}
\forall K_\mathrm{s}\geq 1        \;&\;\alpha_{R_1,M_-,+M_+,-M_-  }(\xi)&=&\sigma_{R_1,M_-,M_+,K_\mathrm{s},0}(\xi)\;\alpha_{R_1,M_-  ,M_+-K_\mathrm{s}  ,-M_-}(\xi)
                                                                                                       \label{Eq_Prp_RLRPCS_s_RCsSs_ss_Ks_002_001a}\\
\forall K_\mathrm{s}\geq 1        \;&\;\alpha_{R_1,M_-,+M_+,-M_-+1}(\xi)&=&\sigma_{R_1,M_-,M_+,K_\mathrm{s},0}(\xi)\;\alpha_{R_1,M_-  ,M_+-K_\mathrm{s}  ,-M_-+1}(\xi)
                                                                                                       \notag\\
                                  \;&\;                                 &+&\sigma_{R_1,M_-,M_+,K_\mathrm{s},1}(\xi)\;\alpha_{R_1,M_-+1,M_+-K_\mathrm{s}+1,-M_-+1}(\xi)
                                                                                                       \label{Eq_Prp_RLRPCS_s_RCsSs_ss_Ks_002_001b}\\
\forall K_\mathrm{s}\geq 2        \;&\;\alpha_{R_1,M_-,+M_+,-M_-+2}(\xi)&=&\sigma_{R_1,M_-,M_+,K_\mathrm{s},0}(\xi)\;\alpha_{R_1,M_-  ,M_+-K_\mathrm{s}  ,-M_-+2}(\xi)
                                                                                                       \notag\\
                                  \;&\;                                 &+&\sigma_{R_1,M_-,M_+,K_\mathrm{s},1}(\xi)\;\alpha_{R_1,M_-+1,M_+-K_\mathrm{s}+1,-M_-+2}(\xi)
                                                                                                       \notag\\
                                  \;&\;                                 &+&\sigma_{R_1,M_-,M_+,K_\mathrm{s},2}(\xi)\;\alpha_{R_1,M_-+2,M_+-K_\mathrm{s}+2,-M_-+2}(\xi)
                                                                                                       \label{Eq_Prp_RLRPCS_s_RCsSs_ss_Ks_002_001c}\\
                                  \;&\;                                 & &\vdots
                                                                                                       \notag\\
\forall\ell\in\{K_\mathrm{s}-M_-,
                M_+-K_\mathrm{s}\}\;&\;\alpha_{R_1,M_-,M_+,\ell}(\xi)   &=&\sum_{k_\mathrm{s}=\max(0,\ell+K_\mathrm{s}-M_+)}^{\min(K_\mathrm{s},\ell+M_-)}
                                                                           \sigma_{R_1,M_-,M_+,K_\mathrm{s},k_\mathrm{s}}(\xi)\;
                                                                           \alpha_{R_1,M_--k_\mathrm{s},M_+M_+-K_\mathrm{s}+k_\mathrm{s},\ell}(\xi)
                                                                                                       \label{Eq_Prp_RLRPCS_s_RCsSs_ss_Ks_002_001d}\\
                                  \;&\;                                 & &\vdots
                                                                                                       \notag\\
\forall K_\mathrm{s}\geq 2        \;&\;\alpha_{R_1,M_-,+M_+,+M_+-1}(\xi)&=&\sigma_{R_1,M_-,M_+,K_\mathrm{s},K_\mathrm{s}-2}(\xi)\;\alpha_{R_1,M_--K_\mathrm{s}+2,M_+-2,+M_+-2}(\xi)
                                                                                                       \notag\\
                                  \;&\;                                 &+&\sigma_{R_1,M_-,M_+,K_\mathrm{s},K_\mathrm{s}-1}(\xi)\;\alpha_{R_1,M_--K_\mathrm{s}+1,M_+-1,+M_+-2}(\xi)
                                                                                                       \notag\\
                                  \;&\;                                 &+&\sigma_{R_1,M_-,M_+,K_\mathrm{s},K_\mathrm{s}  }(\xi)\;\alpha_{R_1,M_--K_\mathrm{s}  ,M_+  ,+M_+-2}(\xi)
                                                                                                       \label{Eq_Prp_RLRPCS_s_RCsSs_ss_Ks_002_001e}\\
\forall K_\mathrm{s}\geq 1        \;&\;\alpha_{R_1,M_-,+M_+,+M_+-1}(\xi)&=&\sigma_{R_1,M_-,M_+,K_\mathrm{s},K_\mathrm{s}-1}(\xi)\;\alpha_{R_1,M_--K_\mathrm{s}+1,M_+-1,+M_+-1}(\xi)
                                                                                                       \notag\\
                                  \;&\;                                 &+&\sigma_{R_1,M_-,M_+,K_\mathrm{s},K_\mathrm{s}  }(\xi)\;\alpha_{R_1,M_--K_\mathrm{s}  ,M_+  ,+M_+-1}(\xi)
                                                                                                       \label{Eq_Prp_RLRPCS_s_RCsSs_ss_Ks_002_001f}\\
\forall K_\mathrm{s}\geq 1        \;&\;\alpha_{R_1,M_-,+M_+,+M_+  }(\xi)&=&\sigma_{R_1,M_-,M_+,K_\mathrm{s},K_\mathrm{s}  }(\xi)\;\alpha_{R_1,M_--K_\mathrm{s}  ,M_+  ,+M_+  }(\xi)
                                                                                                       \label{Eq_Prp_RLRPCS_s_RCsSs_ss_Ks_002_001g}
\end{alignat}
Starting with \eqrefsab{Eq_Prp_RLRPCS_s_RCsSs_ss_Ks_002_001a}{Eq_Prp_RLRPCS_s_RCsSs_ss_Ks_002_001g} we immediately prove uniqueness of $\sigma_{R_1,M_-,M_+,K_\mathrm{s},0}(\xi)$
and $\sigma_{R_1,M_-,M_+,K_\mathrm{s},K_\mathrm{s}}(\xi)$, by contradiction because of \eqref{Eq_Lem_RLRPCS_s_FPLIR_ss_RPsFPs_001_001b}.
Having proved uniqueness of $\sigma_{R_1,M_-,M_+,K_\mathrm{s},0}(\xi)$, \eqref{Eq_Prp_RLRPCS_s_RCsSs_ss_Ks_002_001b}
proves uniqueness of $\sigma_{R_1,M_-,M_+,K_\mathrm{s},1}(\xi)$, by contradiction because of \eqref{Eq_Lem_RLRPCS_s_FPLIR_ss_RPsFPs_001_001b}.
In exactly the same way, having proved uniqueness of $\sigma_{R_1,M_-,M_+,K_\mathrm{s},K_\mathrm{s}  }(\xi)$,
\eqref{Eq_Prp_RLRPCS_s_RCsSs_ss_Ks_002_001f} proves uniqueness of $\sigma_{R_1,M_-,M_+,K_\mathrm{s},K_\mathrm{s}-1}(\xi)$. Continuing the procedure until reaching 
$\sigma_{R_1,M_-,M_+,K_\mathrm{s},\left\lceil\frac{K_\mathrm{s}}{2}\right\rceil}(\xi)$ (for increasing $k_\mathrm{s}$, starting from \eqref{Eq_Prp_RLRPCS_s_RCsSs_ss_Ks_002_001a})
and $\sigma_{R_1,M_-,M_+,K_\mathrm{s},\left\lfloor\frac{K_\mathrm{s}}{2}\right\rfloor}(\xi)$ (for decreasing $k_\mathrm{s}$, starting from \eqref{Eq_Prp_RLRPCS_s_RCsSs_ss_Ks_002_001g}),
completes the proof of uniqueness.
\end{subequations}
\qed
\end{proof}
%
%-----------------------------------------------------------------------------------------------------------------------------------
%-----------------------------------------------------------------------------------------------------------------------------------
%
\begin{corollary}[{\rm Weight-functions and approximation-errors}]
\label{Crl_RLRPCS_s_RCsSs_ss_Ks_002}
\begin{subequations}
                                                                                                       \label{Eq_Crl_RLRPCS_s_RCsSs_ss_Ks_002_001}
Assume the conditions of \prprefnp{Prp_RLRPCS_s_RCsSs_ss_Ks_001}. Then, provided that the reconstruction pair \defref{Def_AELRP_s_RPERR_ss_RP_001} of $f(x)$, $h(x):=[R_{(1;\Delta x)}(f)](x)$,
is sufficiently smooth $\forall x\in[x_{i-M_-}-\tfrac{1}{2}\Delta x,x_{i+M_+}+\tfrac{1}{2}\Delta x]$,
for the expansions \eqrefsab{Eq_Prp_AELRP_s_EPR_ss_AELPR_001_001c}{Eq_Prp_AELRP_s_EPR_ss_AELPR_002_001a} of the approximation error
to hold, the weight-functions $\sigma_{R_1,M_-,M_+,K_\mathrm{s},k_\mathrm{s}}(\xi)$ \eqref{Eq_Prp_RLRPCS_s_RCsSs_ss_Ks_001_001b} and the approximation-error polynomials
$\mu_{R_1,M_-,M_+,n}(\xi)$ \eqref{Eq_Prp_AELRP_s_EPR_ss_AELPR_001_001f} and $\lambda_{R_1,M_-,M_+,n}(\xi)$ \eqref{Eq_Prp_AELRP_s_EPR_ss_AELPR_002_001c}, satisfy
\begin{alignat}{6}
\sum_{k_\mathrm{s}=0}^{K_\mathrm{s}}\sigma_{R_1,M_-,M_+,K_\mathrm{s},k_\mathrm{s}}(\xi)\;\lambda_{R_1,M_--k_\mathrm{s},M_+-K_\mathrm{s}+k_\mathrm{s},n}(\xi)=&0\qquad&\qquad\forall n\in\{M-K_\mathrm{s}+1,M\}
                                                                                                       \label{Eq_Crl_RLRPCS_s_RCsSs_ss_Ks_002_001a}\\
\sum_{k_\mathrm{s}=0}^{K_\mathrm{s}}\sigma_{R_1,M_-,M_+,K_\mathrm{s},k_\mathrm{s}}(\xi)\;\lambda_{R_1,M_--k_\mathrm{s},M_+-K_\mathrm{s}+k_\mathrm{s},n}(\xi)=&\lambda_{R_1,M_-,M_+,n}(\xi)\qquad&\qquad\forall n\geq M+1
                                                                                                       \label{Eq_Crl_RLRPCS_s_RCsSs_ss_Ks_002_001b}\\
\sum_{k_\mathrm{s}=0}^{K_\mathrm{s}}\sigma_{R_1,M_-,M_+,K_\mathrm{s},k_\mathrm{s}}(\xi)\;    \mu_{R_1,M_--k_\mathrm{s},M_+-K_\mathrm{s}+k_\mathrm{s},n}(\xi)=&0\qquad&\qquad\forall n\in\{M-K_\mathrm{s}+1,M\}
                                                                                                       \label{Eq_Crl_RLRPCS_s_RCsSs_ss_Ks_002_001c}\\
\sum_{k_\mathrm{s}=0}^{K_\mathrm{s}}\sigma_{R_1,M_-,M_+,K_\mathrm{s},k_\mathrm{s}}(\xi)\;    \mu_{R_1,M_--k_\mathrm{s},M_+-K_\mathrm{s}+k_\mathrm{s},n}(\xi)=&    \mu_{R_1,M_-,M_+,n}(\xi)\qquad&\qquad\forall n\geq M+1
                                                                                                       \label{Eq_Crl_RLRPCS_s_RCsSs_ss_Ks_002_001d}\\
                                                                                           \forall \xi\in{\mathbb R}\setminus\mathcal{S}_{R_1,M_-,M_+,K_\mathrm{s}}
                                                                                                       \notag
\end{alignat}
where the set of isolated singular points $\mathcal{S}_{R_1,M_-,M_+,K_\mathrm{s}}$ is defined by \eqrefsab{Eq_Prp_RLRPCS_s_RCsSs_ss_Ks_001_001d}
                                                                                                          {Eq_Prp_RLRPCS_s_RCsSs_ss_Ks_001_001e}.
\end{subequations}
\end{corollary}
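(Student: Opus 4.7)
My plan mirrors the argument used to derive \eqref{Eq_Crl_RLRPCS_s_RCsSs_ss_Ks1_001_001e} in \crlrefnp{Crl_RLRPCS_s_RCsSs_ss_Ks1_001}: substitute the reconstructing-polynomial expansions \eqref{Eq_Prp_AELRP_s_EPR_ss_AELPR_002_001a} or \eqref{Eq_Prp_AELRP_s_EPR_ss_AELPR_001_001c} into the combination identity \eqref{Eq_Prp_RLRPCS_s_RCsSs_ss_Ks_001_001a}, invoke the consistency relation \eqref{Eq_Prp_RLRPCS_s_RCsSs_ss_Ks_001_001c} to cancel the common leading term, and isolate the $\Delta x^{N}$-coefficient at each order $N$ by feeding carefully chosen polynomial test functions into \eqref{Eq_Prp_RLRPCS_s_RCsSs_ss_Ks_001_001a}. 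No induction on $K_{\rm s}$ or on $N$ will be required.

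For the $\lambda$-identities \eqrefsab{Eq_Crl_RLRPCS_s_RCsSs_ss_Ks_002_001a}{Eq_Crl_RLRPCS_s_RCsSs_ss_Ks_002_001b}, fix $\xi\in\mathbb{R}\setminus\mathcal{S}_{R_1,M_-,M_+,K_{\rm s}}$ and, for each integer $N\geq M-K_{\rm s}+1$, I would pick the polynomial $q_{N}(x):=(x-x_i-\xi\Delta x)^{N}/N!\in\mathbb{R}_{N}[x]$, whose derivatives at the moving point $x_i+\xi\Delta x$ form a Kronecker spike $q_{N}^{(k)}(x_i+\xi\Delta x)=\delta_{kN}$ for $k\leq N$ (and vanish for $k>N$). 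By \rmkrefnp{Rmk_RLRPCS_s_RB_002} its preimage $p_{N}:=[R_{(1;\Delta x)}^{-1}(q_{N})](x)\in\mathbb{R}_{N}[x]$, so feeding $f=p_{N}$ into \eqref{Eq_Prp_RLRPCS_s_RCsSs_ss_Ks_001_001a} and applying \eqref{Eq_Prp_AELRP_s_EPR_ss_AELPR_002_001a} to every reconstructing polynomial truncates the error sums, leaving only the $n=N$ term on each side. For $N\in\{M-K_{\rm s}+1,\ldots,M\}$ the big-stencil reconstruction is exact by \rmkref{Rmk_RLRPCS_s_RB_002}, so the left-hand side of \eqref{Eq_Prp_RLRPCS_s_RCsSs_ss_Ks_001_001a} equals $q_{N}(x_i+\xi\Delta x)=0$, while the right-hand side reduces, after cancellation of the common $q_{N}(x_i+\xi\Delta x)=0$ contribution via \eqref{Eq_Prp_RLRPCS_s_RCsSs_ss_Ks_001_001c}, to $\bigl(\sum_{k_{\rm s}=0}^{K_{\rm s}}\sigma_{R_1,M_-,M_+,K_{\rm s},k_{\rm s}}(\xi)\,\lambda_{R_1,M_--k_{\rm s},M_+-K_{\rm s}+k_{\rm s},N}(\xi)\bigr)\Delta x^{N}$, yielding \eqref{Eq_Crl_RLRPCS_s_RCsSs_ss_Ks_002_001a}. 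For $N\geq M+1$ the surviving left-hand term is $\lambda_{R_1,M_-,M_+,N}(\xi)\Delta x^{N}$, matching the same reduced right-hand side and proving \eqref{Eq_Crl_RLRPCS_s_RCsSs_ss_Ks_002_001b}.

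The $\mu$-identities \eqrefsab{Eq_Crl_RLRPCS_s_RCsSs_ss_Ks_002_001c}{Eq_Crl_RLRPCS_s_RCsSs_ss_Ks_002_001d} will follow by the identical argument, substituting expansion \eqref{Eq_Prp_AELRP_s_EPR_ss_AELPR_001_001c} for \eqref{Eq_Prp_AELRP_s_EPR_ss_AELPR_002_001a} and using the shifted test polynomials $\tilde{p}_{N}(x):=(x-x_i)^{N}/N!\in\mathbb{R}_{N}[x]$, whose derivatives at the fixed point $x_i$ form the Kronecker spike $\tilde{p}_{N}^{(k)}(x_i)=\delta_{kN}$. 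The common $h(x_i+\xi\Delta x)$-term (with $h:=[R_{(1;\Delta x)}(\tilde{p}_{N})](x)$) again cancels via \eqref{Eq_Prp_RLRPCS_s_RCsSs_ss_Ks_001_001c}, and matching $\Delta x^{N}$-coefficients gives \eqref{Eq_Crl_RLRPCS_s_RCsSs_ss_Ks_002_001c} for $M-K_{\rm s}+1\leq N\leq M$ (the big stencil being exact on $\tilde{p}_{N}\in\mathbb{R}_{M}[x]$) and \eqref{Eq_Crl_RLRPCS_s_RCsSs_ss_Ks_002_001d} for $N\geq M+1$.

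The main thing to get right is the centring of the test polynomial at the evaluation point that actually appears in the chosen expansion: $x_i+\xi\Delta x$ for the $\lambda$-expansion \eqref{Eq_Prp_AELRP_s_EPR_ss_AELPR_002_001a} (whose coefficients multiply $h^{(n)}(x_i+\xi\Delta x)$) and $x_i$ for the $\mu$-expansion \eqref{Eq_Prp_AELRP_s_EPR_ss_AELPR_001_001c} (whose coefficients multiply $f^{(n)}(x_i)$). With the correct centring, the Kronecker-spike property of the derivatives extracts each $\Delta x^{N}$-coefficient in one stroke, simultaneously for all $N$ and for all admissible $K_{\rm s}$; the existence of polynomial preimages under $R_{(1;\Delta x)}$ (needed for the $\lambda$-case) is precisely what \rmkrefnp{Rmk_RLRPCS_s_RB_002} supplies.
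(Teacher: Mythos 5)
Your proposal is correct and follows essentially the same route as the paper's own proof: substitute the error expansions \eqref{Eq_Prp_AELRP_s_EPR_ss_AELPR_001_001c} or \eqref{Eq_Prp_AELRP_s_EPR_ss_AELPR_002_001a} into the combination identity \eqref{Eq_Prp_RLRPCS_s_RCsSs_ss_Ks_001_001a}, cancel the common $h(x_i+\xi\Delta x)$ term via the consistency condition \eqref{Eq_Prp_RLRPCS_s_RCsSs_ss_Ks_001_001c}, and equate the coefficients of $\Delta x^n$. Your Kronecker-spike test polynomials simply make explicit, and non-inductive, the coefficient-identification step that the paper carries out by feeding polynomials of increasing degree recursively (as in the proof of \crlrefnp{Crl_RLRPCS_s_RCsSs_ss_Ks1_001}).
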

%
%-----------------------------------------------------------------------------------------------------------------------------------
%-----------------------------------------------------------------------------------------------------------------------------------
%
\begin{proof}
\begin{subequations}
                                                                                                       \label{Eq_Crl_RLRPCS_s_RCsSs_ss_Ks_002_002}
The proof is quite obvious by replacing $p_{R_1,M_-,M_+}(x_i+\xi\Delta x;x_i,\Delta x;f)$ and $p_{R_1,M_--k_\mathrm{s},M_+-K_\mathrm{s}+k_\mathrm{s}}(x_i+\xi\Delta x;x_i,\Delta x;f)$
in \eqref{Eq_Prp_RLRPCS_s_RCsSs_ss_Ks_001_001a} by either \eqref{Eq_Prp_AELRP_s_EPR_ss_AELPR_001_001c} or \eqref{Eq_Prp_AELRP_s_EPR_ss_AELPR_002_001a}, yielding
\begin{alignat}{6}
p_{R_1,M_-,M_+}(x_i+\xi\Delta x;x_i,\Delta x;f)\stackrel{\eqref{Eq_Prp_AELRP_s_EPR_ss_AELPR_002_001a}}{=}&
                                               h(x_i+\xi\Delta x)+\sum_{n=M+1}^{N_\tsc{tj}}\lambda_{R_1,M_-,M_+,n}(\xi)\;\Delta x^n\;h^{(n)}(x_i+\xi\Delta x)+O(\Delta x^{N_\tsc{tj}+1})
                                                                                                       \notag\\
                                               \stackrel{\eqref{Eq_Prp_RLRPCS_s_RCsSs_ss_Ks_001_001a}}{=}&
                                               \sum_{k_\mathrm{s}=0}^{K_\mathrm{s}}\sigma_{R_1,M_-,M_+,K_\mathrm{s},k_\mathrm{s}}(\xi)\;p_{R_1,M_--k_\mathrm{s},M_+-K_\mathrm{s}+k_\mathrm{s}}(x_i+\xi\Delta x;x_i,\Delta x;f)
                                                                                                       \notag\\
                                               \stackrel{\eqref{Eq_Prp_AELRP_s_EPR_ss_AELPR_002_001a}}{=}&
                                               \underbrace{\left(\sum_{k_\mathrm{s}=0}^{K_\mathrm{s}}\sigma_{R_1,M_-,M_+,K_\mathrm{s},k_\mathrm{s}}(\xi)\right)}_{\displaystyle =1\;\eqref{Eq_Prp_RLRPCS_s_RCsSs_ss_Ks_001_001c}}
                                               h(x_i+\xi\Delta x)
                                                                                                       \notag\\
                                             +&\sum_{n=M-K_\mathrm{s}+1}^{N_\tsc{tj}}\left(\sum_{k_\mathrm{s}=0}^{K_\mathrm{s}}\sigma_{R_1,M_-,M_+,K_\mathrm{s},k_\mathrm{s}}(\xi)\;
                                                                                                            \lambda_{R_1,M_--k_\mathrm{s},M_+-K_\mathrm{s}+k_\mathrm{s},n}(\xi)\right)\;\Delta x^n\;h^{(n)}(x_i+\xi\Delta x)
                                                                                                       \notag\\
                                             +&O(\Delta x^{N_\tsc{tj}+1})
                                                                                                       \label{Eq_Crl_RLRPCS_s_RCsSs_ss_Ks_002_002a}
\end{alignat}
\begin{alignat}{6}
p_{R_1,M_-,M_+}(x_i+\xi\Delta x;x_i,\Delta x;f)\stackrel{\eqref{Eq_Prp_AELRP_s_EPR_ss_AELPR_001_001c}}{=}&
                                               h(x_i+\xi\Delta x)+\sum_{n=M+1}^{N_\tsc{tj}}    \mu_{R_1,M_-,M_+,n}(\xi)\;\Delta x^n\;f^{(n)}(x_i)+O(\Delta x^{N_\tsc{tj}+1})
                                                                                                       \notag\\
                                               \stackrel{\eqref{Eq_Prp_RLRPCS_s_RCsSs_ss_Ks_001_001a}}{=}&
                                               \sum_{k_\mathrm{s}=0}^{K_\mathrm{s}}\sigma_{R_1,M_-,M_+,K_\mathrm{s},k_\mathrm{s}}(\xi)\;p_{R_1,M_--k_\mathrm{s},M_+-K_\mathrm{s}+k_\mathrm{s}}(x_i+\xi\Delta x;x_i,\Delta x;f)
                                                                                                       \notag\\
                                               \stackrel{\eqref{Eq_Prp_AELRP_s_EPR_ss_AELPR_001_001c}}{=}&
                                               \underbrace{\left(\sum_{k_\mathrm{s}=0}^{K_\mathrm{s}}\sigma_{R_1,M_-,M_+,K_\mathrm{s},k_\mathrm{s}}(\xi)\right)}_{\displaystyle =1\;\eqref{Eq_Prp_RLRPCS_s_RCsSs_ss_Ks_001_001c}}
                                               h(x_i+\xi\Delta x)
                                                                                                       \notag\\
                                             +&\sum_{n=M-K_\mathrm{s}+1}^{N_\tsc{tj}}\left(\sum_{k_\mathrm{s}=0}^{K_\mathrm{s}}\sigma_{R_1,M_-,M_+,K_\mathrm{s},k_\mathrm{s}}(\xi)\;
                                                                                                                \mu_{R_1,M_--k_\mathrm{s},M_+-K_\mathrm{s}+k_\mathrm{s},n}(\xi)\right)\;\Delta x^n\;f^{(n)}(x_i)
                                                                                                       \notag\\
                                             +&O(\Delta x^{N_\tsc{tj}+1})
                                                                                                       \label{Eq_Crl_RLRPCS_s_RCsSs_ss_Ks_002_002b}
\end{alignat}
\begin{alignat}{6}
                                                                                           \forall \xi\in{\mathbb R}\setminus\mathcal{S}_{R_1,M_-,M_+,K_\mathrm{s}}
                                                                                           \forall x_i\in{\mathbb R}\quad
                                                                                           \forall \Delta x\in{\mathbb R}_{>0}\quad
                                                                                           \forall h\in C^{N_\tsc{tj}+1}(\mathbb{R})\quad
                                                                                                   f:=R^{-1}_{(1,\Delta x)}(h)
                                                                                                       \notag
\end{alignat}
which prove \eqref{Eq_Crl_RLRPCS_s_RCsSs_ss_Ks_002_001} by identification of coefficients of $\Delta x^n$.
\end{subequations}
\qed
\end{proof}
%
%-----------------------------------------------------------------------------------------------------------------------------------
%-----------------------------------------------------------------------------------------------------------------------------------
%
\begin{example}[{\rm Rational weight-functions $\sigma_{R_1,M_-,M_+,K_\mathrm{s},k_\mathrm{s}}(\xi)$ \eqref{Eq_Prp_RLRPCS_s_RCsSs_ss_Ks_001_001b}}]
\label{Xmp_RLRPCS_s_RCsSs_ss_Ks_001}
The stencil $\tsc{s}_{i,3,3}$ ($M_-=3$, $M_+=3$, $M:=M_-+M_+=6$) is symmetric around $\xi=0$ \figref{Fig_Xmp_RLRPCS_s_RCsSs_ss_sSs_001_001}.
The ($K_\mathrm{s}=\left\lceil\frac{M}{2}\right\rceil=3$)-level subdivision \defref{Def_RLRPCS_s_I_002} is the highest level of
subdivision for which all of the substencils contain either point $i$ or point $i+1$ \figref{Fig_Xmp_RLRPCS_s_RCsSs_ss_sSs_001_001}.
The rational weight-functions $\sigma_{R_1,3,3,3,k_\mathrm{s}}(\xi)$ ($k_\mathrm{s}\in\{0,\cdots,3\}$)
are all $>0$ in the interval $I_{\tsc{c}_{R_1}(\tfrac{1}{2}),3,3,3}$ around point $\xi=+\tfrac{1}{2}$ \figref{Fig_Xmp_RLRPCS_s_RCsSs_ss_Ks_001_001}.
Because of the symmetry of the stencil $\tsc{s}_{i,3,3}$ around $\xi=0$ \figref{Fig_Xmp_RLRPCS_s_RCsSs_ss_Ks_001_001},
we also have $\sigma_{R_1,3,3,3,k_\mathrm{s}}(-\tfrac{1}{2})>0\;\forall k_\mathrm{s}\in\{0,\cdots,3\}$.
The stencil $\tsc{s}_{i,3,4}$ ($M_-=3$, $M_+=4$, $M:=M_-+M_+=7$) is symmetric around $\xi=\tfrac{1}{2}$ \figref{Fig_Xmp_RLRPCS_s_RCsSs_ss_sSs_001_002}.
The ($K_\mathrm{s}=\left\lceil\frac{M}{2}\right\rceil=4$)-level subdivision \defref{Def_RLRPCS_s_I_002} is the highest level of
subdivision for which all of the substencils contain either point $i$ or point $i+1$\figref{Fig_Xmp_RLRPCS_s_RCsSs_ss_sSs_001_002}.
The rational weight-functions $\sigma_{R_1,3,4,4,k_\mathrm{s}}(\xi)$ ($k_\mathrm{s}\in\{0,\cdots,4\}$)
are all $>0$ in the interval $I_{\tsc{c}_{R_1}(\tfrac{1}{2}),3,4,4}$ around point $\xi=+\tfrac{1}{2}$ \figref{Fig_Xmp_RLRPCS_s_RCsSs_ss_Ks_001_002}.
The stencil $\tsc{s}_{i,3,4}$ not being symmetric around $\xi=0$ \figref{Fig_Xmp_RLRPCS_s_RCsSs_ss_Ks_001_002}, positivity of the weight-functions does not
hold around $\xi=-\tfrac{1}{2}$, where $\sigma_{R_1,3,4,4,4}(-\tfrac{1}{2})=-\tfrac{3}{770}$, by direct computation using \eqref{Eq_Prp_RLRPCS_s_RCsSs_ss_Ks_001_001b}.
The conditions of positivity of weight-functions at $\xi=+\tfrac{1}{2}$, which is important in the development of \tsc{weno} schemes \cite{Shu_2009a},
are studied below (\S\ref{RLRPCS_s_RCsSs_ss_C}).
\qed
\end{example}
%
%-----------------------------------------------------------------------------------------------------------------------------------
%-----------------------------------------------------------------------------------------------------------------------------------
%
\begin{figure}[ht!]
\begin{picture}(500,180)
\put(0,-10){\includegraphics[angle=0,width=400pt]{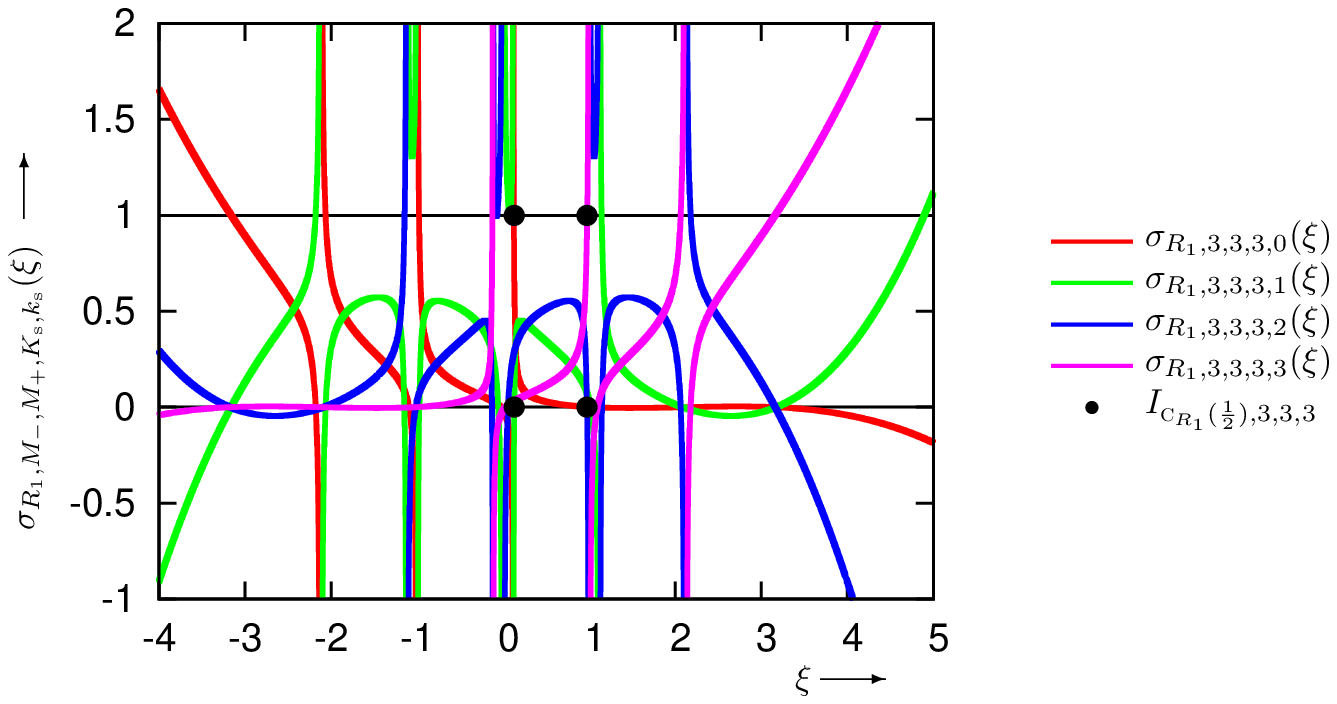}}
\end{picture}
\caption{Rational weight-functions $\sigma_{R_1,M_-,M_+,K_\mathrm{s},k_\mathrm{s}}(\xi)$ \eqref{Eq_Prp_RLRPCS_s_RCsSs_ss_Ks_001_001b} for
the ($K_\mathrm{s}=3$)-level subdivition \defref{Def_RLRPCS_s_I_002} of the stencil $\tsc{s}_{i,3,3}$ \figref{Fig_Xmp_RLRPCS_s_FPLIR_ss_RFPs_001_001},
and interval of convexity of the weight-functions around $i+\tfrac{1}{2}$, $I_{\tsc{c}_{R_1}(\tfrac{1}{2}),M_-,M_+,K_\mathrm{s}}$ \thmref{Thm_RLRPCS_s_RCsSs_ss_C_001}.}
\label{Fig_Xmp_RLRPCS_s_RCsSs_ss_Ks_001_001}
%\end{figure}
%
%-----------------------------------------------------------------------------------------------------------------------------------
%-----------------------------------------------------------------------------------------------------------------------------------
%
%\begin{figure}[ht!]
\begin{picture}(500,210)
\put(0,-10){\includegraphics[angle=0,width=400pt]{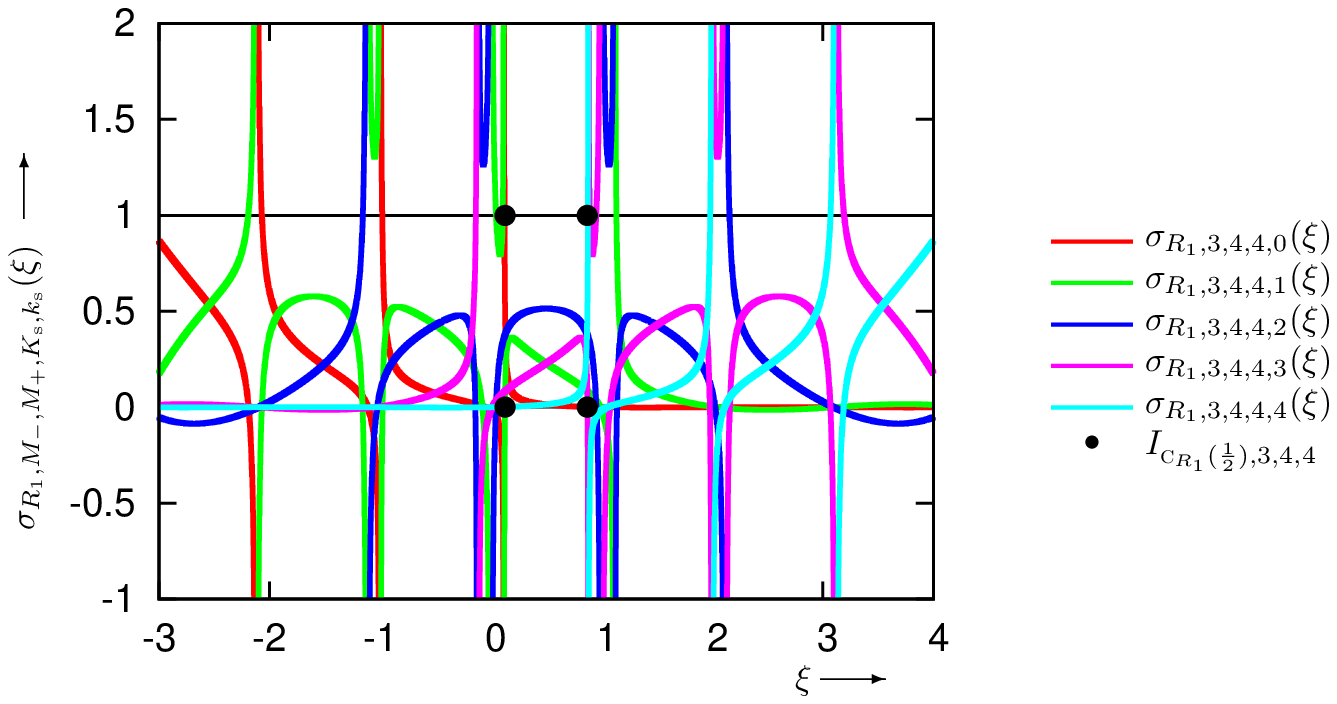}}
\end{picture}
\caption{Rational weight-functions $\sigma_{R_1,M_-,M_+,K_\mathrm{s},k_\mathrm{s}}(\xi)$ \eqref{Eq_Prp_RLRPCS_s_RCsSs_ss_Ks_001_001b} for
the ($K_\mathrm{s}=4$)-level subdivition \defref{Def_RLRPCS_s_I_002} of the stencil $\tsc{s}_{i,3,4}$ \figref{Fig_Xmp_RLRPCS_s_FPLIR_ss_RFPs_001_002},
and interval of convexity of the weight-functions around $i+\tfrac{1}{2}$, $I_{\tsc{c}_{R_1}(\tfrac{1}{2}),M_-,M_+,K_\mathrm{s}}$ \thmref{Thm_RLRPCS_s_RCsSs_ss_C_001}.}
\label{Fig_Xmp_RLRPCS_s_RCsSs_ss_Ks_001_002}
\end{figure}
%
%-----------------------------------------------------------------------------------------------------------------------------------
%\clearpage

%-----------------------------------------------------------------------------------------------------------------------------------
%
%
%
%
%
\subsection{Convexity}\label{RLRPCS_s_RCsSs_ss_C}
%
%
%
%
%
%-----------------------------------------------------------------------------------------------------------------------------------

The nonlinear modification of the optimal (linear) weights in \tsc{weno} schemes \cite{Jiang_Shu_1996a,
                                                                                       Balsara_Shu_2000a}
is more straightforward when the combination \eqref{Eq_RLRPCS_s_I_001a} is convex \cite{Shi_Hu_Shu_2002a}.
%-----------------------------------------------------------------------------------------------------------------------------------
%
\begin{remark}[{\rm Consistency, positivity and convexity}]
\label{Rmk_RLRPCS_s_RCsSs_ss_C_001}
As can be seen by \eqrefsab{Eq_Crl_RLRPCS_s_RCsSs_ss_Ks_002_002a}{Eq_Crl_RLRPCS_s_RCsSs_ss_Ks_002_002b}, condition \eqref{Eq_Prp_RLRPCS_s_RCsSs_ss_Ks_001_001c}
ensures the consistency of the representation \eqref{Eq_Prp_RLRPCS_s_RCsSs_ss_Ks_001_001a} as an approximation of $h(x)=:[R_{(1;\Delta x)}(f)](x)$ \defref{Def_AELRP_s_RPERR_ss_RP_001},
and is therefore called the consistency condition of the representation \eqref{Eq_Prp_RLRPCS_s_RCsSs_ss_Ks_001_001a}.
Obviously, when at a fixed $\xi\in\mathbb{R}$ all of the $K_\mathrm{s}$-level-subdivision weight-functions are $\geq0$
then, because of \eqref{Eq_Prp_RLRPCS_s_RCsSs_ss_Ks_001_001c}, they must take values $\in[0,1]$ (proof by contradiction)
\begin{alignat}{6}
\Big[\sigma_{R_1,M_-,M_+,K_\mathrm{s},k_\mathrm{s}}(\xi)\geq0\;\forall k_\mathrm{s}\in\{0,\cdots,K_\mathrm{s}\}\Big]
\stackrel{\eqref{Eq_Prp_RLRPCS_s_RCsSs_ss_Ks_001_001c}}{\iff}
\Big[0\leq\sigma_{R_1,M_-,M_+,K_\mathrm{s},k_\mathrm{s}}(\xi)\leq1\;\forall k_\mathrm{s}\in\{0,\cdots,K_\mathrm{s}\}\Big]
                                                                                                       \label{Eq_Rmk_RLRPCS_s_RCsSs_ss_C_001_001}
\end{alignat}
Hence, positivity of the weight-functions at a fixed $\xi\in\mathbb{R}$ ensures, by the consistency condition \eqref{Eq_Prp_RLRPCS_s_RCsSs_ss_Ks_001_001c}, that,
locally, the representation \eqref{Eq_Prp_RLRPCS_s_RCsSs_ss_Ks_001_001a} is convex.
\qed
\end{remark}
%
%-----------------------------------------------------------------------------------------------------------------------------------

In the early \tsc{weno} papers \cite{Liu_Osher_Chan_1994a,
                                     Jiang_Shu_1996a}
convexity of the combination \eqref{Eq_RLRPCS_s_I_001a} had been postulated, and verified by direct determination of the coefficients
at $\xi=\tfrac{1}{2}$ \cite{Jiang_Shu_1996a,
                            Balsara_Shu_2000a}.
Shu \cite{Shu_1998a} showed examples of combinations of choices of the stencil $\tsc{s}_{i,M_-,M_+}$ \defref{Def_AELRP_s_EPR_ss_PR_001},
of the level of subdivision $K_\mathrm{s}$ \defref{Def_RLRPCS_s_I_002}, and of the location $\xi\in\mathbb{R}$,
for which convexity of \eqref{Eq_RLRPCS_s_I_001a} is lost, and this appeared as a practical problem, not only in 2-D and 3-D unstructured grids \cite{Shi_Hu_Shu_2002a},
but also in the development of centered (central) \tsc{weno} schemes \cite{Qiu_Shu_2002a}. For this reason the intervals of convexity were investigated numerically \cite{Shu_1998a,
                                                                                                                                                                          Shu_2009a,
                                                                                                                                                                          Liu_Shu_Zhang_2009a}.

The analytical results obtained in the present work, in particular the recursive analytical expression of the weight-functions $\sigma_{R_1,M_-,M_+,K_\mathrm{s},k_\mathrm{s}}(\xi)$
\prpref{Prp_RLRPCS_s_RCsSs_ss_Ks_001} and the factorization of the fundamental functions of Lagrange reconstruction
$\alpha_{R_1,M_-,M_+,\ell}(\xi)$ \prpref{Prp_RLRPCS_s_FPLIR_ss_RFPs_002}, can be used to study convexity intervals for arbitrary values of $[M_\pm,K_\mathrm{s}]$,
as was recently done for the Lagrange interpolating polynomial \cite[Proposition 3.2]{Gerolymos_2011a_news}.
In \cite[Result 6.1, p. 300]{Gerolymos_2011a} we had conjectured that for any choice of $[M_\pm,K_\mathrm{s}]$ for which all of the
substencils $\tsc{s}_{i,M_--k_\mathrm{s},M_+-K_\mathrm{s}+k_\mathrm{s}}$ ($k_\mathrm{s}\in\{0,\cdots,K_\mathrm{s}\}$) contain
either point $i$ or point $i+1$ (or both), convexity was observed at $\xi=\tfrac{1}{2}$. We provide here a formal proof of this conjecture, and give
an estimate of the interval of convexity around $\xi=\tfrac{1}{2}$.
%-----------------------------------------------------------------------------------------------------------------------------------
%
\begin{lemma}[{\rm Positive subdivision}]
\label{Lem_RLRPCS_s_RCsSs_ss_C_001}
Consider the subdivision level $K_\mathrm{s}\geq1$ of $\tsc{s}_{i,M_-,M_+}$ \defref{Def_RLRPCS_s_I_002}. Iff
\begin{subequations}
                                                                                                       \label{Eq_Lem_RLRPCS_s_RCsSs_ss_C_001_001}
\begin{alignat}{6}
&-M_-\leq0<1\leq M_+&
                                                                                                       \label{Eq_Lem_RLRPCS_s_RCsSs_ss_C_001_001a}\\
&1\leq K_\mathrm{s}\leq\min(M_-+1,M_+)&
                                                                                                       \label{Eq_Lem_RLRPCS_s_RCsSs_ss_C_001_001b}
\end{alignat}
then all substencils contain either point $i$ or point $i+1$
\begin{equation}
\tsc{s}_{i,M_--k_\mathrm{s},M_+-K_\mathrm{s}+k_\mathrm{s}}\cap\{i,i+1\}\neq\varnothing\quad \forall k_\mathrm{s}\in\{0,\cdots,K_\mathrm{s}\}
                                                                                                       \label{Eq_Lem_RLRPCS_s_RCsSs_ss_C_001_001c}
\end{equation}
More precisely
\begin{alignat}{6}
\eqrefsab{Eq_Lem_RLRPCS_s_RCsSs_ss_C_001_001a}
         {Eq_Lem_RLRPCS_s_RCsSs_ss_C_001_001b}\iff
\eqref{Eq_Lem_RLRPCS_s_RCsSs_ss_C_001_001c}\iff\left[\begin{array}{lcl} \{i,i+1\}&\subseteq&\tsc{s}_{i,M_-,M_+}\\
                                                                        i        &\in      &\tsc{s}_{i,M_-             ,M_+-K_\mathrm{s}             }\\
                                                                        \{i,i+1\}&\subseteq&\tsc{s}_{i,M_--k_\mathrm{s},M_+-K_\mathrm{s}+k_\mathrm{s}}\;\forall k_\mathrm{s}\in\{1,\cdots,K_\mathrm{s}-1\}\\
                                                                        i+1      &\in      &\tsc{s}_{i,M_--K_\mathrm{s},M_+                          }\\
                                                     \end{array}\right]
                                                                                                       \label{Eq_Lem_RLRPCS_s_RCsSs_ss_C_001_001d}
\end{alignat}
\end{subequations}
A subdivision \defref{Def_RLRPCS_s_I_002} satisfying \eqref{Eq_Lem_RLRPCS_s_RCsSs_ss_C_001_001d} will be called a positive subdivision \cite[Result 6.1, p. 300]{Gerolymos_2011a}.
\end{lemma}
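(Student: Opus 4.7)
The argument is a direct unpacking of \defref{Def_RLRPCS_s_I_002} into arithmetic inequalities on $k_\mathrm{s}$. First, by \eqref{Eq_Def_RLRPCS_s_I_002_001c} the substencil $\tsc{s}_{i,M_--k_\mathrm{s},M_+-K_\mathrm{s}+k_\mathrm{s}}$ consists of the points $i+\ell$ with $k_\mathrm{s}-M_-\leq\ell\leq M_+-K_\mathrm{s}+k_\mathrm{s}$, so $i$ belongs to it iff $K_\mathrm{s}-M_+\leq k_\mathrm{s}\leq M_-$, and $i+1$ belongs iff $K_\mathrm{s}-M_++1\leq k_\mathrm{s}\leq M_-+1$. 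Using $K_\mathrm{s}\leq M-1$ from \eqref{Eq_Def_RLRPCS_s_I_002_001b} to ensure that both integer intervals are nonempty, their union is the single integer interval $[K_\mathrm{s}-M_+,M_-+1]$. Thus condition \eqref{Eq_Lem_RLRPCS_s_RCsSs_ss_C_001_001c} is equivalent to the containment $\{0,\dots,K_\mathrm{s}\}\subseteq[K_\mathrm{s}-M_+,M_-+1]$.

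Second, this containment is itself equivalent to the two endpoint inequalities $K_\mathrm{s}-M_+\leq 0$ and $K_\mathrm{s}\leq M_-+1$, i.e.\ to \eqref{Eq_Lem_RLRPCS_s_RCsSs_ss_C_001_001b}. Combined with the standing assumption $K_\mathrm{s}\geq 1$, these inequalities also force $M_+\geq 1$ and $M_-\geq 0$, which is \eqref{Eq_Lem_RLRPCS_s_RCsSs_ss_C_001_001a}; the reverse implication is immediate. This settles the first iff of the lemma.

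Third, for the refined description on the right-hand side of \eqref{Eq_Lem_RLRPCS_s_RCsSs_ss_C_001_001d}, I specialize the two interval criteria to the three relevant ranges of $k_\mathrm{s}$, under the hypotheses \eqref{Eq_Lem_RLRPCS_s_RCsSs_ss_C_001_001a}--\eqref{Eq_Lem_RLRPCS_s_RCsSs_ss_C_001_001b}: for the full stencil, $\{i,i+1\}\subseteq\tsc{s}_{i,M_-,M_+}$ reads as $M_-\geq 0$ and $M_+\geq 1$; for $k_\mathrm{s}=0$, $M_-\geq 0$ and $K_\mathrm{s}\leq M_+$ give $i\in\tsc{s}_{i,M_-,M_+-K_\mathrm{s}}$; for $k_\mathrm{s}=K_\mathrm{s}$, $K_\mathrm{s}\leq M_-+1$ and $M_+\geq 1$ give $i+1\in\tsc{s}_{i,M_--K_\mathrm{s},M_+}$; and for $k_\mathrm{s}\in\{1,\dots,K_\mathrm{s}-1\}$ the bounds $M_--k_\mathrm{s}\geq M_--K_\mathrm{s}+1\geq 0$ and $M_+-K_\mathrm{s}+k_\mathrm{s}\geq M_+-K_\mathrm{s}+1\geq 1$ together give $\{i,i+1\}\subseteq\tsc{s}_{i,M_--k_\mathrm{s},M_+-K_\mathrm{s}+k_\mathrm{s}}$. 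No serious obstacle arises; the entire proof is elementary bookkeeping with the two integer-interval inclusion criteria derived in the first step.
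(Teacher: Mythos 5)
Your proof is correct, and it takes a cleaner decomposition than the paper's. Your key move is to characterize membership once and for all: $i\in\tsc{s}_{i,M_--k_\mathrm{s},M_+-K_\mathrm{s}+k_\mathrm{s}}$ iff $K_\mathrm{s}-M_+\leq k_\mathrm{s}\leq M_-$, and $i+1$ belongs iff $K_\mathrm{s}-M_++1\leq k_\mathrm{s}\leq M_-+1$; since $K_\mathrm{s}\leq M-1$ \eqref{Eq_Def_RLRPCS_s_I_002_001b} the two integer intervals overlap, so condition \eqref{Eq_Lem_RLRPCS_s_RCsSs_ss_C_001_001c} collapses to the single inclusion $\{0,\cdots,K_\mathrm{s}\}\subseteq\{K_\mathrm{s}-M_+,\cdots,M_-+1\}$, which is exactly the two endpoint inequalities of \eqref{Eq_Lem_RLRPCS_s_RCsSs_ss_C_001_001b}, and with $K_\mathrm{s}\geq1$ these force \eqref{Eq_Lem_RLRPCS_s_RCsSs_ss_C_001_001a}. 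This yields the equivalence in one stroke, whereas the paper proves necessity by staged contradictions (first forcing $\{i,i+1\}\subseteq\tsc{s}_{i,M_-,M_+}$ by testing the substencils adjacent to the endpoints, then forcing $i$ into the leftmost and $i+1$ into the rightmost substencil) and sufficiency by a separate chain of inequalities; the paper's route has the minor virtue of exhibiting explicitly which substencil violates \eqref{Eq_Lem_RLRPCS_s_RCsSs_ss_C_001_001c} when each hypothesis fails, but yours makes it transparent why $\min(M_-+1,M_+)$ is the exact threshold. Your treatment of \eqref{Eq_Lem_RLRPCS_s_RCsSs_ss_C_001_001d} (specializing the two criteria to $k_\mathrm{s}=0$, $k_\mathrm{s}=K_\mathrm{s}$, and the interior range) coincides with the paper's final step; the remaining implication, that the right-hand side of \eqref{Eq_Lem_RLRPCS_s_RCsSs_ss_C_001_001d} returns \eqref{Eq_Lem_RLRPCS_s_RCsSs_ss_C_001_001c}, is immediate since every $k_\mathrm{s}$ is covered by one of the listed cases, so the three-way equivalence closes. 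No gaps.
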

%
%-----------------------------------------------------------------------------------------------------------------------------------
%-----------------------------------------------------------------------------------------------------------------------------------
%
\begin{proof}
First notice that if all substencils contain either point $i$ or point $i+1$ \eqref{Eq_Lem_RLRPCS_s_RCsSs_ss_C_001_001c} then
so does the entire stencil $\tsc{s}_{i,M_-,M_+}\stackrel{\eqref{Eq_Def_RLRPCS_s_I_002_001d}}{=}\bigcup_{k_\mathrm{s}=0}^{K_\mathrm{s}}\tsc{s}_{i,M_--k_\mathrm{s},M_+-K_\mathrm{s}+k_\mathrm{s}}$.
Taking into account that by hypothesis $K_\mathrm{s}\geq1$, in the condition $\{i,i+1\}\cap\tsc{s}_{i,M_-,M_+}\neq\varnothing$, implies that $\tsc{s}_{i,M_-,M_+}$
must contain both points $i$ and $i+1$ (proof\footnote{\label{ff_Lem_RLRPCS_s_RCsSs_ss_C_001_001}Since by \eqref{Eq_Lem_RLRPCS_s_RCsSs_ss_C_001_001c} each of the substencils
                                                                                                 $\tsc{s}_{i,M_--k_\mathrm{s},M_+-K_\mathrm{s}+k_\mathrm{s}}$ ($k_\mathrm{s}\in\{0,\cdots,K_\mathrm{s}\geq1\}$)
                                                                                                 has a non-empty intersection with $\{i,i+1\}$, so does their union $\tsc{s}_{i,M_-,M_+}$
                                                                                                 \eqref{Eq_Def_RLRPCS_s_I_002_001d}, {\em ie} $\{i,i+1\}\cap\tsc{s}_{i,M_-,M_+}\neq\varnothing$.
                                                                                                 Obviously the conditions $\Big((i+M_+<i<i+1)\vee(i<i+1<i-M_-)\Big)\Longrightarrow\{i,i+1\}\cap\tsc{s}_{i,M_-,M_+}=\varnothing$
                                                                                                 are a contradiction, implying that their negation is true,
                                                                                                 {\em ie} we must have $\Big((i+M_+\geq i)\wedge(i+1\geq i-M_-)\Big)$.
                                                                                                 It turns out that the inequalities in $\Big((i+M_+\geq i)\wedge(i+1\geq i-M_-)\Big)$ must be strict.
                                                                                                 Assuming $i+1 =   i-M_-\Longrightarrow
                                                                                                           i<i+1<i-M_-+1\Longrightarrow
                                                                                                           \{i,i+1\}\cap\tsc{s}_{i,M_--1,M_+-K_\mathrm{s}+1}=\varnothing$
                                                                                                           contradicts \eqref{Eq_Lem_RLRPCS_s_RCsSs_ss_C_001_001c},
                                                                                                           implying $i+1>i-M_-\Longrightarrow M_->-1\stackrel{M_-\in\mathbb{Z}}{\Longrightarrow}M_-\geq0$.
                                                                                                 Assuming $i  = i+M_+\Longrightarrow
                                                                                                           i+1>i>i+M_+-K_\mathrm{s}+(K_\mathrm{s}-1)\Longrightarrow
                                                                                                           \{i,i+1\}\cap\tsc{s}_{i,M_-+(K_\mathrm{s}-1),M_+-K_{\rm s}+(K_\mathrm{s}-1)}=\varnothing$
                                                                                                           contradicts \eqref{Eq_Lem_RLRPCS_s_RCsSs_ss_C_001_001c}, because by hypothesis $K_\mathrm{s}\geq1$,
                                                                                                           implying $i<i+M_+\Longrightarrow M_+>0\stackrel{M_+\in\mathbb{Z}}{\Longrightarrow}M_+\geq1$.
                                                      }
by contradiction taking into account $K_{\rm s}\geq1$).
The condition that both points $\{i,i+1\}$ must be contained in the big stencil $\tsc{s}_{i,M_-,M_+}$ yields
\begin{subequations}
                                                                                                       \label{Eq_Lem_RLRPCS_s_RCsSs_ss_C_001_002}
\begin{alignat}{6}
\eqref{Eq_Lem_RLRPCS_s_RCsSs_ss_C_001_001c}\stackrel{\eqref{Eq_Def_RLRPCS_s_I_002_001d}}{\Longrightarrow}&
\{i,i+1\}\cap\tsc{s}_{i,M_-,M_+}\neq\varnothing&\stackrel{\ffref{ff_Lem_RLRPCS_s_RCsSs_ss_C_001_001}}{\Longrightarrow}&
                                                                                                       \notag\\
&\{i,i+1\}\subset\{i-M_,\cdots,i+M_+\}&\iff& i-M_-\leq i<i+1\leq i+M_+
                                      &\iff&  -M_-\leq 0<1\leq M_+
                                                                                                       \label{Eq_Lem_RLRPCS_s_RCsSs_ss_C_001_002a}
\end{alignat}
proving that \eqref{Eq_Lem_RLRPCS_s_RCsSs_ss_C_001_001a} is a necessary condition for the validity of \eqref{Eq_Lem_RLRPCS_s_RCsSs_ss_C_001_001c}.
Combining \eqrefsab{Eq_Lem_RLRPCS_s_RCsSs_ss_C_001_001c}
                   {Eq_Lem_RLRPCS_s_RCsSs_ss_C_001_002a} implies (proof\footnote{\label{ff_Lem_RLRPCS_s_RCsSs_ss_C_001_002}Assuming $i\notin\tsc{s}_{i,M_-,M_+-K_\mathrm{s}}
                                                                                                                                     \stackrel{\eqref{Eq_Lem_RLRPCS_s_RCsSs_ss_C_001_002a}}{\Longrightarrow}
                                                                                                                                     i>i+M_+-K_\mathrm{s}\Longrightarrow\{i,i+1\}\cap\tsc{s}_{i,M_-,M_+-K_{\rm s}}=\varnothing$
                                                                                                                                     contradicts \eqref{Eq_Lem_RLRPCS_s_RCsSs_ss_C_001_001c} for $k_\mathrm{s}=0$.
                                                                                                                           Assuming $i+1\notin\tsc{s}_{i,M_-+K_\mathrm{s},M_+}
                                                                                                                                     \stackrel{\eqref{Eq_Lem_RLRPCS_s_RCsSs_ss_C_001_002a}}{\Longrightarrow}
                                                                                                                                     i+1<i-M_-+K_\mathrm{s}\Longrightarrow\{i,i+1\}\cap\tsc{s}_{i,M_-+K_\mathrm{s},M_+}=\varnothing$
                                                                                                                                     contradicts \eqref{Eq_Lem_RLRPCS_s_RCsSs_ss_C_001_001c}
                                                                                                                                     for $k_\mathrm{s}=K_\mathrm{s}\geq1$.
                                                                                     }
by contradiction) that $i$ must belong to the leftmost substencil ($k_\mathrm{s}=0$) and $i+1$ must belong to the rightmost substencil ($k_\mathrm{s}=K_\mathrm{s}\geq1$)
\begin{alignat}{6}
\eqrefsab{Eq_Lem_RLRPCS_s_RCsSs_ss_C_001_001c}
         {Eq_Lem_RLRPCS_s_RCsSs_ss_C_001_002a}\stackrel{\ffref{ff_Lem_RLRPCS_s_RCsSs_ss_C_001_002}}{\Longrightarrow}&
                                                             \left[\begin{array}{lclclclcl}i  &\in&\tsc{s}_{i,M_-             ,M_+-K_\mathrm{s}}&\Longrightarrow&i-M_-             &\leq&i  &\leq&i+M_+-K_\mathrm{s}\\
                                                                                           i+1&\in&\tsc{s}_{i,M_-+K_\mathrm{s},M_+             }&\Longrightarrow&i-M_-+K_\mathrm{s}&\leq&i+1&\leq&i+M_+             \\
                                                                   \end{array}\right]
                                                                                                       \notag\\
                                                             \Longrightarrow&K_\mathrm{s}\leq\min(M_-+1,M_+)
                                                                                                       \label{Eq_Lem_RLRPCS_s_RCsSs_ss_C_001_002b}
\end{alignat}
proving that \eqref{Eq_Lem_RLRPCS_s_RCsSs_ss_C_001_001b} is also a necessary condition for the validity of \eqref{Eq_Lem_RLRPCS_s_RCsSs_ss_C_001_001c}.
To complete the proof it suffices to show that \eqrefsab{Eq_Lem_RLRPCS_s_RCsSs_ss_C_001_001a}
                                                        {Eq_Lem_RLRPCS_s_RCsSs_ss_C_001_001b}
are not only necessary but also sufficient conditions for \eqref{Eq_Lem_RLRPCS_s_RCsSs_ss_C_001_001c}.
We have
\begin{alignat}{6}
\eqrefsab{Eq_Lem_RLRPCS_s_RCsSs_ss_C_001_001a}
         {Eq_Lem_RLRPCS_s_RCsSs_ss_C_001_001b}\Longrightarrow&
                                               \left[\begin{array}{ccl} M_+             &\geq&1\\
                                                                       -M_-             &\leq&0\\
                                                                       -M_-+K_\mathrm{s}&\leq&1\\
                                                                        M_+-K_\mathrm{s}&\geq&0\\
                                                     \end{array}\right]
                                              \Longrightarrow
                                               \left[\begin{array}{lcccl}i-M_-             &\leq&i<i+1  &\leq&i+M_+             \\
                                                                         i-M_-+K_\mathrm{s}&\leq&i+1                            \\
                                                                                           &    &i      &\leq&i+M_+-K_\mathrm{s}\\
                                                     \end{array}\right]
                                                                                                       \notag\\
                                              \Longrightarrow&
                                               \left[\begin{array}{lcccll}i-M_-             &\leq&i<i+1  &\leq&i+M_+                                                                               \\
                                                                          i-M_-             &\leq&i      &\leq&i+M_+-K_\mathrm{s}                                                                  \\
                                                                          i-M_-+k_\mathrm{s}&<   &i+1    &    &                               &\;\forall k_\mathrm{s}\in\{0,\cdots,K_\mathrm{s}-1\}\\
                                                                                            &    &i      &<   &i+M_+-K_\mathrm{s}+k_\mathrm{s}&\;\forall k_\mathrm{s}\in\{1,\cdots,K_\mathrm{s}  \}\\
                                                                          i-M_-+K_\mathrm{s}&\leq&i+1    &\leq&i+M_+                                                                               \\
                                                     \end{array}\right]
                                                                                                       \notag\\
                                              \Longrightarrow&
                                               \left[\begin{array}{lcccll}i-M_-             &\leq&i<i+1  &\leq&i+M_+                                                                                \\
                                                                          i-M_-             &\leq&i      &\leq&i+M_+-K_\mathrm{s }                                                                  \\
                                                                          i-M_-+k_\mathrm{s}&\leq&i<i+1  &\leq&i+M_+-K_\mathrm{s}+k_\mathrm{s} &\;\forall k_\mathrm{s}\in\{1,\cdots,K_\mathrm{s}-1\}\\
                                                                          i-M_-+K_\mathrm{s}&\leq&i+1    &\leq&i+M_+                                                                                \\
                                                     \end{array}\right]
                                                                                                       \label{Eq_Lem_RLRPCS_s_RCsSs_ss_C_001_002c}
\end{alignat}
completing the proof, the last conditions in \eqref{Eq_Lem_RLRPCS_s_RCsSs_ss_C_001_002c} being exactly \eqref{Eq_Lem_RLRPCS_s_RCsSs_ss_C_001_001d}.
\end{subequations}
\qed
\end{proof}
%
%-----------------------------------------------------------------------------------------------------------------------------------
%-----------------------------------------------------------------------------------------------------------------------------------
%
\begin{corollary}[{\rm $(K_\mathrm{s}=1)$-level positively subdivisible stencils}]
\label{Crl_RLRPCS_s_RCsSs_ss_C_001}
Assume that $M_\pm\in\mathbb{Z}:M:=M_-+M_+\geq 2$ defining the stencil $\tsc{s}_{i,M_-,M_+}$ \defref{Def_AELRP_s_EPR_ss_PR_001} satisfy $-M_-\leq0<1\leq M_+$ \eqref{Eq_Lem_RLRPCS_s_RCsSs_ss_C_001_001a}.
Then the $(K_\mathrm{s}=1)$-level subdivision of the stencil $\tsc{s}_{i,M_-,M_+}$ \defref{Def_RLRPCS_s_I_002} is a positive subdivision \lemref{Lem_RLRPCS_s_RCsSs_ss_C_001}.
\end{corollary}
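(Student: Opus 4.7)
The plan is to verify that, under the hypotheses $M:=M_-+M_+\geq 2$ and $-M_-\leq 0 < 1\leq M_+$, the $(K_\mathrm{s}=1)$-level subdivision satisfies both conditions \eqref{Eq_Lem_RLRPCS_s_RCsSs_ss_C_001_001a} and \eqref{Eq_Lem_RLRPCS_s_RCsSs_ss_C_001_001b} of \lemrefnp{Lem_RLRPCS_s_RCsSs_ss_C_001}, and then invoke that lemma directly.

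First, condition \eqref{Eq_Lem_RLRPCS_s_RCsSs_ss_C_001_001a}, namely $-M_-\leq0<1\leq M_+$, is given as a hypothesis of the corollary, so there is nothing to prove. Second, I need to check condition \eqref{Eq_Lem_RLRPCS_s_RCsSs_ss_C_001_001b}, which for $K_\mathrm{s}=1$ reduces to $1\leq\min(M_-+1,M_+)$. From $-M_-\leq0\in\mathbb{Z}$ we deduce $M_-\geq0$, so $M_-+1\geq1$, and from $1\leq M_+$ we have $M_+\geq1$ directly; hence $\min(M_-+1,M_+)\geq1=K_\mathrm{s}$, establishing \eqref{Eq_Lem_RLRPCS_s_RCsSs_ss_C_001_001b}.

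With both conditions verified, \lemrefnp{Lem_RLRPCS_s_RCsSs_ss_C_001} applies and yields \eqref{Eq_Lem_RLRPCS_s_RCsSs_ss_C_001_001d}, so that each of the two substencils $\tsc{s}_{i,M_-,M_+-1}$ and $\tsc{s}_{i,M_--1,M_+}$ of the $(K_\mathrm{s}=1)$-level subdivision \defref{Def_RLRPCS_s_I_002} contains either point $i$ or point $i+1$ (in fact, both, in the form $i\in\tsc{s}_{i,M_-,M_+-1}$ and $i+1\in\tsc{s}_{i,M_--1,M_+}$, since under the above hypotheses $M_+-1\geq0$ and $-M_-+1\leq1$). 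This is exactly the definition of a positive subdivision \lemref{Lem_RLRPCS_s_RCsSs_ss_C_001}, completing the proof.

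The statement is a direct corollary of \lemrefnp{Lem_RLRPCS_s_RCsSs_ss_C_001}, so there is no substantive obstacle; the only subtlety is the arithmetic check that $K_\mathrm{s}=1$ automatically satisfies the upper bound $\min(M_-+1,M_+)$ as soon as the stencil itself contains both points $i$ and $i+1$.
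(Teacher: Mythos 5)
Your proposal is correct and follows essentially the same route as the paper's own proof: both verify that the hypothesis $-M_-\leq0<1\leq M_+$ gives $M_-\geq0$ and $M_+\geq1$, hence $\min(M_-+1,M_+)\geq1=K_\mathrm{s}$, so that conditions \eqref{Eq_Lem_RLRPCS_s_RCsSs_ss_C_001_001a} and \eqref{Eq_Lem_RLRPCS_s_RCsSs_ss_C_001_001b} hold and \lemrefnp{Lem_RLRPCS_s_RCsSs_ss_C_001} applies. The extra remark identifying explicitly which substencil contains $i$ and which contains $i+1$ is a harmless elaboration beyond what the paper states.
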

%
%-----------------------------------------------------------------------------------------------------------------------------------
%-----------------------------------------------------------------------------------------------------------------------------------
%
\begin{proof}
By \eqref{Eq_Lem_RLRPCS_s_RCsSs_ss_C_001_001a} we have that $\Big((M_-\geq0)\wedge(M_+\geq1)\Big)\Longrightarrow \min(M_-+1,M_+)\geq1$, so that $K_\mathrm{s}=1\leq \min(M_-+1,M_+)$.
Hence the conditions \eqrefsab{Eq_Lem_RLRPCS_s_RCsSs_ss_C_001_001a}{Eq_Lem_RLRPCS_s_RCsSs_ss_C_001_001b} are satisfied, so that, by \lemrefnp{Lem_RLRPCS_s_RCsSs_ss_C_001},
the $(K_\mathrm{s}=1)$-level subdivision of a stencil satisfying \eqref{Eq_Lem_RLRPCS_s_RCsSs_ss_C_001_001a} is a a positive subdivision.
\qed
\end{proof}
%
%-----------------------------------------------------------------------------------------------------------------------------------
%-----------------------------------------------------------------------------------------------------------------------------------
%
\begin{lemma}[{\rm Convexity in the neighborhood of $i+\tfrac{1}{2}$ for $(K_{\rm s}=1)$-level subdivision}]
\label{Lem_RLRPCS_s_RCsSs_ss_C_002}
\begin{subequations}
                                                                                                       \label{Eq_Lem_RLRPCS_s_RCsSs_ss_C_002_001}
Assume that $M_\pm\in\mathbb{Z}:M:=M_-+M_+\geq 2$ \eqref{Eq_Def_RLRPCS_s_I_002_001a}
defining the stencil $\tsc{s}_{i,M_-,M_+}$ \defref{Def_AELRP_s_EPR_ss_PR_001} satisfy $-M_-\leq0<1\leq M_+$ \eqref{Eq_Lem_RLRPCS_s_RCsSs_ss_C_001_001a}.
Then the rational weight-functions
$\sigma_{R_1,M_-,M+,1,0}(\xi)$ \eqref{Eq_Lem_RLRPCS_s_RCsSs_ss_Ks1_001_001a} and
$\sigma_{R_1,M_-,M+,1,1}(\xi)$ \eqref{Eq_Lem_RLRPCS_s_RCsSs_ss_Ks1_001_001b}
for the representation of the Lagrange reconstructing polynomial by the Lagrange reconstructing polynomials
of the $(K_\mathrm{s}=1)$-level substencils of $\tsc{s}_{i,M_-,M_+}$ \lemref{Lem_RLRPCS_s_RCsSs_ss_Ks1_001} satisfy
\begin{equation}
0<\sigma_{R_1,M_-,M+,1,k_{\rm s}}(\xi)<1\qquad\left\{\begin{array}{l}\forall \xi\in I_{\tsc{c}_{R_1}(\tfrac{1}{2}),M_-,M_+,1}:=\left(\xi^-_{\tsc{c}_{R_1}(\frac{1}{2}),M_-,M_+,1},
                                                                                                                                     \xi^+_{\tsc{c}_{R_1}(\frac{1}{2}),M_-,M_+,1}\right)\subset\mathbb{R}\\
                                                                     \forall k_{\rm s}\in\{0,1\}\\\end{array}\right.
                                                                                                       \label{Eq_Lem_RLRPCS_s_RCsSs_ss_C_002_001a}
\end{equation}
where the limits of the convexity interval around $\xi=\tfrac{1}{2}$, $I_{\tsc{c}_{R_1}(\frac{1}{2}),M_-,M_+,1}\ni\tfrac{1}{2}$ of length $>0$,
are defined by
\begin{alignat}{6}
\xi^-_{\tsc{c}_{R_1}(\frac{1}{2}),M_-,M_+,1}:=&\left\{\begin{array}{ll}         \xi_{R_1,M_-,M_+,+M_+,0}   &\quad M_-=0\\
                                                                       \max\left(\xi_{R_1,M_-,M_+,+M_+,0},
                                                                                \xi_{R_1,M_-,M_+,-M_-,0},
                                                                                \xi_{R_1,M_-,M_+-1,-M_-,0}\right)&\quad M_->0\\
                                                      \end{array}\right.
                                                                                                       \label{Eq_Lem_RLRPCS_s_RCsSs_ss_C_002_001b}\\
\xi^+_{\tsc{c}_{R_1}(\frac{1}{2}),M_-,M_+,1}:=&\left\{\begin{array}{ll}         \xi_{R_1,M_-,M_+,-M_-,1}       &\quad M_+=1\\
                                                                       \min\left(\xi_{R_1,M_-,M_+,-M_-,1},
                                                                                \xi_{R_1,M_-,M_+,+M_+,1},
                                                                                \xi_{R_1,M_--1,M_+,+M_+,1}\right)&\quad M_+>1\\
                                                      \end{array}\right.
                                                                                                       \label{Eq_Lem_RLRPCS_s_RCsSs_ss_C_002_001c}
\end{alignat}
where $\xi_{R_1,M_-,M_+,\ell,n}$ ($n\in\{-M_-,\cdots,M_+\}\setminus\{\ell\}$) are the $M$ real roots \prpref{Prp_RLRPCS_s_FPLIR_ss_RFPs_001} of the fundamental polynomial
of Lagrange reconstruction $\alpha_{R_1,M_-,M_+,\ell}(\xi)$ \eqref{Eq_Prp_AELRP_s_EPR_ss_PR_001_001g}.
\end{subequations}
\end{lemma}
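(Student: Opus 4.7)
The plan is to reduce the convexity statement to three ingredients: strict positivity of both $\sigma_{R_1,M_-,M_+,1,0}$ and $\sigma_{R_1,M_-,M_+,1,1}$ at the single point $\xi=\tfrac{1}{2}$; absence of sign changes on the candidate interval $I_{\tsc{c}_{R_1}(\tfrac{1}{2}),M_-,M_+,1}$; and nondegeneracy of that interval. Once these three are in hand, strict positivity of both weights combined with the consistency relation \eqref{Eq_Lem_RLRPCS_s_RCsSs_ss_Ks1_001_001c} immediately yields \eqref{Eq_Lem_RLRPCS_s_RCsSs_ss_C_002_001a}, as noted in \rmkrefnp{Rmk_RLRPCS_s_RCsSs_ss_C_001}.

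For the pointwise sign at $\xi=\tfrac{1}{2}$, I would apply the factorization \eqref{Eq_Prp_RLRPCS_s_FPLIR_ss_RFPs_002_001a} \prpref{Prp_RLRPCS_s_FPLIR_ss_RFPs_002} to each of the four polynomials appearing in \eqref{Eq_Lem_RLRPCS_s_RCsSs_ss_Ks1_001_001a}--\eqref{Eq_Lem_RLRPCS_s_RCsSs_ss_Ks1_001_001b}, namely $\alpha_{R_1,M_-,M_+,-M_-}$, $\alpha_{R_1,M_-,M_+-1,-M_-}$, $\alpha_{R_1,M_-,M_+,+M_+}$ and $\alpha_{R_1,M_--1,M_+,+M_+}$. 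The root-localization \eqref{Eq_Prp_RLRPCS_s_FPLIR_ss_RFPs_001_001a} from \prprefnp{Prp_RLRPCS_s_FPLIR_ss_RFPs_001} places each root $\xi_{R_1,\ldots,n}$ strictly in $(n-\tfrac{1}{2},n+\tfrac{1}{2})$, so evaluating $(\tfrac{1}{2}-\xi_{R_1,\ldots,n})$ gives a strictly positive value when $n\leq 0$ and a strictly negative value when $n\geq 1$. A direct count of roots with $n\geq 1$, combined with the leading-coefficient sign $(-1)^{\ell+M_+}\binom{M}{\ell+M_-}/M!$ from \eqref{Eq_Prp_RLRPCS_s_FPLIR_ss_RFPs_002_001a}, determines the sign of each of the four evaluations, from which positivity of both \eqref{Eq_Lem_RLRPCS_s_RCsSs_ss_Ks1_001_001a} and \eqref{Eq_Lem_RLRPCS_s_RCsSs_ss_Ks1_001_001b} at $\xi=\tfrac{1}{2}$ follows. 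The identity \eqref{Eq_Prp_RLRPCS_s_FPLIR_ss_SICFPLR_001_001a} from \prprefnp{Prp_RLRPCS_s_FPLIR_ss_SICFPLR_001}, $\alpha_{R_1,M_--1,M_+,+M_+}=(-1)^{M-1}\alpha_{R_1,M_-,M_+-1,-M_-}$, unifies the two denominators (same zero set) and is useful both for the sign count and for the next step.

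Next, I would verify that none of the three relevant polynomials $\alpha_{R_1,M_-,M_+,-M_-}$, $\alpha_{R_1,M_-,M_+,+M_+}$, $\alpha_{R_1,M_-,M_+-1,-M_-}$ vanishes on $(\xi^-,\xi^+)$. By \prprefnp{Prp_RLRPCS_s_FPLIR_ss_RFPs_001}, the roots of any of these nearest to $\tfrac{1}{2}$ are of the form $\xi_{R_1,\cdot,\cdot,\cdot,0}\in(-\tfrac{1}{2},\tfrac{1}{2})$ on the left and $\xi_{R_1,\cdot,\cdot,\cdot,1}\in(\tfrac{1}{2},\tfrac{3}{2})$ on the right, with the exception of the degenerate cases where the index $n$ coincides with the excluded index $\ell$: this occurs when $M_-=0$ (then $\ell=-M_-=0=n$ for the $n=0$ candidates of the numerator and denominator of $\sigma_{R_1,M_-,M_+,1,0}$) and when $M_+=1$ (then $\ell=M_+=1=n$ for the $n=1$ candidates of $\sigma_{R_1,M_-,M_+,1,1}$). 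The piecewise definition \eqref{Eq_Lem_RLRPCS_s_RCsSs_ss_C_002_001b}--\eqref{Eq_Lem_RLRPCS_s_RCsSs_ss_C_002_001c} simply drops these nonexistent candidates; otherwise it takes the maximum/minimum of all genuine nearest-root candidates. Since every remaining candidate lies strictly in $(-\tfrac{1}{2},\tfrac{1}{2})$ or $(\tfrac{1}{2},\tfrac{3}{2})$, we have $\xi^-<\tfrac{1}{2}<\xi^+$, so $I_{\tsc{c}_{R_1}(\tfrac{1}{2}),M_-,M_+,1}$ is a nondegenerate open interval containing $\tfrac{1}{2}$. Continuity of each rational function away from its poles then transfers the pointwise positivity at $\tfrac{1}{2}$ to the whole of $I_{\tsc{c}_{R_1}(\tfrac{1}{2}),M_-,M_+,1}$, and \eqref{Eq_Lem_RLRPCS_s_RCsSs_ss_Ks1_001_001c} finishes the proof.

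The main obstacle I anticipate is the combinatorial sign bookkeeping of the pointwise step: one must simultaneously track the parity of $M_+$, the position of the excluded index $\ell$ within $\{-M_-,\ldots,M_+\}$, and the edge cases $M_-=0$, $M_+=1$, across four different factorizations. Using \eqref{Eq_Prp_RLRPCS_s_FPLIR_ss_SICFPLR_001_001a} to replace $\alpha_{R_1,M_--1,M_+,+M_+}$ by $\alpha_{R_1,M_-,M_+-1,-M_-}$ essentially halves the case analysis, and separating the arguments for $\sigma_{R_1,M_-,M_+,1,0}$ and $\sigma_{R_1,M_-,M_+,1,1}$ keeps the bookkeeping local; positivity of only one weight is not enough, since the consistency condition alone would merely force the other weight into $\mathbb{R}$, so strict positivity of both has to be verified independently from the factorization.
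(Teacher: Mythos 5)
Your proposal is correct and follows essentially the same route as the paper's proof: both rest on the factorization \eqref{Eq_Prp_RLRPCS_s_FPLIR_ss_RFPs_002_001a}, the root localization \eqref{Eq_Prp_RLRPCS_s_FPLIR_ss_RFPs_001_001a}, the identity \eqref{Eq_Prp_RLRPCS_s_FPLIR_ss_SICFPLR_001_001a} to unify the denominators, and the consistency relation \eqref{Eq_Lem_RLRPCS_s_RCsSs_ss_Ks1_001_001c} to pass from positivity to $0<\sigma<1$. The only cosmetic difference is that you evaluate signs at $\xi=\tfrac{1}{2}$ and extend by continuity on the root-free interval, whereas the paper computes the sign of each factor directly on all of $I_{\tsc{c}_{R_1}(\frac{1}{2}),M_-,M_+,1}$ via the splitting into $n\leq0$ and $n\geq1$ factors; these are equivalent.
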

%
%-----------------------------------------------------------------------------------------------------------------------------------
%-----------------------------------------------------------------------------------------------------------------------------------
%
\begin{proof}
\begin{subequations}
                                                                                                       \label{Eq_Lem_RLRPCS_s_RCsSs_ss_C_002_002}
By hypothesis, the stencil $\tsc{s}_{i,M_-,M_+}$ satisfies the conditions of \crlrefnp{Crl_RLRPCS_s_RCsSs_ss_C_001}, implying that the ($K_\mathrm{s}=1$)-level
subdivision of $\tsc{s}_{i,M_-,M_+}$ is a positive subdivision, satisfying the conditions of \lemrefnp{Lem_RLRPCS_s_RCsSs_ss_C_001}, and we have by \eqref{Eq_Lem_RLRPCS_s_RCsSs_ss_C_001_001d}
\begin{alignat}{6}
\{i,i+1\}&\subseteq&\tsc{s}_{i,M_-,M_+}
                                                                                                       \label{Eq_Lem_RLRPCS_s_RCsSs_ss_C_002_002a}\\
i        &\in      &\tsc{s}_{i,M_-,M_+-1}
                                                                                                       \label{Eq_Lem_RLRPCS_s_RCsSs_ss_C_002_002b}\\
i+1      &\in      &\tsc{s}_{i,M_--1,M_+}
                                                                                                       \label{Eq_Lem_RLRPCS_s_RCsSs_ss_C_002_002c}
\end{alignat}
By \lemrefnp{Lem_RLRPCS_s_RCsSs_ss_Ks1_001} the rational weight-functions
$\sigma_{R_1,M_-,M+,1,0}(\xi)$ \eqref{Eq_Lem_RLRPCS_s_RCsSs_ss_Ks1_001_001a} and
$\sigma_{R_1,M_-,M+,1,1}(\xi)$ \eqref{Eq_Lem_RLRPCS_s_RCsSs_ss_Ks1_001_001b}
can be expressed in terms of the fundamental polynomials of Lagrange reconstruction \prpref{Prp_RLRPCS_s_RB_001}
$\alpha_{R_1,M_-,M_+,-M_-}(\xi)$, $\alpha_{R_1,M_-,M_+,+M_+}(\xi)$, $\alpha_{R_1,M_-,M_+-1,-M_-}(\xi)$, and $\alpha_{R_1,M_--1,M_+,+M_+}(\xi)$.
Notice that, because of the identities of \prprefnp{Prp_RLRPCS_s_FPLIR_ss_SICFPLR_001}, we have
$\alpha_{R_1,M_--1,M_+,M_+}(\xi)\stackrel{\eqref{Eq_Prp_RLRPCS_s_FPLIR_ss_SICFPLR_001_001a}}{=}(-1)^{M-1}\;\alpha_{R_1,M_-,M_+-1,-M_-}(\xi)$.
By \prprefnp{Prp_RLRPCS_s_FPLIR_ss_RFPs_001}, all of the roots of the fundamental polynomials of Lagrange reconstruction are real,
and therefore the factorization of \prprefnp{Prp_RLRPCS_s_FPLIR_ss_RFPs_002} applies. Applying the factorization \eqref{Eq_Prp_RLRPCS_s_FPLIR_ss_RFPs_002_001a},
and taking into account \eqrefsab{Eq_Lem_RLRPCS_s_RCsSs_ss_C_002_002a}{Eq_Lem_RLRPCS_s_RCsSs_ss_C_002_002b},
which were shown in \lemrefnp{Lem_RLRPCS_s_RCsSs_ss_C_001} to be direct consequences of \eqref{Eq_Lem_RLRPCS_s_RCsSs_ss_C_001_001a}, we have
\begin{alignat}{6}
\alpha_{R_1,M_-,M_+,-M_-}(\xi)\stackrel{\eqref{Eq_Prp_RLRPCS_s_FPLIR_ss_RFPs_002_001a}}{=}&\frac{(-1)^M}
                                                                                                {    M!}\prod_{n=-M_-+1}^{M_+}(\xi-\xi_{R_1,M_-,M_+,-M_-,n})
                                                                                                       \notag\\
                              \stackrel{\eqref{Eq_Lem_RLRPCS_s_RCsSs_ss_C_001_001a}}{=}&
\frac{(-1)^M}
     {    M!}\left\{\begin{array}{lll}                                                                                                                      &
                                      \displaystyle\prod_{n=1}^{M_+}\overbrace{(\xi-\xi_{R_1,M_-,M_+,-M_-,n})}^{<0\;\forall\xi<\xi_{R_1,M_-,M_+,-M_-,1}}    &\quad M_-=0\\
                                      \displaystyle\prod_{n=-M_-+1}^{0}\underbrace{(\xi-\xi_{R_1,M_-,M_+,-M_-,n})}_{>0\;\forall\xi>\xi_{R_1,M_-,M_+,-M_-,0}}&
                                      \displaystyle\prod_{n=1}^{M_+}\underbrace{(\xi-\xi_{R_1,M_-,M_+,-M_-,n})}_{<0\;\forall\xi<\xi_{R_1,M_-,M_+,-M_-,1}}   &\quad M_->0\\\end{array}\right.
                                                                                                       \label{Eq_Lem_RLRPCS_s_RCsSs_ss_C_002_002d}\\
\alpha_{R_1,M_-,M_+,+M_+}(\xi)\stackrel{\eqref{Eq_Prp_RLRPCS_s_FPLIR_ss_RFPs_002_001a}}{=}&\frac{(-1)^{2M_+}}
                                                                                                {         M!}\prod_{n=-M_-}^{M_+-1}(\xi-\xi_{R_1,M_-,M_+,+M_+,n})
                                                                                                       \notag\\
                              \stackrel{\eqref{Eq_Lem_RLRPCS_s_RCsSs_ss_C_001_001a}}{=}&
\frac{(-1)^{2M_+}}
     {         M!}\left\{\begin{array}{lll}\displaystyle\prod_{n=-M_-}^{0}\overbrace{(\xi-\xi_{R_1,M_-,M_+,+M_+,n})}^{>0\;\forall\xi>\xi_{R_1,M_-,M_+,+M_+,0}}   &
                                                                                                                                                                 &\quad M_+=1\\
                                           \displaystyle\prod_{n=-M_-}^{0}\underbrace{(\xi-\xi_{R_1,M_-,M_+,+M_+,n})}_{>0\;\forall\xi>\xi_{R_1,M_-,M_+,+M_+,0}}  &
                                           \displaystyle\prod_{n=1}^{M_+-1}\underbrace{(\xi-\xi_{R_1,M_-,M_+,+M_+,n})}_{<0\;\forall\xi<\xi_{R_1,M_-,M_+,+M_+,1}} &\quad M_+>1\\\end{array}\right.
                                                                                                       \label{Eq_Lem_RLRPCS_s_RCsSs_ss_C_002_002e}\\
\alpha_{R_1,M_-,M_+-1,-M_-}(\xi)\stackrel{\eqref{Eq_Prp_RLRPCS_s_FPLIR_ss_RFPs_002_001a}}{=}&\frac{(-1)^{M-1}}
                                                                                                  {    (M-1)!}\prod_{n=-M_-+1}^{M_+-1}(\xi-\xi_{R_1,M_-,M_+-1,-M_-,n})
                                                                                                       \notag\\
                              \stackrel{\eqref{Eq_Lem_RLRPCS_s_RCsSs_ss_C_001_001a}}{=}&
\frac{(-1)^{M-1}}
     {    (M-1)!}\left\{\begin{array}{lll}                                                                                                                      &
                                      \displaystyle\prod_{n=1}^{M_+-1}\overbrace{(\xi-\xi_{R_1,M_-,M_+-1,-M_-,n})}^{<0\;\forall\xi<\xi_{R_1,M_-,M_+-1,-M_-,1}}  &\quad M_-=0\\
                                      \displaystyle\prod_{n=-M_-+1}^{0}\overbrace{(\xi-\xi_{R_1,M_-,M_+-1,-M_-,n})}^{>0\;\forall\xi>\xi_{R_1,M_-,M_+-1,-M_-,0}} &
                                                                                                                                                                &\quad M_+=1\\
                                      \displaystyle\prod_{n=-M_-+1}^{0}\underbrace{(\xi-\xi_{R_1,M_-,M_+-1,-M_-,n})}_{>0\;\forall\xi>\xi_{R_1,M_-,M_+-1,-M_-,0}}&
                                      \displaystyle\prod_{n=1}^{M_+-1}\underbrace{(\xi-\xi_{R_1,M_-,M_+-1,-M_-,n})}_{<0\;\forall\xi<\xi_{R_1,M_-,M_+-1,-M_-,1}} &\quad M_-\neq0\neq M_+-1\\\end{array}\right.
                                                                                                       \label{Eq_Lem_RLRPCS_s_RCsSs_ss_C_002_002f}
\end{alignat}
where in \eqref{Eq_Lem_RLRPCS_s_RCsSs_ss_C_002_002f} we only need to distinguish 3 cases because the constraint $M:=M_-+M_+\geq2$ \eqref{Eq_Def_RLRPCS_s_I_002_001a} implies
that we cannot have simultaneously $M_-=0$ and $M_+=1$. Since by \prprefnp{Prp_RLRPCS_s_FPLIR_ss_SICFPLR_001},
$\alpha_{R_1,M_--1,M_+,M_+}(\xi)\stackrel{\eqref{Eq_Prp_RLRPCS_s_FPLIR_ss_SICFPLR_001_001a}}{=}(-1)^{M-1}\;\alpha_{R_1,M_-,M_+-1,-M_-}(\xi)
 \Longrightarrow\xi_{R_1,M_-,M_+-1,-M_-,n}=\xi_{R_1,M_--1,M_+,M_+,n}\;\forall n\in\{-M_-+1,\cdots,M_+-1\}$,
defining the limits of the open convexity interval around $\xi=\tfrac{1}{2}$, $I_{\tsc{c}_{R_1}(\frac{1}{2}),M_-,M_+,1}$ \eqref{Eq_Lem_RLRPCS_s_RCsSs_ss_C_002_001a},
by \eqrefsab{Eq_Lem_RLRPCS_s_RCsSs_ss_C_002_001b}{Eq_Lem_RLRPCS_s_RCsSs_ss_C_002_001c}, we have
\begin{alignat}{6}
\mathrm{sign}\Big(\alpha_{R_1,M_-,M_+,-M_-}(\xi)\Big)\stackrel{\eqref{Eq_Lem_RLRPCS_s_RCsSs_ss_C_002_002d}}{=}&(-1)^{M+M_+}
                                                     \stackrel{\eqref{Eq_Def_RLRPCS_s_I_002_001a}}{=}(-1)^{M_-}
                                                     &\quad\forall\xi\in I_{\tsc{c}_{R_1}(\frac{1}{2}),M_-,M_+,1}
                                                                                                       \label{Eq_Lem_RLRPCS_s_RCsSs_ss_C_002_002g}\\
\mathrm{sign}\Big(\alpha_{R_1,M_-,M_+,+M_+}(\xi)\Big)\stackrel{\eqref{Eq_Lem_RLRPCS_s_RCsSs_ss_C_002_002e}}{=}&(-1)^{2M_++M_+-1}=(-1)^{M_+-1}
                                                     &\quad\forall\xi\in I_{\tsc{c}_{R_1}(\frac{1}{2}),M_-,M_+,1}
                                                                                                       \label{Eq_Lem_RLRPCS_s_RCsSs_ss_C_002_002h}\\
\mathrm{sign}\Big(\alpha_{R_1,M_-,M_+-1,-M_-}(\xi)\Big)\stackrel{\eqref{Eq_Lem_RLRPCS_s_RCsSs_ss_C_002_002f}}{=}&(-1)^{M-1+M_+-1}
                                                     \stackrel{\eqref{Eq_Def_RLRPCS_s_I_002_001a}}{=}(-1)^{M_-}
                                                     &\quad\forall\xi\in I_{\tsc{c}_{R_1}(\frac{1}{2}),M_-,M_+,1}
                                                                                                       \label{Eq_Lem_RLRPCS_s_RCsSs_ss_C_002_002i}\\
\mathrm{sign}\Big(\alpha_{R_1,M_--1,M_+,+M_+}(\xi)\Big)\stackrel{\eqref{Eq_Prp_RLRPCS_s_FPLIR_ss_SICFPLR_001_001a}}{=}&(-1)^{M-1}\mathrm{sign}\Big(\alpha_{R_1,M_-,M_+-1,-M_-}(\xi)\Big)
                                                                                                       \notag\\
                                                        \stackrel{\eqref{Eq_Lem_RLRPCS_s_RCsSs_ss_C_002_002f}}{=}&(-1)^{M-1+M_-}=(-1)^{M_+-1}
                                                     &\quad\forall\xi\in I_{\tsc{c}_{R_1}(\frac{1}{2}),M_-,M_+,1}
                                                                                                       \label{Eq_Lem_RLRPCS_s_RCsSs_ss_C_002_002j}
\end{alignat}
whence
\begin{alignat}{6}
\sigma_{R_1,M_-,M_+,1,0}(\xi)\stackrel{\eqrefsabc{Eq_Lem_RLRPCS_s_RCsSs_ss_Ks1_001_001a}
                                                 {Eq_Lem_RLRPCS_s_RCsSs_ss_C_002_002g}
                                                 {Eq_Lem_RLRPCS_s_RCsSs_ss_C_002_002i}}{>}0\qquad\forall\xi\in I_{\tsc{c}_{R_1}(\tfrac{1}{2}),M_-,M_+,1}
                                                                                                       \label{Eq_Lem_RLRPCS_s_RCsSs_ss_C_002_002k}\\
\sigma_{R_1,M_-,M_+,1,1}(\xi)\stackrel{\eqrefsabc{Eq_Lem_RLRPCS_s_RCsSs_ss_Ks1_001_001b}
                                                 {Eq_Lem_RLRPCS_s_RCsSs_ss_C_002_002h}
                                                 {Eq_Lem_RLRPCS_s_RCsSs_ss_C_002_002j}}{>}0\qquad\forall\xi\in I_{\tsc{c}_{R_1}(\tfrac{1}{2}),M_-,M_+,1}
                                                                                                       \label{Eq_Lem_RLRPCS_s_RCsSs_ss_C_002_002l}
\end{alignat}
Because of the consistency condition \eqref{Eq_Lem_RLRPCS_s_RCsSs_ss_Ks1_001_001c}, positivity of the weight-functions implies convexity \rmkref{Rmk_RLRPCS_s_RCsSs_ss_C_001},
so that \eqrefsab{Eq_Lem_RLRPCS_s_RCsSs_ss_C_002_002k}{Eq_Lem_RLRPCS_s_RCsSs_ss_C_002_002l} prove \eqref{Eq_Lem_RLRPCS_s_RCsSs_ss_C_002_001a}.
Notice that, by \prprefnp{Prp_RLRPCS_s_FPLIR_ss_RFPs_001}, \eqref{Eq_Prp_RLRPCS_s_FPLIR_ss_RFPs_001_001a} implies that $\xi_{R_1,M_-,M_+,\ell\neq0,0}<\tfrac{1}{2}$
and $\xi_{R_1,M_-,M_+,\ell\neq1,1}>\tfrac{1}{2}$, $\forall M_\pm\in\mathbb{Z}:M:=M_-+M_+>2$, satisfying the conditions of \crlrefnp{Crl_RLRPCS_s_RCsSs_ss_C_001},
so that the length of $I_{\tsc{c}_{R_1}(\frac{1}{2}),M_-,M_+,1}$ \eqrefsatob{Eq_Lem_RLRPCS_s_RCsSs_ss_C_002_001a}
                                                                             {Eq_Lem_RLRPCS_s_RCsSs_ss_C_002_001c}
is $>0$.
\end{subequations}
\qed
\end{proof}
%
%-----------------------------------------------------------------------------------------------------------------------------------
%-----------------------------------------------------------------------------------------------------------------------------------
%
\begin{theorem}[{\rm Convexity of a positive subdivision in the neighborhood of $i+\tfrac{1}{2}$}]
\label{Thm_RLRPCS_s_RCsSs_ss_C_001}
\begin{subequations}
                                                                                                       \label{Eq_Thm_RLRPCS_s_RCsSs_ss_C_001_001}
Assume that the subdivision level $K_\mathrm{s}\geq1$ of $\tsc{s}_{i,M_-,M_+}$ \defref{Def_RLRPCS_s_I_002} satisfies the conditions of
\lemrefnp{Lem_RLRPCS_s_RCsSs_ss_C_001} (positive subdivision), {\em viz}
\begin{alignat}{6}
&M:=M_-+M_+\geq2&
                                                                                                       \tag{\text{\ref{Eq_Def_RLRPCS_s_I_002_001a}}}\\
&-M_-\leq0<1\leq M_+&
                                                                                                       \tag{\text{\ref{Eq_Lem_RLRPCS_s_RCsSs_ss_C_001_001a}}}\\
&1\leq K_\mathrm{s}\leq\min(M_-+1,M_+)&
                                                                                                       \tag{\text{\ref{Eq_Lem_RLRPCS_s_RCsSs_ss_C_001_001b}}}
\end{alignat}
implying \lemref{Lem_RLRPCS_s_RCsSs_ss_C_001} that all substencils contain either point $i$ or point $i+1$. Define the interval
\begin{alignat}{6}
I_{\tsc{c}_{R_1}(\frac{1}{2}),M_-,M_+,K_\mathrm{s}}:= \bigcap_{   L_\mathrm{s}=0}^{K_\mathrm{s}-1}\displaylimits
                                                      \bigcap_{\ell_\mathrm{s}=0}^{L_\mathrm{s}  }I_{\tsc{c}_{R_1}(\frac{1}{2}),M_--\ell_\mathrm{s},M_+-L_\mathrm{s}+\ell_\mathrm{s},1}
                                                                                                       \label{Eq_Thm_RLRPCS_s_RCsSs_ss_C_001_001a}
\end{alignat}
recursively using convexity intervals $I_{\tsc{c}_{R_1}(\frac{1}{2}),M_-,M_+,1}$ \eqrefsatob{Eq_Lem_RLRPCS_s_RCsSs_ss_C_002_001a}{Eq_Lem_RLRPCS_s_RCsSs_ss_C_002_001c}
of $(K_\mathrm{s}=1)$-level positive subdivisions \lemref{Lem_RLRPCS_s_RCsSs_ss_C_002}.
Then the rational weight-functions $\sigma_{R_1,M_-,M_+,K_{\rm s},k_{\rm s}}(\xi)$ \eqref{Eq_Prp_RLRPCS_s_RCsSs_ss_Ks_001_001b} satisfy
\begin{equation}
0<\sigma_{R_1,M_-,M_+,K_{\rm s},k_{\rm s}}(\xi)<1\qquad\left\{\begin{array}{l}\forall \xi\in I_{\tsc{c}_{R_1}(\tfrac{1}{2}),M_-,M_+,K_\mathrm{s}}\\
                                                                              \forall K_\mathrm{s}\in\{1,\cdots,M-1\}                            \\
                                                                              \forall k_{\rm s}\in\{0,K_\mathrm{s}\}                             \\\end{array}\right.
                                                                                                       \label{Eq_Thm_RLRPCS_s_RCsSs_ss_C_001_001b}
\end{equation}
implying convexity of the combination \eqref{Eq_Prp_RLRPCS_s_RCsSs_ss_Ks_001_001a}.
\end{subequations}
\end{theorem}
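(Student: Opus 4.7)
The plan is to proceed by induction on $K_\mathrm{s}\in\{1,\cdots,M-1\}$, exploiting the recursive formula~\eqref{Eq_Prp_RLRPCS_s_RCsSs_ss_Ks_001_001b} which expresses each $\sigma_{R_1,M_-,M_+,K_\mathrm{s},k_\mathrm{s}}(\xi)$ as a sum of products of $(K_\mathrm{s}=1)$-level weight-functions of a smaller positive subdivision by $(K_\mathrm{s}-1)$-level weight-functions of $\tsc{s}_{i,M_-,M_+}$. Since every summand in that recursion will be shown to be strictly positive on $I_{\tsc{c}_{R_1}(\frac{1}{2}),M_-,M_+,K_\mathrm{s}}$, positivity of $\sigma_{R_1,M_-,M_+,K_\mathrm{s},k_\mathrm{s}}(\xi)$ will follow, and the upper bound $<1$ is then automatic from the consistency condition~\eqref{Eq_Prp_RLRPCS_s_RCsSs_ss_Ks_001_001c} as noted in \rmkrefnp{Rmk_RLRPCS_s_RCsSs_ss_C_001}.

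The base case $K_\mathrm{s}=1$ is precisely \lemrefnp{Lem_RLRPCS_s_RCsSs_ss_C_002}, with $I_{\tsc{c}_{R_1}(\frac{1}{2}),M_-,M_+,1}$ given by~\eqrefsatob{Eq_Lem_RLRPCS_s_RCsSs_ss_C_002_001a}{Eq_Lem_RLRPCS_s_RCsSs_ss_C_002_001c}, and is consistent with the definition~\eqref{Eq_Thm_RLRPCS_s_RCsSs_ss_C_001_001a} evaluated for $K_\mathrm{s}=1$. For the induction step, fix $K_\mathrm{s}\geq2$ satisfying~\eqref{Eq_Lem_RLRPCS_s_RCsSs_ss_C_001_001b} and assume positivity on $I_{\tsc{c}_{R_1}(\frac{1}{2}),M_-,M_+,K_\mathrm{s}-1}$ for the $(K_\mathrm{s}-1)$-level subdivision of $\tsc{s}_{i,M_-,M_+}$. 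Two positivity facts must be combined through~\eqref{Eq_Prp_RLRPCS_s_RCsSs_ss_Ks_001_001b}: on the one hand, $\sigma_{R_1,M_-,M_+,K_\mathrm{s}-1,\ell_\mathrm{s}}(\xi)>0$ by the induction hypothesis, since $\tsc{s}_{i,M_-,M_+}$ with level $K_\mathrm{s}-1\in\{1,\cdots,K_\mathrm{s}-1\}\subset\{1,\cdots,\min(M_-+1,M_+)\}$ still satisfies the positive-subdivision hypotheses of \lemrefnp{Lem_RLRPCS_s_RCsSs_ss_C_001}; on the other hand, for each $\ell_\mathrm{s}\in\{0,\cdots,K_\mathrm{s}-1\}$ appearing in the sum, the substencil $\tsc{s}_{i,M_--\ell_\mathrm{s},M_+-(K_\mathrm{s}-1)+\ell_\mathrm{s}}$ admits a $(K_\mathrm{s}=1)$-level positive subdivision by \crlrefnp{Crl_RLRPCS_s_RCsSs_ss_C_001}, because $-(M_--\ell_\mathrm{s})\leq0$ follows from $\ell_\mathrm{s}\leq K_\mathrm{s}-1\leq M_-$ and $M_+-(K_\mathrm{s}-1)+\ell_\mathrm{s}\geq1$ follows from $K_\mathrm{s}\leq M_+$, so \lemrefnp{Lem_RLRPCS_s_RCsSs_ss_C_002} delivers $\sigma_{R_1,M_--\ell_\mathrm{s},M_+-(K_\mathrm{s}-1)+\ell_\mathrm{s},1,k_\mathrm{s}-\ell_\mathrm{s}}(\xi)>0$ on $I_{\tsc{c}_{R_1}(\frac{1}{2}),M_--\ell_\mathrm{s},M_+-(K_\mathrm{s}-1)+\ell_\mathrm{s},1}$.

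The interval on which both factors are positive for every term in the recursive sum is exactly the intersection
\begin{alignat}{6}
\left(\bigcap_{L_\mathrm{s}=0}^{K_\mathrm{s}-2}\bigcap_{\ell_\mathrm{s}=0}^{L_\mathrm{s}}I_{\tsc{c}_{R_1}(\frac{1}{2}),M_--\ell_\mathrm{s},M_+-L_\mathrm{s}+\ell_\mathrm{s},1}\right)\cap\left(\bigcap_{\ell_\mathrm{s}=0}^{K_\mathrm{s}-1}I_{\tsc{c}_{R_1}(\frac{1}{2}),M_--\ell_\mathrm{s},M_+-(K_\mathrm{s}-1)+\ell_\mathrm{s},1}\right)\notag
\end{alignat}
the first factor being $I_{\tsc{c}_{R_1}(\frac{1}{2}),M_-,M_+,K_\mathrm{s}-1}$ by definition~\eqref{Eq_Thm_RLRPCS_s_RCsSs_ss_C_001_001a}, and the combined intersection coinciding with $I_{\tsc{c}_{R_1}(\frac{1}{2}),M_-,M_+,K_\mathrm{s}}$ in~\eqref{Eq_Thm_RLRPCS_s_RCsSs_ss_C_001_001a}. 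On $I_{\tsc{c}_{R_1}(\frac{1}{2}),M_-,M_+,K_\mathrm{s}}$ every summand in~\eqref{Eq_Prp_RLRPCS_s_RCsSs_ss_Ks_001_001b} is therefore a product of two strictly positive quantities, so the sum is strictly positive, proving $\sigma_{R_1,M_-,M_+,K_\mathrm{s},k_\mathrm{s}}(\xi)>0$. Invoking~\eqref{Eq_Prp_RLRPCS_s_RCsSs_ss_Ks_001_001c} then yields $\sigma_{R_1,M_-,M_+,K_\mathrm{s},k_\mathrm{s}}(\xi)<1$ as in~\eqref{Eq_Rmk_RLRPCS_s_RCsSs_ss_C_001_001}, completing the induction.

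The main obstacle is really only bookkeeping: verifying that each substencil entering the recursion satisfies the positive-subdivision conditions of \lemrefnp{Lem_RLRPCS_s_RCsSs_ss_C_001} (which reduces to checking $\ell_\mathrm{s}\leq M_-$ and $M_+-(K_\mathrm{s}-1)+\ell_\mathrm{s}\geq1$, both immediate from~\eqref{Eq_Lem_RLRPCS_s_RCsSs_ss_C_001_001b}), and that the intersection $I_{\tsc{c}_{R_1}(\frac{1}{2}),M_-,M_+,K_\mathrm{s}}$ is non-empty. The latter follows because each $(K_\mathrm{s}=1)$-level interval contains $\xi=\tfrac{1}{2}$: indeed, \prprefnp{Prp_RLRPCS_s_FPLIR_ss_RFPs_001} places every root $\xi_{R_1,\cdot,\cdot,\ell,n}$ in an open unit interval $(n-\tfrac{1}{2},n+\tfrac{1}{2})$ centered on an integer $n\neq\ell$, so for positive subdivisions the maximum in~\eqref{Eq_Lem_RLRPCS_s_RCsSs_ss_C_002_001b} is strictly $<\tfrac{1}{2}$ and the minimum in~\eqref{Eq_Lem_RLRPCS_s_RCsSs_ss_C_002_001c} strictly $>\tfrac{1}{2}$, hence every member of the finite intersection~\eqref{Eq_Thm_RLRPCS_s_RCsSs_ss_C_001_001a} contains $\tfrac{1}{2}$, and $I_{\tsc{c}_{R_1}(\frac{1}{2}),M_-,M_+,K_\mathrm{s}}$ is a non-degenerate open interval around $\tfrac{1}{2}$.
\qed
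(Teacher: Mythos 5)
Your proof is correct and follows essentially the same route as the paper's: induction on $K_\mathrm{s}$ via the recursion \eqref{Eq_Prp_RLRPCS_s_RCsSs_ss_Ks_001_001b}, using \lemrefnp{Lem_RLRPCS_s_RCsSs_ss_C_002} for the $1$-level factors (after checking via \crlrefnp{Crl_RLRPCS_s_RCsSs_ss_C_001} that each substencil $\tsc{s}_{i,M_--\ell_\mathrm{s},M_+-(K_\mathrm{s}-1)+\ell_\mathrm{s}}$ is positively subdivisible), the induction hypothesis for the $(K_\mathrm{s}-1)$-level factors, and the consistency condition \eqref{Eq_Prp_RLRPCS_s_RCsSs_ss_Ks_001_001c} for the upper bound; the paper merely spells out the $K_\mathrm{s}=2$ case explicitly before the general step. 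Your closing observation that each $1$-level interval contains $\xi=\tfrac{1}{2}$ (so the finite intersection is a non-degenerate neighborhood of $\tfrac{1}{2}$) is a worthwhile explicit addition that the paper only states for $K_\mathrm{s}=1$.
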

%
%-----------------------------------------------------------------------------------------------------------------------------------
%-----------------------------------------------------------------------------------------------------------------------------------
%
\begin{proof}
\begin{subequations}
                                                                                                       \label{Eq_Thm_RLRPCS_s_RCsSs_ss_C_001_002}
The validity of \eqrefsab{Eq_Thm_RLRPCS_s_RCsSs_ss_C_001_001a}{Eq_Thm_RLRPCS_s_RCsSs_ss_C_001_001b} for $K_\mathrm{s}=1$ was proven in \lemrefnp{Lem_RLRPCS_s_RCsSs_ss_C_002}.
Assume $\min(M_-+1,M_+)\geq2$ so that the ($K_\mathrm{s}=2$)-level subdivision be a positive subdivision \lemref{Lem_RLRPCS_s_RCsSs_ss_C_001}.
Then, by \eqref{Eq_Prp_RLRPCS_s_RCsSs_ss_Ks_001_001b}, we have
\begin{alignat}{6}
\sigma_{R_1,M_-,M_+,2,0}(\xi)\stackrel{\eqref{Eq_Prp_RLRPCS_s_RCsSs_ss_Ks_001_001b}}{=}&\sigma_{R_1,M_-  ,M_+  ,1,0}(\xi)\;
                                                                                        \sigma_{R_1,M_-  ,M_+-1,1,0}(\xi)
                                                                                                       \label{Eq_Thm_RLRPCS_s_RCsSs_ss_C_001_002a}\\
\sigma_{R_1,M_-,M_+,2,1}(\xi)\stackrel{\eqref{Eq_Prp_RLRPCS_s_RCsSs_ss_Ks_001_001b}}{=}&\sigma_{R_1,M_-  ,M_+  ,1,0}(\xi)\;
                                                                                        \sigma_{R_1,M_-  ,M_+-1,1,1}(\xi)
                                                                                                       \notag\\
                                                                                     + &\sigma_{R_1,M_-  ,M_+  ,1,1}(\xi)\;
                                                                                        \sigma_{R_1,M_--1,M_+  ,1,0}(\xi)
                                                                                                       \label{Eq_Thm_RLRPCS_s_RCsSs_ss_C_001_002b}\\
\sigma_{R_1,M_-,M_+,2,2}(\xi)\stackrel{\eqref{Eq_Prp_RLRPCS_s_RCsSs_ss_Ks_001_001b}}{=}&\sigma_{R_1,M_-  ,M_+  ,1,1}(\xi)\;
                                                                                        \sigma_{R_1,M_--1,M_+  ,1,1}(\xi)
                                                                                                       \label{Eq_Thm_RLRPCS_s_RCsSs_ss_C_001_002c}
\end{alignat}
Having assumed that the ($K_\mathrm{s}=2$)-level subdivision is a positive subdivision \lemref{Lem_RLRPCS_s_RCsSs_ss_C_001}, we have
\begin{alignat}{6}
\min(M_-+1,M_+)\geq2\Longrightarrow\left[\begin{array}{l}M_-+1\geq2\stackrel{\eqref{Eq_Def_RLRPCS_s_I_002_001a}}{\Longrightarrow}-(M_--1)\leq0<1\leq M_+\\
                                                         M_+\geq2\stackrel{\eqref{Eq_Def_RLRPCS_s_I_002_001a}}{\Longrightarrow}-M_-\leq0<1\leq (M_+-1)\\\end{array}\right]
                                                                                                       \label{Eq_Thm_RLRPCS_s_RCsSs_ss_C_001_002d}
\end{alignat}
implying that the $1$-level subdivisions of the stencils $\tsc{s}_{i,M_--1,M_+}$ and $\tsc{s}_{i,M_-,M_+-1}$ are positive \crlref{Crl_RLRPCS_s_RCsSs_ss_C_001}. Therefore all of the $1$-level
weight-functions on the \tsc{rhs} of \eqrefsatob{Eq_Thm_RLRPCS_s_RCsSs_ss_C_001_002a}{Eq_Thm_RLRPCS_s_RCsSs_ss_C_001_002c} are positive in the neighborhood of $\xi=\tfrac{1}{2}$,
because of \lemrefnp{Lem_RLRPCS_s_RCsSs_ss_C_002}, and we have
\begin{alignat}{6}
0<\sigma_{R_1,M_-,M_+,2,0}(\xi)&\quad\forall \xi\in\Big(I_{\tsc{c}_{R_1}(\tfrac{1}{2}),M_-,M_+,1}\cap I_{\tsc{c}_{R_1}(\tfrac{1}{2}),M_-,M_+-1,1}\Big)
                                                                                                       \label{Eq_Thm_RLRPCS_s_RCsSs_ss_C_001_002e}\\
0<\sigma_{R_1,M_-,M_+,2,1}(\xi)&\quad\forall \xi\in\Big(I_{\tsc{c}_{R_1}(\tfrac{1}{2}),M_-,M_+,1}\cap I_{\tsc{c}_{R_1}(\tfrac{1}{2}),M_-,M_+-1,1}\cap I_{\tsc{c}_{R_1}(\tfrac{1}{2}),M_--1,M_+,1}\Big)
                                                                                                       \label{Eq_Thm_RLRPCS_s_RCsSs_ss_C_001_002f}\\
0<\sigma_{R_1,M_-,M_+,2,2}(\xi)&\quad\forall \xi\in\Big(I_{\tsc{c}_{R_1}(\tfrac{1}{2}),M_-,M_+,1}                                                \cap I_{\tsc{c}_{R_1}(\tfrac{1}{2}),M_--1,M_+,1}\Big)
                                                                                                       \label{Eq_Thm_RLRPCS_s_RCsSs_ss_C_001_002g}
\end{alignat}
Defining
\begin{alignat}{6}
I_{\tsc{c}_{R_1}(\frac{1}{2}),M_-,M_+,2}\stackrel{\eqref{Eq_Thm_RLRPCS_s_RCsSs_ss_C_001_001a}}{:=} \bigcap_{   L_\mathrm{s}=0}^{1             }\displaylimits
                                                                                                   \bigcap_{\ell_\mathrm{s}=0}^{L_\mathrm{s}  }I_{\tsc{c}_{R_1}(\frac{1}{2}),M_--\ell_\mathrm{s},M_+-L_\mathrm{s}+\ell_\mathrm{s},1}
                                                                                                =I_{\tsc{c}_{R_1}(\tfrac{1}{2}),M_-,M_+,1}\cap I_{\tsc{c}_{R_1}(\tfrac{1}{2}),M_-,M_+-1,1}\cap I_{\tsc{c}_{R_1}(\tfrac{1}{2}),M_--1,M_+,1}
                                                                                                       \label{Eq_Thm_RLRPCS_s_RCsSs_ss_C_001_002h}
\end{alignat}
we have that all of the 3 ($K_\mathrm{s}=2$)-level weight-functions \eqrefsatob{Eq_Thm_RLRPCS_s_RCsSs_ss_C_001_002e}{Eq_Thm_RLRPCS_s_RCsSs_ss_C_001_002g} are simultaneously positive
$\forall\xi\in I_{\tsc{c}_{R_1}(\frac{1}{2}),M_-,M_+,2}$, which \rmkref{Rmk_RLRPCS_s_RCsSs_ss_C_001}, because of the consistency condition \eqref{Eq_Prp_RLRPCS_s_RCsSs_ss_Ks_001_001b},
proves \eqrefsab{Eq_Thm_RLRPCS_s_RCsSs_ss_C_001_001a}{Eq_Thm_RLRPCS_s_RCsSs_ss_C_001_001b} for $K_\mathrm{s}=2$.

It is straightforward to complete the proof by induction. Since we have already proved \eqrefsab{Eq_Thm_RLRPCS_s_RCsSs_ss_C_001_001a}{Eq_Thm_RLRPCS_s_RCsSs_ss_C_001_001b} for $K_\mathrm{s}=2$,
assume $K_\mathrm{s}-1\geq2\iff K_\mathrm{s}\geq3$. By \prprefnp{Prp_RLRPCS_s_RCsSs_ss_Ks_001}
\begin{alignat}{6}
\sigma_{R_1,M_-,M_+,K_\mathrm{s},k_\mathrm{s}}(\xi)\stackrel{\eqref{Eq_Prp_RLRPCS_s_RCsSs_ss_Ks_001_001b}}{=}
                                                   \sum_{\ell_\mathrm{s}=\max(0,k_\mathrm{s}-1)}^{\min(K_\mathrm{s}-1,k_\mathrm{s})}
                                                   \sigma_{R_1,M_-             ,M_+                                 ,K_\mathrm{s}-1,             \ell_\mathrm{s}}(\xi)\;
                                                   \sigma_{R_1,M_--\ell_{\rm s},M_+-(K_\mathrm{s}-1)+\ell_\mathrm{s},1             ,k_\mathrm{s}-\ell_\mathrm{s}}(\xi)
                                                   \quad\forall k_\mathrm{s}\in\{0,\cdots,K_\mathrm{s}\}
                                                                                                       \label{Eq_Thm_RLRPCS_s_RCsSs_ss_C_001_002i}
\end{alignat}
Assume that \eqrefsab{Eq_Thm_RLRPCS_s_RCsSs_ss_C_001_001a}{Eq_Thm_RLRPCS_s_RCsSs_ss_C_001_001b} are valid for $K_\mathrm{s}-1\geq2$
\begin{equation}
0<\sigma_{R_1,M_-,M_+,K_{\rm s}-1,\ell_{\rm s}}(\xi)<1\qquad\left\{\begin{array}{l}\displaystyle\forall \xi\in I_{\tsc{c}_{R_1}(\tfrac{1}{2}),M_-,M_+,K_\mathrm{s}-1}
                                                                                               =\bigcap_{   L_\mathrm{s}=0}^{K_\mathrm{s}-2}\displaylimits
                                                                                                \bigcap_{\ell_\mathrm{s}=0}^{L_\mathrm{s}  }
                                                                                                I_{\tsc{c}_{R_1}(\frac{1}{2}),M_--\ell_\mathrm{s},M_+-L_\mathrm{s}+\ell_\mathrm{s},1}\\
                                                                                                                                                          \\
                                                                                  \forall \ell_{\rm s}\in\{0,K_\mathrm{s}-1\}                             \\\end{array}\right.
                                                                                                       \label{Eq_Thm_RLRPCS_s_RCsSs_ss_C_001_002j}
\end{equation}
and that $K_\mathrm{s}$ satisfies \eqref{Eq_Lem_RLRPCS_s_RCsSs_ss_C_001_001b}, {\em ie}
\begin{alignat}{6}
\min(M_-+1,M_+)\geq K_\mathrm{s}\Longrightarrow\left[\begin{array}{l}M_-+1\geq K_\mathrm{s}\\
                                                                     M_+  \geq K_\mathrm{s}\\\end{array}\right]\stackrel{\eqref{Eq_Def_RLRPCS_s_I_002_001a}}{\Longrightarrow}
                                               -(M_--\ell_\mathrm{s})\leq0<1\leq(M_+-(K_\mathrm{s}-1)+\ell_\mathrm{s})
                                                                                                       \notag\\
\quad\forall \ell_\mathrm{s}\in\{0,\cdots,K_\mathrm{s}-1\}
                                                                                                       \label{Eq_Thm_RLRPCS_s_RCsSs_ss_C_001_002k}
\end{alignat}
so that the substencils $\tsc{s}_{i,M_--\ell_{\rm s},M_+-(K_\mathrm{s}-1)+\ell_\mathrm{s}}$ satisfy the conditions of \crlrefnp{Crl_RLRPCS_s_RCsSs_ss_C_001}, implying by \lemrefnp{Lem_RLRPCS_s_RCsSs_ss_C_002} that
\begin{alignat}{6}
0<\sigma_{R_1,M_--\ell_\mathrm{s},M_+-(K_\mathrm{s}-1)+\ell_\mathrm{s},1,m_s}(\xi)\quad\forall \xi\in I_{\tsc{c}_{R_1}(\tfrac{1}{2}),M_--\ell_\mathrm{s},M_+-(K_\mathrm{s}-1)+\ell_\mathrm{s},1}
\qquad\left\{\begin{array}{l}\forall    m_\mathrm{s}\in\{0,1\}\\
                             \forall \ell_\mathrm{s}\in\{0,\cdots,K_\mathrm{s}-1\}\\\end{array}\right.
                                                                                                       \label{Eq_Thm_RLRPCS_s_RCsSs_ss_C_001_002l}
\end{alignat}
Combining \eqrefsabc{Eq_Thm_RLRPCS_s_RCsSs_ss_C_001_002i}{Eq_Thm_RLRPCS_s_RCsSs_ss_C_001_002j}{Eq_Thm_RLRPCS_s_RCsSs_ss_C_001_002l} yields
\begin{equation}
0<\sigma_{R_1,M_-,M_+,K_{\rm s},k_{\rm s}}(\xi)  \qquad\left\{\begin{array}{l}\displaystyle\forall \xi\in I_{\tsc{c}_{R_1}(\tfrac{1}{2}),M_-,M_+,K_\mathrm{s}-1}\cap
                                                                                          \left(\bigcap_{\ell_\mathrm{s}=0}^{K_\mathrm{s}-1}
                                                                                          I_{\tsc{c}_{R_1}(\frac{1}{2}),M_--\ell_\mathrm{s},M_+-(K_\mathrm{s}-1)+\ell_\mathrm{s},1}\right)\\
                                                                                                                                                                                          \\
                                                                                          \forall k_{\rm s}\in\{0,K_\mathrm{s}\}                                                          \\\end{array}\right.
                                                                                                       \label{Eq_Thm_RLRPCS_s_RCsSs_ss_C_001_002m}
\end{equation}
which \rmkref{Rmk_RLRPCS_s_RCsSs_ss_C_001}, because of the consistency condition \eqref{Eq_Prp_RLRPCS_s_RCsSs_ss_Ks_001_001b},
proves \eqrefsab{Eq_Thm_RLRPCS_s_RCsSs_ss_C_001_001a}{Eq_Thm_RLRPCS_s_RCsSs_ss_C_001_001b} $\forall K_\mathrm{s}$ satisfying \eqref{Eq_Lem_RLRPCS_s_RCsSs_ss_C_001_001b}.
\end{subequations}
\qed
\end{proof}
%
%-----------------------------------------------------------------------------------------------------------------------------------
%-----------------------------------------------------------------------------------------------------------------------------------
%
\begin{figure}[ht!]
\begin{picture}(500,200)
\put(50,-10){\includegraphics[angle=0,width=300pt]{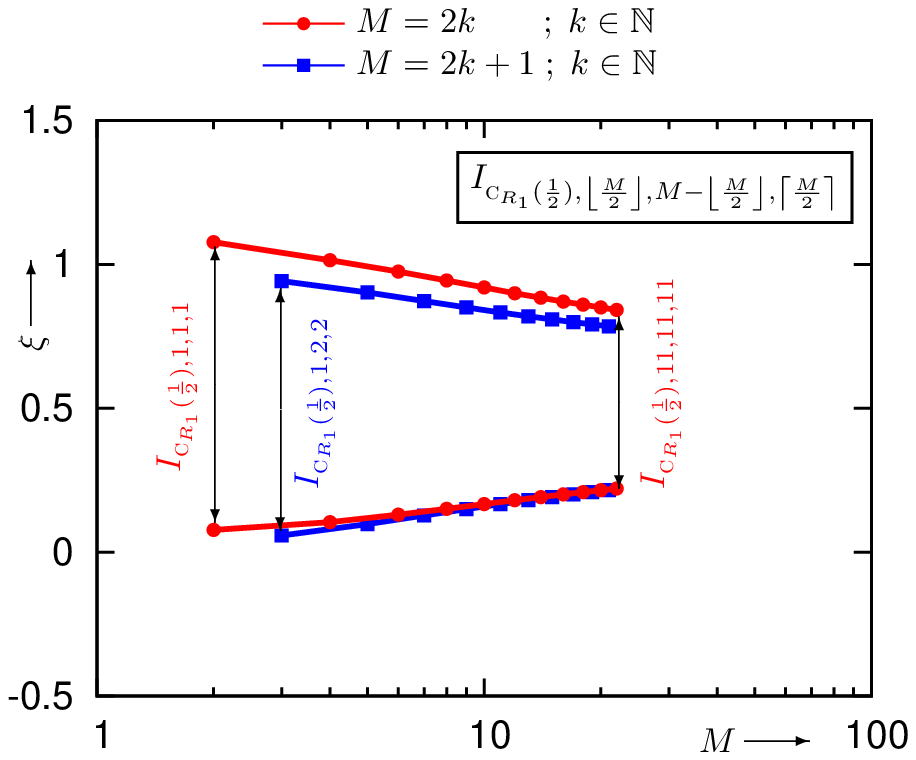}}
\end{picture}
\caption{ Interval of convexity $I_{\tsc{c}_{R_1}(\frac{1}{2}),\left\lfloor\frac{M}{2}\right\rfloor,M-\left\lfloor\frac{M}{2}\right\rfloor,\left\lceil\frac{M}{2}\right\rceil}$ around
$i+\tfrac{1}{2}$ \thmref{Thm_RLRPCS_s_RCsSs_ss_C_001}, of the maximum positive subdivision-level \lemref{Lem_RLRPCS_s_RCsSs_ss_C_001} $K_\mathrm{s}=\left\lceil\frac{M}{2}\right\rceil$,
of the usual \tsc{weno} stencils $\tsc{s}_{i,\left\lfloor\frac{M}{2}\right\rfloor,M-\left\lfloor\frac{M}{2}\right\rfloor}$ \defref{Def_AELRP_s_EPR_ss_PR_001},
as a function of stencil width $M$ (in logscale).}
\label{Fig_Xmp_RLRPCS_s_RCsSs_ss_C_001_001}
\end{figure}
%
%-----------------------------------------------------------------------------------------------------------------------------------
%-----------------------------------------------------------------------------------------------------------------------------------
%
\begin{example}[{\rm Convexity around $i+\tfrac{1}{2}$ of usual \tsc{weno} discretizations}]
\label{Xmp_RLRPCS_s_RCsSs_ss_C_001}
The usual \tsc{weno} discretizations for the numerical approximation of $f'(x)$ \cite{Shu_1998a,
                                                                                      Shu_2009a,
                                                                                      Liu_Shu_Zhang_2009a}
use \cite[p. 298]{Gerolymos_2011a} the ($K_\mathrm{s}=\left\lceil\frac{M}{2}\right\rceil$)-level subdivision \defref{Def_RLRPCS_s_I_002}
of the general family of stencils $\tsc{s}_{i,\left\lfloor\frac{M}{2}\right\rfloor,M-\left\lfloor\frac{M}{2}\right\rfloor}$,
which \defref{Def_AELRP_s_EPR_ss_PR_001} contains $M+1$ points.
If $M=2k$ ($k\in\mathbb{N}_{>0}$) is even,
then the stencil $\tsc{s}_{i,\left\lfloor\frac{M}{2}\right\rfloor,M-\left\lfloor\frac{M}{2}\right\rfloor}=\tsc{s}_{i,k,k}$ is symmetric around point $i$,
and upwind-biased with respect to the cell-interface $i+\tfrac{1}{2}$ ({\em eg} $\tsc{s}_{i,3,3}$; \figrefnp{Fig_Xmp_RLRPCS_s_RCsSs_ss_Ks_001_001}),
corresponding to the family of \tscn{weno}{$(2r-1)$} ($r:=k+1$) upwind-biased schemes \cite{Jiang_Shu_1996a,
                                                                                            Balsara_Shu_2000a,
                                                                                            Gerolymos_Senechal_Vallet_2009a}.
If $M=2k+1$ ($k\in\mathbb{N}_{>0}$) is odd,
then the stencil $\tsc{s}_{i,\left\lfloor\frac{M}{2}\right\rfloor,M-\left\lfloor\frac{M}{2}\right\rfloor}=\tsc{s}_{i,k,k+1}$ is symmetric around the cell-interface $i+\tfrac{1}{2}$,
and downwind-biased with respect to the point $i$ ({\em eg} $\tsc{s}_{i,3,4}$; \figrefnp{Fig_Xmp_RLRPCS_s_RCsSs_ss_Ks_001_002}),
corresponding to centered (central) \tsc{weno} schemes \cite{Qiu_Shu_2002a}.
For the family of stencils $\tsc{s}_{i,\left\lfloor\frac{M}{2}\right\rfloor,M-\left\lfloor\frac{M}{2}\right\rfloor}$,
we have
\begin{subequations}
                                                                                                       \label{Eq_Xmp_RLRPCS_s_RCsSs_ss_C_001_001}
\begin{alignat}{6}
M_-=&\left\lfloor\frac{M}{2}\right\rfloor
                                                                                                       \label{Eq_Xmp_RLRPCS_s_RCsSs_ss_C_001_001a}\\
M_+=&M-\left\lfloor\frac{M}{2}\right\rfloor
                                                                                                       \label{Eq_Xmp_RLRPCS_s_RCsSs_ss_C_001_001b}\\
\min(M_-+1,M_+)=&\left\{\begin{array}{l}\min(k+1,k~~~~~~\,)=k~~~~~~  \qquad \forall M=2k~~~~~~  \;;\;k\in\mathbb{N}_{>0}\\
                                        \min(k+1,k+1      )=k+1      \qquad \forall M=2k+1      \;;\;k\in\mathbb{N}_{>0}\\\end{array}\right\}=\left\lceil\frac{M}{2}\right\rceil\quad\forall M\in\mathbb{N}_{\geq2}
                                                                                                       \label{Eq_Xmp_RLRPCS_s_RCsSs_ss_C_001_001c}
\end{alignat}
\end{subequations}
so that, by \lemrefnp{Lem_RLRPCS_s_RCsSs_ss_C_001},
$K_\mathrm{s}=\min(\left\lfloor\frac{M}{2}\right\rfloor+1,M-\left\lfloor\frac{M}{2}\right\rfloor)\stackrel{\eqref{Eq_Xmp_RLRPCS_s_RCsSs_ss_C_001_001}}{=}\left\lceil\frac{M}{2}\right\rceil$
corresponds to the maximum level of positive subdivision of $\tsc{s}_{i,\left\lfloor\frac{M}{2}\right\rfloor,M-\left\lfloor\frac{M}{2}\right\rfloor}$.
Therefore, \thmrefnp{Thm_RLRPCS_s_RCsSs_ss_C_001} applies, and there exists an interval of convexity
$I_{\tsc{c}_{R_1}(\frac{1}{2}),\left\lfloor\frac{M}{2}\right\rfloor,M-\left\lfloor\frac{M}{2}\right\rfloor,\left\lceil\frac{M}{2}\right\rceil}$ \eqref{Eq_Thm_RLRPCS_s_RCsSs_ss_C_001_001a}
around $\xi=\tfrac{1}{2}$ \figref{Fig_Xmp_RLRPCS_s_RCsSs_ss_C_001_001}.

Notice that the interval of convexity $I_{\tsc{c}_{R_1}(\frac{1}{2}),\left\lfloor\frac{M}{2}\right\rfloor,M-\left\lfloor\frac{M}{2}\right\rfloor,\left\lceil\frac{M}{2}\right\rceil}$ \figref{Fig_Xmp_RLRPCS_s_RCsSs_ss_C_001_001},
is slightly larger for $M=2k$ ($k\in\mathbb{N}_{>0}$) even, compared to $M=2k+1$ ($k\in\mathbb{N}_{>0}$) odd,
and its length slightly decreases (quasi-logarithmically $\forall M\in\{2,\cdots,22\}$) with increasing number of cells in the stencil, $M$ \figref{Fig_Xmp_RLRPCS_s_RCsSs_ss_C_001_001}.
For stencils with $M=2k$ ($k\in\mathbb{N}_{>0}$) even, like $\tsc{s}_{i,3,3}$ \figref{Fig_Xmp_RLRPCS_s_RCsSs_ss_sSs_001_001}, because of symmetry with respect to point $i$,
it is straightforward to show that there is a symmetric interval of convexity around $i-\tfrac{1}{2}$ \figref{Fig_Xmp_RLRPCS_s_RCsSs_ss_Ks_001_001},
as was also observed in \cite[Tab. 3.2, p. 516]{Liu_Shu_Zhang_2009a}.
On the contrary, for stencils with $M=2k+1$ ($k\in\mathbb{N}_{>0}$) odd, like $\tsc{s}_{i,3,4}$ \figref{Fig_Xmp_RLRPCS_s_RCsSs_ss_sSs_001_002},
it turns out that positivity of the weight-functions does not hold at $\xi=-\tfrac{1}{2}$ (\xmprefnp{Xmp_RLRPCS_s_RCsSs_ss_Ks_001}; \figrefnp{Fig_Xmp_RLRPCS_s_RCsSs_ss_Ks_001_002}),
as was also observed in \cite[Tab. 3.5, p. 518]{Liu_Shu_Zhang_2009a}.
\qed
\end{example}
%
%-----------------------------------------------------------------------------------------------------------------------------------

%-----------------------------------------------------------------------------------------------------------------------------------
%
%
%
%
%
%
%
%
%
\section{Conclusions}\label{RLRPCS_s_C}
%
%
%
%
%
%
%
%
%
%-----------------------------------------------------------------------------------------------------------------------------------

In the present work, we studied analytically the representation of the Lagrange reconstructing polynomial by combination of substencils, and in particular
the conditions under which this representation is convex, {\em ie} the weight-functions $\in[0,1]$.

We first formalized several results on the fundamental polynomials of Lagrange reconstruction \prpref{Prp_RLRPCS_s_RB_001},
$\alpha_{R_1,M_-,M_+,\ell}(\xi)$ \eqref{Eq_Prp_AELRP_s_EPR_ss_PR_001_001g}. Each of the polynomials $\alpha_{R_1,M_-,M_+,\ell}(\xi)$ is the reconstruction pair \defref{Def_AELRP_s_RPERR_ss_RP_001}
of the corresponding fundamental function of Lagrange interpolation $\alpha_{I,M_-,M_+,\ell}(\xi)$ \prpref{Prp_RLRPCS_s_FPLIR_ss_RPsFPs_001},
and for this reason all of its $M$ roots are real \prpref{Prp_RLRPCS_s_FPLIR_ss_RFPs_001},
distant $<\tfrac{1}{2}$ from the corresponding root of the fundamental function of Lagrange interpolation $\alpha_{I,M_-,M_+,\ell}(\xi)$ \eqref{Eq_Prp_AELRP_s_EPR_ss_PR_001_001h}.
This leads to a simple factorization of the fundamental polynomials of Lagrange reconstruction \prpref{Prp_RLRPCS_s_FPLIR_ss_RFPs_002}.

The leading $O(\Delta x^M)$ term of the approximation error of the Lagrange reconstructing polynomials on 2 overlapping stencils shifted by 1 cell, $\{i-M_-,\cdots,i+M_+-1\}$ and $\{i-M_-+1,\cdots,i+M_+\}$,
is different \prpref{Prp_RLRPCS_s_FPLIR_ss_SICFPLR_001}, and several identities hold between some of the fundamental polynomials on the 2 stencils. Based on these identities \prpref{Prp_RLRPCS_s_FPLIR_ss_SICFPLR_001},
we show that there exist unique rational weight-functions combining the Lagrange reconstructing polynomials on $\{i-M_-,\cdots,i+M_+-1\}$ and $\{i-M_-+1,\cdots,i+M_+\}$
into the Lagrange reconstructing polynomials on $\{i-M_-,\cdots,i+M_+\}$ \lemref{Lem_RLRPCS_s_RCsSs_ss_Ks1_001}, this representation failing at the poles of the weight-functions,
all of which are real and can be identified with roots of fundamental polynomials of Lagrange reconstruction. Having established this $1$-level subdivision rule,
the general recurrence relation for the weight-functions proven in \cite[Lemma 2.1]{Gerolymos_2011a_news} applies, and provides
the analytical expression of the weight-functions for a general level of subdivision \prpref{Prp_RLRPCS_s_RCsSs_ss_Ks_001}, and of the set of their poles, all of which are real.
These weight-functions are unique \prpref{Prp_RLRPCS_s_RCsSs_ss_Ks_002}.

Finally, we prove \thmref{Thm_RLRPCS_s_RCsSs_ss_C_001} that for any $K_\mathrm{s}$-level subdivision of $\{i-M_-,\cdots,i+M_+\}$ into $K_\mathrm{s}+1$ substencils
$\left\{i-M_-+k_\mathrm{s},\cdots,M_+-K_\mathrm{s}+k_\mathrm{s}\right\}$ ($k_\mathrm{s}\in\{0,\cdots,K_\mathrm{s}\}$), iff each of the substencils contains either
point $i$ or point $i+1$ (positive subdivision; \lemrefnp{Lem_RLRPCS_s_RCsSs_ss_C_001}), then there exists a neighborhood of $\xi=\tfrac{1}{2}$ ($x=x_i+\tfrac{1}{2}\Delta x$),
$I_{\tsc{c}_{R_1}(\frac{1}{2}),M_-,M_+,K_\mathrm{s}}\ni\tfrac{1}{2}$,
whose limits can be explicitly defined by roots of fundamental polynomials of Lagrange reconstruction,
where all of the weight-functions $\sigma_{R_1,M_-,M+,K_{\rm s},k_{\rm s}}(\xi)>0\;\forall\xi\in I_{\tsc{c}_{R_1}(\frac{1}{2}),M_-,M_+,K_\mathrm{s}}$, implying because of the consistency relation
$\sum_{k_\mathrm{s}=0}^{K_\mathrm{s}}\sigma_{R_1,M_-,M_+,K_\mathrm{s},k_\mathrm{s}}(\xi)=1\;\forall\xi\in{\mathbb R}$ \eqref{Eq_Prp_RLRPCS_s_RCsSs_ss_Ks_001_001c},
that the representation of the Lagrange reconstructing polynomial by combination of substencils is convex $\forall\xi\in I_{\tsc{c}_{R_1}(\frac{1}{2}),M_-,M_+,K_\mathrm{s}}\ni\tfrac{1}{2}$.
\thmrefnp{Thm_RLRPCS_s_RCsSs_ss_C_001} provides a formal proof of (and general conditions for) convexity in the neighborhood of $\xi=\tfrac{1}{2}$,
which had always been conjectured, on the basis of numerical evidence, all along the development of \tsc{weno} schemes \cite{Liu_Osher_Chan_1994a,
                                                                                                                             Jiang_Shu_1996a,
                                                                                                                             Shu_1998a,
                                                                                                                             Balsara_Shu_2000a,
                                                                                                                             Shu_2009a,
                                                                                                                             Liu_Shu_Zhang_2009a,
                                                                                                                             Gerolymos_Senechal_Vallet_2009a}.

%-----------------------------------------------------------------------------------------------------------------------------------
%
%
%
%
%
%
%
%
%
\section*{Acknowledgments}
%
%
%
%
%
%
%
%
%
%-----------------------------------------------------------------------------------------------------------------------------------

Computations were performed using \tsc{hpc} resources from \tsc{genci--idris} (Grants 2010--066327 and 2010--022139).
Symbolic calculations were performed using {\tt maxima} ({\tt http://sourceforge.net/projects/maxima}).
The corresponding package {\tt reconstruction.mac} is available at {\tt http://www.aerodynamics.fr}.

%-----------------------------------------------------------------------------------------------------------------------------------
%
%
%
%
%
%
%
%
%
%\footnotesize\bibliographystyle{elsarticle_2010/elsarticle-num}\bibliography{Aerodynamics,GV,GV_news,Aerodynamics_in_press}
\footnotesize\bibliographystyle{elsarticle-num}\bibliography{Aerodynamics,GV,GV_news,Aerodynamics_in_press}
%
%
%
%
%
%
%
%
%
%-----------------------------------------------------------------------------------------------------------------------------------
%\bibliographystyle{amsplain}
%\begin{thebibliography}{10}
%
%\bibitem {A} T. Aoki, \textit{Calcul exponentiel des op\'erateurs
%microdifferentiels d'ordre infini.} I, Ann. Inst. Fourier (Grenoble)
%\textbf{33} (1983), 227--250.
%
%\bibitem {B} R. Brown, \textit{On a conjecture of Dirichlet},
%Amer. Math. Soc., Providence, RI, 1993.
%
%\bibitem {D} R. A. DeVore, \textit{Approximation of functions},
%Proc. Sympos. Appl. Math., vol. 36,
%Amer. Math. Soc., Providence, RI, 1986, pp. 34--56.
%
%\end{thebibliography}

\end{document}